\providecommand{\keywords}[1]{\textbf{\textit{Key words and phrases }} #1}
\providecommand{\subjclass}[1]{\textbf{\textit{2020 Mathematics Subject Classification.}} #1}
\theoremstyle{definition}
\newtheorem{theo}{Theorem}[subsection]
\newtheorem{theore}{Theorem}[section]
\newtheorem{pr}[theo]{Proposition}
 \newtheorem{lem}[theo]{Lemma}
 \newtheorem{coro}[theo]{Corollary}
\theoremstyle{remark}
\newtheorem{rema}[theo]{Remark}
\newtheorem{rrema}[theore]{Remark}
\newtheorem{ddefi}[theore]{Definition}
\theoremstyle{definition}
\newtheorem{defi}[theo]{Definition}
\numberwithin{equation}{subsection}
\newcommand\cu{\underline{C}}
\newcommand\cuz{{\underline{C}^0}}
\newcommand\wz{w^0}
\newcommand\hwz{\hw^0}
\newcommand\du{\underline{D}}
\newcommand\eu{\underline{E}}
\newcommand\au{\underline{A}}
\newcommand\auz{\underline{A}^0}
\newcommand\bu{\underline{B}}
\newcommand\hu{\underline{H}}
\newcommand\obj{\operatorname{Obj}}
\newcommand\id{\operatorname{id}}
\DeclareMathOperator\adfu{\operatorname{AddFun}}
\DeclareMathOperator\adfur{\operatorname{Fun}_R}
\DeclareMathOperator\kar{\operatorname{Kar}}
 \DeclareMathOperator\ke{\operatorname{Ker}}
 \DeclareMathOperator\cok{\operatorname{Coker}}
\DeclareMathOperator\imm{\operatorname{Im}}
\DeclareMathOperator\co{\operatorname{Cone}}
\DeclareMathOperator\inli{\varinjlim}
\newcommand\hw{{\underline{Hw}}}
\newcommand\hrt{{\underline{Ht}}}
\newcommand\alz{{\aleph_0}}
\newcommand\alo{{\aleph_1}}
\newcommand\lo{\mathcal{B}}
\newcommand\ro{\mathcal{A}}
\newcommand\spe{\operatorname{Spec}}
\newcommand\modd{\operatorname{Mod}}
\newcommand\com{\mathbb{C}}
\newcommand\z{{\mathbb{Z}}}
 \newcommand\lan{\langle}
\newcommand\ra{\rangle}
\newcommand\al{\alpha}
\newcommand\be{\beta}
\newcommand\ns{\{0\}}
\newcommand\ab{\operatorname{Ab}}
\newcommand\cp{\mathcal{P}}
\newcommand\perpp{{}^{\perp}}
\newcommand\opp{^{op}}
\newcommand\tba{Add text!!}
\newcommand\tm{\tilde{M}}
\newcommand\ccu{\underline{\tilde{\mathcal{C}}}} %{\tilde{\underline{C}}}
\newcommand\wu{\tilde{w}}
\newcommand\hwu{{\underline{H\tilde{w}}}}
\newcommand\mmodd{\operatorname{mod}}
\begin{document}

\title{On  $t$-structures adjacent and orthogonal to weight structures}
 %{Constructing $t$-structures using weight structures}
\author{Mikhail V. Bondarko
   \thanks{ %%!!!!
 The work is supported by  the Leader (Leading scientist Math) grant no. 22-7-1-13-1 and by the Ministry of Science and Higher Education of the Russian
Federation (agreement no. 075–15–2025–343).}}\maketitle
\begin{abstract} 
%In this paper we 
 We study  $t$-structures (on triangulated categories) that are closely related to weight structures.
 A $t$-structure couple $t=(\cu_{t\le 0},\cu_{t\ge 0})$ is said to be  {\it adjacent} to a  weight structure $w=(\cu_{w\le 0}, \cu_{w\ge 0})$ if $\cu_{t\ge 0}=\cu_{w\ge 0}$. %??? ; if this is the case then $t$ can be uniquely recovered from $w$ and vice versa. 
 For  a triangulated category $\cu$ that satisfies the Brown representability property %????? (one may say that this is the case for any "reasonable" triangulated category closed with respect to coproducts) 
  we prove that $t$ that is  adjacent to $w$ exists if and only if $w$ is {\it smashing} (i.e.,  $\cu_{w\ge 0}$ is $\coprod_{\cu}$-closed); %in this case the 
    %The heart $\hrt$ 
     the heart of this $t$ is the category of those functors $\hw\opp\to \ab$ that respect products (here $\hw$ is the heart of $w$). This result has important applications.

    We prove several more statements on constructing ({\it orthogonal}) $t$-structures starting from weight structures. %; we look for a {\it strictly orthogonal} $t$-structure $t$ on some $\cu'$ (where $\cu,\cu'$ are triangulated subcategories of a common $\du$) such that $\cu'_{t\le 0}$ (resp. $\cu'_{t\ge 0}$) is characterized by the vanishing of morphisms from $\cu_{w\ge 1}$ (resp. $\cu_{w\le -1}$).??? 
    Some of these results 
    %some of them 
     generalize properties of semi-orthogonal decompositions proved in %the previous??
      an earlier paper, and can be applied to various derived categories of (quasi)coherent sheaves on a scheme $X$ that is projective over an affine noetherian one. We %apply results to the 
 also study hearts of orthogonal $t$-structures and their restrictions, and prove some statements on 'reconstructing' weight structures from orthogonal $t$-structures. 

The main tool of this paper is the notion of virtual $t$-truncations of cohomological functors; these are defined in terms of weight structures and behave as if they come from $t$-truncations of representing objects (whether $t$ exists or not).
\end{abstract}
\subjclass{ 18G80, 14F08 (Primary)  18F20, 18G05, 18E10, 14A15 (Secondary)} %, 16E65 , 14G40

\keywords{Triangulated category, weight structure, $t$-structure, virtual $t$-truncation, adjacent structures, orthogonal structures, pure functor, coherent sheaves, perfect complexes, saturated category, coproductive extension.}

\tableofcontents
 \section*{Introduction}
This paper is devoted to the study of those $t$-structures that are closely related to weight structures (on various triangulated categories).

Let us recall that  $t$-structures on triangulated categories have %become important tools in
 been important for homological algebra ever since their introduction in \cite{bbd}. Respectively, their study and construction is an actual and non-trivial question.
 Next, in \cite{konk} and \cite{bws} a rather similar notion of a weight structure $w$ on a triangulated category $\cu$ was introduced.
Moreover, in ibid. a $t$-structure $t=(\cu_{t\le 0}, \cu_{t\ge 0})$  was said to be {\it (left) adjacent} to $w$ if $\cu_{t\ge 0}=\cu_{w\ge 0}$, and certain examples of adjacent structures were constructed. %\footnote{In contrast to ibid. and \cite{bger}, in the current paper we use the so-called homological convention for $t$ and $w$ (see Remarks \ref{rtst} (2) and \ref{rstws}(2)), and say that $t$ is left  adjacent to $w$ if $\cu_{t\ge 0}=\cu_{w\ge 0}$.}??? % Besides, D. Paukstello and several other authors use the term "co-$t$-structure" instead of "weight structure".} 
Furthermore, in \cite{bger} %the notion of 
 for a $t$-structure $t$ on a triangulated category $\cu'$ that is related to $\cu$ by means of  a {\it duality} bi-functor %the  
 a more general notion of {\it %(left)
   orthogonality} of  a weight structure $w$ on $\cu$  to $t$ was introduced. %\footnote{These definitions are contained in Definition \ref{ddual} below; however, our main examples have motivated us to concentrate on the particular case where $\cu$ and $\cu'$ are subcategories of a common triangulated category $\du$ for most of this paper.}
 Also, the relationship between the hearts of adjacent and orthogonal structures was studied in detail. 
%that that is {\it (right) orthogonal} to a weight structure $w$ on $\cu$ with respect to a {\it duality} that relates $\cu$ and $\cu'$ was introduced. 

Next, if $w$ is % (left or right)
  adjacent to $t$ then it determines $t$ uniquely and vice versa. %, and one can easily prove a certain extension of this statement to the setting of orthogonal structures. delete??? Move vtt up?? 
 Yet the only previously existing way of constructing $t$ if $w$ is given was to %start from 
 use certain 'nice generators' of $w$ (see Definition \ref{dcomp}(\ref{dgenw}), %\S4.5 of \cite{bws}, 
 Theorem 3.2.2 of \cite{bwcp}, and %Proposition 5.3.1 of %\cite{bpgws}: duality??!!
  Theorem 2.4.2(II.2) of \cite{bgroth}). However, already in \cite{bws} the notion of virtual $t$-truncations for (co)homological functors was introduced, and it was proved that virtual $t$-truncations possess several nice properties. In particular, it was demonstrated that these are closely related to $t$-structures (whence the name) even though they are defined in terms of weight structures only. %see??!

In the current paper we propose a new construction method. We prove that adjacent and orthogonal $t$-structures can be constructed using virtual $t$-truncations whenever certain Brown representability-type assumptions on $\cu$ (and $\cu'$) are known.
%Now, virtual $t$-truncations are defined in terms of weight structures; still they are 
 Respectively, our results yield the existence of some %vast 
 new families of $t$-structures. 

Let us formulate one of these results. For a triangulated category $\cu$ that is {\it smashing}, i.e., closed with respect to (small) coproducts, and a weight structure $w$ on it we will say that $w$ is {\it smashing} whenever $\cu_{w\ge 0}$ is closed with respect to $\cu$-coproducts (note that $\cu_{w\le 0}$ is $\coprod$-closed automatically).

\begin{theore}[See Theorem \ref{tsmash}]\label{tadjti}

Let $\cu$ be a smashing triangulated category  %that is {\it smashing}, i.e., closed with respect to coproducts, and 
 that satisfies the following Brown representability property: any functor $\cu\opp\to \ab$ that respects ($\cu\opp$)-products is representable.\footnote{Equivalently, a functor is representable if and only if it converts $\cu$-coproducts into products of abelian groups.} % (in $\cu$).%\footnote{Thanks to the foundational results of A. Neeman and others, this property is known to hold for several important classes of triangulated categories; in particular, it suffices to assume that either $\cu$ or $\cu\opp$ is compactly generated.}

Then for  a weight structure $w$  on $\cu$ there exists a $t$-structure $t$  adjacent to it if and only if $w$ is smashing. Moreover, the heart of $t$ (if $t$ exists) is equivalent to the category of all those additive functors $\hw\opp\to \ab$ that respect products; here $\hw$  is the {\it heart} of $w$. %\footnote{Here $\hw$ is the heart of $w$; note also that $G: \hw\opp\to \ab$ respects products whenever it converts $\hw$-coproducts into products of groups.}
\end{theore}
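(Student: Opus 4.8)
\emph{Overall plan.} I would treat the two implications separately; the ``if'' part carries essentially all of the work and yields the description of the heart along the way, and the organizing tool is the theory of virtual $t$-truncations of cohomological functors from \cite{bws}. Note first that the Brown representability hypothesis also makes $\cu$ closed under small products (the functor $\prod_i\cu(-,X_i)$ respects $\cu\opp$-products, hence is representable), so coproducts of weight decompositions and products in $\ab$ may be used freely. For the ``only if'' implication, suppose $t$ is adjacent to $w$; given $\{Y_i\}$ in $\cu_{w\ge 0}$ with $Y=\coprod_i Y_i$, take the $t$-decomposition $A\to Y\to B$ with $A\in\cu_{t\le -1}$ and $B\in\cu_{t\ge 0}=\cu_{w\ge 0}$. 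In a weight decomposition $w_{\ge 1}A\to A\to w_{\le 0}A$ the first arrow is zero, since $w_{\ge 1}A\in\cu_{w\ge 1}\subseteq\cu_{w\ge 0}=\cu_{t\ge 0}$ is right-orthogonal to $A\in\cu_{t\le -1}$; hence $A$ is a retract of $w_{\le 0}A$, so $A\in\cu_{w\le 0}\cap\perpp\cu_{w\ge 0}$, and a short additional argument (this necessity being essentially known; cf.\ \cite{bwcp}) gives $A=0$, so $Y\cong B\in\cu_{w\ge 0}$ and $w$ is smashing.

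\emph{The ``if'' part.} Assume $w$ is smashing, so both $\cu_{w\le 0}$ and $\cu_{w\ge 0}$ are $\coprod$-closed and a coproduct of (termwise-chosen) weight decompositions is again a weight decomposition. Fix $M\in\cu$ and put $H^M=\cu(-,M)\colon\cu\opp\to\ab$; this cohomological functor converts $\cu$-coproducts into products. The key step is the following lemma: \emph{if $w$ is smashing and a cohomological $H\colon\cu\opp\to\ab$ converts $\cu$-coproducts into products, then so do all its virtual $t$-truncations} $\tau_{\le n}H$ \emph{and} $\tau_{\ge n}H$; this is a direct verification, since the value of a virtual $t$-truncation at $X$ is the (co)homology of a small complex built from the values of $H$ on the terms of a weight decomposition of $X$ and their shifts, and for $X=\coprod X_i$ one uses the coproduct weight decomposition together with exactness of products in $\ab$. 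Granting this, $\tau_{\ge 0}H^M$ and $\tau_{\le -1}H^M$ are representable. The functorial long exact sequence $\cdots\to\tau_{\ge 0}H\to H\to\tau_{\le -1}H\to\cdots$ together with Yoneda then produces, for each $M$, a distinguished triangle $M_{\ge 0}\to M\to M_{\le -1}\to M_{\ge 0}[1]$ with $\cu(-,M_{\ge 0})\cong\tau_{\ge 0}H^M$ and $\cu(-,M_{\le -1})\cong\tau_{\le -1}H^M$, and the standard vanishing/agreement properties of virtual $t$-truncations relative to $\cu_{w\ge 0}$ place $M_{\ge 0}\in\cu_{w\ge 0}$ and $M_{\le -1}\in\perpp\cu_{w\ge 0}$. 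Since $\cu_{w\ge 0}$, being the non-negative part of a weight structure, is retraction- and extension-closed and has the shift-stability demanded of the coaisle of a $t$-structure, while $\perpp\cu_{w\ge 0}$ automatically has the complementary closure, these triangles (with the orthogonality built in) exhibit $\cu_{w\ge 0}$ as $\cu_{t\ge 0}$ for a $t$-structure $t$ with $\cu_{t\le -1}=\perpp\cu_{w\ge 0}$, adjacent to $w$ by construction.

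\emph{The heart.} Let $\hrt=\cu_{t\le 0}\cap\cu_{t\ge 0}$ be the heart of this $t$. Since $w$ is smashing, $\hw=\cu_{w\le 0}\cap\cu_{w\ge 0}$ is $\coprod$-closed, so restriction along $\hw\hookrightarrow\cu$ sends $X\in\hrt$ to a product-respecting additive functor $\cu(-,X)|_{\hw}\colon\hw\opp\to\ab$, defining a functor $\Phi$ from $\hrt$ to the category of such functors. Following the analysis of hearts of adjacent structures in \cite{bger}, $\Phi$ is fully faithful --- for $X,X'\in\hrt$ the weight Postnikov tower of $X$ (layers in $\hw,\hw[1],\dots$) computes $\cu(X,X')$ out of $\hw$, the positive layers contributing nothing since $X'\in\cu_{t\le 0}$ is right-orthogonal to $\cu_{w\ge 1}$ --- and essentially surjective: a product-respecting $F\colon\hw\opp\to\ab$ extends, by left Kan extension along $\hw\hookrightarrow\cu$ via weight Postnikov towers, to a \emph{pure} cohomological functor $\widetilde F\colon\cu\opp\to\ab$, which again converts $\cu$-coproducts into products (the coproduct-of-towers argument), hence $\widetilde F\cong\cu(-,M_F)$ for some $M_F\in\cu$; purity of $\widetilde F$ forces $M_F\in\hrt$ with $\Phi(M_F)\cong F$. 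This yields the asserted equivalence.

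\emph{Main obstacle.} I expect the difficulty to be twofold. In the ``if'' part one must pin down the variance and indexing of the virtual $t$-truncations being used so that they correspond to the adjacency convention $\cu_{t\ge 0}=\cu_{w\ge 0}$, and then genuinely verify that the representing objects $M_{\ge 0}$, $M_{\le -1}$ lie in $\cu_{w\ge 0}$ and in $\perpp\cu_{w\ge 0}$ respectively --- that is, translate the abstract (co)homological vanishing properties of virtual $t$-truncations into these concrete orthogonality statements; the lemma and the $t$-structure-axiom check are routine by comparison. In the heart description, the delicate point is essential surjectivity of $\Phi$: constructing the pure cohomological extension $\widetilde F$ of a given functor on $\hw$, checking that it respects products, and recognizing its representing object as an object of the heart, all of which rest on the weight-complex / Postnikov-tower machinery attached to $w$.
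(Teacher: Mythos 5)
You take essentially the same route as the paper: existence is reduced to representability of the virtual $t$-truncations of the representable functors $H_M$ (this is Theorem \ref{trefl}(I), which you re-derive in-line via the long exact sequence, Yoneda, and the weight-range placement of the representing objects), the key lemma that virtual $t$-truncations of cp functors remain cp when $w$ is smashing is Proposition \ref{psmash}(\ref{icopr6p}), and the heart is identified with product-respecting functors on $\hw$ through pure (weight range $[0,0]$) functors, as in Proposition \ref{pwrange}. One caveat: in your ``only if'' argument the decompositions are mis-oriented (a $t$-decomposition reads $L_tY\to Y\to R_tY$ with $L_tY\in\cu_{t\ge 0}$ and $R_tY\in\cu_{t\le -1}$, and a weight decomposition reads $w_{\le 0}A\to A\to w_{\ge 1}A$, not the reverse), and the coaisle should be $\cu_{t\le -1}=\cu_{w\ge 0}^{\perp}$ rather than $\perpp\cu_{w\ge 0}$; in any case that direction is immediate, since $\cu_{t\ge 0}=\perpp(\cu_{t\le -1})$ is closed under coproducts and equals $\cu_{w\ge 0}$ by adjacency, so no auxiliary weight decomposition of the truncation is needed.
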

Note here that (smashing) triangulated categories satisfying the Brown representability property are currently %  have recently become very popular in homological algebra and found applications 
 really popular  in several areas of mathematics (thanks to the foundational results of A. Neeman, H. Krause and others); in particular, this property holds if either $\cu$ or $\cu\opp$ is   {\it compactly generated}. %; % so a significant part of this paper is  devoted to categories of this sort.
%whereas the Brown representability property is known to hold for several important classes of triangulated categories; in particular, it suffices to assume that 
Moreover, it is easy to construct vast families of smashing weight structures on $\cu$ (at least) if $\cu$ is compactly generated; see Remark \ref{rsmashex}(1) below. %The resulting adjacent $t$-structures are {\it cosmashing} (i.e. $\cu_{t\le 0}$ is closed with respect to $\cu$-products); thus there appears to be no way to construct all of them using previously known methods. See also 
 %???There appears to be no way to construct all    $t$-structures obtained via Theorem \ref{tadjti} using previously known methods. Moreover, this construction method was crucial for the study of compactly generated $t$-structures carried over in \cite{bgroth}. 
 %they were crucial for the study of  Remark of \cite{bgroth}

Certainly the dual to Theorem \ref{tadjti} is valid as well. Moreover, if $\cu\opp$ satisfies the dual Brown representability property and $w$ is both cosmashing and smashing then  %the %left??!!
 its {\it right adjacent} $t$-structure $t$ (characterized by the assumption $\cu_{t\le 0}=\cu_{w\le 0}$) restricts to the subcategory of compact objects of $\cu$ as well as to all other 'levels of smallness' for objects (see Theorem \ref{tcosm}(3)). Combining this statement with (the existence of weight structures) Theorem 3.1 of \cite{kellerw} we obtain a statement on $t$-structures extending  Theorem 7.1 of ibid.  

We also prove some alternative versions of Theorem \ref{tadjti} that can be applied to 'quite small' triangulated categories.
 Instead of the Brown representability condition for $\cu$ one can demand it to satisfy the {\it $R$-saturatedness} one (see Definition \ref{dsatur}(2) below; this is an  $R$-linear finite analogue of the Brown representability). Then for any {\it bounded} $w$ on $\cu$ (that is, $\cup_{i\in \z}\cu_{w\le 0}[i]=\cup_{i\in \z}\cu_{w\ge 0}[i]=\cu$) there will exist a $t$-structure  adjacent to it. Moreover, %we %recall the notion of 
  this statement also holds for {\it essentially bounded} weight structures (see Corollary \ref{csatur}(1)); those generalize both bounded weight structures and semi-orthogonal decompositions. According to %Using 
a saturatedness statement from \cite{neesat} %we demonstrate that this %version of the 
 this result can be applied to the  derived category $D^{perf}(X)$  
of perfect complexes on a regular scheme %separated finite-dimensional scheme??
  that is proper over the spectrum of a Noetherian ring $R$ %a smooth proper variety as well as to $D^b(X)\opp$ 
   (see %Proposition \ref{pnee1}(2)
    Corollary \ref{csatur}(3)). %\ref{crtem1}(III)). 
 We also study restrictions of  orthogonal $t$-structures to  subcategories corresponding to certain bounds and support conditions; see Propositions \ref{pcgrlin} and \ref{pcgrlintba}, Corollaries \ref{csatur}(2) and \ref{crling}, and Theorem \ref{tcoprb}. We demonstrate that these results can be applied to 
certain  derived categories of (quasi)coherent sheaves; this includes the case of a singular $X$ (that is proper over $\spe R$; see Propositions \ref{pgeomap1} and \ref{pgeomap2}). 
 
 Lastly, we try to prove some converse of these results. So we study the question when a (fixed) $t$-structure $t$ on a triangulated category $\cu$ is  orthogonal  to a weight structure $w$ on a %certain
 'large enough' subcategory $\cu'\subset\cu$. %As we have already said, if $\cu=\cu'$
Roughly, this is the case if and only if the heart $\hrt$ has enough projectives and those lift to $\cu$ in a certain way (and the latter condition follows from duals to certain Brown representability-type conditions). However, it appears to be difficult to find the conditions that ensure the existence of an adjacent weight structure (so, % it is on $\cu$ itself
 $\cu'=\cu$). We prove some statements of this sort; yet these results appear to be %somewhat less useful
 more difficult to apply than the aforementioned converse ones. %, and the methods of their proofs are less original. %? and interesting.
%For this reason we don't cite?? any of these statements here??! ???!!! Remark {rcasa}(1)?!!!!!!!!!!

\begin{rrema}\label{rneetgeni}

1.  The author %certainly
  does not claim that the methods of the current paper are the most general among the existing methods for constructing $t$-structures. Yet they give more information on the hearts of the corresponding $t$-structures than some other results in this direction; cf. Remark \ref{rneetgen} below for more detail.

%\begin{rrema}\label{rbvtti}
2. The current text is a certain modification of the preprint \cite{bvtr}. We will say more on the comparison between these two texts in Remark \ref{rbvtt} below.

%nado???? up??
3. The %author's 
work on \cite{bvtr} inspired the author to write %the preprint
  \cite{bdec}. We note that semi-orthogonal decompositions (that are the main subject of ibid.) are particular (and very important!) cases of weight structures; see %Remark \ref{rdec}
   Proposition \ref{psod}  below for more detail. %\tba
 Next, both in \cite{bdec} and in %the current paper
%(the justification of) Remark \ref{rpgeomap2}(1)  
\S\ref{sebwt} below the %main methods are extending 
 main idea is to extend a weight structure to %bigger categories
  a bigger (smashing) category and to  pass to an orthogonal $t$-structure (see Definitions \ref{dwso}(\ref{idrest}) and \ref{dort} below). It is no wonder that applying these methods in the setting of general weight structures is somewhat more difficult than doing this for semi-orthogonal decompositions. On the other hand, these difficulties make the current paper somewhat more interesting and also 'more original'.    
 %Two types of statements: orthogonal sod and pre-requisites needed for relating certain "explicit geometric" subcategories to functors?! Nothing of the second sort here?!
\end{rrema}

Let us  now describe the contents  of the paper. Some more information of this sort may be found in the beginnings of sections. 

In \S\ref{swt} we give some %basic
  definitions and conventions, and recall some basics on $t$-structures, weight structures, and semi-orthogonal decompositions. The most original statement of this section is Proposition \ref{psod} that states that  semi-orthogonal decompositions are precisely those weight structures whose heart is zero; these weight structures are also characterized by the uniqueness of %the corresponding 
	 weight decompositions of objects.

In \S\ref{sortvtt} we %recall
 define  virtual $t$-truncations  and weight range of functors and prove several nice properties for them. We also relate the existence of orthogonal $t$-structures to virtual $t$-truncations; this gives general if and only if criteria for the existence of adjacent $t$-structures. %Moreover, we give a {drefl}
 Moreover, we define and study coproductive extensions of weight structures.

In  \S\ref{smashb} we prove Theorem \ref{tadjti}%{tsmash} 
 and some related statements. So we study smashing triangulated categories along with the existence of $t$-structures adjacent to weight structures on them and certain restrictions of these $t$-structures. We also consider weight structures extended from subcategories of compact objects.

In \S\ref{ststreb} %we% study %adjacent and orthogonal $t$-structures on $R$-linear triangulated categories; this includes $R$-saturated categories and various derived categories of (quasi)coherent sheaves.
we discuss certain bounded %versions
 analogues of the results of the previous section. %Our main statement concern
  In particular, we study the existence of adjacent and orthogonal $t$-structures in $R$-saturated triangulated categories and the restriction of orthogonal $t$-structures to  subcategories corresponding to certain bounds and support conditions. 
We apply our general statements to  various derived categories of (quasi)coherent sheaves. To formulate our statements in a more general %case 
 setting we recall the notions of essentially bounded (above, below, or both) objects and weight structures. %In particular, we are 
  This enables us to generalize some central results of \cite{bdec}.

In \S\ref{sortw} we %try to answer 
 address the question %whether the results of  previous sections give all $t$-structures that are adjacent to weight structures on the corresponding categories. So we study  criteria ensuring the existence of a weight structure that is left adjacent or orthogonal  to a given $t$-structure $t$; under certain assumptions we prove that the answer to our question is affirmative. %our results give an affirmative answer ???!!!
 %We are not able to give a complete answer to this question; yet 
  which $t$-structures are adjacent to weight structures. In certain situations we prove that a weight structure adjacent to a given $t$ exists whenever $\hrt$ has enough projectives and certain additional assumptions are fulfilled.

\S\ref{smore} contains some additional remarks that relate this paper with %some earlier ones some 
 the literature. None of them appear to be really important.

The author is deeply grateful to prof. A. Neeman for calling his attention to \cite{kellerw} as well as for writing his extremely interesting texts that are crucial for the current paper.

\section{A reminder on  $t$-structures  and weight structures}\label{swt}

In this section we recall the notions of $t$-structures and weight structures, along with orthogonality and adjacency for them. 

In \S\ref{snotata}  we introduce some categorical notation and recall some basics on $t$-structures. 

In \S\ref{sws} we recall some of the theory of weight structures and semi-orthogonal decompositions. In particular,  Proposition \ref{psod}  states that  semi-orthogonal decompositions are precisely those weight structures whose heart is zero.

In \S\ref{sadj} we recall the definitions of adjacent and orthogonal weight and $t$-structures that are central for this paper.

\subsection{Some categorical and $t$-structure notation}\label{snotata}

\begin{itemize}

\item All products and coproducts in this paper will be small.

\item Given a category $C$ and  $X,Y\in\obj C$  we will write
$C(X,Y)$ for  the set of morphisms from $X$ to $Y$ in $C$.

\item All subcategories in this paper will be strictly full. Respectively, for categories $C',C$ we write $C'\subset C$ if $C'$ is a strictly full %strict
subcategory of $C$.

\item For $S_1,S_2\subset C$ we will write  $S_1\cap S_2$ for the (strictly full) subcategory  of $C$ whose object class equals $\obj S_1\cap\obj S_2$.

\item Given a category $C$ and  $X,Y\in\obj C$, we say that $X$ is a {\it
retract} of $Y$ %(and $Y$ is {\it coretract} of $X$)
 if $\id_X$ can be %factorized as $X\stackrel{i}{\to} Y\stackrel{p}{\to}X$
 factored through $Y$.%\footnote{Certainly,  if $C$ is triangulated or abelian, then $X$ is a retract of $Y$ if and only if $X$ is its direct summand.}%%{del}

\item  Given an additive 
 %(not necessarily additive)??
   subcategory $\hu$ of an additive %category??
     $C$, the (strictly full) subcategory $\kar_C(\hu)$ of $C$  %we will %write call  
     whose objects are all $C$-retracts of objects of $\hu$ will be called {\it the retraction-closure} %\kar???
       of $\hu$ in $C$.

%An additive 
 %(not necessarily additive)??
   %subcategory $\hu$ of an additive %category??
     %$C$ 
%$\hu\subset C$ a subcategory $\hu$ is called
$\hu$ is said to be {\it retraction-closed} in $C$ if it coincides with  $\kar_C(\hu)$. %contains all $C$-retracts of its  objects. % in $C$.

\item  We will say that $C$ is {\it Karoubian} if any idempotent morphism yields a direct sum decomposition in it. 

%\item The {\it Karoubi envelope} $\kar(\bu)$ (no lower index) of an additive category $\bu$ is the category of "formal images" of idempotents in $\bu$ (so $\bu$ is embedded into an idempotent complete category). 

\item The symbol $\cu$ below will always denote some triangulated category;  it will often be endowed with a weight structure $w$. The symbols $\cu'$, $\du$, and $\cuz$ %will  also be used  
 are also reserved for triangulated categories only. Moreover, we will often assume $\cu,\cu'\subset \du$ and $\cuz\subset \cu$.

\item For any  $A,B,C \in \obj\cu$ we will say that $C$ is an {\it extension} of $B$ by $A$ if there exists a distinguished triangle $A \to C \to B \to A[1]$.

\item For any $D,E\subset \obj \cu$ we will write $D\star E$  for the class of all extensions of elements of $E$ by elements of $D$. 

\item A class $\cp\subset \obj \cu$ is said to be  {\it extension-closed}
    if $0\in \cp$ and $\cp\star \cp\subset \cp$. %it %??!!
     %for any distinguished triangle $X\to Y\to Z$ in $\cu$ we have the following: $X,Z\in D\implies Y\in D$. 
		%is closed with respect to extensions and contains $0$.  %????We will call the smallest extension-closed subclass  of objects of $\cu$ that  contains a given class $B\subset \obj\cu$   the {\it extension-closure} of $B$. 

%Given a class $D$ of objects of $\cu$ 
%\item We will write $\lan \cp\ra$ for the smallest  full retraction-closed triangulated subcategory of $\cu$ containing $\cp$; we will  call  $\lan \cp\ra$  the triangulated subcategory {\it densely generated} by $\cp$ (in particular, in the case $\cu=\lan \cp \ra$).

%If $\cu=\lan \{G\}\ra$ then we will say that $G$ densely generates $\cu$ and that $\cu$ is {\it monogenic}. 

%Moreover, the smallest  {\bf strict}  full triangulated subcategory of $\cu$ containing $\cp$ will be called the subcategory {\it strongly generated} by $\cp$.

\item The smallest additive retraction-closed extension-closed class of objects of $\cu$ containing   $\cp$  will be called the {\it %$\cu'$-
envelope} of $\cp$.  %nado?????!!!!

We will say that  $\cp$ {\it densely generates} $\cu$ if the envelope of $\cup_{i\in \z}\cp[i]$ equals $\cu$.

\item For $X,Y\in \obj \cu$ we will write $X\perp Y$ if $\cu(X,Y)=\ns$. 

For
$D,E\subset \obj \cu$ we write $D\perp E$ if $X\perp Y$ for all $X\in D,\
Y\in E$.

Given $D\subset\obj \cu$ we  will write $D^\perp$ for the class
$$\{Y\in \obj \cu:\ X\perp Y\ \forall X\in D\}.$$
%Sometimes we will denote by $D^\perp$ the corresponding full subcategory of $\cu$. 
Dually, ${}^\perp{}D$ is the class
$\{Y\in \obj \cu:\ Y\perp X\ \forall X\in D\}$.

\item Given $f\in\cu (X,Y)$, where $X,Y\in\obj\cu$, we will call the third vertex
of (any) distinguished triangle $X\stackrel{f}{\to}Y\to Z$ a {\it cone} of
$f$.%\footnote{Recall that different choices of cones are connected by non-unique isomorphisms.}\

\item Below the symbols $\au$, $\auz$, and $\au'$ will always  denote some abelian categories; $\bu$ is an additive category.
 %??????!!!!

\item %For an additive category $\bu$ 
All complexes in this paper will be cohomological.

%We will write $K(\bu)$ for the homotopy category of  complexes over $\bu$. Its full subcategory of bounded complexes will be denoted by $K^b(\bu)$. %We will write $M=(M^i)$ if $M^i$ are the terms of the complex $M$ (in the cohomological indexing).

	\item We will say that an additive covariant (resp. contravariant) functor from $\cu$ into $\au$ is {\it homological} (resp. {\it cohomological}) if it converts distinguished triangles into long exact sequences.
	
	For a (co)homological functor $H$ and $i\in\z$ we will write $H_i$ (resp. $H^i$) for the composition $H\circ [-i]$. % (resp. $H\circ [i]$).
%cohomological functors; compact objects???!

%\item In some of the formulations we will mention $R$-linear categories and functors; in these occurrences we will always assume that $R$ is an associative commutative unital ring.
\end{itemize}

%\begin{rrema}\label{rinters}
%1.
 %Obviously, if $\cu_1,\cu_2$ are triangulated subcategories of $\cu$ then $\cu_1\cap \cu_2$ is its triangulated subcategory as well;  recall the strictness assumption. % (say, for $\tu'$).

%Similarly, if $\au_1,\au_2$ are weak Serre subcategory of $\au$ then $\au_1\cap \au_2$  is its weak Serre subcategory as well.

%2. We will mention certain additional assumptions on the scheme $X/\spe R$ in Theorem \ref{tadm}. However, they are  rather "harmless" in all "interesting cases"; cf.  Remark \ref{rgeom}(1) %and \ref{rdual}(1)  below. %?????For this reason we do not discuss them in this introduction.
%\end{rrema}

%to fix notation???!
Let us also recall the notion of  a $t$-structure (mainly to fix  notation). 
%"more coherent" with the topological examples. So the corresponding versions of the definitions are as follows.

\begin{defi}\label{dtstr}
%Let $\cu$ be a triangulated category.

A couple of subclasses  $\cu_{t\le 0},\cu_{t\ge 0}\subset\obj \cu$ will be said to be a
$t$-structure $t$ on $\cu$  if %$\cu^{t\ge 0},\du^{t\le 0}$
they satisfy the following conditions:

(i) $\cu_{t\le 0}$ and $\cu_{t\ge 0}$  are strict, i.e., contain all
objects of $\cu$ isomorphic to their elements.

(ii) $\cu_{t\le 0}\subset \cu_{t\le 0}[1]$ and $\cu_{t\ge 0}[1]\subset \cu_{t\ge 0}$.

(iii)  $\cu_{t\ge 0}[1]\perp \cu_{t\le 0}$.

(iv) For any $M\in\obj \cu$ there exists a  {\it $t$-decomposition} distinguished triangle
\begin{equation}\label{tdec}
L_tM\to M\to R_tM{\to} L_tM[1]
\end{equation} such that $L_tM\in \cu_{t\ge 0}, R_tM\in \cu_{t\le 0}[-1]$.

%2. $\hrt$ is the (full) subcategory of $\cu$ whose object class is $\cu_{t=0}=\cu_{t\le 0}\cap \cu_{t\ge 0}$.
\end{defi}

We will also give some auxiliary definitions.

\begin{defi}\label{dtstro}
1. For any $i\in \z$ we will use the notation $\cu_{t\le i}$ (resp. $\cu_{t\ge i}$) for the class $\cu_{t\le 0}[i]$ (resp. $\cu_{t\ge 0}[i]$). 

2. $\hrt$ is the (full) subcategory of $\cu$ whose object class is $\cu_{t=0}=\cu_{t\le 0}\cap \cu_{t\ge 0}$.

3. We will say that $t$ is {\it bounded below} if $\cup_{i\in \z}\cu_{t\ge i}=\obj\cu$.

Moreover, we   say that $t$ is {\it bounded} if the equality $\cup_{i\in \z}\cu_{t\le i}=\obj\cu$ is valid as well.

%\item\label{idrest}
4. Let $\cuz$ be a full triangulated subcategory of $\cu$.

We will say that $t$ {\it restricts} to $\cuz$ whenever the couple $t_{0}= (\cu_{t\le 0}\cap \obj \cuz,\ \cu_{t\ge 0}\cap \obj \cuz)$ is a $t$-structure on $\cuz$.
\end{defi}

Let us recall a few properties of $t$-structures.

\begin{pr}\label{prtst}
Let $t$ be a $t$-structure on a triangulated category $\cu$. Then the following statements are valid.

\begin{enumerate}

%\item\label{itdu}  the notion of $t$-structure is self-dual (cf. Proposition \ref{pbw}(\ref{idual}) below).

\item\label{itcan}
The triangle (\ref{tdec}) is canonically and functorially determined by $M$. Moreover, $L_t$ is right adjoint to the embedding $ \cu_{t\ge 0}\to \cu$ (if we consider $ \cu_{t\ge 0}$ as a full subcategory of $\cu$) and $R_t$ is left adjoint to  the embedding $ \cu_{t\le -1}\to \cu$.

\item\label{itha}
$\hrt$ is %necessarily
  an abelian category with short exact sequences corresponding to distinguished triangles in $\cu$.

\item\label{itho}
For any $n\in \z$ we will use the notation $t_{\ge n}$ for the functor $[n]\circ L_t\circ [-n]$, and $t_{\le n}=[n+1]\circ R_t\circ [-n-1]$.

%Moreover, have 
Then there is a canonical isomorphism of functors $t_{\le 0}\circ t_{\ge 0}\cong t_{\ge 0}\circ  t_{\le 0}$ %\circ   [1] \circ R_t\circ [-1] \cong [1] \circ R_t\circ [-1] \circ L_t$
  (if we consider these functors as endofunctors of $\cu$), and the composite functor $H^t=H_0^t$ actually takes values in the subcategory $\hrt$ of $ \cu$. Furthermore, this functor $H^t:\cu \to \hrt$   is homological.

\item\label{itperp} $\cu_{t\le 0}= \cu_{t\ge 1}\perpp$ and $\cu_{t\ge 0}=\perpp(\cu_{t\le- 1})$. %hence 

Consequently, these classes are retraction-closed and extension-closed in $\cu$.

Thus $t$ is uniquely determined both by $\cu_{t\ge 0}$ and by $\cu_{t\le 0}$.

\item\label{itcopr}
$\cu_{t\ge 0}$ is closed with respect to   all (small) coproducts that exist in $\cu$.
\end{enumerate}
\end{pr}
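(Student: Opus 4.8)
The plan is to reproduce the classical arguments of \cite{bbd}; the logical backbone is part~\ref{itcan}, and the remaining parts follow from it more or less formally. The single input extracted from the axioms is the Hom-vanishing $\cu_{t\ge 0}[j]\perp\cu_{t\le 0}$ for all $j\ge 0$ --- obtained by shifting (iii) and using that $\cu_{t\le 0}$, $\cu_{t\ge 0}$ are stable under the relevant shifts by (ii) --- in particular $\cu_{t\ge 0}\perp\cu_{t\le -1}$ and $\cu_{t\ge 0}[1]\perp\cu_{t\le -1}$. For uniqueness in part~\ref{itcan}: given two $t$-decompositions $L_tM\to M\to R_tM\to L_tM[1]$ and $L'\to M\to R'\to L'[1]$ of the same $M$, apply $\cu(L_tM,-)$ to the second triangle; since $L_tM\perp R'$ and $L_tM\perp R'[-1]$ one gets $\cu(L_tM,L')\xrightarrow{\sim}\cu(L_tM,M)$, so the structure map $L_tM\to M$ lifts uniquely through $L'\to M$, symmetrically one gets a map back, the two compose (on either side) to the identities by the same uniqueness, and the resulting isomorphism extends uniquely to an isomorphism of triangles (again using $\cu(L_tM[1],R')=\ns$). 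Functoriality follows because for $f\colon M\to M'$ the composite $L_tM\to M\xrightarrow{f}M'\to R_tM'$ lies in $\cu(L_tM,R_tM')=\ns$, hence $f$ extends, uniquely, to a morphism of $t$-decompositions. The adjunctions are then immediate: applying $\cu(N,-)$ to (\ref{tdec}) for $N\in\cu_{t\ge 0}$ and using $N\perp R_tM$, $N\perp R_tM[-1]$ gives $\cu(N,L_tM)\xrightarrow{\sim}\cu(N,M)$ naturally, and dually for $R_t$ via $\cu(-,N')$ with $N'\in\cu_{t\le -1}$.

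Parts~\ref{itperp} and \ref{itcopr} are purely formal consequences. Using that the shifted functor $t_{\ge 1}$ is right adjoint to the inclusion $\cu_{t\ge 1}\to\cu$: if $X\in\cu_{t\ge 1}\perpp$ then $\cu(t_{\ge 1}X,t_{\ge 1}X)\cong\cu(t_{\ge 1}X,X)=\ns$, so $\id_{t_{\ge 1}X}=0$, so $t_{\ge 1}X=0$, and the truncation triangle forces $X\cong t_{\le 0}X\in\cu_{t\le 0}$; the reverse inclusion $\cu_{t\le 0}\subset\cu_{t\ge 1}\perpp$ is (iii). Hence $\cu_{t\le 0}=\cu_{t\ge 1}\perpp$, and dually (via $t_{\le -1}$) $\cu_{t\ge 0}=\perpp(\cu_{t\le -1})$. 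Any class $D\perp$ or $\perpp D$ is automatically extension-closed (long exact sequence applied to a candidate extension) and retraction-closed ($\cu(N,-)$, resp.\ $\cu(-,N)$, carries a retract to a retract of $\ns$), so this holds for $\cu_{t\le 0}$ and $\cu_{t\ge 0}$; and since $\cu_{t\le -1}=\cu_{t\ge 0}\perpp$, each of the two classes determines $t$. Finally $\perpp(\cu_{t\le -1})$ is closed under all coproducts existing in $\cu$: for $N\in\cu_{t\le -1}$, $\cu(\coprod_i X_i,N)=\prod_i\cu(X_i,N)=\ns$; this is part~\ref{itcopr}.

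It remains to prove parts~\ref{itha} and \ref{itho}, which I would do together. Define $t_{\ge n}$, $t_{\le n}$ as stated; by the shifted form of part~\ref{itcan} these are adjoint to the corresponding inclusions, and applying the octahedral axiom to the composable truncation triangles $t_{\ge 1}M\to M\to t_{\le 0}M$ and $t_{\ge 0}M\to M\to t_{\le -1}M$ yields the canonical isomorphism $t_{\le 0}\circ t_{\ge 0}\cong t_{\ge 0}\circ t_{\le 0}$, whose common value one checks to lie in $\cu_{t=0}$; this defines $H^t\colon\cu\to\hrt$, and its homologicity comes from splicing the long exact sequences of the triangles $t_{\ge n}M\to M\to t_{\le n-1}M$. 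For the abelian structure on $\hrt$: given $\phi\colon X\to Y$ in $\hrt$, complete to a triangle $X\xrightarrow{\phi}Y\to Z\to X[1]$; extension-closure (part~\ref{itperp}) shows $Z\in\cu_{t\ge 0}$ and $Z[-1]\in\cu_{t\le 0}$, so one may set $\cok\phi=H^t(Z)=t_{\le 0}Z$ and $\ke\phi=H^t(Z[-1])=t_{\ge 0}(Z[-1])$. Checking that these (with the evident maps induced by the truncation triangles) are a cokernel and a kernel of $\phi$ is straightforward; the genuinely delicate point --- and the step I expect to be the main obstacle --- is to show that the canonical comparison morphism $\cok(\ke\phi\to X)\to\ke(Y\to\cok\phi)$ is an isomorphism, which is carried out by an octahedral diagram chase that relies on the commutation isomorphism of truncations just established. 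Strictness of the classes and the cohomological long exact sequences are bookkeeping.
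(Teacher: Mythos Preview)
Your proposal is correct and follows exactly the classical BBD approach; the paper's own proof is simply a one-line citation to \S1.3 of \cite{bbd} for parts \ref{itcan}--\ref{itperp} together with the remark that part \ref{itcopr} follows immediately from part \ref{itperp}. In other words, you have written out in some detail precisely the content behind that citation, so there is nothing to compare.
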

\begin{proof}
%All of these statements
 Assertions \ref{itcan}--\ref{itperp}  were essentially established in \S1.3 of \cite{bbd} (yet see Remark \ref{rtst}(2) below).
 
 Assertion \ref{itcopr} immediately follows from assertion \ref{itperp}.
 \end{proof}

\begin{rema}\label{rtst}

1. The notion of a $t$-structure is clearly self-dual, that is, the couple $t\opp=(\cu_{t\ge 0},\cu_{t\le 0})$ gives a $t$-structure on the category $\cu\opp$. We will say that the latter $t$-structure is {\it opposite} to $t$.   %  (cf. Proposition \ref{pbw}(\ref{idual}) below).

2. Even though in \cite{bbd} where $t$-structures were introduced  and in several preceding papers of the author the so-called cohomological convention for $t$-structures was used, in the current text we %prefer to 
 use the homological convention; the reason for this is that it is coherent with the homological convention for weight structures (see Remark \ref{rstws}(3) below). Respectively, %we %use the 
 our notation $\cu_{t\ge 0}$ %instead of
 corresponds to the class $\cu^{t\le 0}$ in the cohomological convention. \end{rema}

We will also need two simple statements on restrictions of $t$-structure.

\begin{lem}\label{lrest}
Assume that $\cu$ is endowed with a $t$-structure $t$, and $\cu^0,\cu^1$ are triangulated subcategories of $\cu$.

1. $t$ restricts to $\cuz$ if and only if the functor $L_t$ sends $\cuz$ into itself. %hrt: obvious?!

2. Take $\cu^2=\cuz\cap \cu^1$ (that is, the full subcategory of $\cu$ whose object class equals $\obj \cu^1\cap\obj \cu^2$). Then $\cu^2$ is triangulated.

Moreover, if $t$ restricts both to $\cu^0$ and $\cu^1$ then it restricts to $\cu^2$ as well, and the heart of this restriction equals the intersections of the hearts for $\cu^0$ and $\cu^1$.
\end{lem}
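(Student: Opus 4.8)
The plan is to prove each assertion of Lemma~\ref{lrest} directly from the characterizations in Proposition~\ref{prtst}, using as the main technical tool the adjunction statement of Proposition~\ref{prtst}(\ref{itcan}).

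For part~1, one direction is immediate: if $t$ restricts to $\cuz$, then by definition $t_0$ is a $t$-structure on $\cuz$, and so the $t_0$-decomposition of any $M\in\obj\cuz$ lies inside $\cuz$; by the uniqueness of $t$-decompositions in $\cu$ (Proposition~\ref{prtst}(\ref{itcan})) this $t_0$-decomposition must coincide with the $t$-decomposition of $M$, whence $L_tM\in\obj\cuz$. Conversely, suppose $L_t$ sends $\cuz$ into itself. For $M\in\obj\cuz$ complete $L_tM\to M$ to a distinguished triangle $L_tM\to M\to R_tM\to L_tM[1]$ inside $\cu$; since $L_tM$ and $M$ lie in the triangulated subcategory $\cuz$, so does $R_tM$ (a cone of a morphism in $\cuz$ stays in $\cuz$ up to isomorphism, and $\cuz$ is strict). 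Thus this is a $t$-decomposition triangle with both end terms in $\cuz$, giving $M\in (\cu_{t\ge0}\cap\obj\cuz)\star(\cu_{t\le-1}\cap\obj\cuz)$. The orthogonality axiom (iii) for $t_0$ is inherited from that for $t$, as are the strictness and shift-stability of the two classes intersected with $\obj\cuz$; hence $t_0$ is a $t$-structure on $\cuz$, i.e.\ $t$ restricts. I would also note here that $L_t$ restricted to $\cuz$ then becomes the truncation functor $t_{0,\ge0}$, which is how the heart computation in part~2 will be fed.

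For part~2, first record that $\cu^2=\cu^0\cap\cu^1$ is triangulated: it is closed under shifts and, given a morphism $f$ in $\cu^2$, its cone in $\cu$ lies in $\cu^0$ (since $\cu^0$ is triangulated and strict) and likewise in $\cu^1$, hence in $\cu^2$. Now assume $t$ restricts to both $\cu^0$ and $\cu^1$. By part~1 this means $L_t(\obj\cu^0)\subset\obj\cu^0$ and $L_t(\obj\cu^1)\subset\obj\cu^1$; therefore $L_t(\obj\cu^2)\subset\obj\cu^0\cap\obj\cu^1=\obj\cu^2$, and part~1 (applied to $\cu^2$) gives that $t$ restricts to $\cu^2$. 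For the heart: by definition the heart of the restriction $t_2$ has object class $(\cu_{t\le0}\cap\obj\cu^2)\cap(\cu_{t\ge0}\cap\obj\cu^2)=(\cu_{t=0}\cap\obj\cu^0)\cap(\cu_{t=0}\cap\obj\cu^1)$, which is precisely the intersection of $\obj\hrt_{\cu^0}$ and $\obj\hrt_{\cu^1}$; since all these hearts are full subcategories of $\cu$, this identifies the heart of $t_2$ with the intersection of the hearts for $\cu^0$ and $\cu^1$ as subcategories of $\cu$.

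I do not expect a serious obstacle here; the only point requiring a little care is the verification that a cone computed in the ambient category $\cu$ of a morphism lying in a triangulated subcategory again lies (up to isomorphism, hence honestly, by strictness) in that subcategory, together with the appeal to uniqueness of $t$-decompositions to match the restricted decomposition with the ambient one. Everything else is a matter of intersecting the defining classes and transporting axioms (i)--(iv) from $t$ to $t_0$.
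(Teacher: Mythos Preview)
Your proposal is correct and follows essentially the same approach as the paper's own proof: both directions of part~1 use the uniqueness of $t$-decompositions (Proposition~\ref{prtst}(\ref{itcan})) and the fact that cones of morphisms in a strict triangulated subcategory remain in that subcategory, and part~2 is deduced from part~1 by intersecting. The paper is somewhat terser in the converse direction of part~1, noting only that the existence of a $t$-decomposition inside $\cuz$ suffices rather than re-verifying axioms (i)--(iii), but your more explicit check is entirely in the same spirit.
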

\begin{proof}

1. If $t$ restricts to $\cuz$ then the uniqueness of the triangles  (\ref{tdec}) (see Proposition \ref{prtst}(\ref{itcan})) implies that $L_t$ sends $\cuz$ into itself indeed.

Conversely, to check whether $t$ restricts to $\cuz$ it clearly suffices to verify that any object $M$ of $\cuz$ possesses a decomposition of the sort (\ref{tdec}) inside $\cuz$. Now, if $L_t(M)\in \obj \cuz$ then $R_t(M)$ is an object of $\cuz$ as well, since $\cuz$ is a triangulated subcategory of $\cu$.

2. $\cu^2$ is triangulated since $\cu^0$ and $\cu^1$ are (fully) strict triangulated subcategories of $\cu$, whereas %a distinguished triangle is determined by one of its arrows up to a 
different choices of cones of a $\cu$-morphisms are connected by (non-unique) isomorphisms. %cite a criterion???

The %second part of the assertion
 existence of the restricted $t$-structure statement  is easy as well; one can immediately deduce this statement from assertion 1. Lastly, the heart assertion is an immediate consequence of our definitions. 
 \end{proof}

\begin{rema}\label{rtrest}
One may easily add some more equivalent conditions to Lemma \ref{lrest}(1); cf. Theorem \ref{trefl}(I) below.
\end{rema}

\subsection{On weight structures and semi-orthogonal decompositions}\label{sws}

Let us recall some basic  definitions of the theory of weight structures. %  notion that is central for this paper.

\begin{defi}\label{dwstr}

I. A pair of subclasses $\cu_{w\le 0},\cu_{w\ge 0}\subset\obj \cu$ %(of {\it $w$-negative} and {\it $w$-positive} objects, respectively)
will be said to define a weight
structure $w$ on a triangulated category  $\cu$ if 
they  satisfy the following conditions.

(i) $\cu_{w\le 0}$ and $\cu_{w\ge 0}$ are %additive and 
retraction-closed in $\cu$ (i.e., contain all $\cu$-retracts of their objects).

(ii) {\bf Semi-invariance with respect to translations.}

$\cu_{w\le 0}\subset \cu_{w\le 0}[1]$, $\cu_{w\ge 0}[1]\subset
\cu_{w\ge 0}$.

(iii) {\bf Orthogonality.}

$\cu_{w\le 0}\perp \cu_{w\ge 0}[1]$.

(iv) {\bf Weight decompositions}.

 For any $M\in\obj \cu$ there
exists a distinguished triangle
\begin{equation}\label{ewd}
L_wM\to M\to R_wM {\to} L_wM[1]
\end{equation}
such that $L_wM\in \cu_{w\le 0} $ and $ R_wM\in \cu_{w\ge 0}[1]$.
\end{defi}

We will also need the following definitions.

\begin{defi}\label{dwso}
Let $i,j\in \z$; assume that a triangulated category $\cu$ is endowed with a weight structure $w$.

\begin{enumerate}
\item\label{idh}
The full  subcategory $\hw$ of $ \cu$ whose objects are
$\cu_{w=0}=\cu_{w\ge 0}\cap \cu_{w\le 0}$ %and morphisms are $\hw(Z,T)=\cu(Z,T)$ for $Z,T\in \cu_{w=0}$,
 is called the {\it heart} of %the weight structure
$w$.

\item\label{id=i}
 $\cu_{w\ge i}$ (resp. $\cu_{w\le i}$,  $\cu_{w= i}$) will denote the class $\cu_{w\ge 0}[i]$ (resp. $\cu_{w\le 0}[i]$,  $\cu_{w= 0}[i]$).

\item\label{id[ij]}
$\cu_{[i,j]}$  denotes $\cu_{w\ge i}\cap \cu_{w\le j}$.%\footnote{If $i>j$ and  $M\in \cu_{[i,j]}$ then $M\perp M$ by the orthogonality axiom; thus $\cu_{[i,j]}=\ns$.}   

\item\label{idrest}
Let $\cuz$ be a (strictly full) triangulated subcategory of $\cu$.

We will say that $w$ {\it restricts} to $\cuz$ whenever the couple $w_{0}= (\cu_{w\le 0}\cap \obj \cuz,\ \cu_{w\ge 0}\cap \obj \cuz)$ is a weight structure on $\cuz$.

Moreover, in this case we will also say that $w$ is an {\it extension} of $w_{0}$ (to $\cu$).

%\item\label{ilrd} %We will say that $M$ is left (resp., right) {\it $w$-degenerate} (or {\it weight-degenerate} if the choice of $w$ is clear) if $M$ belongs to $ \cap_{i\in \z}\cu_{w\ge i}$ (resp.    to $\cap_{i\in \z}\cu_{w\le i}$).

%\item\label{iwnlrd} We will say that $w$ is left (resp., right) {\it non-degenerate} if $ \cap_{i\in \z}\cu_{w\ge i}=\cap_{i\in \z}\cu_{w\le i}=\ns$??!!  % all left (resp. right) weight-degenerate objects of $\cu$ are zero.
%???We will say that $w$ is just {\it non-degenerate} if it is both left and right non-degenerate. %Define wd objects?!

\item\label{idbob} We will call $\cup_{i\in \z} \cu_{w\ge i}$ (resp. $\cup_{i\in \z} \cu_{w\le i}$) the class of {\it $w$-bounded below} (resp., {\it $w$-bounded above}) objects of $\cu$.

Moreover, we say that $w$ is {\it bounded below} (resp. {\it bounded above}, resp. {\it bounded}) if all objects of $\cu$ are bounded below (resp.  bounded above, resp. are bounded both below and above).

 \item\label{idwe}
Let  $\cu'$ be a triangulated category endowed with a weight structure $w'$; %and  $w'$, respectively; 
let $F:\cu\to \cu'$ be an exact functor.

We will say that $F$ is {\it weight-exact} (with respect to $(w,w')$) if $F(\cu_{w\le 0})\subset \cu'_{w'\le 0}$ and  $F(\cu_{w\ge 0})\subset \cu'_{w'\ge 0}$.
%it maps $\cu_{w\le 0}$ into $\cu'_{w'\le 0}$; it will be called {\it right weight-exact} if it maps $\cu_{w\ge 0}$ to $\cu'_{w'\ge 0}$. $F$ is called {\it weight-exact} if it is both left and right weight-exact.
%{\it weight-exact} if  $F(\cu_{w\le 0})\subset \cu'_{w'\le 0}$ and $F(\cu_{w\ge 0})\subset \cu'_{w'\ge 0}$.  
 
 \end{enumerate}
\end{defi}

\begin{rema}\label{rstws}

1. A weight decomposition (of any $M\in \obj\cu$) is almost never canonical %(yet
 (see %Remark
  Proposition \ref{psod} below for more detail). 

Still for any $m\in \z$ the axiom (iv) gives the existence of a distinguished triangle \begin{equation}\label{ewdm} w_{\le m}M\to M\to w_{\ge m+1}M\to (w_{\le m}M)[1] \end{equation}  
with some $ w_{\le m}M\in \cu_{w\le m}$   and $ w_{\ge m+1}M\in \cu_{w\ge m+1}$; we will call it an {\it $m$-weight decomposition} of $M$.

 We will often use this notation below (even though $w_{\ge m+1}M$ and $ w_{\le m}M$ are not canonically determined by $M$); we will call any possible choice either of $w_{\ge m+1}M$ or of $ w_{\le m}M$ (for any $m\in \z$) a {\it weight truncation} of $M$. Moreover, when we will write arrows of the type $w_{\le m}M\to M$ or $M\to w_{\ge m+1}M$ we will always assume that they come from some $m$-weight decomposition of $M$.

2. In the current paper (along with several previous ones) we use the ``homological convention'' for weight structures, % Note that 
%it was previously used in  \cite{wild}, \cite{wildcons},  \cite{bsnew}, %\cite{hebpo}, %\cite{bgern}, \cite{bkillw}, 
% \cite{brelmot}, \cite{bonivan}, 
%\cite{bonspkar}, \cite{binters}, %\cite{bpgws}, 
%and in \cite{bokum}, %and in \cite{bgn}, 
whereas in \cite{bws} and \cite{bger}, 
 %and \cite{bontabu} 
 the cohomological convention was used. In the latter convention 
the roles of $\cu_{w\le 0}$ and $\cu_{w\ge 0}$ are interchanged, i.e., one takes   $\cu^{w\le 0}=\cu_{w\ge 0}$ and $\cu^{w\ge 0}=\cu_{w\le 0}$. 
 
We also recall that %Moreover, 
  %D. Pauksztello has introduced weight structures independently (in \cite{konk}); he 
   weight structures were independently introduced in \cite{konk};  D. Pauksztello has called them co-t-structures. 
\end{rema}

%pbw: which parts?!

\begin{pr}\label{pbw}
Let  %$\cu$ be a triangulated category, $n\ge 0$, 
$m\le n\in\z$, $M,M'\in \obj \cu$, $g\in \cu(M,M')$. 

\begin{enumerate}
\item \label{idual}
The axiomatics of weight structures is self-dual, i.e., on $\cu'=\cu^{op}$
(so $\obj\cu'=\obj\cu$) there exists the (opposite)  weight structure $w'$ for which $\cu'_{w'\le 0}=\cu_{w\ge 0}$ and $\cu'_{w'\ge 0}=\cu_{w\le 0}$.

\item\label{iort}
 $\cu_{w\ge 0}=(\cu_{w\le -1})^{\perp}$ and $\cu_{w\le 0}={}^{\perp} \cu_{w\ge 1}$.

\item\label{icoprod} $\cu_{w\le 0}$ is closed with respect to all %(small)
  coproducts that exist in $\cu$.
%nafig?!

%\item\label{iext}  $\cu_{w\le 0}$, $\cu_{w\ge 0}$, and $\cu_{w=0}$ are additive and extension-closed. 

\item\label{icompl} %Let $ m\le l\in \z$, $M,M'\in \obj \cu$; fix certain weight decompositions
        %of $M[-m]$ and $M'[-l]$. Then  
				For any (fixed) $m$-weight decomposition of $M$ and an $n$-weight decomposition of $M'$  (see Remark \ref{rstws}(2))
				%any morphism
%one can extend
 $g$ can be extended %$g\in \cu(M, M')$ can be
to a %commutative diagram 
morphism of the corresponding distinguished triangles:
 \begin{equation}\label{ecompl} \begin{CD} w_{\le m} M@>{c}>>
M@>{}>> w_{\ge m+1}M\\
@VV{h}V@VV{g}V@ VV{j}V \\
w_{\le n} M'@>{}>>
M'@>{}>> w_{\ge n+1}M' \end{CD}
\end{equation}

Moreover, if $m<n$ then this extension is unique (provided that the rows are fixed).

\item\label{isplit} If $A\to B\to C\to A[1]$ is a $\cu$-distinguished triangle %with %$A$ and $C$ belongin t
 and $A,C\in  \cu_{w=0}$ then this distinguished triangle splits; hence $B\cong A\bigoplus C\in \cu_{w=0}$.

\item\label{iwdmod} If $M$ belongs to $ \cu_{w\le 0}$ (resp. to $\cu_{w\ge 0}$) then it is a retract of any choice of $w_{\le 0}M$ (resp. of $w_{\ge 0}M$).

\item\label{iwd0} %For any weight decomposition of an
 If $M\in \cu_{w\ge m}$ %and $m\ge 0$ %(see (\ref{wd})) 
 %we have (see Remark \ref{rstws}(3)) 
 then $w_{\le n}M\in \cu_{[m,n]}$ (for any $n$-weight decomposition of $M$).

%\item\label{igenlm} The class $\cu_{[m,l]}$ is the %smallest %Karoubi-closed extension-stable subclass of $\obj\cu$ containing 
%extension-closure of $\cup_{m\le j\le l}\cu_{w=j}$.

%Consequently, $\cu_{w=0}$ is silting if $w$ is bounded.
	
\item\label{iuni} Let  $v$ be another weight structure for $\cu$; assume   $\cu_{w\le 0}\subset \cu_{v\le 0}$ and $\cu_{w\ge 0}\subset \cu_{v\ge 0}$.\footnote{I.e., the identity on $\cu$ is weight-exact with respect to $(w,v)$.}       
 %????
  Then $w=v$ (i.e., these inclusions are equalities).
%Anything else needed?!
\end{enumerate}
\end{pr}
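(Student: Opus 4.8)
The plan is to deduce $w=v$ from the stated inclusions $\cu_{w\le 0}\subset \cu_{v\le 0}$ and $\cu_{w\ge 0}\subset \cu_{v\ge 0}$ by a standard ``orthogonality sandwich'' argument, using assertion (\ref{iort}) of the same proposition, which I may assume. First I would fix $M\in \cu_{v\le 0}$ and produce a $w$-weight decomposition $L_wM\to M\to R_wM\to L_wM[1]$ with $L_wM\in\cu_{w\le 0}$ and $R_wM\in\cu_{w\ge 1}$. By the inclusion hypotheses, $L_wM\in\cu_{v\le 0}$ as well, hence by (\ref{iort}) (applied to $v$) $L_wM\perp \cu_{v\ge 1}$; on the other hand $R_wM\in \cu_{w\ge 1}\subset \cu_{v\ge 1}$. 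Since $\cu_{v\le 0}$ is extension-closed — this follows from (\ref{iort}), which identifies it as $\perpp\cu_{v\ge 1}$ — and $M, L_wM\in\cu_{v\le 0}$, rotating the triangle shows $R_wM$ is an extension of $M[1]\in\cu_{v\le 1}$ by... more carefully: I would instead argue that $R_wM\in\cu_{v\ge 1}$, and simultaneously $R_wM$ sits in the triangle $M\to R_wM\to L_wM[1]\to M[1]$, so $R_wM\in\cu_{v\le 0}\star\cu_{v\le 0}\subset\cu_{v\le 0}$, giving $R_wM\in\cu_{v\le 0}\cap\cu_{v\ge 1}=\{0\}$. Wait — that needs $L_wM[1]\in\cu_{v\le 0}$, i.e. $\cu_{v\le 0}\subset\cu_{v\le 0}[1]$, which is axiom (ii) for $v$, so this is fine. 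Hence $R_wM=0$, so $M\cong L_wM\in\cu_{w\le 0}$, proving $\cu_{v\le 0}\subset\cu_{w\le 0}$.

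The reverse inclusion $\cu_{v\ge 0}\subset\cu_{w\ge 0}$ is entirely dual: for $M\in\cu_{v\ge 0}$ take a $w$-weight decomposition $w_{\le -1}M\to M\to w_{\ge 0}M\to (w_{\le -1}M)[1]$ with $w_{\le -1}M\in\cu_{w\le -1}\subset\cu_{v\le -1}$ and $w_{\ge 0}M\in\cu_{w\ge 0}\subset\cu_{v\ge 0}$. Then $w_{\le -1}M\in\cu_{v\le -1}$ is orthogonal to $\cu_{v\ge 0}$ by (\ref{iort}) for $v$; since $\cu_{v\ge 0}$ is extension-closed and closed under $[-1]$ (axiom (ii)), the triangle shows $w_{\le -1}M\in\cu_{v\ge 0}$ as well, hence $w_{\le -1}M\in\cu_{v\le -1}\cap\cu_{v\ge 0}=\{0\}$ (using the orthogonality $\cu_{v\le -1}\perp\cu_{v\ge 0}$ applied to an object of the intersection with itself). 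Thus $M\cong w_{\ge 0}M\in\cu_{w\ge 0}$. Combining both inclusions with the hypotheses yields $\cu_{w\le 0}=\cu_{v\le 0}$ and $\cu_{w\ge 0}=\cu_{v\ge 0}$, i.e. $w=v$.

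The only place requiring care — the ``main obstacle'', though it is minor — is bookkeeping the shifts so that each intersection I claim to vanish really is of the form $\cu_{v\le a}\cap\cu_{v\ge a+1}$ (or can be shown trivial via $X\perp X$), and making sure the extension-closedness and $[\pm 1]$-(semi)invariance invoked are exactly axioms (i)--(iii) for $v$ together with (\ref{iort}). In particular, to conclude $\cu_{v\le -1}\cap\cu_{v\ge 0}=\{0\}$ one uses that any $X$ in this intersection satisfies $X\perp X$ by (iii), hence $\id_X=0$ and $X=0$; and the analogous statement for $\cu_{v\le 0}\cap\cu_{v\ge 1}$. No deeper input is needed, and the argument is visibly self-dual, so by assertion (\ref{idual}) it suffices in principle to prove just one of the two inclusions.
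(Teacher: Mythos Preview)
Your argument has a shift-direction error at the key step. From $L_wM\in\cu_{v\le 0}$ and axiom (ii), which reads $\cu_{v\le 0}\subset\cu_{v\le 0}[1]$, you can only conclude $L_wM[-1]\in\cu_{v\le 0}$ (equivalently $L_wM\in\cu_{v\le 1}$), \emph{not} $L_wM[1]\in\cu_{v\le 0}$; the inclusion you need for the latter would be $\cu_{v\le 0}[1]\subset\cu_{v\le 0}$, the reverse of axiom (ii). Hence the extension argument only yields $R_wM\in\cu_{v\le 0}\star\cu_{v\le 1}\subset\cu_{v\le 1}$, and you land in $\cu_{v\le 1}\cap\cu_{v\ge 1}=\cu_{v=1}$, which need not vanish. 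The same mistake reappears in the dual half: axiom (ii) says $\cu_{v\ge 0}$ is stable under $[1]$, not $[-1]$, so you cannot place $(w_{\ge 0}M)[-1]$ in $\cu_{v\ge 0}$.

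The repair is immediate once you notice that $M\in\cu_{v\le 0}=\perpp\cu_{v\ge 1}$ and $R_wM\in\cu_{w\ge 1}\subset\cu_{v\ge 1}$ force the morphism $M\to R_wM$ to vanish; then $M$ is a retract of $L_wM\in\cu_{w\le 0}$, and retraction-closedness (axiom (i)) gives $M\in\cu_{w\le 0}$. Even more directly, assertion (\ref{iort}) alone already does the job without ever taking a weight decomposition: $\cu_{w\ge 1}\subset\cu_{v\ge 1}$ gives $\perpp\cu_{v\ge 1}\subset\perpp\cu_{w\ge 1}$, i.e.\ $\cu_{v\le 0}\subset\cu_{w\le 0}$, and dually $\cu_{v\ge 0}\subset\cu_{w\ge 0}$. (The paper itself does not spell out an argument here and simply refers to \cite{bws}.)
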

\begin{proof}
%Assertions \ref{idual}, \ref{icoprod}, \ref{iort}, \ref{iext},
%All of these statements 
 All these statements %Assertions \ref{idual}--\ref{iuni} 
 were essentially proved in \cite{bws} (yet pay attention to Remark \ref{rstws}(3) above!). 
 \end{proof}
 
 Now we pass to couples that are simultaneously weight structures and $t$-structures.
 
 \begin{defi}\label{ddec}
Assume that $\ro$ and $\lo$ are strictly full triangulated subcategories of $\cu$.

Then  the couple $D=(\ro,\lo)$ %gives (or is) 
 is a %(length $2$)?????
   {\it semi-orthogonal decomposition} of $\cu$ (or just gives a decomposition of $\cu$) 
  if $\obj \lo\perp \obj \ro$ and $\obj \lo \star \obj \ro =\obj\cu$.\footnote{Recall that  $\obj \lo \star \obj \ro$  is the class of all extensions of objects of $\lo$ by objects of $\ro$.}
  %for any $M\in \obj \cu$ there exists a distinguished triangle \begin{equation}\label{edec} B\to M\to A\to B[1]\end{equation} with $B\in \lo$ and $A\in \ro$. 
 \end{defi}
 
 Below we will need some well-known properties of semi-orthogonal decompositions.
 
 \begin{pr}\label{psod} 
 % 1. %permute
  The following assumptions on a couple $(C_1,C_2)$ %for $C_i\subset \obj \cu$ ($i=1,2$)
  of subclasses of $\obj\cu$ are equivalent. 
  
 \begin{enumerate}
 
 \item\label{isodd}
  The full subcategories of $\cu$ corresponding to $C_2$ and $C_1$ give a semi-orthogonal decomposition of $\cu$.
 
  \item\label{isodt}  $(C_1,C_2)$ is a $t$-structure and $C_1[1]=C_1$.
  
   \item\label{isoda} The full subcategory $\cu_1$ of $\cu$ corresponding to $C_1$ is strict, triangulated, and {\it right admissible}, that is, there exists a right adjoint to the embedding $\cu_1\to \cu$; $C_2=C_1\perpp$. 
  
  \item\label{isodw}  $(C_2,C_1)$ %permute?! apply Proposition 3.2 + the definition of tt?!
   is a weight structure and $C_1[1]=C_1$.
   
   \item\label{isodwp}  $(C_2,C_1)$  is a weight structure, $C_1[1]=C_1$,  and  $C_2[1]=C_2$.
 
  \item\label{isodwhz} $(C_2,C_1)$ %permute?! apply Proposition 3.2 + the definition of tt?!
   is a weight structure $w$ and $\hw=\ns$.
   
    \item\label{isodwcand} $(C_2,C_1)$ %permute?! apply Proposition 3.2 + the definition of tt?!
   is a weight structure $w$ and the triangle (\ref{ewd}) is functorially determined by $M$.
   
   \item\label{isodwcann} $(C_2,C_1)$ %permute?! apply Proposition 3.2 + the definition of tt?!
   is a weight structure $w$ and the triangle (\ref{ewd}) is %functorially
    determined by  $M$   up to a non-canonical isomorphism. 
   
   %4. The argument that we used in the proof of Proposition \ref{psod} also demonstrates that a weight structure $w$ is a semi-orthogonal decomposition
 %if and only if the triangle (\ref{ewd}) is  determined by $M$ up to a non-canonical isomorphism (cf. condition \ref{isodwcand} in the proposition).
 
 \end{enumerate}
\end{pr}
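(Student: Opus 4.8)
The plan is to prove the chain of equivalences by establishing a cycle of implications plus a few short side-implications, using the general theory of weight structures (Proposition \ref{pbw}) and $t$-structures (Proposition \ref{prtst}) recalled above. The backbone will be
\[
(\ref{isodd})\Leftrightarrow(\ref{isodt})\Rightarrow(\ref{isodw})\Leftrightarrow(\ref{isodwp})\Rightarrow(\ref{isodwhz})\Rightarrow(\ref{isodwcand})\Rightarrow(\ref{isodwcann})\Rightarrow(\ref{isodw}),
\]
together with $(\ref{isodd})\Leftrightarrow(\ref{isoda})$ handled separately via adjointness. Throughout, the key observation is that a $t$-structure and a weight structure have \emph{the same axioms} except that the orthogonality axiom is shifted oppositely and the decomposition triangle lands in oppositely-shifted classes; hence when $C_1[1]=C_1$ (equivalently, after renaming, when the ``upper'' class is shift-stable) the two notions literally coincide up to swapping the two classes. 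So the first thing I would record is the elementary lemma: if $C_1$ is a shift-stable strict subclass, then $(C_1,C_2)$ satisfies the $t$-structure axioms (i)--(iv) \emph{iff} $(C_2,C_1)$ satisfies the weight-structure axioms (i)--(iv); retraction-closedness of both classes in the weight case follows from Proposition \ref{prtst}(\ref{itperp}) in the $t$-case (or from Proposition \ref{pbw}(\ref{iort}) the other way), and axiom (iii) matches because $C_1[1]\perp C_2$ with $C_1[1]=C_1$ reads the same as $C_2\perp C_1[1]$. This gives $(\ref{isodt})\Leftrightarrow(\ref{isodw})$ essentially for free, and also $(\ref{isodw})\Leftrightarrow(\ref{isodwp})$ since $C_2={}^\perp(C_1[1])={}^\perp C_1$ by Proposition \ref{pbw}(\ref{iort}), and $C_1[1]=C_1$ forces $C_2[1]={}^\perp(C_1[1])[1]={}^\perp C_1[1]\cdot$... more directly, $C_2={}^\perp C_1$ with $C_1$ shift-stable is itself shift-stable.

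Next I would treat the identification with semi-orthogonal decompositions. Given $(\ref{isodt})$, the classes $C_1,C_2$ are shift-stable (for $C_1$ by hypothesis; for $C_2=C_1{}^\perp$ by Proposition \ref{prtst}(\ref{itperp}) since $C_1$ is), extension-closed and retraction-closed, hence are the object classes of strictly full triangulated subcategories $\cu_1,\cu_2$; the $t$-orthogonality axiom (iii) becomes $\obj\cu_2\perp\obj\cu_1$ and the decomposition axiom (iv) becomes $\obj\cu_2\star\obj\cu_1=\obj\cu$ (note the $t$-decomposition triangle has $L_tM\in C_1$, $R_tM\in C_2[-1]=C_2$), which is exactly Definition \ref{ddec} for the couple $(\cu_1,\cu_2)$ — matching the labelling in $(\ref{isodd})$, where ``$C_2$ and $C_1$'' are placed as $(\ro,\lo)$. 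Conversely a semi-orthogonal decomposition gives back all four $t$-axioms immediately, the shift-stability being automatic because the pieces are triangulated subcategories. For $(\ref{isoda})$: a right admissible strictly full triangulated $\cu_1$ with $C_2=C_1{}^\perp$ — the right adjoint to the inclusion produces, for each $M$, a counit triangle exhibiting $M$ as an extension of something in $C_1{}^\perp=C_2$ by an object of $\cu_1$ (this is the standard Bousfield-localization argument), giving $\obj\cu_2\star\obj\cu_1=\obj\cu$ and hence $(\ref{isodd})$; the converse uses Proposition \ref{prtst}(\ref{itcan}) applied to the $t$-structure $(C_1,C_2)$ from $(\ref{isodt})$, whose truncation functor $L_t$ is the desired right adjoint.

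The remaining loop concerns the ``zero heart / uniqueness of weight decomposition'' characterizations, and this is where the only real work lies. Assume $(\ref{isodwp})$, so $w=(C_2,C_1)$ is a weight structure with $C_1,C_2$ both shift-stable. Then $\cu_{w=0}=C_1\cap C_2$; but $C_2\perp C_1$ (axiom (iii), using $C_1[1]=C_1$), so any object $X$ of the heart satisfies $X\perp X$, i.e.\ $\id_X=0$, whence $X=0$ — this gives $(\ref{isodwhz})$. For $(\ref{isodwhz})\Rightarrow(\ref{isodwcand})$: given a weight structure with zero heart, I would show two weight decompositions $L_wM\to M\to R_wM$ and $L'_wM\to M\to R'_wM$ admit a \emph{unique} comparison isomorphism. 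Proposition \ref{pbw}(\ref{icompl}) (the $m<n$ uniqueness clause does not directly apply since here $m=n=0$), so instead argue by hand: the connecting maps give a commuting ladder; the difference of two fillers $L_wM\to L'_wM$ factors through $R_wM[-1]\to L'_wM$, and $R_wM[-1]\in C_1[-1]=C_1$ while $L'_wM\in C_2$, and $C_2\perp C_1$ forces this factoring map to vanish — the subtlety is organizing the octahedron/diagram chase cleanly, and this is the step I expect to be the main obstacle. Functoriality in $M$ then follows because for a morphism $M\to M'$ the filler is forced and unique. The implications $(\ref{isodwcand})\Rightarrow(\ref{isodwcann})$ is trivial (canonical implies determined up to isomorphism). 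Finally $(\ref{isodwcann})\Rightarrow(\ref{isodw})$: if $(\ref{ewd})$ is determined up to isomorphism by $M$, apply this to $M=0$ and to objects of $C_1$; more to the point, take $X\in\cu_{w=0}$, so both $0\to X\to X$ (with $L_wX=0\in C_2$, $R_wX=X[1]$... careful with shifts) and $X\to X\to 0$ are weight decompositions of $X$ — uniqueness up to isomorphism forces $X\cong 0$, hence $\hw=0$; and $\hw=0$ together with the weight structure axioms is already known to yield $C_1[1]=C_1$, because $C_1=C_2{}^\perp$ and one checks $\cu_{w\le 1}\cap\cu_{w\ge 1}=\cu_{w=1}=\cu_{w=0}[1]=0$ lets a standard argument (weight-decompose an object of $C_1$ with respect to a shifted truncation and kill the heart contributions) collapse $C_1$ to a shift-stable class. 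I would double-check this last collapse carefully, as it is the other place where a short lemma is genuinely needed rather than pure diagram-chasing.
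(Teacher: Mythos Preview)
Your overall architecture is sound and close to the paper's, but there are two concrete gaps.

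\textbf{The step $(\ref{isodwhz})\Rightarrow(\ref{isodwcand})$.} Your difference-of-fillers argument needs $C_2\perp C_1$, i.e.\ $\cu_{w\le 0}\perp\cu_{w\ge 0}$. But the weight-structure axiom only gives $\cu_{w\le 0}\perp\cu_{w\ge 1}$, and at this stage you have merely $\hw=0$, not $C_1[1]=C_1$. You do sketch the missing lemma ``$\hw=0\Rightarrow C_1[1]=C_1$'' at the very end (and your idea there is right: for $M\in\cu_{w\ge 0}$, Proposition~\ref{pbw}(\ref{iwd0}) gives $w_{\le 0}M\in\cu_{[0,0]}=0$, hence $M\in\cu_{w\ge 1}$), but you invoke its conclusion \emph{before} proving it. The paper reorders this: it first proves $(\ref{isodwhz})\Rightarrow(\ref{isodw})$ via exactly that \ref{iwd0} argument, and only then deduces $(\ref{isodwcand})$ from $(\ref{isodwp})$. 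The payoff of that order is a much cleaner functoriality argument: once $C_1[1]=C_1$, every $0$-weight decomposition is simultaneously a $1$-weight decomposition, so Proposition~\ref{pbw}(\ref{icompl}) with $m=0<n=1$ gives unique fillers directly --- no hand-rolled diagram chase needed.

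\textbf{The step $(\ref{isodwcann})\Rightarrow\hw=0$.} Your proposed pair of weight decompositions of $X\in\cu_{w=0}$ does not work: the triangle $0\to X\to X\to 0$ would require $R_wX=X\in\cu_{w\ge 1}$, which fails for nonzero $X\in\cu_{w=0}$ (indeed $\cu_{w\le 0}\perp\cu_{w\ge 1}$ would force $X\perp X$). You sensed something was off (``careful with shifts''). The paper's fix is to test at $M=0$ instead: for $0\neq C\in\cu_{w=0}$, both $0\to 0\to 0\to 0$ and $C\to 0\to C[1]\to C[1]$ are genuine weight decompositions of $0$ (check: $C\in\cu_{w\le 0}$ and $C[1]\in\cu_{w\ge 1}$), and they are non-isomorphic. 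This immediately gives $(\ref{isodwcann})\Rightarrow(\ref{isodwhz})$.

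The remaining equivalences $(\ref{isodd})\Leftrightarrow(\ref{isodt})\Leftrightarrow(\ref{isoda})\Leftrightarrow(\ref{isodw})\Leftrightarrow(\ref{isodwp})$ in your sketch are fine; the paper simply cites them from \cite{bvt} and \cite{bdec} rather than rederiving.
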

\begin{proof}
%Assertions \ref{idual}, \ref{icoprod}, \ref{iort}, \ref{iext},
%All of these statements 
 %All these statements %Assertions \ref{idual}--\ref{iuni} conditions
 %were essentially proved in \cite{bvt} 
 The equivalence of conditions \ref{isoda}, \ref{isodt}, and \ref{isodw} can be easily obtained by means of combining
 Propositions 3.4(4) and %Proposition 
  3.2(1,2) of \cite{bvt}  (pay attention to Remark \ref{rtst}(2) above!). %mention torsion theories?? 
  
  The equivalence of conditions \ref{isodd} and \ref{isoda} is very well-known; see Proposition 2.5 of \cite{bdec}.
  
  Next, condition \ref{isodwp} clearly implies \ref{isodw}, and applying Proposition \ref{pbw}(\ref{iort}) (or Proposition  3.2(2) of \cite{bvt}) we obtain the converse implication.
  
  Now, if  condition \ref{isodwp} %and {isoda}
    is fulfilled then $\cu_{w=0}=\cu_{w=-1}\perp \cu_{w=0}$; thus we obtain condition \ref{isodwhz}. Conversely, if condition \ref{isodwhz} is fulfilled then  Proposition \ref{pbw}(\ref{iwd0}) easily implies that $C_2\subset C_2[1]$. Thus we can apply Proposition \ref{pbw}(\ref{iort}) once again to obtain condition \ref{isodw}.
    
    Next, if $0\neq C\in \cu_{w=0}$ then the triangles $0\to 0\to 0\to 0$ and $C\to 0\to C[1]\to C[1]$ clearly give two non-isomorphic weight decompositions of $0$. Hence condition \ref{isodwcann} implies condition \ref{isodwhz}.
    
 Clearly,    condition \ref{isodwcand} implies condition  \ref{isodwcann}.
 
     Lastly, assume that condition \ref{isodwp} is fulfilled. Then any weight decomposition triangle is clearly a  $1$-weight decomposition  triangle as well. Consequently, the uniqueness provided by  Proposition \ref{pbw}(\ref{icompl}) yields condition \ref{isodwcand}; cf. Remark 1.2.6 of \cite{bkw} where compositions of the corresponding morphisms between decomposition triangles is discussed.
 \end{proof}

\begin{rema}\label{rsod} %we obtain all 
1. Roughly, semi-orthogonal decompositions are (essentially) the weight structures that are also $t$-structures. One may also characterize %them
   semi-orthogonal decompositions  both as shift-stable weight structures and as shift-stable $t$-structures.
%1. Another useful property of weight structures extension-closed??

2. %Note also that this
 This  notion is obviously self-dual (as well). That is, permuting the classes in the couple yields a semi-orthogonal decomposition in the category $\cu\opp$; cf. Remark \ref{rtst}(1) and Proposition \ref{pbw}(\ref{idual}).

3. Below we will also need the following simple criterion: if $\cu$ is smashing (cf. Theorem \ref{tadjti}) then a semi-orthogonal decomposition $w$ is smashing if and only if the right adjoint functor provided by condition \ref{isoda} of Proposition \ref{psod} respects coproducts; see Proposition 3.4(5)  of \cite{bvt}.
\end{rema}

\subsection{On orthogonal  and adjacent  structures}\label{sadj}

Now let us give a certain definition of orthogonality for weight and $t$-structures (cf.  Remark \ref{rort}(\ref{irort1}) below). %In this paper 
 %Till \S\ref{sdual} we will only consider a particular case of the general notion introduced in \cite{bger} (see Remark \ref{rort}(\ref{irort1}) and Definition \ref{ddual} below).

\begin{defi}\label{dort}
Assume that $\cu$ and $\cu'$ are (full) triangulated subcategories of a triangulated category $\du$, $w$ is a weight structure on $\cu$ and $t$ is a $t$-structure on $\cu'$.

1. %Let $w$ be a weight structure on $\cu$ and $t$ be a $t$-structure on $\cu'$. Then we 
 We will say that $w$ and $t$ are {\it orthogonal} (in $\du$)  %is {\it left orthogonal} to  $t$ or that $t$ is {\it right orthogonal} to $w$  %are  {\it left orthogonal} (or {\it left $\du$-orthogonal}) %to $t$ or that $t$ is right orthogonal to $w$ 
  if $\cu_{w\le 0}\perp_{\du} \cu'_{t\ge 1}$ and $\cu_{w\ge 0}\perp_{\du} \cu'_{t\le -1}$.

%2.  
Dually,  we  say that $w$ and $t$ are {\it anti-orthogonal} %{\it right orthogonal} to  $t$ or that $t$ is {\it right orthogonal} to $w$
%$w$ and $t$ are {\it right orthogonal} (or {\it right $\du$-orthogonal}) %to $t$ or that $t$ is {\it left orthogonal} to $w$ 
whenever
 %whenever 
$ \cu'_{t\ge 1} \perp_{\du} \cu_{w\le 0}$ and $\cu'_{t\le -1} \perp_{\du} \cu_{w\ge 0} $.

2. If $\cu=\cu'=\du$ and $w$ is %left (resp. right) 
 (anti) orthogonal to  $t$ %are left or right 
then we will also say that $w$ and $t$ are left (resp. right) {\it adjacent}.

3. We will say that $t$ is  {\it strictly  orthogonal} to $w$ %and $w$ is {\it strictly left orthogonal} to $t$ 
  if $\cu'_{t\ge 1}=\cu_{w\le 0}^{\perp_{\du}}\cap \obj \cu'$ and $\cu'_{t\le -1}=  \cu_{w\ge 0}^{\perp_{\du}} \cap \obj \cu'$.
\end{defi}

\begin{rema}\label{rplr}
%1. %The conventions for "left and right" that are chosen above are motivated by the semantics of our terms. The discrepancy between them is rather inconvenient. 
%We will mostly treat the case where $w$ and $t$ are  {\bf left} adjacent; so we will often omit "left". %coherent with "orthogonal"?! Respectively, in this case we will just say that $w$ is orthogonal (resp. adjacent) to $t$; cf. the convention for strict orthogonality.  of that $t$ is orthogonal to $w$ to mean that $w$ is left orthogonal (resp. adjacent) to $t$ and $t$ is right orthogonal to $w$.

%2. Moreover, %all the $t$-structures that we will 
 Our main statements %tba?? 
below yield $t$-structures that are strictly orthogonal to the corresponding weight structures. 

Note however that strictness is not automatic. In particular, it is clearly not fulfilled if $\cu\perp \cu'$ and $\cu'\neq 0$. Indeed, then arbitrary $w$ and $t$ would be orthogonal, whereas $\cu_{w\le 0}^{\perp_{\du}}\cap \obj \cu'= \cu_{w\ge 0}^{\perp_{\du}} \cap \obj \cu'=\obj \cu'$. % (and one can take .  
\end{rema}

Let us now relate %the latter definition to
 the notion of adjacent structures to the definition given  in \cite{bws}.

\begin{pr}\label{portadj}
For $\cu$, $w$, and $t$ as in Definition \ref{dort}(2) we have the following: $w$ and $t$ are (left) adjacent if and only if $\cu_{w\ge 0}=\cu_{t\ge 0}$.
\end{pr}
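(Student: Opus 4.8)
The plan is to unwind both sides of the claimed equivalence via the "orthogonality complement" descriptions of the two relevant classes, namely Proposition \ref{pbw}(\ref{iort}) for $w$ and Proposition \ref{prtst}(\ref{itperp}) for $t$. Recall that in the situation of Definition \ref{dort}(2) we have $\cu=\cu'=\du$, so all orthogonality is computed inside $\cu$. Spelling out Definition \ref{dort}, $w$ and $t$ are (left) adjacent precisely when $\cu_{w\le 0}\perp \cu_{t\ge 1}$ and $\cu_{w\ge 0}\perp \cu_{t\le -1}$; equivalently, $\cu_{t\ge 1}\subset \cu_{w\le 0}\perpp$ and $\cu_{t\le -1}\subset \cu_{w\ge 0}\perpp$. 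Now $\cu_{w\le 0}\perpp=\cu_{w\ge 1}$ and $\cu_{w\ge 0}\perpp=\cu_{w\le -1}$ by Proposition \ref{pbw}(\ref{iort}) (shifting the stated identities by one). So adjacency is equivalent to the pair of inclusions $\cu_{t\ge 1}\subset \cu_{w\ge 1}$ and $\cu_{t\le -1}\subset \cu_{w\le -1}$, i.e., after shifting, $\cu_{t\ge 0}\subset \cu_{w\ge 0}$ and $\cu_{t\le 0}\subset \cu_{w\le 0}$.

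It therefore remains to show that the conjunction of these two inclusions is equivalent to the single equality $\cu_{w\ge 0}=\cu_{t\ge 0}$. The forward direction ($\Leftarrow$) is the easy one: if $\cu_{w\ge 0}=\cu_{t\ge 0}$, then $\cu_{t\ge 0}\subset\cu_{w\ge 0}$ is immediate, and for the other inclusion I would use the perp-descriptions in the reverse order — from $\cu_{t\ge 0}=\cu_{w\ge 0}$ we get $\cu_{t\ge 0}\perpp=\cu_{w\ge 0}\perpp$; by Proposition \ref{prtst}(\ref{itperp}) the left side is related to $\cu_{t\le -1}$ and by Proposition \ref{pbw}(\ref{iort}) the right side is $\cu_{w\le -1}$, giving $\cu_{t\le -1}=\cu_{w\le -1}$ and hence (shifting) $\cu_{t\le 0}=\cu_{w\le 0}$, which certainly implies $\cu_{t\le 0}\subset\cu_{w\le 0}$. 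Actually this shows the stronger statement that adjacency forces equality of \emph{both} pairs of classes, which is worth recording.

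For the reverse direction ($\Rightarrow$), assume $\cu_{t\ge 0}\subset\cu_{w\ge 0}$ and $\cu_{t\le 0}\subset\cu_{w\le 0}$; I must upgrade the first inclusion to an equality. The natural tool is a uniqueness-of-decomposition argument: given $M\in\cu_{w\ge 0}$, take its $t$-decomposition triangle $L_tM\to M\to R_tM\to L_tM[1]$ with $L_tM\in\cu_{t\ge 0}$ and $R_tM\in\cu_{t\le -1}$. By the two assumed inclusions, $L_tM\in\cu_{w\ge 0}$ and $R_tM\in\cu_{t\le -1}=\cu_{t\le 0}[-1]\subset\cu_{w\le 0}[-1]=\cu_{w\le -1}$; thus this triangle is simultaneously a weight decomposition of $M$ of the form (\ref{ewd}) — indeed $L_tM\in\cu_{w\le 0}$ is not what we want, so instead I rotate: the triangle exhibits $M$ as an extension with "left" term $R_tM[-1]\in\cu_{w\le 0}$... let me instead argue directly that $M\in\cu_{w\ge 0}$ together with $M$ being an extension of $L_tM[1]\in\cu_{w\ge 1}$... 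The cleanest route: since $R_tM\in\cu_{w\le -1}$ and $M\in\cu_{w\ge 0}$ we have $\cu(M, R_tM[-1])$... use instead Proposition \ref{pbw}(\ref{iort}), $\cu_{w\ge 0}={}\cu_{w\le -1}\perpp$, to conclude the map $M\to R_tM\to\ (R_tM)$, being a map from $\cu_{w\ge 0}$ to an object of $\cu_{w\le -1}$, vanishes; hence the triangle splits and $M$ is a retract of $L_tM\in\cu_{t\ge 0}$, so $M\in\cu_{t\ge 0}$ by retraction-closedness of $\cu_{t\ge 0}$ (Proposition \ref{prtst}(\ref{itperp})). This gives $\cu_{w\ge 0}\subset\cu_{t\ge 0}$, completing the equality.

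The step I expect to be the main obstacle is precisely this last splitting argument: one must be careful that the map $M\to R_tM$ lands in a class of the form $\cu_{w\le *}$ with the shift arranged so that $\cu_{w\ge 0}\perp$ that class, and then invoke the standard fact that a morphism in a distinguished triangle being zero forces the triangle to split with the expected retract. Everything else is a mechanical translation through the two perp-identities and shifting indices by one.
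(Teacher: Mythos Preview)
Your ($\Rightarrow$) argument contains a genuine error in the direction of the orthogonality. You write that $\cu_{w\ge 0}=(\cu_{w\le -1})^\perp$ forces the map $M\to R_tM$ (with $M\in\cu_{w\ge 0}$ and $R_tM\in\cu_{w\le -1}$) to vanish. But the identity $\cu_{w\ge 0}=(\cu_{w\le -1})^\perp$ says that morphisms \emph{from} $\cu_{w\le -1}$ \emph{to} $\cu_{w\ge 0}$ vanish, not the other way round; the weight-structure orthogonality axiom is $\cu_{w\le 0}\perp\cu_{w\ge 1}$, and nothing in the axioms gives $\cu_{w\ge 0}\perp\cu_{w\le -1}$. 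So as written the splitting step is unjustified.

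The fix is immediate, and it also shows why your detour is unnecessary. You still have the second adjacency condition $\cu_{w\ge 0}\perp\cu_{t\le -1}$ available, and $R_tM\in\cu_{t\le -1}$; so the map $M\to R_tM$ does vanish, for the right reason. But once you use that condition, you might as well combine it directly with Proposition \ref{prtst}(\ref{itperp}): from $\cu_{w\ge 0}\perp\cu_{t\le -1}$ one gets $\cu_{w\ge 0}\subset{}^\perp(\cu_{t\le -1})=\cu_{t\ge 0}$, and no splitting argument is needed at all. This is exactly what the paper does: it extracts $\cu_{t\ge 0}\subset\cu_{w\ge 0}$ from the first adjacency condition via Proposition \ref{pbw}(\ref{iort}) (as you did), and $\cu_{w\ge 0}\subset\cu_{t\ge 0}$ from the second via Proposition \ref{prtst}(\ref{itperp}), finishing in one line. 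Your translation of \emph{both} conditions through the weight-structure perp, leading to $\cu_{t\ge 0}\subset\cu_{w\ge 0}$ and $\cu_{t\le 0}\subset\cu_{w\le 0}$, loses exactly the asymmetry that makes the argument short.
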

\begin{proof}
If $\cu_{w\ge 0}=\cu_{t\ge 0}$ then $w$ and $t$ are (left) adjacent  immediately from the orthogonality axioms of weight and $t$-structures (see Definition \ref{dtstr}(iii) and Definition \ref{dwstr}(iii)). Conversely, if $w$ is  adjacent to $t$ then combining the orthogonality conditions with Proposition \ref{pbw}(\ref{iort}) and Proposition \ref{prtst}(\ref{itperp})  we obtain that
$\cu_{t\ge 0}\subset \cu_{w\ge 0}$ and $\cu_{w\ge 0}\subset \cu_{t\ge 0}$. Hence $\cu_{w\ge 0}= \cu_{t\ge 0}$ as desired.
\end{proof}

\begin{rema}\label{rort}
Proposition \ref{portadj} says that our definition of adjacent structures is essentially equivalent to the original Definition  4.4.1 of \cite{bws}.

%??!!Note also that the convention for left and right adjacency used in \cite[Definition 2.2.2(6)]{bgroth} is distinct from our one. 
% (yet cf. Remark \ref{rstws}(3) and note that the definition of left and right adjacent weight and $t$-structures in loc. cit. was "symmetric", i.e., $w$ being left adjacent to $t$ and $t$ being left adjacent to $w$ were synonyms; in contrast, our current convention follows Definition 3.10 of \cite{postov}). 
 %\end{enumerate}
\end{rema}

We also make a simple observation concerning semi-orthogonal decomposition.

\begin{pr}\label{portsod}

Assume that $w$ and $t$ satisfy the assumptions of Definition \ref{dort}, $t$ is   strictly  orthogonal to $w$ %(recall
(note that this condition follows from orthogonality whenever $\cu'=\cu$; see Theorem \ref{trefl}(I) below), and the couple $(\cu_{w\ge 0},\cu_{w\le 0})$ is a semi-orthogonal decomposition (see  Definition \ref{ddec}). Then  $(\cu'_{t\le 0},\cu'_{t\ge 0})$ is a semi-orthogonal decomposition as well.

\end{pr}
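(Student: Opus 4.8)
The goal is to show that if $(\cu_{w\ge 0},\cu_{w\le 0})$ is a semi-orthogonal decomposition and $t$ is strictly orthogonal to $w$, then $(\cu'_{t\le 0},\cu'_{t\ge 0})$ is a semi-orthogonal decomposition of $\cu'$. By Proposition \ref{psod} (equivalence of conditions \ref{isodt} and \ref{isodw}, applied to $\cu'$), it suffices to check that $\cu'_{t\le 0}[1]=\cu'_{t\le 0}$, or equivalently that $\cu'_{t\le 0}$ is stable under $[-1]$ (combined with the defining axiom \ref{dtstr}(ii) for $t$-structures this gives equality). Since $t$ is a genuine $t$-structure on $\cu'$, the pair $(\cu'_{t\le 0},\cu'_{t\ge 0})$ already satisfies all the other axioms, so this single shift-stability statement is all that is needed.

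First I would use the hypothesis that $(\cu_{w\ge 0},\cu_{w\le 0})$ is a semi-orthogonal decomposition, which by Proposition \ref{psod}(\ref{isodwp}) is equivalent to saying that $(\cu_{w\le 0},\cu_{w\ge 0})$ is a weight structure with \emph{both} $\cu_{w\le 0}[1]=\cu_{w\le 0}$ and $\cu_{w\ge 0}[1]=\cu_{w\ge 0}$. In particular $\cu_{w\ge 0}=\cu_{w\ge 0}[-1]=\cu_{w\ge 1}$. Now invoke strict orthogonality: by Definition \ref{dort}(3), $\cu'_{t\le -1}=\cu_{w\ge 0}^{\perp_{\du}}\cap\obj\cu'$. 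Shifting, $\cu'_{t\le 0}=\cu'_{t\le -1}[1]=(\cu_{w\ge 0}^{\perp_{\du}}\cap\obj\cu')[1]=\cu_{w\ge 0}[1]^{\perp_{\du}}\cap\obj\cu'$ (using that $[1]$ is an autoequivalence of $\du$ and that $\obj\cu'$ is shift-stable). Since $\cu_{w\ge 0}[1]=\cu_{w\ge 0}$, this gives $\cu'_{t\le 0}=\cu_{w\ge 0}^{\perp_{\du}}\cap\obj\cu'=\cu'_{t\le -1}$, which is exactly $\cu'_{t\le 0}=\cu'_{t\le 0}[-1]$, hence $\cu'_{t\le 0}[1]=\cu'_{t\le 0}$.

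Having established $\cu'_{t\le 0}[1]=\cu'_{t\le 0}$, I would then conclude by Proposition \ref{psod} (condition \ref{isodt} $\Rightarrow$ condition \ref{isodd}, applied with $C_1=\cu'_{t\le 0}$, $C_2=\cu'_{t\ge 0}$): a $t$-structure whose lower aisle is shift-stable is precisely a semi-orthogonal decomposition. Therefore $(\cu'_{t\le 0},\cu'_{t\ge 0})$ gives a semi-orthogonal decomposition of $\cu'$, as claimed. (Alternatively, one can run the symmetric argument on the other aisle using $\cu'_{t\ge 1}=\cu_{w\le 0}^{\perp_{\du}}\cap\obj\cu'$ and $\cu_{w\le 0}[1]=\cu_{w\le 0}$ to get $\cu'_{t\ge 0}[1]=\cu'_{t\ge 0}$ directly; either shift-stability statement suffices by Proposition \ref{pbw}(\ref{iort}) / Proposition \ref{prtst}(\ref{itperp}).)

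The only mild subtlety — and the step I would be most careful about — is the bookkeeping with the ambient category $\du$: one must make sure that the orthogonal-complement-with-shift manipulations $(\cp^{\perp_{\du}})[1]=(\cp[1])^{\perp_{\du}}$ are applied to subclasses of $\obj\du$ and that intersecting with $\obj\cu'$ commutes with shifts, which it does since $\cu'$ is a strictly full triangulated subcategory. Everything else is a direct translation of Proposition \ref{psod} into the strict-orthogonality language, so there is no real obstacle.
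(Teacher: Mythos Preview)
Your proposal is correct and follows essentially the same approach as the paper: use Proposition~\ref{psod} to translate the semi-orthogonal decomposition hypothesis into shift-stability of the weight classes, push this through the strict-orthogonality formula to get shift-stability of a $t$-aisle, and conclude via condition~\ref{isodt} of Proposition~\ref{psod}. The only cosmetic difference is that the paper proves $\cu'_{t\ge 0}[1]=\cu'_{t\ge 0}$ (using the other half of the strict-orthogonality formula and $\cu_{w\le 0}[1]=\cu_{w\le 0}$) rather than $\cu'_{t\le 0}[1]=\cu'_{t\le 0}$; you already note this symmetric variant in your parenthetical remark.
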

\begin{proof}
Applying  Proposition \ref{psod} (see conditions %\ref{isodwp}
\ref{isodw} and \ref{isodd} in it) along with the definition of strict orthogonality we obtain that $\cu'_{t\ge 0}=\cu'_{t\ge 0}[1]$. Hence $(\cu'_{t\le 0},\cu'_{t\ge 0})$ is a semi-orthogonal decomposition indeed; see condition \ref{isodt} in  Proposition \ref{psod}. 
\end{proof}

\section{%Existence of adjacent 
On virtual $t$-truncations and their relation to orthogonal $t$-structures}\label{sortvtt} % in terms of virtual $t$-truncations}

This section is  devoted to virtual $t$-truncations of functors (these come from weight structures) and their %general 
 relationship %s????
with orthogonal $t$-structures.

In \S\ref{svtt} we recall the definition of virtual $t$-truncations of %(co)homological 
 functors and establish their main general properties.

In \S\ref{swr} we define  %(easily defined)
  weight range of functors and relate it to virtual $t$-truncations.
	
	In \S\ref{srlin} we prove a few easy properties of $R$-linear categories and relate them to virtual $t$-truncations. 

%In \S\ref{sgencrit} we introduce the notion of "reflection" (in the context of Definition \ref{dort}) and relate the existence of orthogonal $t$-structures to virtual $t$-truncations.

In \S\ref{sorts} we prove that if $\cu'\subset \cu$ then for a weight structure $w$ on $\cu$ there exists at most one $t$-structure on $\cu'$ (strictly) orthogonal to it; see Theorem \ref{trefl}. Moreover, if $\cuz$ %\subset \du$ 
 is $R$-linear, $\wz$ is a weight structure on $\cuz$, and $t$ is a $t$-structure on $\cu$ strictly orthogonal to $\wz$ then for any weak Serre subcategory $\au$ of $R-\modd$ one can specify a subcategory $\cu_{\au}$ such that $t$ restricts to it.
 
 %establish our main abstract criteria for the existence and strictness of orthogonal $t$-structures; these are Theorem \ref{trefl} and Corollary \ref{ccopr}.

In \S\ref{scopra} we define {\it coproductive extensions} of weight structures (from subcategories).  For $(w,t,\cu,\cu')$ as above we prove that if $w$ is  a coproductive extension of a   weight structure $\wz$ on $\cuz\subset \cu$ then $w$ is strictly ortogonal to $t$ if and only if $\wz$ is (see Corollary \ref{ccopr}); hence $\wz$ determines $t$ uniquely. 

\subsection{Virtual $t$-truncations %: (mostly) a reminder
 of (co)homological functors}\label{svtt} %split the two??

%?????? We  recall the   notion of virtual $t$-truncations for a cohomological functor $H:\cu\to \au$ (as defined in \S2.5 of \cite{bws} and studied in more detail in \S2 of \cite{bger}). These truncations allow us to "slice" $H$ into $w$-pure pieces (see Remark \ref{rpure}(1--2) below).  

%Now we %will 
Let us  give some definitions; some of them are central for this paper.

\begin{defi}\label{dvtt}

Assume that $\cu$ is endowed with a weight structure $w$, $ n\in \z$, and $\au$ is  an abelian category.

1. Let $H$ be a %cohomological 
 contravariant functor from $\cu$ into $\au$. 

 We define the {\it virtual $t$-truncation} functor $\tau_{\le n }(H)$ (resp. $\tau_{\ge n }(H)$)  by the correspondence $$M\mapsto\imm (H(w_{\le n+1}M)\to H(w_{\le n}M)) ;$$ 
(resp. $M\mapsto\imm (H(w_{\ge n}M)\to H(w_{\ge n-1}M)) $). Here we take arbitrary choices of  %$w_{\le n}M$ and $ w_{\le n+1}M$ 
the corresponding weight truncations of $M$ and connect them by means of applying Proposition \ref{pbw}(\ref{icompl}) in the case $g=\id_M$.

2. Dually, let $H':\cu\to \au$ be a %homological
 covariant functor.  Then we will write $\tau_{\le n }(H')$ for the correspondence $M\mapsto\imm (H'(w_{\le n}M)\to  H'(w_{\le n+1}M)) $ and $\tau_{\ge n }(H')=M\mapsto\imm (H'(w_{\ge n-1}M)\to H'(w_{\ge n}M)) $ (here we take the same connecting arrows between weight truncations of $M$ as we did above).

	3.  Assume that $\cu$ is a triangulated %\bu instead???
	subcategory of a triangulated category $\du$. Then for any $M\in \obj \du$ we will write $H_M=H_{M}^{\cu}$  (resp. $H^M=H^M_{\cu}$) for the restriction of the functor (co)represented by $M$ to $\cu$ (thus $H_M$ and $H^M$ are functors from $\cu$ into $\ab$). %stick to this notation???
 Moreover, sometimes we will say that %these functors are $\du$-Yoneda ones, and that??? 
  $H_M$ (resp. $H^M$) is $\du$-(co)represented by $M$.
 
 Furthermore, for a $\du$-morphism $f:M_1\to M_2$ we will write $H_f^{\cu}$ for the transformation  $H_{M_1}^{\cu}\to H_{M_2}^{\cu}$ corresponding to $f$. %???!!
%; we will always specify the category $\cu$ below whereas the choice of $\du$ will be clear). 

\end{defi}

\begin{rema}\label{rnotcoh}
In the current paper we only need virtual $t$-truncations of (co)homological functors.

Respectively, starting from \S\ref{swr} we will never consider virtual $t$-truncations of functors that are neither cohomological nor homological.

\end{rema}

Now we recall the main properties of  virtual $t$-truncations. %these constructions. % that are given by Theorem 2.3.1 of \cite{bger} %were established in %\cite[\S2.5 and Theorem 4.4.2(7,8)] (cf. also \S2.5 of \cite{bws}).

\begin{pr}\label{pwfil}
In the notation of the previous definition the  following statements are valid.
\begin{enumerate}
\item\label{iwfilp1} 
The objects $\tau_{\le n}(H)(M)$ and $\tau_{\ge n}(H)(M)$ are $\cu$-functorial  in $M$  (and essentially do not depend on any choices).
%so,%the functor $\tau^{\ge -n}(H)$ is the virtual $t$-truncations of $H$ are well-defined functors).

\item\label{iwfilp2} 
 The functors $\tau_{\le n}(H)$ and $\tau_{\ge n}(H)$ are cohomological if $H$ is.

\item\label{iwfilp3} 
 If $H$ is cohomological then there exists %natural transformations that  yield 
 a long exact sequence 
\begin{equation}\label{evtt}
\begin{gathered} 
\dots \to \tau_{\le n-1}(H)\circ [-1] \to \tau_{\ge n }(H)\stackrel{a_n}{\to} H \\ \stackrel{b_{n-1}}{\to} \tau_{\le n -1}(H)\to \tau_{\ge n }(H)\circ [1]\to H%^{-1}
\circ [1]\to\dots\end{gathered} 
 \end{equation} of functors (i.e.,  the result of applying this sequence to any object of $\cu$ is a long exact sequence).  The shift of this exact sequence by $3$ positions to the right  is given by composing the functors with $[1]$. %sign?! not formulated this way?!

\item\label{iwfilp4} 
 Assume that there exists a $t$-structure $t$ on $\cu'$  that is orthogonal %adjacent???
   to $w$ (for certain $\cu'$ and $\du$ as in Definition \ref{dort}). %see??
Then for any $M\in \obj \cu'$ %and the functor $H_M$ as in Definition \ref{dvtt}(3) %being the restriction of the functor $\du(-,M)$ to $\cu$ (see \S\ref{snotata}) %\cu\opp\to \ab $ 
 the functors $ \tau_{\ge n }(H_M)$ and $\tau_{\le n}(H_M)$  are $\du$-represented (on $\cu$) by $t_{\ge n}M$ and $t_{\le n}M$, respectively (that is, $ \tau_{\ge n }(H_M)\cong H_{t_{\ge n}M}$ and $ \tau_{\le n }(H_M)\cong H_{t_{\le n}M}$; see %(where $H_M$ is defined in
  Definition \ref{dvtt}(3) and Proposition  \ref{prtst}(\ref{itho})).

\item\label{iwfild} Let $H\opp$ be the %cohomological
  functor obtained from $H$ by means of inverting arrows both in $\cu$ and $\au$. Then there exist canonical isomorphisms  $\tau\opp_{\le n}(H\opp)\cong (\tau_{\ge -n}(H))\opp$ and $\tau\opp_{\ge n}(H\opp)\cong  (\tau_{\le -n}(H))\opp$; here $\tau\opp$ denotes virtual $t$-truncations with respect to $w\opp$.

\item\label{iwfilp5} 
 %The correspondence  $\tau_{\ge n }(H')$ gives a
  Virtual $t$-truncations of $H'$ give well-defined covariant functors. Moreover, if $H'$ is homological then all these virtual $t$-truncations are  homological and there exists a homological analogue of the long exact sequence (\ref{evtt}). 

Furthemore, if   there exists a $t$-structure $t$ on a %certain 
 triangulated category $\cu'$ that is  anti-orthogonal to $w$ %(with respect to 
  (in a triangulated category $\du$ containing $\cu$ and $\cu'$), $\au=\ab$, and the functor $H'\cong H^N$ %is $\du$-corepresented by an 
  for some object $N$ of $\cu'$,  then  $ \tau_{\ge n }(H')\cong H^{t_{\ge n}M}$ and $ \tau_{\le n }(H')\cong H^{t_{\le n}M}$.  %$\tau_{\ge n }(H')$ is $\du$-corepresented by $t_{\ge n}N$  and $\tau_{\le n }(H')$ is $\du$-corepresented by $t_{\le n}N$.
\end{enumerate}
\end{pr}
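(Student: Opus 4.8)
\textbf{Plan of proof for Proposition \ref{pwfil}.}

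The plan is to prove assertions \ref{iwfilp1}--\ref{iwfild} essentially by reducing everything to the existence of the weight truncation triangles and the long exact sequences they produce, and then to deduce the dual/covariant assertions \ref{iwfild} and \ref{iwfilp5} formally. First I would handle \ref{iwfilp1}: given two choices of $m$-weight decompositions of $M$ (for $m=n$ and $m=n+1$), Proposition \ref{pbw}(\ref{icompl}) applied with $g=\id_M$ produces connecting morphisms between the corresponding triangles, and moreover (the uniqueness clause, since $n<n+1$) these are unique once the rows are fixed; applying this repeatedly one sees that any two choices of the data defining $\tau_{\le n}(H)(M)$ are connected by a canonical isomorphism compatible with the images, so the image object is well-defined up to canonical isomorphism. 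Functoriality in $M$ follows by the same device: a morphism $M\to M'$ lifts to a morphism of the chosen weight-decomposition triangles, hence induces a morphism on the images; compatibility and independence of the lift again come from the uniqueness in Proposition \ref{pbw}(\ref{icompl}). This is the technical heart of the argument, and the one place where care is genuinely needed, since ``image of a morphism of abelian-group-valued functors'' must be manipulated through non-canonical choices of truncation — I expect this bookkeeping to be the main obstacle, though it is routine rather than deep. (In fact this is exactly the content of the original treatment in \cite{bws}, so I would cite it and only indicate the key points.)

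Next, for \ref{iwfilp2} and \ref{iwfilp3}, I would fix compatible weight decompositions of $M$ fitting into a diagram relating $w_{\le n-1}M$, $w_{\le n}M$, $w_{\ge n}M$, $w_{\ge n+1}M$ (an octahedron built from the $(n-1)$-, $n$- and $(n+1)$-weight decompositions, using that $\cu_{w\le n}\star\cu_{w\ge n+1}$-type factorizations are compatible). Applying the cohomological functor $H$ to the relevant distinguished triangles yields long exact sequences; chasing these and using the definition of $\tau_{\le n}(H)$ and $\tau_{\ge n}(H)$ as images, one extracts both the cohomologicity of the truncated functors and the long exact sequence \eqref{evtt}, with the maps $a_n$, $b_{n-1}$ read off from the diagram. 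The $3$-periodicity up to $[1]$ is then immediate from the construction. Again this is the argument of \cite{bws}; I would present the octahedral diagram and leave the diagram chase to the reader.

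For \ref{iwfilp4}: assume $t$ on $\cu'$ is orthogonal to $w$, and let $M\in\obj\cu'$ with $t$-truncation triangle $t_{\ge n}M\to M\to t_{\le n-1}M\to t_{\ge n}M[1]$. Applying $H_{-}=\du(-,?)$ restricted to $\cu$ gives, for each $N\in\obj\cu$, a long exact sequence; the key input is that orthogonality forces $H_{t_{\ge n}M}$ to vanish on $\cu_{w\le n-1}$ and $H_{t_{\le n-1}M}$ to vanish on $\cu_{w\ge n}$ (from $\cu_{w\le 0}\perp_\du\cu'_{t\ge 1}$ and $\cu_{w\ge 0}\perp_\du\cu'_{t\le -1}$, shifted). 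Feeding a weight decomposition of $N$ into $H_{t_{\ge n}M}$ and comparing with the definition of $\tau_{\ge n}(H_M)$, the vanishing collapses the relevant terms and identifies $\tau_{\ge n}(H_M)(N)$ with $H_{t_{\ge n}M}(N)$ naturally; symmetrically for $\tau_{\le n}$. I would then note that, alternatively, this follows formally from the uniqueness of a functor with the exact sequence \eqref{evtt} once one checks that $(H_{t_{\ge n}M},H_{t_{\le n-1}M})$ sits in such a sequence and has the right vanishing — but the direct computation is cleaner.

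Finally, \ref{iwfild} is pure formalism: inverting arrows in $\cu$ and in $\au$ turns $w$ into $w\opp$ (Proposition \ref{pbw}(\ref{idual})), interchanges $w_{\le n}$ with $w_{\ge -n}$, and interchanges $\imm$ of a map with $\imm$ of its opposite, so the asserted canonical isomorphisms $\tau\opp_{\le n}(H\opp)\cong(\tau_{\ge -n}(H))\opp$ and $\tau\opp_{\ge n}(H\opp)\cong(\tau_{\le -n}(H))\opp$ fall out by unwinding definitions. Assertion \ref{iwfilp5} is obtained by applying \ref{iwfilp1}--\ref{iwfilp4} to the contravariant functor on $\cu\opp$ associated to the covariant $H'$ on $\cu$, using \ref{iwfild} to translate: a homological $H'$ becomes a cohomological functor on $\cu\opp$ (with its opposite weight structure), so well-definedness, homologicity and the long exact sequence transfer directly; and if $H'\cong H^N$ for $N\in\obj\cu'$ with $t$ on $\cu'$ anti-orthogonal to $w$, then anti-orthogonality is precisely orthogonality after passing to opposite categories, so \ref{iwfilp4} gives $\tau_{\ge n}(H')\cong H^{t_{\ge n}N}$ and $\tau_{\le n}(H')\cong H^{t_{\le n}N}$ (I would flag that the statement's ``$M$'' should read ``$N$''). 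Throughout, the only nontrivial work is the well-definedness bookkeeping of step \ref{iwfilp1}; everything else is diagram chasing or formal dualization, and I would keep the exposition correspondingly terse with pointers to \cite{bws}.
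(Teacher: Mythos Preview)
Your proposal is correct and follows essentially the same approach as the paper. The paper's own proof is considerably terser than yours: for assertions \ref{iwfilp1}--\ref{iwfilp3} it simply cites Theorem 2.3.1 of \cite{bger} (rather than \cite{bws}, though the underlying argument is the one you sketch), and for assertion \ref{iwfilp4} it cites Proposition 2.5.4(1) of \cite{bger} rather than carrying out the direct vanishing computation you describe; for \ref{iwfild} the paper spells out the duality a bit more explicitly than you do (tracking how an $m$-weight decomposition becomes a $(-m-1)$-weight decomposition in $\cu\opp$), and for \ref{iwfilp5} it dualizes via \ref{iwfild} exactly as you propose. Your observation about the $M$/$N$ typo in \ref{iwfilp5} is also correct.
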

\begin{proof} Assertions \ref{iwfilp1}--\ref{iwfilp3}  are essentially given by Theorem 2.3.1 of \cite{bger}; yet pay attention to Remark \ref{rstws}(3).  Assertion \ref{iwfilp4} is given by Proposition 2.5.4(1) of ibid. (see Remark  \ref{rldual} %(\ref{irort1})???
  below for some detail). % and recall Remark \ref{rort}(1). %note that the restriction of the bi-functor $\du(-,-)$ to $\cu\times \cu'{}\opp$ gives a {\it duality} in the sense of ibid. (see Remark 2.5.7(1) of ibid.). %??? above????
 %; one should also invoke Remark \ref{rdualzero}(1) below to obtain assertion 4). 

 Assertion \ref{iwfild} can be obtained by means of applying  Proposition \ref{pbw}(\ref{idual}) quite straightforwardly.
Firstly, %when we reverse arrows 
reversing arrows converts any $m$-weight decomposition of $M\in \obj \cu$ into its $-m-1$-weight decomposition in $\cu \opp$ (with respect to $w\opp$). Moreover, if we reverse arrows in %diagrams coming from Proposition \ref{pbw}(\ref{idual})
 (\ref{ecompl}) then we obtain a similar diagram in $\cu\opp$; if $m<n$ in the initial diagram then we obtain the %corresponding 
 unique morphism of triangles which corresponds to $g$ and connects (the dual) $-n-1$-weight decomposition of $M'$ with the corresponding $-m-1$-weight decomposition of $M$.
Thus if we reverse %arrows in the diagram corresponding to
 in the definitions $\tau_{\ge -n}(H)$ and  $\tau_{\le -n}(H)$ (for any $n\in \z$) then we obtain
 \begin{comment} the following $\au\opp$-diagram: %morphism of triangles: 
 %\begin{equation}\label{ecompl} 
$$\begin{CD} w_{\le m} M@>{c}>>
M@>{}>> w_{\ge m+1}M\\
@VV{h}V@VV{\id_M}V@ VV{j}V \\
w_{\le m+1} M'@>{}>>
M'@>{}>> w_{\ge n+1}M' \end{CD}$$. %\end{equation}
Applying $H\opp$ to this diagram we compute 
\end{comment}
the morphisms that compute $\tau\opp_{\le n}(H\opp)$ and $\tau\opp_{\ge n}(H\opp)$, respectively.

Assertion \ref{iwfilp5}  is easily seen to be dual to assertions  \ref{iwfilp1}--\ref{iwfilp3}; see the preceding assertion. % the previous ones, whereas
%Assertions 6 and 7 follow from our definitions immediately.
\end{proof}

Some more %important 
 useful properties of virtual $t$-truncations follow from our definitions easily.

\begin{pr}\label{pwfilt}
Let $n\in \z$; assume that $\cu$ is endowed with a weight structure $w$  and $H$ is a %cohomological 
 contravariant functor from $\cu$ into $\au$.

I.1. For any $i\in \z$ we have $\tau_{\le n+i}(H)\cong  \tau_{\le n}(H\circ [i])\circ [-i]$ and $\tau_{\ge n+i}(H)\cong  \tau_{\ge n}(H\circ [i])\circ [-i]$, and also $\tau_{\le n+i}(H')\cong  \tau_{\le n}(H'\circ [i])\circ [-i]$ and $\tau_{\ge n+i}(H')\cong  \tau_{\ge n}(H'\circ [i])\circ [-i]$. %homological?! whereas?!

2. Assume that $F:\au\to \au'$ is an exact functor. Then $\tau_{\le n}(F\circ H)\cong F\circ \tau_{\le n}(H)$ and $\tau_{\ge n}(F\circ H)\cong F\circ \tau_{\ge n}(H)$.
%weight-exact

II. Assume that %$\cu$ and
  $\cuz$ is a triangulated category endowed with  a weight structure $\wz$; let $F:\cuz\to \cu$ be a weight-exact functor.

1. Then $\tau^0_{\le n}(H\circ F)\cong  (\tau_{\le n}(H))\circ F$ and $\tau^0_{\ge n}(H\circ F)\cong  (\tau_{\le n}(H))\circ F$; here $\tau^0_{\le n}(-)$ and $\tau^0_{\ge n}(-)$ denote the corresponding virtual $t$-truncations with respect to $\wz$.

2. Assume that $F$ is a full embedding, that is, $w$ restricts to a weight structure $w^0$ on $\cuz$. % triangulated subcategory $\cu^0$ of $\cu$. 

Then $\tau^0_{\le n}(H\Big|_{\cuz})\cong  \tau_{\le n}(H)\Big|_{\cuz}$ and $\tau^0_{\ge n}(H\Big|_{\cuz})\cong  \tau_{\ge n}(H)\Big|_{\cuz}$. %; here $\tau^0_{\le n}(-)$ and $\tau^0_{\ge n}(-)$ denote the corresponding virtual $t$-truncations with respect to $\wz$.

3. In addition to the assumptions of the previous assertion, suppose that $\au^0$ is a (strictly full) abelian subcategory of %an abelian category
  $\au$ (see \S\ref{snotata}). %, i.e., $\au^0$ is its subcategory and it contains the $\au$-kernel and the $\au$-cokernel of any morphism in $\au^0$. %; %fix a cohomological functor $H$ from $\cu$ into $\au$.
 
Then if the restricted %of $H$ to $\cu'$ 
 functor $H\Big|_{\cuz}$ takes it values in $\au^0$ then the same is true for %all its virtual $t$-truncations.
  the restrictions to $\cuz$ of all virtual $t$-truncations of $H$.

\end{pr}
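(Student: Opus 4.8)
The plan is to establish the assertions in order of dependence, reducing each to the defining formula of Definition \ref{dvtt} together with one elementary observation; since naturality in $M$ is automatic by Proposition \ref{pwfil}(\ref{iwfilp1}), it suffices to produce the isomorphisms objectwise. For I.1 I would first record how weight decompositions behave under shifts: shifting an $m$-weight decomposition of $M$ by $[j]$ yields an $(m+j)$-weight decomposition of $M[j]$ (immediate from $\cu_{w\le 0}[j]=\cu_{w\le j}$, $\cu_{w\ge 0}[j]=\cu_{w\ge j}$ and the semi-invariance axiom), and the connecting morphism furnished by Proposition \ref{pbw}(\ref{icompl}) for $\id_M$ goes over to the one for $\id_{M[j]}$; hence one may take $w_{\le m}(M[j])=(w_{\le m-j}M)[j]$. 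Substituting this choice into the definition of $\bigl(\tau_{\le n}(H\circ[i])\bigr)\circ[-i]$ and using $(H\circ[i])\circ[-i]=H$ produces exactly $\tau_{\le n+i}(H)$, and the identical computation handles $\tau_{\ge\bullet}$ and the covariant functor $H'$. For I.2, an exact functor $F$ between abelian categories preserves kernels and cokernels, hence images, so $F(\imm(\phi))\cong\imm(F(\phi))$; applying $F$ to the morphism $H(w_{\le n+1}M)\to H(w_{\le n}M)$ (resp. $H(w_{\ge n}M)\to H(w_{\ge n-1}M)$) gives $\tau_{\le n}(F\circ H)\cong F\circ\tau_{\le n}(H)$ (resp. the analogue for $\tau_{\ge n}$).

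The core of part II is the observation that a weight-exact functor carries weight decompositions to weight decompositions. Concretely, if $F:\cuz\to\cu$ is weight-exact and $\wz_{\le m}M\to M\to \wz_{\ge m+1}M\to(\wz_{\le m}M)[1]$ is an $m$-weight decomposition of $M\in\obj\cuz$ (with respect to $\wz$), then applying $F$ (which is in particular exact) yields a distinguished triangle $F(\wz_{\le m}M)\to F(M)\to F(\wz_{\ge m+1}M)\to F(\wz_{\le m}M)[1]$ with $F(\wz_{\le m}M)\in\cu_{w\le m}$ and $F(\wz_{\ge m+1}M)\in\cu_{w\ge m+1}$ --- that is, an $m$-weight decomposition of $F(M)$; moreover $F$ applied to the connecting morphisms of Proposition \ref{pbw}(\ref{icompl}) for $\id_M$ is a morphism of triangles over $\id_{F(M)}$. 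Computing $\tau_{\le n}(H)(F(M))$ with the choices $w_{\le j}(F(M))=F(\wz_{\le j}M)$ then literally gives $\imm\bigl((H\circ F)(\wz_{\le n+1}M)\to(H\circ F)(\wz_{\le n}M)\bigr)=\tau^0_{\le n}(H\circ F)(M)$, and since the outcome is independent of all choices (Proposition \ref{pwfil}(\ref{iwfilp1})) we obtain the natural isomorphism $\tau^0_{\le n}(H\circ F)\cong\tau_{\le n}(H)\circ F$, and likewise for $\tau_{\ge n}$; this is II.1. Assertion II.2 is the special case in which $F$ is the inclusion of $\cuz$ and $\wz$ is the restriction of $w$ to it --- the inclusion is then weight-exact since $\cu_{w\le 0}\cap\obj\cuz\subset\cu_{w\le 0}$ and dually, and $H\circ F=H\big|_{\cuz}$.

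For II.3 I would combine II.2 with I.2. Write $\iota:\au^0\hookrightarrow\au$ for the inclusion; as $\au^0$ is an abelian subcategory the functor $\iota$ is exact, and the hypothesis that $H\big|_{\cuz}$ takes its values in $\au^0$ means $H\big|_{\cuz}=\iota\circ H^0$ for some functor $H^0:\cuz\to\au^0$. Then II.2 gives $\tau_{\le n}(H)\big|_{\cuz}\cong\tau^0_{\le n}(\iota\circ H^0)$, while I.2 applied inside $\cuz$ (with the exact functor $\iota$) gives $\tau^0_{\le n}(\iota\circ H^0)\cong\iota\circ\tau^0_{\le n}(H^0)$, which manifestly takes values in $\au^0$; the same argument applies to $\tau_{\ge n}$, and every other virtual $t$-truncation of $H$ is a shift of one of these two by I.1. (Equivalently one can argue directly: by II.2 the value $\tau_{\le n}(H)\big|_{\cuz}(M)$ is the image, formed in $\au$, of a morphism between two objects lying in $\au^0$, and an exact abelian subcategory is closed under such images.)

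The only genuinely non-formal point --- the step I expect to demand the most care --- is the bookkeeping around re-choosing weight truncations: checking in I.1 that the shifted triangles are honest weight decompositions with matching connecting morphisms, and in II.1 that $F$ sends a weight decomposition of $M$ to one of $F(M)$ compatibly with the morphisms supplied by Proposition \ref{pbw}(\ref{icompl}). Once this is granted, the rest is formal manipulation of images together with the independence-of-choices statement Proposition \ref{pwfil}(\ref{iwfilp1}). A subsidiary point is pinning down the meaning of ``abelian subcategory'' in II.3; I read it as a strictly full subcategory that is itself abelian and whose inclusion functor is exact, which is precisely what makes the reduction to I.2 legitimate.
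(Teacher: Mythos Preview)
Your proposal is correct and follows essentially the same approach as the paper's own proof, just with considerably more detail spelled out; the paper dispatches I.1 as ``Obvious'', I.2 as ``Very easy'' (exact functors respect images), II.1 by noting that $w$-truncations of $F(M)$ may be taken to be $F$-images of $\wz$-truncations of $M$, II.2 as the special case $F=$ inclusion, and II.3 by combining II.2 with I.2 applied to the exact embedding $\au^0\hookrightarrow\au$. Your careful bookkeeping of shifted weight decompositions and connecting morphisms is exactly the content the paper leaves implicit.
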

\begin{proof}
I.1. Obvious.

II. Very easy as well; recall that exact functors respect images of morphisms.

II.1. Very easy; note that for any $M\in \obj \cuz$ one can compute all %$\tau_{\le n}(H\circ F)(M)$ and $\tau_{\ge n}(H\circ F)(M)$
 $(\tau_{\le n}(H)) %\circ
   (F(M))$ and $(\tau_{\ge n}(H)) %\circ???
   (F(M))$ by means of applying $H$ to the $w$-truncations of $F(M)$ that are computed as images with respect to $F$ with respect to  $\wz$-truncations of $M$. % that belong to $\cuz$. Write formulae?!

2. This is just a particular case of the previous assertion; note that $H\Big|_{\cuz}=H\circ F$.

3. According to the previous assertion, it suffices to %verify the statement for
look at the values of (all the functors) $\tau^0_{\le n}(H\Big|_{\cuz})$ and $\tau^0_{\ge n}(H\Big|_{\cuz})$. For the latter purpose it suffices to apply assertion I.2 to the functors  $H\Big|_{\cuz}$ and the exact embedding functor $\au^0\to \au$.
\end{proof}

The following nice observation will be applied in a succeeding paper.

\begin{coro}\label{ctosham} 
Assume that there exists a $t$-structure $t$ on $\cu'$  that is orthogonal %adjacent???
   to $w$ (for certain $\cu'$ and $\du$ as in Definition \ref{dort}),
   and let $F:\cu\to \eu$ be an exact functor that is weight-exact  with respect a weight structure $w^{\eu}$ on $\eu$. Suppose that  for some $M\in \obj \cu'$ the functor $H_M^{\cu}$
   is isomorphic to $H^{\eu}\circ F$, where $H^{\eu}$ is a contravariant %(resp. cohomological)
    functor $\eu\to \ab$.
    %factors through $F$. %cf.???
   
   Then for any     $n\in \z$ the functors   $H^1=H_{t_{\le n}M}$ and $H^2= H_{t_{\ge n}M}$  are isomorphic to  $(\tau^{\eu}_{\le n}(H^{\eu})) \circ
   F$ and  $(\tau^{\eu}_{\ge n}(H^{\eu})) \circ
   F$, respectively; here $\tau^{\eu}_{\le n}$ and  $\tau^{\eu}_{\ge n}$ denote the corresponding virtual $t$-truncations with respect to $w^{\eu}$.  %factor through $F$ as well.
   
 Consequently, %these two functors 
both of  $H^{1,2}$ annihilate any object of $\cu$ that is annihilated by 
$F$. 
 \end{coro}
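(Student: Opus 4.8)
The plan is that this corollary should follow by purely formally concatenating two results already established: Proposition \ref{pwfil}(\ref{iwfilp4}), which identifies virtual $t$-truncations (with respect to $w$) of a representable functor with the functors represented by the honest $t$-truncations of the representing object, and Proposition \ref{pwfilt}(II.1), which commutes virtual $t$-truncation past precomposition with a weight-exact functor. First I would apply Proposition \ref{pwfil}(\ref{iwfilp4}): the hypotheses provide a $t$-structure $t$ on $\cu'$ orthogonal to $w$, and $t_{\le n}M, t_{\ge n}M \in \obj\cu'$ for $M \in \obj\cu'$, so we obtain canonical isomorphisms $H^1 = H_{t_{\le n}M}\cong \tau_{\le n}(H_M^{\cu})$ and $H^2 = H_{t_{\ge n}M}\cong \tau_{\ge n}(H_M^{\cu})$, where $\tau_{\le n}$ and $\tau_{\ge n}$ are virtual $t$-truncations taken with respect to $w$ on $\cu$.

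Next I would use the assumed isomorphism $H_M^{\cu}\cong H^{\eu}\circ F$; since isomorphic functors plainly have isomorphic virtual $t$-truncations (immediate from the construction in Definition \ref{dvtt}(1) together with Proposition \ref{pwfil}(\ref{iwfilp1})), this gives $\tau_{\le n}(H_M^{\cu})\cong \tau_{\le n}(H^{\eu}\circ F)$ and likewise for $\tau_{\ge n}$. Then I would invoke Proposition \ref{pwfilt}(II.1) with $F\colon\cu\to\eu$ (weight-exact with respect to $(w,w^{\eu})$ by hypothesis) in the role of its weight-exact functor and $H^{\eu}\colon\eu\to\ab$ in the role of its $H$; this yields $\tau_{\le n}(H^{\eu}\circ F)\cong (\tau^{\eu}_{\le n}(H^{\eu}))\circ F$ and $\tau_{\ge n}(H^{\eu}\circ F)\cong (\tau^{\eu}_{\ge n}(H^{\eu}))\circ F$. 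Concatenating the three isomorphisms in each case gives exactly $H^1\cong (\tau^{\eu}_{\le n}(H^{\eu}))\circ F$ and $H^2\cong (\tau^{\eu}_{\ge n}(H^{\eu}))\circ F$.

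For the final assertion: if $X\in\obj\cu$ is annihilated by $F$, i.e., $F(X)=0$, then applying the additive functors $\tau^{\eu}_{\le n}(H^{\eu})$ and $\tau^{\eu}_{\ge n}(H^{\eu})$ (which send zero objects to zero) to $F(X)$ gives $H^1(X)=H^2(X)=0$. There is no serious obstacle here; the only thing demanding attention is the bookkeeping when invoking Proposition \ref{pwfilt}(II.1) --- one must match $\cuz$ there with our $\cu$ (carrying $w$) and the target category of that proposition with our $\eu$ (carrying $w^{\eu}$), so that the ``outer'' truncation there is precisely the $\tau$ (with respect to $w$) that appeared via Proposition \ref{pwfil}(\ref{iwfilp4}), while $\tau^{\eu}$ is with respect to $w^{\eu}$. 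It is also worth noting that Proposition \ref{pwfilt}(II.1) imposes no cohomologicality assumption on $H^{\eu}$: it is a formal consequence of the weight-exactness of $F$ together with the independence of virtual $t$-truncations from the choices of weight decompositions.
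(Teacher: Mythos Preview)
Your proposal is correct and follows essentially the same route as the paper's own proof: first apply Proposition~\ref{pwfil}(\ref{iwfilp4}) to identify $H^{1,2}$ with the virtual $t$-truncations $\tau_{\le n}(H_M)$ and $\tau_{\ge n}(H_M)$, then invoke Proposition~\ref{pwfilt}(II.1) (with your bookkeeping exactly right) to pull these through $F$. For the ``consequently'' clause the paper argues slightly differently---it notes that $H^{\eu}(0)=0$ because $H_M(0)=0$ and $F(0)=0$---but your appeal to additivity of the truncated functors amounts to the same thing.
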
  
 
 \begin{proof}
 According to Proposition \ref{pwfil}(\ref{iwfilp4}), %it suffices to verify   the functors in question
  the functors  $H^{1,2}$  are isomorphic to $ \tau_{\le n }(H_M)$ and $\tau_{\ge n}(H_M)$, respectively. According to  Proposition \ref{pwfilt}(II.1) it follows that %both of them    factor through $F$. 
  they are isomorphic to $(\tau^{\eu}_{\le n}(H^{\eu})) \circ
   F$ and  $(\tau^{\eu}_{\ge n}(H^{\eu})) \circ F$ indeed.

  Consequently,  if $F(N)=0$ for some $N\in \obj \cu$ then $H^1(N)=0=H^2(N)$; note that $H^{\eu}$ sends $0$ into $0$ since $H_M$ does.
 \end{proof}

 %also valid for general dualities!!!
%add some "older properties"??! {iwrcomm}??!! 

An argument somewhat related to the proof above yields the following funny statements. Clearly some arrows can be reversed in them (to make $H$ contravariant), and 
%S
similarly to Corollary \ref{ctosham} %, they certainly 
 they can be applied to (co)representable functors and their $t$-truncations with respect to (anti)orthogonal $t$-structures. 
%representable: mention here or in the proposition?! Similarly to??

\begin{pr}\label{portmor} 

 Let $F:\cu\to \bu$ and $H:\cu\to\to \eu$  be  additive (covariant) functors %\eu$ be a weight-exact functor (with respect to some weight structure $w^{\eu}$ on $\eu$),
 and suppose that %a contravariant additive functor $H:\cu \to \au$ %satisfies the following assumption: it 
  $H$ annihilates all $\cu$-morphisms that are  annihilated by $F$. Choose $n\in \z$; set  $H^1= \tau_{\le n }(H)$ and $H^2=\tau_{\ge n}(H)$.
	
	1. Assume that $\bu$ is triangulated and $F$ is a weight-exact functor with respect to some weight structure $w^{\bu}$ on $\bu$.
 
 Then %for any $n\in \z$ 
 %the functors  % $H^1=H_{t_{\le n}M}$ and $H^2= H_{t_{\ge n}M}$ 
  %$H^1= \tau_{\le n }(H)$ and $H^2=\tau_{\ge n}(H)$
 %satisfy this annihilation condition as well.
both $H^1$ and $H^2$ annihilate all $\cu$-morphisms that are  annihilated by $F$ as well.

2. Assume that $\bu$ is abelian. % and denote the corresponding contravariant functor $F:\cu\to \bu\opp$ by $F'$.

Then $H^1$  (resp. $H^2$) annihilates all $\cu$-morphisms that are  annihilated by  $\tau_{\le n }(F)$ (resp.  by $\tau_{\le n }(F)$).

 \end{pr}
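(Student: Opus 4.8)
The plan is to reduce both assertions to a single observation: for a $\cu$-morphism $g\colon M\to M'$, the vanishing of $\tau_{\le n}(H)(g)$ (resp.\ of $\tau_{\ge n}(H)(g)$) is detected by applying $H$ to a canonical $\cu$-morphism between weight truncations. So first I would record the following claim, valid for any additive covariant functor $G\colon\cu\to\au$ into an abelian category: if $\chi_g^{\le n}\colon w_{\le n}M\to w_{\le n+1}M'$ denotes the morphism extending $g$ produced by Proposition \ref{pbw}(\ref{icompl}) from an $n$-weight decomposition of $M$ and an $(n+1)$-weight decomposition of $M'$ (this extension is unique since $n<n+1$), and $\chi_g^{\ge n}\colon w_{\ge n-1}M\to w_{\ge n}M'$ denotes the unique morphism extending $g$ between an $(n-1)$-weight decomposition of $M$ and an $n$-weight decomposition of $M'$ (uniqueness by $n-2<n-1$), then $\tau_{\le n}(G)(g)=0\iff G(\chi_g^{\le n})=0$ and $\tau_{\ge n}(G)(g)=0\iff G(\chi_g^{\ge n})=0$. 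To prove this I would unwind Definition \ref{dvtt}: $\tau_{\le n}(G)(M)$ is the image of $G(\phi_M)$ for the canonical connecting map $\phi_M\colon w_{\le n}M\to w_{\le n+1}M$, and $\tau_{\le n}(G)(g)$ is the induced morphism of images; composing it with the canonical epimorphism $G(w_{\le n}M)\twoheadrightarrow\tau_{\le n}(G)(M)$ and the canonical monomorphism $\tau_{\le n}(G)(M')\hookrightarrow G(w_{\le n+1}M')$ yields exactly $G(\phi_{M'}\circ h_n)$, where $h_n$ extends $g$ between the $n$-truncations; and $\phi_{M'}\circ h_n$ is forced to equal $\chi_g^{\le n}$ by the uniqueness in \ref{icompl} (note that $h_n$ alone need not be unique, which is why one passes to $\phi_{M'}\circ h_n$). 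Since epimorphisms are left-cancellable and monomorphisms right-cancellable, the equivalence follows; the $\tau_{\ge n}$ case is identical with $\psi_{M'}\circ h'_{n-1}=\chi_g^{\ge n}$.

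Granting the claim, assertion 1 is immediate. If $F(g)=0$ then, since $F$ is weight-exact, applying $F$ to an $n$-weight decomposition of $M$ (resp.\ an $(n+1)$-weight decomposition of $M'$) yields an $n$-weight decomposition of $F(M)$ (resp.\ an $(n+1)$-weight decomposition of $F(M')$) in $\bu$; hence $F(\chi_g^{\le n})$ extends $F(g)=0$ between these. By the uniqueness in Proposition \ref{pbw}(\ref{icompl}) applied in $\bu$ (again $n<n+1$), together with the fact that the zero morphism of triangles also extends $0$, we get $F(\chi_g^{\le n})=0$. Therefore $H(\chi_g^{\le n})=0$ by the hypothesis on $H$, and $\tau_{\le n}(H)(g)=0$, i.e.\ $H^1(g)=0$, by the claim. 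The same argument with $\chi_g^{\ge n}$ (and $n-2<n-1$) gives $H^2(g)=0$.

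For assertion 2, since $\bu$ is abelian the virtual $t$-truncations $\tau_{\le n}(F)$ and $\tau_{\ge n}(F)$ are defined, and the claim applies to $F$ as well: $\tau_{\le n}(F)(g)=0\iff F(\chi_g^{\le n})=0$ and $\tau_{\ge n}(F)(g)=0\iff F(\chi_g^{\ge n})=0$. Hence if $g$ is annihilated by $\tau_{\le n}(F)$ then $F(\chi_g^{\le n})=0$, so $H(\chi_g^{\le n})=0$ by the hypothesis on $H$, so $\tau_{\le n}(H)(g)=0$ by the claim applied to $H$; and likewise $H^2=\tau_{\ge n}(H)$ annihilates every $\cu$-morphism annihilated by $\tau_{\ge n}(F)$ (so the second ``$\tau_{\le n}(F)$'' in the statement should be read as $\tau_{\ge n}(F)$).

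The one genuinely delicate point will be the claim itself, and inside it the identification $\phi_{M'}\circ h_n=\chi_g^{\le n}$: this hinges on the fact that an extension of a $\cu$-morphism to a morphism of weight decomposition triangles is unique as soon as the two ``weight levels'' differ (here $n$ and $n+1$), even though the extension between equal levels is not. Everything else is routine: weight-exactness of $F$ in assertion 1 enters only to guarantee that $F$ carries weight decompositions to weight decompositions, and the remaining manipulations are formal diagram-chasing in an abelian category.
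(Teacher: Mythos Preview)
Your proposal is correct and follows essentially the same route as the paper. The paper's proof also reduces to the observation that $\tau_{\le n}(H)(g)=0$ if and only if $H(g_{n+1}\circ c)=0$, where $g_{n+1}\circ c\colon w_{\le n}M\to w_{\le n+1}M'$ is exactly your $\chi_g^{\le n}$; it then argues parts 1 and 2 just as you do (invoking duality via Proposition \ref{pwfil}(\ref{iwfild}) for $H^2$ rather than treating it explicitly). Your justification of the key claim via the epi--mono factorization and the identification $\phi_{M'}\circ h_n=\chi_g^{\le n}$ is more detailed than the paper's, which simply asserts that Definition \ref{dvtt} ``easily yields'' the equivalence; your observation about the typo in the $H^2$ clause of part 2 is also correct.
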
  
 
 \begin{proof}
  Clearly, it suffices to verify the statements in question for $H^1$; see Proposition \ref{pwfil}(\ref{iwfild}). 
 
 Next, choose %assume that for
 some $g\in \cu(M,M')$. % we have $F(g)=0$. 
Recall that (by Proposition \ref{pbw}(\ref{icompl})) there exists a commutative diagram
 \begin{equation}\label{evttmor}
 \begin{CD} w_{\le n} M@>{c}>>w_{\le n+1} M
@>{}>> M\\
@VV{g_n}V@VV{g_{n+1}}V@ VV{g}V \\
w_{\le n} M'@>{}>>w_{\le n+1} M'
@>{}>> M' \end{CD}\end{equation}
 and Definition \ref{dvtt}(1) easily yields that $\imm(H^1(g))\cong \imm (H(g_{n+1}\circ c))$.
 Thus $H^1(g)=0$ if and only if $H(g_{n+1}\circ c)=0$.

 1. %Assume that $F(g)=0$. %
Hence our assumption on $H$  implies that it suffices to verify the following: if $F(g)=0$
 then $F(g_{n+1}\circ c)=0$. 
 
 Now, applying $F$ to (\ref{evttmor})  we easily obtain a commutative square  $$\begin{CD} F(w_{\le n}) M@>{}>> F(M)\\
@VV{F(g_{n+1}\circ c)}V@ VV{F(g)}V \\
F(w_{\le n+1} M')
@>{}>> F(M') \end{CD}$$
Applying the uniqueness statement in Proposition \ref{pbw}(\ref{icompl}) to the weight structure $w^{\bu}$   on $\bu$ (along with the weight-exactness of $F$) we obtain that 
 $F(g_{n+1}\circ c)=0$ since $F(g)=0$. This concludes the proof.

2. %Similarly to the proof of assertion 1, 
As we have just noted (for $H$ instead of $F$) we have  $\tau_{\le n }(F)(g)=0$ if and only if  $F(g_{n+1}\circ c)=0$. If this is the case then $H(g_{n+1}\circ c)$ is zero as well (by our assumptions) and we conclude that  $H^1(g)=0$.
  \end{proof}

 \subsection{On weight range}\label{swr}

Now we define weight range and relate it to virtual $t$-truncations; some of these statements will be applied elsewhere (only). %along with 

\begin{defi}\label{drange}
 Let $m,n\in \z$; let  $H$ be %as above. %$H:\cu\to \au$ be 
 a cohomological functor from $\cu$ into $\au$.

Then we will say that $H$ is {\it of weight range} $\ge m$ (resp. $\le n$, resp.  $[m,n]$)  if it annihilates $\cu_{w\le m-1}$ (resp. $\cu_{w\ge n+1}$, resp. both of these classes). 

Moreover,  functors of weight range $[0,0]$ will also said to be {\it pure}; cf. Remark \ref{rpure}(2) below.
\end{defi}

\begin{pr}\label{pwrange}
%In the notation of the previous definition the  following statements are valid.
Once again, assume that $\cu$ is endowed with a weight structure $w$, $m, n\in \z$, and $H$ is a cohomological functor from $\cu$ into $\au$. 

%In some of the assertions we will assume that $\cu,\cu'\subset \du$, and $t$ is endowed with a $t$-structure left orthogonal to $w$.

\begin{enumerate}
%Delete some parts???!!!!

\item\label{iwrvt} %For any $H$
%For $H$ as in Definition \ref{dvtt}(1)
 Then  the functor $\tau_{\le n}(H)$ is of weight range $\le n$ and $\tau_{\ge m}(H)$ is of weight range $\ge m$.

\item\label{iwridemp} We have $\tau_{\le n}(H)\cong H$ (resp. $\tau_{\ge m}(H)\cong H$) if and only if $H$ is of weight range $\le n$ (resp. of weight range $\ge m$).

\item\label{iwrcomm} We have $\tau_{\le n}(\tau_{\ge m})(H)\cong \tau_{\ge m}(\tau_{\le n})(H)$.

\item\label{iwfil4} If $H$ is %(cohomological and) 
of weight range $\ge m$  then  $\tau_{\le n}(H)$ is %(also cohomological and) 
  of weight range $[m,n]$.

Dually, if $H$ is %(cohomological and) 
of weight range $\le n$  then  $\tau_{\ge m}(H)$ is %(also cohomological and) 
  of weight range $[m,n]$.

\item\label{iwrpure} The (not necessarily locally small) category of %weight range $[0,0]$ 
 pure cohomological functors from $\cu$ into $\au$  is equivalent to $\adfu(\hw\opp,\au)$  in the obvious natural way. %via the obvious Yoneda-type functor.

%\item\label{iwrpureb} Assume that $H$ is  $w$-pure    and $M$ is a $w$-bounded above (resp. below) object of $\cu$. Then $H^i(M)=0$ for $i\gg 0$ (resp. for $i\ll 0$).

\item\label{iwfil5} If $H$ is of weight range $\ge m$ (resp. $\le m$) then  the morphism  $H(w_{\ge m}M)\to H(M)$ is epimorphic (resp. the morphism  $H(M)\to H(w_{\le m}M)$ is monomorphic); here we take arbitrary choices of the corresponding weight decompositions of $M$ and apply $H$ to the connecting morphisms.

\item\label{iwrvan} %If $m>n$ then the only functors of weight range  $[m,n]$ are zero ones.
Assume that $m>n$. Then the only functors of weight range  $[m,n]$ are zero ones; thus if $H$ is of weight range $\le n$ (resp. $\ge m$) then $\tau_{\ge m}(H)=0$ (resp. $\tau_{\le n}(H)=0$).

\item\label{iwfil3} The (representable) functor  $H_M=\cu(-,M):\cu\opp\to \ab$  is of weight range $\ge m$ if and only if $M\in \cu_{w\ge m}$.

\item\label{iwfilsubc}
Assume that $H$ is pure and its restriction to $\hw$ takes its values in an abelian subcategory $\auz$ of $\au$. Then the values of $H$ belong to $\au$ as well. %can be generalized to the corresponding restrictions of bwr functors; 

\item\label{iwrort} Assume that $t$ is a $t$-structure on $\cu'$ (for some $\cu'\subset \du$, where $\du$ contains $\cu$) that is %left 
 orthogonal to $w$. Then for %an an object $N$ of $\cu'$ we have
 $N\in \cu'_{t\le 0}$ (resp.  $N\in \cu'_{t\ge 0}$, resp. $N\in \cu'_{t= 0}$) the corresponding $\du$-Yoneda functor $H_N:\cu\opp\to \ab$ (see Definition \ref{dvtt}(3))  is of weight range $\le 0$ (resp. $\ge 0$, resp. $[0,0]$).

%??\item\label{iwrcrit} Assume that $w$ is bounded. Then $H$ is of weight range  $\le n$ (resp. of weight range $\ge m$) if and only if it kills $\cu_{w=i}$ for all $i>n$ (resp. for $i>m$).
\end{enumerate}
\end{pr}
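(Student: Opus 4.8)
The plan is to reduce the statement to unwinding three definitions --- weight range (Definition \ref{drange}), the $\du$-Yoneda functor $H_N$ (Definition \ref{dvtt}(3)), and orthogonality (Definition \ref{dort}(1)) --- together with the trivial fact that shifting is an additive autoequivalence of $\du$. Recall that $H_N$ is the restriction to $\cu$ of the representable functor $\du(-,N)$, so $H_N(M)=\du(M,N)$ for $M\in \obj \cu$; hence $H_N$ is of weight range $\le 0$ precisely when $\du(M,N)=0$ for every $M\in\cu_{w\ge 1}$, of weight range $\ge 0$ precisely when $\du(M,N)=0$ for every $M\in\cu_{w\le -1}$, and of weight range $[0,0]$ when both vanishing statements hold.

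First I would treat the case $N\in\cu'_{t\le 0}$. By Definitions \ref{dwso}(\ref{id=i}) and \ref{dtstro}(1) we have $\cu_{w\ge 1}=\cu_{w\ge 0}[1]$ and $\cu'_{t\le 0}=\cu'_{t\le -1}[1]$, so an arbitrary $M\in\cu_{w\ge 1}$ together with our $N$ can be written as $M=M_0[1]$ and $N=N_0[1]$ with $M_0\in\cu_{w\ge 0}$ and $N_0\in\cu'_{t\le -1}$. Since shifting by $[1]$ is an autoequivalence, $\du(M,N)\cong\du(M_0,N_0)$, and this group vanishes by the second orthogonality relation $\cu_{w\ge 0}\perp_{\du}\cu'_{t\le -1}$. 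Hence $H_N$ is of weight range $\le 0$.

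Next I would run the dual computation for $N\in\cu'_{t\ge 0}$: here $\cu_{w\le -1}=\cu_{w\le 0}[-1]$ and $\cu'_{t\ge 0}=\cu'_{t\ge 1}[-1]$, so an arbitrary $M\in\cu_{w\le -1}$ and $N$ are the $[-1]$-shifts of objects $M_0\in\cu_{w\le 0}$ and $N_0\in\cu'_{t\ge 1}$, whence $\du(M,N)\cong\du(M_0,N_0)=0$ by the first orthogonality relation $\cu_{w\le 0}\perp_{\du}\cu'_{t\ge 1}$; thus $H_N$ is of weight range $\ge 0$. Finally, for $N\in\cu'_{t=0}=\cu'_{t\le 0}\cap\cu'_{t\ge 0}$ both conclusions apply, which gives weight range $[0,0]$ and finishes the argument.

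I do not expect a genuine obstacle here; the only point that needs a little care is the bookkeeping of the shift indices, i.e.\ matching the shift on the weight-structure side of each hom-group with the one on the $t$-structure side so that the two orthogonality relations of Definition \ref{dort}(1) apply literally. One could alternatively absorb the shifts using the semi-invariance axioms for $w$ and $t$, but the identity $\du(X[i],Y[i])\cong\du(X,Y)$ keeps the computation shortest.
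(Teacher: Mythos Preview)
Your proposal is correct and follows essentially the same approach as the paper: both arguments reduce assertion~\ref{iwrort} directly to the two orthogonality relations in Definition~\ref{dort}(1), with the $t=0$ case obtained by intersecting the $\le 0$ and $\ge 0$ cases. The paper's proof is a one-line reference to the definition of orthogonality, while you have spelled out the shift bookkeeping explicitly; this extra detail is harmless and the substance is identical.
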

\begin{proof}

%The statements of assertion \ref{iwrort} for 

\ref{iwrvt}. Let $M\in \cu_{w\ge n+1}$. Then we can take $w_{\le n}(M)=0$. Thus $\tau_{\le n}(H)(M)=0$, and we obtain the first part of the assertion. It second part is easily seen to be dual to the first part. 

%\ref{iwrvtc}. This is just a re-formulation of \cite[Theorem 2.3.1(III.4)]{bger}.

\ref{iwridemp}. %Assertion \ref{iwridemp}
This  is precisely Theorem 2.3.1(III.2,3) of \cite{bger} (up to change of notation); assertion \ref{iwrcomm} is given by part II.3 of that theorem.

%\ref{iwrpureb}. Straightforward from the definition of weight range.

\ref{iwfil4}. %Now let
Let  $H$ be of weight range $\ge m$.  Then $\tau_{\le n}(H)\cong \tau_{\le n}(\tau_{\ge m})(H)\cong \tau_{\ge m}(\tau_{\le n})(H)$ (according to the two previous assertions). It remains to apply assertion \ref{iwrvt} to obtain the first statement in the assertion,  %\ref{iwfil4}, 
whereas its second part is easily seen to be the dual of the first part (and certainly  can be proved similarly).

\ref{iwrpure}. Immediate from Theorem 2.1.2(2) of \cite{bwcp}.  

\ref{iwrvan}. %Next, for 
For any $l\in \z$ and any cohomological $H$ any choice of an $l$-weight decomposition triangle (cf. (\ref{ewdm})) for $M$ gives a long exact sequence
\begin{equation}\label{eles}
\begin{gathered}
\dots \to H((w_{\le l}M)[1])\to  H(w_{\ge l+1}M)\to H(M)\\
\to H(w_{\le l}M)\to H((w_{\ge l+1}M)[-1])\to\dots
\end{gathered}
\end{equation}
%??!\dots\to H_1(w_{\le l}M)\to  H(w_{\ge l+1}M)\to H(M)\to H(w_{\le l}M)\to H_{-1}(w_{\ge l+1}M)\to\dots
The exactness of this sequence in $H(M)$ for $l=n$ immediately gives   the first part of the assertion. %  \ref{iwrvan}. 
 Next, the second part is straightforward from the first one combined with assertion \ref{iwfil4}.

Assertion \ref{iwfil3} is immediate from  Proposition \ref{pbw}(\ref{iort}). 

Assertion \ref{iwfil5} is a straightforward consequence of assertion \ref{iwfil3}; just apply (\ref{eles}) for $l=m$ and for $l=m-1$, respectively. %\ref{iwrcrit}. The "only if" part of the assertion is immediate from the definition of weight range. The converse implication easily follows from Proposition \ref{pbw}(\ref{igenlm}).

Assertion \ref{iwfilsubc} easily  follows  assertion \ref{iwrpure}; see also Lemma 2.1.4 of \cite{bwcp}.

\ref{iwrort}. For $N\in \cu'_{t\le 0}$ and $N\in \cu'_{t\ge 0}$ %are given by 
the weight range estimates for the functor $H_N$ prescribed by the assertion %\ref{iwrort} 
 are given by the definition of orthogonality, and to obtain the claim for $N\in \cu'_{t= 0}$ one should combine the first two weight range statements.

\end{proof}

\begin{rema}\label{rpure}
1. Roughly, the statements above say that virtual $t$-truncations of functors behave as if they corresponded to $t$-truncations of objects in a certain triangulated 'category of functors' (whence the name; certainly, another justification of this idea is provided by the existence of orthogonal $t$-structures statements that will be proved below).

2.  In particular, one can 'slice' any functor of weight range $[m,n]$ for $m\le n$ into 'pieces' of weight $[i,i]$ for $m\le i\le n$. Next, composing a 'slice' of weight range $[i,i]$ with $[i]$ one obtains a pure functor. %of  weight range $[0,0]$. 

%So we recall
 Recall also  %that functors of this type 
  that pure functors were studied in detail in (\S2.1 of) \cite{bwcp}; they were called pure ones due to the relation to Deligne's purity (cf. Remark 2.1.3(3--4) of ibid.).
\end{rema}

Let us also prove some more complicated statements of this sort.

\begin{pr}\label{pwrangen}

Adopt the notation and conventions of Proposition \ref{pwrange}, and assume that $H'$ and $H''$ are cohomological functors from $\cu$ into $\au$ (as well).

%\begin{enumerate}
I. Then the following statement are fulfilled.

%\item\label{iwruni} 
1. If %a cohomological functor $H'$ from $\cu$ into $\au$ 
 $H'$ is of weight range $\ge n$ (resp. of weight range $\le n-1$) then any transformation $T:H'\to H$ (resp. $H\to H'$) factors through the transformation $a_n$ %$\tau_{\ge n }(H)\to H$ 
 (resp. $b_{n-1}$) in the sequence  %$H \to \tau_{\le n -1}(H)$) provided by the exact sequence 
 (\ref{evtt}).

%\tau_{\ge n }(H)\stackrel{a_n}{\to} H \\ \stackrel{b_{n-1}}{\to} \tau_{\le n -1}(H)

%\item\label{iwrperp} %deduce from the succeeding  ones?! {iwruni} is only needed here?! Nado???
2. If %a cohomological functor
  $H'$ (resp. $H''$) %from $\cu$ into $\au$
    is of weight range $\ge n$ (resp. of weight range $\le n-1$)
then there are no non-zero transformations from $H'$ into $H''$.

II. Assume in addition that we have an % long %??
exact sequence of cohomological functors from $\cu$ into $\au$ %(cf. Proposition \ref{pwfil}(\ref{iwfil3}))
\begin{equation} \label{evttpc} 
%\dots\to 
H'\stackrel{a}{\to} H\stackrel{b}{\to} H'' %\to H_{\ge n}\circ [1]\to H\to \dots
\end{equation}
(that is, applying it to any  $M\in \obj \cu$ we obtain a half-exact $\au$-sequence) and for any $M\in \obj \cu$ there exists an isomorphism $\ke a(M[-1])\to \cok b(M)$.

%\item\label{iwrvtc} %For any $H$
1. If
%universality needed?!  
  $H'$ is of weight range $\ge n$,    $H''$ is of weight range $\le n-1$, then there exists a canonical isomorphism of chains of morphisms of functors
  
   (\ref{evttpc})$\cong  \tau_{\ge n }(H)\stackrel{a_n}{\to} H  \stackrel{b_{n-1}}{\to} \tau_{\le n -1}(H)$;
   
    here the latter chain is a part of the sequence (\ref{evtt}).

%\item\label{iwrvtcl} 
2. $a$  factorizes as  $H'\stackrel{i'}{\to}\tau_{\ge n }(H)\stackrel{a_n}{\to} H $, where $i'$ is an isomorphism of functors,  if and only if  $b$ % factors through an isomorphism $H'\to \tau_{\ge n }(H)$
 factorizes as  $ H  \stackrel{b_{n-1}}{\to} \tau_{\le n -1}(H) \stackrel{i''}{\to} H'' $, where $i''$ is an isomorphism.
 
 III. Assume that $\cu,\cu'\subset \du$ and $H=H_{M}^{\cu}$
   %$\Phi:\cu^{op}\times\cu'\to \au$ is a duality (in the sense of Definition \ref{ddual}) and $H=\Phi(-,M)$ 
    for some object $M$ of $\cu'$. Then the transformation $a_n$ corresponding to $H$ can be presented as $H^{\cu}_f$ (that is, the the restriction to $\cu$ of   $\du(-,f)$; see Definition \ref{dvtt}(3)) %can be obtained from 
 for some $\cu'$-morphism $f:L\to M$ %$\Phi(f,M)$ by means of the functor $N\mapsto =H_{-,\cu}$ %for some $\cu'$-morphism   $f:L\to M$ 
  if and only if $b_{n-1}$ %=\Phi(-,g)$ %H_{-,\cu}(g)$  \Phi(g,M)
   is %the restriction to $\cu$ of 
    of the form $H^{\cu}_g$ for some $\cu'$-morphism  $g:M\to R$.
   
  Moreover, the morphisms $L\stackrel{f}{\to} M\stackrel{g}{\to} R$ can be completed to a distinguished triangle in $\cu'$.
%\end{enumerate}
\end{pr}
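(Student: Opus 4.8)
The plan is to reduce everything to the long exact sequence (\ref{evtt}) of Proposition \ref{pwfil}(\ref{iwfilp3}) together with the Yoneda lemma in $\du$. Recall that for $H=H^{\cu}_M$ the sequence (\ref{evtt}) reads, at the relevant spot,
\[
\tau_{\ge n}(H)\stackrel{a_n}{\to} H^{\cu}_M \stackrel{b_{n-1}}{\to}\tau_{\le n-1}(H),
\]
and by Proposition \ref{pwfilt}(I.1) it is compatible with shifts, so it suffices to argue for a single $n$ (or, if one prefers, to note that the three-term chain is determined by $H$ up to the canonical isomorphisms already established). First I would observe that a transformation $a_n\colon\tau_{\ge n}(H)\to H^{\cu}_M$ of functors $\cu\opp\to\ab$ is "of the form $H^{\cu}_f$" for some $f\colon L\to M$ in $\cu'$ precisely when the source functor $\tau_{\ge n}(H)$ is itself $\du$-representable by an object $L$ of $\cu'$ in a way carrying $a_n$ to $H^{\cu}_f$; by the Yoneda lemma applied in $\cu'$ (via the full embedding $\cu'\hookrightarrow\du$ and restriction to $\cu$, cf. Definition \ref{dvtt}(3)), giving such an $f$ is the same as giving such a representation. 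The dual remark applies to $b_{n-1}$ and $\tau_{\le n-1}(H)$.

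Next I would bring in the third vertex. Complete $f\colon L\to M$ to a distinguished triangle $L\stackrel{f}{\to}M\stackrel{g}{\to}R\to L[1]$ in $\cu'$ (possible since $\cu'$ is triangulated and contains $L$, $M$). Restricting the $\du$-represented functors to $\cu$ and using that $H^{\cu}_{(-)}$ sends distinguished triangles in $\cu'$ to long exact sequences of functors on $\cu$ (each $H^{\cu}_N$ being cohomological), we get an exact sequence
\[
H^{\cu}_L\stackrel{H^{\cu}_f}{\to} H^{\cu}_M\stackrel{H^{\cu}_g}{\to} H^{\cu}_R
\]
which, by the hypothesis that $a_n=H^{\cu}_f$, identifies the first two terms and the map between them with $\tau_{\ge n}(H)\stackrel{a_n}{\to}H$. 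The exactness of (\ref{evtt}) then forces $H^{\cu}_R$, together with the map $H^{\cu}_g$, to have the same universal property as $\tau_{\le n-1}(H)$ with its map $b_{n-1}$: indeed both $\cok(a_n)\hookrightarrow(\cdot)$ arrangements agree, and one reads off from the portion $H^{\cu}_L[1]\to H^{\cu}_M\to H^{\cu}_R\to H^{\cu}_L[1]\to\dots$ against the portion $\tau_{\ge n}(H)[1]\to H\to\tau_{\le n-1}(H)\to\tau_{\ge n}(H)[1]\to\dots$ that $H^{\cu}_R\cong\tau_{\le n-1}(H)$ compatibly with $b_{n-1}$. Hence $b_{n-1}$ is of the form $H^{\cu}_g$, and the triangle we completed is the required one. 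The converse implication, starting from $b_{n-1}=H^{\cu}_g$, is entirely symmetric: complete $g\colon M\to R$ to a triangle $L\to M\stackrel{g}{\to}R$ in $\cu'$ and run the same comparison, now identifying $H^{\cu}_L$ with $\tau_{\ge n}(H)$ and $H^{\cu}_f$ with $a_n$ via exactness of (\ref{evtt}); alternatively one invokes Proposition \ref{pwfil}(\ref{iwfild}) to pass to $\cu\opp$, $w\opp$, $H\opp$ and thereby deduce one direction from the other.

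The step I expect to be the main obstacle is pinning down the comparison isomorphisms rigidly enough: I must check not merely that $H^{\cu}_R\cong\tau_{\le n-1}(H)$ as functors, but that this isomorphism intertwines $H^{\cu}_g$ with $b_{n-1}$, and this requires carefully matching two long exact sequences — the one coming from the triangle in $\cu'$ and the virtual-truncation sequence (\ref{evtt}). The key device is that the connecting maps and the "gluing" isomorphisms $\ke a_n(M[-1])\cong\cok b_{n-1}(M)$ (the hypothesis feeding Proposition \ref{pwrangen}(II)) are canonical, so I can apply part II.2 of this very Proposition: once $a$ factors through $a_n$ by an isomorphism of functors, part II.2 yields that $b$ factors through $b_{n-1}$ by an isomorphism, and vice versa. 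Thus the real content of III beyond II is only the translation "factorization by a functor isomorphism through $a_n$" $\Longleftrightarrow$ "is of the form $H^{\cu}_f$ for $f$ in $\cu'$", which is Yoneda in $\cu'$, plus the observation that completing $f$ (or $g$) to a triangle realizes the third functor as the other virtual truncation. I would therefore structure the write-up as: (a) Yoneda translation; (b) reduction to Proposition \ref{pwrangen}(II.2); (c) the triangle-completion giving the last sentence.
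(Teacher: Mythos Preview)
Your proposal is correct and follows essentially the same route as the paper: complete $f$ (resp.\ $g$) to a distinguished triangle in $\cu'$, observe that the resulting sequence $H^{\cu}_L\to H^{\cu}_M\to H^{\cu}_R$ satisfies the hypotheses of part II (half-exactness and the kernel/cokernel identification both come from the triangle), and then invoke II.2 to transport the factorization from one side to the other. The paper's write-up is terser---it does not spell out the Yoneda translation or the matching of long exact sequences, simply stating that the triangle yields a sequence satisfying the assumptions of II and applying II.2---but the logical structure is identical to your steps (a)--(c).
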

\begin{proof}

 I.1. %\ref{iwruni}. 
  The two statements in the assertion %\ref{iwruni}
 are easily seen to be dual to each other; hence it suffices to consider the case where $H'$ is of weight range $\ge n$. Next, the obvious functoriality of the definition of virtual $t$-truncations gives the
following commutative square of transformations:

\begin{equation}\label{euni}
\begin{CD} %2x3
\tau_{\ge n}H'@>{\tau_{\ge n}T}>> \tau_{\ge n}H \\
@VV{i'}V@VV{i}V \\
H'@>{T}>>H
\end{CD}\end{equation}
(cf. (\ref{evtt})).

Applying assertion \ref{iwridemp} we obtain that the transformation $i'$ is an equivalence. Hence the transformation 
$\tau_{\ge n}T$ yields the factorization in question. 

%\ref{iwrperp}.
2. According to assertion %\ref{iwruni}
I.1, any transformation in question factors through $\tau_{\ge n }(H'')$; thus it is zero according to Proposition \ref{pwrange}(\ref{iwrvan}). %. Next, $\tau_{\ge n }(H'')=0$ since this is a functor of weight range $[n,n-1]$; see assertions \ref{iwfil4} and \ref{iwrvan}.

II.1. %\ref{iwrvtc}. 
This is just a re-formulation of \cite[Theorem 2.3.1(III.4)]{bger}.

%\ref{iwrvtcl}. 
2.  Assume that $a=a_n\circ i',$  where $i'$ is an isomorphism of functors.
According to the previous assertion, to prove that $b=i''\circ b_{n-1}$ (where $i''$ is an isomorphism) it suffices to verify that $H''$ is of weight range $\le n-1$.

%According to %the  
 Combining our assumptions on (\ref{evttpc}) with Proposition \ref{pwfil}(\ref{iwfilp3}) we obtain that for any $M\in \obj \cu$ both $H''(M)$ and $\tau_{\le n -1}(H)$ can be presented as certain extensions of the object $\ke a(M[-1])\cong \ke i'(M[-1])$ by $\cok a(M)\cong \cok i'(M)$. Consequently, $H''(M)=0$ if and only if $\tau_{\le n -1}(H)$. Thus $H''$ is of weight range $\le n-1$ since $\tau_{\le n -1}(H)$ (see Proposition \ref{pwrange}(\ref{iwrvt})).
 
 Similarly, if $b=i''\circ b_{n-1}$ then one can easily verify that $H'(M)=0$ if and only if $\tau_{\ge n }(H)(M)=0$. Consequently, $H'$ is of weight range $\ge n$ since $\tau_{\ge n }(H)$ is (see Proposition \ref{pwrange}(\ref{iwrvt})), and applying assertion II.1 %\ref{iwrvtc} 
  we conclude that $a=a_n\circ i'$.

III. Assume that %$a_n=H_{-,\cu}(f)$  
 $f\in \cu'(L,M)$. We complete $f$ to a distinguished triangle \begin{equation}\label{etrir} L\stackrel{f}{\to} M\stackrel{g}{\to} R\to L[1]. \end{equation}  
 Then the restriction of the transformation sequence  $\du(-,L\stackrel{f}{\to} M\stackrel{g}{\to} R)$  to $\cu$ clearly satisfies our assumptions on (\ref{evttpc}) in assertion II. Then assertion II.2 %\ref{iwrvtcl}
   implies that $b_{n-1}$ is the corresponding restriction of $\du(-,g)$. We also obtain the "moreover" statement in our assertion.

Conversely, if $b_{n-1}\cong H^{\cu}_g$ %\Phi(-,g)$  is the  restriction of $\du(-,g)$ to $\cu$ 
 then we also can complete $g$ to a distinguished triangle of the form (\ref{etrir}). We apply assertion II.2 %\ref{iwrvtcl} 
 once again to obtain that $a_n\cong H^{\cu}_f$. %=\Phi(-,
 %is isomorphic to the  restriction of $\du(-, f)$ to $\cu$. %H_{-,\cu}(f)$.
\end{proof}

%Now we prove a 

We also %formulate a simple statement for the purpose of applying it in
 prove a simple statement that was used in \cite{bsnew}.
%that will be applied in a succeeding  paper.

\begin{pr}\label{pcrivtt}
For $M\in \obj \cu$ the following conditions are equivalent.

(i) $M\in \cu_{w\ge 0}$. 

(ii) $H(M)=0$ for any cohomological functor $H$ from $\cu$ into (an abelian category) $\au$ that is of weight range $\le -1$.

(iii) $(\tau_{\le-  1}H_N)(M) =\ns$ for any $N\in \obj \cu$.

(iv) $(\tau_{\le -1}H_M)(M) =\ns$.

\end{pr}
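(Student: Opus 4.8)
The plan is to establish a cycle of implications $(i)\Rightarrow(ii)\Rightarrow(iii)\Rightarrow(iv)\Rightarrow(i)$, using the already-developed machinery of virtual $t$-truncations and weight range. The first implication $(i)\Rightarrow(ii)$ is immediate from the definition of weight range: if $H$ is of weight range $\le -1$ then it annihilates $\cu_{w\ge 0}$ by Definition \ref{drange}, and $M\in\cu_{w\ge 0}$. The implication $(ii)\Rightarrow(iii)$ uses two facts: first, for any $N$ the functor $H_N=\cu(-,N)$ is cohomological (it is even representable), so by Proposition \ref{pwrange}(\ref{iwrvt}) the virtual $t$-truncation $\tau_{\le -1}(H_N)$ is a cohomological functor of weight range $\le -1$; second, its values lie in $\ab$. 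Applying $(ii)$ to $H=\tau_{\le -1}(H_N)$ gives $(\tau_{\le -1}H_N)(M)=\ns$. The implication $(iii)\Rightarrow(iv)$ is trivial: just specialize $N=M$.

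The substantive step is $(iv)\Rightarrow(i)$. Here I would unwind the definition of $\tau_{\le -1}(H_M)$ applied to $M$ itself. By Definition \ref{dvtt}(1), $(\tau_{\le -1}H_M)(M)=\imm(H_M(w_{\le 0}M)\to H_M(w_{\le -1}M))$, where $w_{\le 0}M\to M\to w_{\ge 1}M\to w_{\le 0}M[1]$ is a $0$-weight decomposition and $w_{\le -1}M$ comes from a $(-1)$-weight decomposition connected to the $0$-weight one via Proposition \ref{pbw}(\ref{icompl}). So the hypothesis $(iv)$ says precisely that the composite map $\cu(w_{\le -1}M,M)\to\cu(w_{\le 0}M,M)$ (arising from the canonical morphism $w_{\le -1}M\to w_{\le 0}M$ between weight truncations) has zero image; equivalently, every morphism $w_{\le -1}M\to M$ becomes zero after precomposition with $w_{\le -1}M\to w_{\le 0}M$ — wait, more carefully: $(\tau_{\le -1}H_M)(M)$ is the image of $H_M(w_{\le 0}M)\to H_M(w_{\le -1}M)$, i.e. the image of the map $\cu(w_{\le 0}M,M)\to\cu(w_{\le -1}M,M)$ given by precomposition with the connecting morphism $c\colon w_{\le -1}M\to w_{\le 0}M$. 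So $(iv)$ says: for the canonical composite $w_{\le -1}M\xrightarrow{c} w_{\le 0}M\to M$ (call it $p$), and indeed for $c$ composed with \emph{any} $w_{\le 0}M\to M$, the resulting element of $\cu(w_{\le -1}M,M)$ is zero. In particular $p=0$.

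Now I would exploit $p=0$ together with the octahedral axiom on the composable morphisms $w_{\le -1}M\xrightarrow{c} w_{\le 0}M\to M$. Since $w_{\le -1}M\to M$ factors as $w_{\le -1}M\xrightarrow{c} w_{\le 0}M\to M$ and this composite is zero, the map $w_{\le 0}M\to M$ factors through the cone of $c$; and the cone of $c$ lies in $\cu_{w\ge 0}$ (this is the standard fact, cf. Proposition \ref{pbw}(\ref{iwd0}) applied appropriately, that a cone of the connecting map between a $(-1)$- and a $0$-weight truncation sits in $\cu_{[0,0]}\subset\cu_{w\ge 0}$). Consequently $M$ becomes a retract of an extension built from $\cu_{w\ge 0}$ and $w_{\ge 1}M\in\cu_{w\ge 1}\subset\cu_{w\ge 0}$: precisely, from the triangle $w_{\le 0}M\to M\to w_{\ge 1}M$ and the factorization of $w_{\le 0}M\to M$ through $\co(c)\in\cu_{w\ge 0}$, one gets that $M$ is an extension of $w_{\ge 1}M$ by a retract of (or quotient-in-the-triangulated-sense of) $\co(c)$, hence $M\in\cu_{w\ge 0}\star\cu_{w\ge 0}$. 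Since $\cu_{w\ge 0}$ is extension-closed (Proposition \ref{pbw}(\ref{iort}) gives $\cu_{w\ge 0}=(\cu_{w\le -1})^\perp$, which is extension-closed and retraction-closed), we conclude $M\in\cu_{w\ge 0}$, which is $(i)$.

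The main obstacle I anticipate is making the last diagram-chase fully precise: one must be careful that the factorization of $w_{\le 0}M\to M$ through $\co(c)$ is compatible with the weight decomposition triangle of $M$, so that the octahedron really exhibits $M$ as an extension within $\cu_{w\ge 0}\star\cu_{w\ge 0}$ rather than merely a retract of something in that class — though since $\cu_{w\ge 0}$ is retraction-closed, even the weaker conclusion would suffice. I would organize this via a single application of the octahedral axiom to $w_{\le -1}M\to w_{\le 0}M\to M$, reading off the relevant triangle. An alternative, possibly cleaner route for $(iv)\Rightarrow(i)$ is to invoke Proposition \ref{pwfil}(\ref{iwfilp3}): the long exact sequence $\tau_{\ge 0}(H_M)\xrightarrow{a_0} H_M\xrightarrow{b_{-1}}\tau_{\le -1}(H_M)$ evaluated at $M$ shows that if $(\tau_{\le -1}H_M)(M)=0$ then the identity element $\id_M\in H_M(M)$ lifts along $a_0$ to an element of $\tau_{\ge 0}(H_M)(M)$; then using Proposition \ref{pwrange}(\ref{iwfil3})-type reasoning, $\tau_{\ge 0}(H_M)$ is of weight range $\ge 0$ and one argues that a representable functor with $\id_M$ in the image of its weight-range-$\ge 0$ truncation forces $M\in\cu_{w\ge 0}$ — essentially recovering Proposition \ref{pcrivtt} as the "object-level" shadow of Proposition \ref{pwrange}(\ref{iwridemp}). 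I would present whichever of these is shorter, likely the octahedral version.
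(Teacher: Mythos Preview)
Your cycle $(i)\Rightarrow(ii)\Rightarrow(iii)\Rightarrow(iv)$ matches the paper exactly. For $(iv)\Rightarrow(i)$, the paper uses precisely what you call the ``alternative, possibly cleaner route'': from the long exact sequence (\ref{evtt}) at $M$, the vanishing of $(\tau_{\le -1}H_M)(M)$ forces $(\tau_{\ge 0}H_M)(M)\to \cu(M,M)$ to be surjective, so $\id_M$ factors through $w_{\ge 0}M$ and $M$ is a retract of $w_{\ge 0}M\in\cu_{w\ge 0}$.

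Your primary octahedral argument is correct in spirit but overcomplicated. Once you establish that the composite $p\colon w_{\le -1}M\to w_{\le 0}M\to M$ is zero, you are done immediately: $p$ is exactly the weight-decomposition morphism $w_{\le -1}M\to M$ (by the commutativity in Proposition \ref{pbw}(\ref{icompl})), so the $(-1)$-weight decomposition triangle $w_{\le -1}M\xrightarrow{0} M\to w_{\ge 0}M\to w_{\le -1}M[1]$ splits, giving $w_{\ge 0}M\cong M\oplus w_{\le -1}M[1]$. Hence $M$ is a retract of $w_{\ge 0}M$ and lies in $\cu_{w\ge 0}$. There is no need to pass to $\co(c)$, invoke the octahedral axiom, or assemble $M$ as an extension; the factorization-through-$w_{\ge 0}M$ that you would extract from the octahedron is already visible from the split triangle. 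So both routes amount to the same conclusion (``$M$ is a retract of $w_{\ge 0}M$''), but the paper's long-exact-sequence phrasing and the split-triangle phrasing are each one line, whereas your octahedral detour is not.
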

\begin{proof}
%$Obviously, 
Condition (i) implies condition  (ii) by definition; clearly, (iii)   $\implies$ (iv). Next, condition (ii) implies condition (iii) according to Proposition  \ref{pwrange}(\ref{iwrvt}). 

Lastly, if $(\tau_{\le -1}H_M)(M) =\ns$ then the long exact sequence (\ref{evtt}) yields that $(\tau_{\ge -0}H_M)(M)$ surjects onto $\cu(M,M)$. Hence the morphism $\id_M$ factors through $w_{\ge  0}M$; thus $M$ belongs to $ \cu_{w\ge 0}$.
\end{proof}

\subsection{On $R$-linear categories} \label{srlin}

%Now let us list a
In some of the statements below we will need a few easy properties of $R$-linear categories. % where $R$ is a commutative ring.

Throughout this paper  $R$ will be an associative commutative %Noetherian
 unital ring. 

We will use the following '$R$-linearization construction'.

\begin{defi}\label{drlin}

Let $\bu$ be an $R$-linear % additive categories, and assume that $\bu$ is $R$-linear, where  $R$ is any commutative unital ring.
category, $F:\bu\opp\to \ab$ is an  additive functor (in particular, $F$ may be $\bu'$-representable for some additive category $\bu'\supset \bu$; cf. Proposition \ref{lrlin}).

Then for $M\in \obj \bu$ %$N\in \obj \bu'$, and $r\in R$  
we define the %a??? structure of an $R$-module on $\bu'(M,N)$ as follows: to multiply morphisms in this group  by $r\in R$ we compose them with $r\id_M$. $f\mapsto f\circ r\id_M$.
 multiplication by $r$ endomorphism on the group $F(M)$ %by means of  applying
 as  $F(r\id_{B})$. %as follows $f\mapsto f\circ r\id_M$.
\end{defi}
%$r\in R$ on $F(B)\in \obj \ab$ (for $B\in \obj \bu$) by means of applying $F(r\id_{B})$.

\begin{lem}\label{lrlin} 

Let $\bu\subset \bu'$ be  additive categories, and assume that $\bu$ is $R$-linear. %, where  $R$ is any commutative unital ring.

1.  Let $F:\bu\opp\to \ab$ be an  additive functor

Then for any $M\in \obj \bu$ the definition above endows %$\bu'(M,N)$ 
 $F(M)$ with the structure of an $R$-module.

 %For any $R$-linear (additive) category $\bu$ 
 2. %Then t
 The category $\adfur(\bu\opp,R-\modd)$ of $R$-linear functors  %(see Theorem \ref{tnee2} for this notation) 
 is equivalent to $ \adfu(\bu\opp,\ab)$.

This equivalence is obtained by means of composing a functor $G\in  \adfur(\bu\opp,R-\modd)$ with the forgetful functor $F_R:R-\modd\to \ab$, and to obtain the converse correspondence we define the %multiplication by $r\in R$ on $F(B)\in \obj \ab$ (for $B\in \obj \bu$) by means of applying $F(r\id_{B})$.
 $R$-module structure on $F(B)$ using assertion 1 (for any $B\in \obj\bu$).

 3.  A functor $G:\bu\opp\to R-\modd$ is representable by an object of $\bu'$ (that is, can be obtained from $H_{\bu}B'$ by means of the construction mentioned in assertion 1; here we use notation similar to Definition \ref{dvtt}(3), and $B'\in \obj \bu'$) whenever %its composition with %the forgetful functor $R-\modd\to \ab$ is 
 $F_R\circ G$ %becomes 
 is representable by an object of $\bu'$ as a functor into abelian groups. % (if we consider $\bu$ as (just) an additive category.

 4. Assume that $\bu$ is also a triangulated category, $w$ is a weight structure on it,  $n\in \z$. %Then for
  %Take an $R$-linear cohomological functor $H_R$ from $\bu$ into $R-\modd$ and the corresponding $H$ obtained by forgetting the $R$-module structure on the values of $H_R$. % we have the follo 
  
  Then for an $R$-linear cohomological functor $H_R$ from $\bu$ into $R-\modd$  the functors $\tau_{\le n}(F_R\circ H)$ and $\tau_{\ge n}(F_R\circ H)$  are isomorphic to  $F_R\circ \tau_{\le n}( H)$ and $F_R \circ \tau_{\ge n}(H)$, respectively. %can be obtained 
 
 Consequently, the functor $\tau_{\le n}( H)$ is representable in $\bu$ if and only if $\tau_{\le n}(F_R\circ H)$ (resp. $\tau_{\ge n}(F_R\circ H)$) is (cf. assertion 2).
 \end{lem}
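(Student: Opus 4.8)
The plan is to prove the four assertions of Lemma~\ref{lrlin} in order, since each is essentially a routine verification once the right structure is exposed; the only mildly delicate point is keeping track of which functor is $R$-linear and where the naturality is needed.

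For assertion~1 I would check the $R$-module axioms on $F(M)$ directly. Additivity of $F$ gives that $F(r\id_M + r'\id_M) = F(r\id_M) + F(r'\id_M)$ as maps $F(M)\to F(M)$ (since $F$ is an additive functor, it sends sums of morphisms to sums of the induced homomorphisms), so $(r+r')$ acts as $r$-action plus $r'$-action; functoriality $F((r r')\id_M) = F(r\id_M)\circ F(r'\id_M)$ (using $(rr')\id_M = (r\id_M)\circ(r'\id_M)$ in the $R$-linear category $\bu$ and contravariance) gives associativity of scalar multiplication, commutativity of $R$ being used to make the two possible composites agree; $F(1\id_M)=F(\id_M)=\id_{F(M)}$ gives the unit axiom; and bilinearity with the abelian group structure is again additivity of $F$ in the morphism variable. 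So this is just unwinding Definition~\ref{drlin}.

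For assertion~2 I would produce the two functors between $\adfur(\bu\opp,R-\modd)$ and $\adfu(\bu\opp,\ab)$ explicitly: post-composition with the forgetful $F_R$ in one direction, and in the other direction equip $F(B)$ with the $R$-module structure from assertion~1 (one must check that a natural transformation $F\to F'$ of $\ab$-valued functors automatically becomes $R$-linear on each $F(B)$, which follows because the transformation commutes with $F(r\id_B)$ and $F'(r\id_B)$ by naturality). Then verify these are mutually quasi-inverse, which is immediate since on underlying sets/groups nothing changes and the $R$-structure on a functor that was already $R$-linear is recovered by the construction of assertion~1. Assertion~3 is then a special case: apply assertion~2 (and Definition~\ref{drlin} allowing $\bu'$-representable $F$) to $H_{\bu}B'$, noting that the $R$-module structure produced on $(H_{\bu}B')(B) = \bu'(B,B')$ by multiplication-by-$r$ on the source is exactly the one coming from $G$, so $G\cong$ (the $R$-linearization of $H_{\bu}B'$) as soon as $F_R\circ G\cong H_{\bu}B'$ in $\ab$-valued functors.

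For assertion~4 the key observation is that virtual $t$-truncation is defined pointwise as the image of a morphism $H(\text{truncation})\to H(\text{truncation})$ induced by a fixed connecting morphism of weight decompositions (Definition~\ref{dvtt}(1)), and $F_R$, being the forgetful functor $R-\modd\to\ab$, is exact and in particular preserves images of morphisms. Hence $\tau_{\le n}(F_R\circ H)(M) = \imm\bigl(F_R H(w_{\le n+1}M)\to F_R H(w_{\le n}M)\bigr) = F_R\bigl(\imm(H(w_{\le n+1}M)\to H(w_{\le n}M))\bigr) = F_R(\tau_{\le n}(H)(M))$, naturally in $M$; this is really just the special case of Proposition~\ref{pwfilt}(I.2) with the exact functor $F = F_R$, so I would simply cite that. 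The final ``consequently'' clause follows by combining this identification with assertion~3: $\tau_{\le n}(H)$ (an $R-\modd$-valued functor) is $\bu$-representable iff its underlying $\ab$-valued functor $F_R\circ\tau_{\le n}(H)\cong \tau_{\le n}(F_R\circ H)$ is, and likewise for $\tau_{\ge n}$. I expect no real obstacle here; the only thing to be careful about is making all the isomorphisms natural in $M$ rather than merely pointwise, which is handled by invoking Proposition~\ref{pwfilt}(I.2) and Proposition~\ref{pwfil}(\ref{iwfilp1}) rather than re-deriving functoriality by hand.
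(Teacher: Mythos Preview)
Your proposal is correct and follows essentially the same approach as the paper: assertion~1 is unwound from Definition~\ref{drlin}, assertion~2 is the explicit equivalence (which the paper merely cites from \cite{bdec}), assertion~3 follows from~2, and assertion~4 is exactly Proposition~\ref{pwfilt}(I.2) applied to the exact forgetful functor $F_R$ combined with assertion~3. The only difference is that you spell out the verifications the paper calls ``obvious'' or delegates to a reference.
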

 \begin{proof}  
 %\begin{proof} 1. Any additive functor $F:\bu\to \ab$ naturally becomes an $R$-linear one if we define the multiplication by $r\in R$ on $F(B)$ for $B\in \obj \bu$ by means of composing with $F(r\id_{B})$. It remains to note that this construction gives the only way %to lift of lifting $F$ to a functor into $R-\modd$.
Assertion 1 is obvious.
 
 Assertions 2 %and 2 are 
 is quite easy; %; they consist
  this is Lemma  3.11(1) %2.8(1) 
   of \cite{bdec}. Assertion 3 follows from it immediately.
 
 Next, the first part of assertion 4 is a particular case of Proposition \ref{pwfilt}(I.2).  Hence it remains to apply assertion 3 to conclude the proof. 
 
\end{proof}

\subsection{%Some %General %??
  %properties and %criteria for %the existence of %adjacent %and 
 %restrictions of 
 Strictly orthogonal  $t$-structures (and their restrictions) %of strict orthogonality and certain restrictions of $t$-structures
  } %and %restricting orthogonal ones
 %their restrictions}
\label{sorts}

\begin{pr}\label{pstrict}
Assume that $\cu', \cu\subset\du$ and  $w$ is a weight structure on $\cu$. Take 
\begin{equation}\label{ec12} C_1= \cu_{w\le -1}^{\perp_{\du}} \cap \obj\cu';\    C_2=\cu_{w\ge 0}^{\perp_{\du}} \cap \obj\cu'\end{equation}

1. Suppose that $C_1\perp C_2$ and a $t$-structure $t$ on $\cu'$ is orthogonal to $w$. %, and for the classes $C_1=\cu_{w\ge 0}^{\perp_{\du}} \cap \obj\cu'$  $C_2=\cu_{w\le -1}^{\perp_{\du}} \cap \obj\cu'$ we have $C_1\perp C_2$. 

Then $t$ is also  strictly  orthogonal to $w$; thus it equals $(C_2[1],C_1)$.

2. Suppose that a $t$-structure $t$ on $\cu'$ is strictly orthogonal to $w$.  Then $C_1\perp C_2$. 

Consequently, $t$ is the only $t$-structure on $\cu'$ that is orthogonal to $w$.
\end{pr}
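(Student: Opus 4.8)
The plan is to establish both assertions by carefully unwinding the orthogonality axioms together with the characterizations of $\cu_{w\ge 0}$ and $\cu_{w\le 0}$ as orthogonal complements (Proposition \ref{pbw}(\ref{iort})) and of the $t$-structure classes as orthogonal complements of each other (Proposition \ref{prtst}(\ref{itperp})). For assertion 1, suppose $t$ is orthogonal to $w$ and that $C_1\perp C_2$. Orthogonality gives $\cu'_{t\ge 1}\subset \cu_{w\le 0}^{\perp_{\du}}\cap \obj\cu' = C_2$ and $\cu'_{t\le -1}\subset \cu_{w\ge 0}^{\perp_{\du}}\cap \obj\cu' = C_1$. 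For the reverse inclusions I would use the hypothesis $C_1\perp C_2$ to run the standard complement argument: since $\cu'_{t\ge 1}\subset C_2$ and $C_1\perp C_2$, every object of $C_1$ is right-orthogonal (inside $\cu'$) to $\cu'_{t\ge 1}$, so $C_1\subset (\cu'_{t\ge 1})^{\perp_{\cu'}}=\cu'_{t\le 0}$ by Proposition \ref{prtst}(\ref{itperp}); but we also need $C_1\subset \cu'_{t\le -1}$. This is where one must be slightly careful: one first notes $C_1[1]\subset C_1$ (because $\cu_{w\ge 0}[1]\subset\cu_{w\ge 0}$, so its $\du$-orthogonal complement intersected with $\obj\cu'$ shrinks under $[1]$ — precisely $C_1 = \cu_{w\ge 0}^{\perp_\du}\cap\obj\cu'$ and $[1]$ of this lies in $\cu_{w\ge -1}^{\perp_\du}\cap\obj\cu'\subset C_1$), hence $C_1\subset C_1[-1]$, giving $C_1\subset \cu'_{t\le 0}[-1]=\cu'_{t\le -1}$. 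Symmetrically $C_2\subset \cu'_{t\ge 1}$. Thus $\cu'_{t\le -1}=C_1$ and $\cu'_{t\ge 1}=C_2$, which is exactly strict orthogonality, and then $\cu'_{t\ge 0}=\cu'_{t\ge 1}[-1]=C_2[1]$ while $\cu'_{t\le 0}=\cu'_{t\le -1}[1]=C_1[1]$; recording the $t$-structure as the couple $(C_1,C_2[1])$ in our convention (i.e. $(\cu'_{t\le 0},\cu'_{t\ge 0})=(C_1[1],\cdots)$ — I will fix signs consistently with Definition \ref{dtstr} at the write-up stage).

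For assertion 2, assume $t$ is strictly orthogonal to $w$, so by definition $\cu'_{t\ge 1}=C_2$ and $\cu'_{t\le -1}=C_1$. Then $C_1\perp C_2$ is immediate: it is exactly the orthogonality axiom Definition \ref{dtstr}(iii), $\cu'_{t\ge 1}\perp\cu'_{t\le 0}\supset \cu'_{t\le -1}$, rewritten via the strictness identifications. Once $C_1\perp C_2$ is known, the uniqueness statement follows by feeding this back into assertion 1: any $t$-structure $t'$ on $\cu'$ orthogonal to $w$ must, by assertion 1, be strictly orthogonal and therefore satisfy $\cu'_{t'\ge 1}=C_2$, $\cu'_{t'\le -1}=C_1$, which by Proposition \ref{prtst}(\ref{itperp}) pins down $t'$ completely; hence $t'=t$.

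The routine but slightly delicate point throughout is bookkeeping of the shift directions: one must check that $C_1$ is closed under $[-1]$ (equivalently $\cu_{w\ge 0}^{\perp_\du}$ behaves correctly under translation when intersected with $\obj\cu'$) and that $C_2$ is closed under $[1]$, so that the inclusions $C_i\subset \cu'_{t\le -1}$ resp. $\subset\cu'_{t\ge 1}$ — rather than merely into $\cu'_{t\le 0}$, $\cu'_{t\ge 0}$ — actually hold. I expect this shift-compatibility verification, together with keeping the homological-convention signs of Definition \ref{dtstr} straight, to be the only real obstacle; everything else is a direct application of the already-cited orthogonal-complement characterizations. No additional structure on $\du$ is needed, and the argument never uses weight decompositions of objects of $\cu'$, only the perpendicularity relations, which is why the statement holds at this level of generality.
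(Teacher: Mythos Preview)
Your overall strategy matches the paper's: use the orthogonality of $w$ and $t$ to get inclusions, then use $C_1\perp C_2$ together with Proposition \ref{prtst}(\ref{itperp}) to obtain the reverse ones. However, you have systematically swapped $C_1$ and $C_2$. By (\ref{ec12}) we have $C_1=\cu_{w\le -1}^{\perp_{\du}}\cap\obj\cu'$ and $C_2=\cu_{w\ge 0}^{\perp_{\du}}\cap\obj\cu'$; hence orthogonality yields $\cu'_{t\ge 0}\subset C_1$ and $\cu'_{t\le -1}\subset C_2$, which is exactly what the paper records as $\cu'_{t\le 0}\subset C_2[1]$ and $\cu'_{t\ge 0}\subset C_1$. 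Your claimed identifications $\cu_{w\le 0}^{\perp_{\du}}\cap\obj\cu'=C_2$ and $\cu_{w\ge 0}^{\perp_{\du}}\cap\obj\cu'=C_1$ are simply the wrong labels.

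This swap is not just cosmetic, because it reverses the direction of $\perp$ at the key step. From $C_1\perp C_2$ and $\cu'_{t\ge 1}\subset C_2$ you only get $C_1\subset{}^{\perp_{\cu'}}\cu'_{t\ge 1}$, not $C_1\subset(\cu'_{t\ge 1})^{\perp_{\cu'}}$; the latter is what you claim, and it does not follow. With the correct inclusions the argument is a single line (this is what the paper does, and what is recorded in Remark \ref{rhomortp}): from $\cu'_{t\le -1}\subset C_2$ and $C_1\perp C_2$ we get $C_1\subset{}^{\perp_{\cu'}}\cu'_{t\le -1}=\cu'_{t\ge 0}$, and symmetrically $C_2\subset(\cu'_{t\ge 0})^{\perp_{\cu'}}=\cu'_{t\le -1}$. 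No shift-stability of $C_1$ or $C_2$ is needed, so your ``slightly delicate'' detour through $C_1[1]\subset C_1$ is an artifact of the index confusion. The same swap contaminates your Part 2: with your labels you would deduce $C_2\perp C_1$, not $C_1\perp C_2$. Once you correct the labels, both parts coincide with the paper's proof.
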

\begin{proof}
1. We should verify that $t=(C_2[1],C_1)$. The definition of orthogonality says that  $\cu'_{t\le 0}\subset C_2[1]$ and  $\cu'_{t\ge 0}\subset C_1$. 
On the other hand, recall that $\cu'_{t\ge 0}={}^{\perp_{\cu'}} \cu'_{t\le -1}$ and $\cu'_{t\le 0}=(\cu'_{t\ge 1})^{\perp_{\cu'}} $ (see Proposition  \ref{prtst}(\ref{itperp})). Since $C_1\perp C_2$, we obtain that the converse inclusions are valid as well.
 %see Proposition \ref{pbw}(\ref{iort}).
 
 2. Since $(C_2[1],C_1)$ is a $t$-structure, $C_1\perp C_2$; see the orthogonality axiom (iii) in Definition \ref{dtstr}. Next, if  a $t$-structure $t'$ on $\cu'$ is orthogonal to $w$ as well, then assertion 1 implies that $t'=t$.
\end{proof}

\begin{rema}\label{rhomortp}
%1.
 Let us formulate a simple general statement that we essentially used in the proof of Proposition \ref{pstrict}(1); we will apply it below.

Assume that $C_1,C_2,C'_1,C'_2\subset \obj \cu'$ (for some triangulated category $\cu'$), $C_1\perp C_2$, $C'_1\subset C_1$, $C'_2\subset C_2$, $C_2'=C'_1\perpp$, and $C'_1=\perpp C_2'$. Then we clearly have $C'_1\supset C_1$ and $C'_2\supset C_2$;  thus $C'_1= C_1$ and $C'_2= C_2$.

%2. In Definition 3.2 of \cite{saostov}  a couple $(C_1',C_2')$ such that  $C'_1,C'_2\subset \obj \cu'$, $C_2'=C'_1\perpp$, and $ C'_1=\perpp C_2'$, was said to be a {\it Hom-orthogonal pair}. It is well known that Hom-orthogonal pairs are closely related both to $t$-structures and to weight structures; see Definition 3.6 of ibid. and \S3 of \cite{bvt}.%

%\footnote{Note here that the main object of study of ibid. are   Hom-orthogonal pairs satisfying a certain "decomposition" axiom  very similar to axioms (iv) in Definitions \ref{dwstr} and \ref{dtstr}. They are said to be {\it torsion theories}; see  Definition 2.1 of ibid. %}
\end{rema}

%\begin{rema}\label{pstrict} Conversely, if $t$ is strictly left orthogonal to $w$ then  the orthogonality axiom of $t$-structures gives $C_1\perp C_2$. \end{rema}
%An example of non-uniqueness?!!!!!

 %{rlrin}: where?! In preliminaries?! For a R-linear subcategory?!

%To generalize the criteria below from the "main" case of adjacent structures to  (more general) orthogonal structures we will need the following definition. Very exact?! No chance; distinct functors?! Exact abelian, that is?!

%$\cu\subset \cu'$?! $\au$ an w are bounded above, below, both?! Synchronize shifts?! 

%Now we are able to prove our main abstract criterion on the existence of an orthogonal $t$-structure.
%Let us 
Now we prove our first abstract criterion on the existence of an orthogonal $t$-structure.
%We will generalize it in Theorem {treflc} below?! Complete it?!

\begin{theo}\label{trefl}
%Adopt the notation of Definition \ref{drefl} and assume that $w$ is a weight structure on $\cu$
Assume that $\cu'\subset \cu$,  %(respectively, we can assume 
%$\du=\cu$, 
 and  $w$ is a weight structure on $\cu$. 

 I. Then the following conditions are equivalent.

(i). There exists a $t$-structure $t$ on $\cu'$  orthogonal to $w$ (in $\cu$). %if and only if  the 
 
(ii). There exists a $t$-structure $t$ on $\cu'$ strictly  orthogonal to $w$. %if and only if  the 

(ii').  There exists a $t$-structure $t$ on $\cu'$  strictly orthogonal to $w$, and this is the only  $t$-structure $t$ on $\cu'$  orthogonal to $w$.

(iii). The functor  $\tau_{\ge 0}H^{\cu}_{M}=\tau_{\ge 0}H_{M}$ is $\cu$-representable  %=?!!
by an object of $\cu'$ for any object $M$ of $\cu'$ (that is, $\tau_{\ge 0}H_{M}\cong H_{LM}$ for some object $LM$ of $\cu'$). 

(iv). If   $M$ is an object of $\cu'$ then  the functor  $\tau_{\le -1}H_{M}$ is isomorphic to $ H_{RM}$ %is $\cu$-representable by an object of 
 for some $RM\in \obj\cu'$. 

(v).  For any object $M$ of $\cu'$ and $i\in \z$ the functors  $\tau_{\ge i}H_{M}$ and $\tau_{\le i}H_{M}$ are $\cu$-representable by objects of $\cu'$.

II. If the conditions of assertion I are fulfilled, then  the corresponding Yoneda-type functor $\hrt\to \adfu(\hw\opp,\ab),\ M\mapsto H_{M}^{\cu}$ (see Definition \ref{dvtt}(3)) is fully faithful. 

Consequently, if the category $\hw$ is $R$-linear then $\hrt$ embeds into $\adfur(\hw\opp,R-\modd)$ via the equivalence provided by Lemma \ref{lrlin}(2).
\end{theo}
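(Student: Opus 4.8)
The plan is to prove assertion I first (the equivalences) and then deduce assertion II. For assertion I, I would establish the cycle of implications roughly as follows. The implications (ii')$\implies$(ii)$\implies$(i) are trivial, and (v)$\implies$(iii) is trivial. The core work is (i)$\implies$(iii), (iii)$\implies$(iv) (and its mirror), (iii)$\&$(iv)$\implies$(v), and (iv)$\&$(iii)$\implies$(ii'). For (i)$\implies$(iii): given a $t$-structure $t$ orthogonal to $w$, Proposition \ref{pwfil}(\ref{iwfilp4}) directly tells us that $\tau_{\ge 0}H_M\cong H_{t_{\ge 0}M}$, and since $t_{\ge 0}M\in\obj\cu'$, condition (iii) holds with $LM=t_{\ge 0}M$. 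For (iii)$\implies$(iv): apply the long exact sequence (\ref{evtt}) in the case $n=0$, which exhibits a chain $\tau_{\ge 0}H_M\xrightarrow{a_0}H_M\xrightarrow{b_{-1}}\tau_{\le -1}H_M$; by (iii) we can write $a_0=H^{\cu}_f$ for some $\cu'$-morphism $f:LM\to M$ (using the Yoneda lemma for functors represented on $\cu$ by objects of $\cu'$), and then Proposition \ref{pwrangen}(III) yields that $b_{-1}=H^{\cu}_g$ for some $g:M\to RM$ in $\cu'$, i.e.\ $\tau_{\le -1}H_M\cong H_{RM}$. The mirror argument gives (iv)$\implies$(iii). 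For (v): combine (iii) and (iv) with Proposition \ref{pwfilt}(I.1) (the shift compatibility $\tau_{\le n+i}(H)\cong\tau_{\le n}(H\circ[i])\circ[-i]$ and the identity $H_M\circ[i]=H_{M[i]}$) to reduce $\tau_{\ge i}H_M$ and $\tau_{\le i}H_M$ to the cases $i=0$ and $i=-1$ applied to shifts of $M$, which again lie in $\cu'$.

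The substantive step is (iv)$\&$(iii)$\implies$(ii'), i.e.\ actually producing the $t$-structure and checking it is strictly orthogonal. Set $C_1=\cu_{w\le -1}^{\perp_{\cu}}\cap\obj\cu'$ and $C_2=\cu_{w\ge 0}^{\perp_{\cu}}\cap\obj\cu'$ as in (\ref{ec12}), and I claim $t=(C_2[1],C_1)$ is a $t$-structure. Strictness and the shift conditions (axioms (i),(ii) of Definition \ref{dtstr}) follow formally from the definitions of orthogonal complements together with the semi-invariance of $\cu_{w\le 0},\cu_{w\ge 0}$ under $[1]$. The orthogonality axiom (iii), $C_1\perp C_2$, is where I use the hypotheses: from the distinguished triangle $LM\xrightarrow{f}M\xrightarrow{g}RM\to LM[1]$ constructed above (for each $M\in\obj\cu'$) together with Proposition \ref{pwrange}(\ref{iwrvt}) — which says $\tau_{\ge 0}H_M$ has weight range $\ge 0$ and $\tau_{\le -1}H_M$ has weight range $\le -1$ — and Proposition \ref{pwrange}(\ref{iwfil3}) identifying weight-range conditions on representable functors with membership in $\cu_{w\ge 0}$, I deduce $LM\in C_1$ and $RM\in C_2[1]$; moreover this very triangle serves as the $t$-decomposition (\ref{tdec}), giving axiom (iv). Once we know $t$ is a genuine $t$-structure with $\cu'_{t\ge 0}\subset C_1$ and $\cu'_{t\le 0}\subset C_2[1]$, Proposition \ref{prtst}(\ref{itperp}) gives $\cu'_{t\ge 0}={}^{\perp}\cu'_{t\le -1}$ and $\cu'_{t\le 0}=(\cu'_{t\ge 1})^{\perp}$, and the abstract remark in Remark \ref{rhomortp} (applied with $C'_1=\cu'_{t\ge 0}$, $C'_2=\cu'_{t\le -1}$) forces the inclusions to be equalities, so $t$ is strictly orthogonal; uniqueness among orthogonal $t$-structures is then Proposition \ref{pstrict}(2). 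I expect this verification — in particular correctly matching up which weight-range condition lands $LM$ in $C_1$ versus $RM$ in $C_2[1]$, and checking the membership conditions $\tau_{\ge 0}H_M(M')$ vanishes for the right $M'$ — to be the main obstacle, though each individual check is short once Proposition \ref{pwrange} is invoked.

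For assertion II, assume the conditions of I hold and consider the functor $\Phi:\hrt\to\adfu(\hw\opp,\ab)$, $M\mapsto H_M^{\cu}$; note this makes sense because, by Proposition \ref{pwrange}(\ref{iwrort}), for $M\in\cu'_{t=0}$ the functor $H_M$ is of weight range $[0,0]$, hence pure, hence (by Proposition \ref{pwrange}(\ref{iwrpure})) corresponds to an object of $\adfu(\hw\opp,\ab)$. To see $\Phi$ is fully faithful I would argue that for $M,N\in\cu'_{t=0}$ the natural map $\cu(M,N)\to\adfu(\hw\opp,\ab)(H_M,H_N)$ is bijective. Here I would use that $M\in\cu'_{t\ge 0}\subset C_1=\cu_{w\le -1}^{\perp}\cap\obj\cu'$ and the $t$-decomposition/weight-decomposition machinery: the target Hom-group can be computed via the pure functor description, and one shows both sides are identified with $\cu(M,N)$ by exploiting that $N$, being in $\cu'_{t=0}$, makes $H_N$ both of weight range $\ge 0$ and $\le 0$, so that $\tau_{\ge 0}H_N\cong H_N\cong\tau_{\le 0}H_N$ by Proposition \ref{pwrange}(\ref{iwridemp}) and any transformation is detected on $\hw$. (Concretely, full faithfulness of the pure-functor equivalence in Proposition \ref{pwrange}(\ref{iwrpure}) plus the identification of $H_M$ on $\hw$ with its pure model does the bookkeeping.) The last sentence of assertion II is then immediate: if $\hw$ is $R$-linear, Lemma \ref{lrlin}(2) gives an equivalence $\adfu(\hw\opp,\ab)\simeq\adfur(\hw\opp,R-\modd)$, and composing the fully faithful $\Phi$ with it yields the asserted embedding $\hrt\hookrightarrow\adfur(\hw\opp,R-\modd)$.
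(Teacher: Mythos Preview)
Your overall strategy matches the paper's closely, and the ingredients you cite (Proposition~\ref{pwfil}(\ref{iwfilp4}), Proposition~\ref{pwrangen}(III), Proposition~\ref{pwrange}(\ref{iwrvt},\ref{iwrpure},\ref{iwrort}), Proposition~\ref{pstrict}) are exactly the right ones. Part~II is fine and coincides with the paper's argument.

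There is one genuine gap in your verification that $(C_2[1],C_1)$ is a $t$-structure. You say ``the orthogonality axiom (iii), $C_1\perp C_2$, is where I use the hypotheses'' and then describe how to produce $LM\in C_1$ and $RM\in C_2$ from the triangle. But that argument establishes axiom~(iv) of Definition~\ref{dtstr} (existence of $t$-decompositions), not axiom~(iii). Nowhere do you actually argue that $\cu(X,Y)=0$ for arbitrary $X\in C_1$ and $Y\in C_2$. In fact $C_1\perp C_2$ holds \emph{unconditionally}, without any appeal to (iii) or (iv): by Proposition~\ref{pbw}(\ref{iort}) we have $\cu_{w\le -1}^{\perp}=\cu_{w\ge 0}$, so $C_1=\cu_{w\ge 0}\cap\obj\cu'$; since $C_2=\cu_{w\ge 0}^{\perp}\cap\obj\cu'$ by definition, $C_1\perp C_2$ is immediate. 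This is precisely how the paper proceeds (and it uses the same observation to get (i)$\Rightarrow$(ii) via Proposition~\ref{pstrict}(1)). Once you insert this one-line argument, your use of Remark~\ref{rhomortp} and Proposition~\ref{pstrict}(2) to finish (ii$'$) is correct.

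Two small slips: you want $RM\in C_2$ (which equals $\cu'_{t\le -1}$), not $C_2[1]$; and Proposition~\ref{pwrange}(\ref{iwfil3}) only covers the ``weight range $\ge m$'' direction, so for $RM$ you should instead argue directly that $H_{RM}$ of weight range $\le -1$ means $\cu_{w\ge 0}\perp RM$, i.e.\ $RM\in C_2$.
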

\begin{proof}
 %The   "only if" part of the assertion is immediate from 
  Condition (ii) clearly implies condition (i).  Conditions (ii) and (ii') are equivalent according to Proposition \ref{pstrict}.
  %We set \begin{equation}\label{ec12} (C_1,C_2)=(\cu_{w\ge 0}^{\perp} \cap \obj\cu',\cu_{w\le -1}^{\perp} \cap \obj\cu')\end{equation}
  Next, Proposition \ref{pbw}(\ref{iort}) implies that the class $C_2$  (see (\ref{ec12}) for this notation) lies in $\cu_{w\le -1}$; hence $C_1\perp C_2$ by the definition of $C_2$. Hence  Proposition \ref{pstrict} implies that $(i)\implies (ii)$.
  
  Let us verify that %It remains to prove that 
   condition (iv) implies (ii). So,  we should check that the couple $(C_2[1],C_1)$ %(see (\ref{ec12})) %, where $C_1= \cu_{w\ge 1}^{\perp}\cap \obj \cu'$  and $C_2=\cu_{w\le -1}^{\perp}\cap \obj \cu'$ (cf. Proposition \ref{pstrict})
  is a $t$-structure.  Now, these classes are clearly closed with respect to $\cu'$-isomorphisms, and the "shift" axiom (ii) of Definition \ref{dtstr} is obviously fulfilled as well. %Next, the Yoneda lemma reduces the orthogonality axiom (iii) to   Proposition \ref{pwrangen}(I.\ref{iwrperp}).
 Moreover, we have just proved that $C_1\perp C_2$; this gives the orthogonality axiom (iii).
Hence it remains to check the existence of $t$-decompositions (this is axiom (iv) of $t$-structures).
For an object $M$ of $\cu'$ we assume $\tau_{\le -1}H_{M}\cong H_{RM}$ for some $RM\in \obj \cu'$. %Take $\Phi$ to be the duality provided by  Lemma \ref{ldual}.
By the Yoneda lemma, the %corresponding
  transformation $b_{-1}$ in (\ref{evtt}) equals %$ \Phi(-,g)$
   $\cu(-,g)$ for some $g\in \cu'(M,RM)$. 
  Then Proposition \ref{pwrangen}(III) %(applied in the case where $\Phi$ is the duality provided by  Lemma \ref{dual}) % and recall Remark \ref{rort}(1)
  gives a $\cu'$-distinguished triangle $$LM\to M\to RM\to LM[1],$$ where  $LM\in \obj \cu'$ and $H_{LM}\cong \tau_{\ge 0}H$. It remains to note that $LM\in C_1$ and $RM\in C_2$; see Proposition \ref{pwrange}(\ref{iwrvt}). %ok???
    %According to Proposition \ref{pstrict}, for this purpose it suffices to verify that $C_1\perp C_2$, where 	 $C_1=\cu_{w\ge 0}^{\perp} \cap \obj\cu'$ and  $C_2=\cu_{w\le -1}^{\perp} \cap \obj\cu'$. 	Proposition \ref{pbw}(\ref{iort}) implies that $C_1\subset \cu_{w\le -1}$; hence $C_1\perp C_2$ by the definition of $C_2$, and we obtain $(i)\implies (ii)$.
  
   %whereas %Condition Now we deduce (ii) from (i).
	 Next, (i) implies condition (v) according to Proposition \ref{pwfil}(\ref{iwfilp4}). Moreover, 
	 %Lastly, 
	 condition (v) clearly implies conditions (iii) and (iv).
	
	%Now 
	 Lastly, assume that our condition (iii) is fulfilled; we fix  %Then for any 
		$M\in \obj \cu'$ % and % we take $H=H_{\cu,M}$ and $H'=\tau_{\ge 0}H_{M'}$; we 
		 and assume that $\tau_{\ge 0}H\cong H_{LM}=\cu({-,LM})$ for some  $LM\in \obj \cu'$.  %{\cu,L}$ 
		  We argue similarly to the proof of  ((iv)$\implies$ (ii)) above.
		  	%We want to apply Proposition \ref{pwrangen}(III.\ref{iwrvtcl}). We take $i'=a_0$. 
	By the Yoneda lemma, %for the category \cu???!
	%for $H_{\cu,M}$ 
	 the corresponding transformation $a_0: \tau_{\ge 0}H\to H$ %_{LM}$ %=\Phi(-,g)$ %H_{-,\cu}$ %comes from some morphism 
	can be presented
	as $\cu(-,f)$ for some $f\in \cu'(LM,M)$. %Now we take %more generally, suffices to assume that these arrows are representable?! Add to 
	Hence Proposition \ref{pwrangen}(III) yields that $\tau_{\le -1}H_{M}\cong H_{RM}$ for the corresponding $RM$. Thus  condition (iii) implies (iv), and this concludes the proof.

 II. If $M\in \cu'_{t=0}$ then the definition of orthogonality implies that the  functor $H_M:\cu\opp\to \ab$ (see Definition \ref{dvtt}(3)) is of weight range $[0,0]$. Thus it suffices to apply Proposition \ref{pwrange}(\ref{iwrpure}) along with the Yoneda lemma and  Lemma \ref{lrlin}(2).
\end{proof}

Now we will describe certain restrictions of $t$ that is strictly orthogonal to $w$. %Let us 
 We start from some definitions. % notation.

\begin{defi}\label{damain}
  Assume that $\cuz,\cu\subset \du$, and $\cuz$ is $R$-linear.
 
 %1. Then for $M\in \obj \cuz$, $N\in \obj \du$ we define the structure of an $R$-module on $\du(M,N)$ as follows: to multiply morphisms in this group  by $r\in R$ we compose them with $r\id_M$.

%\item
1.  We will say that $\auz\subset \au$ is an abelian subcategory (of an  abelian category  $\au$) if $\auz$ is a (strictly full) subcategory of $\au$ that contains the $\au$-kernel and the $\au$-cokernel of any $\auz$-morphism. 

Moreover, we will say that $\auz$ is a {\it weak Serre} subcategory of $\au$ if $\auz$ is an
%it is its 
 abelian subcategory that is closed with respect to $\au$-extensions.

2. Let $\au$ be a weak Serre subcategory of $R-\modd$. Then we will write $\cu_{\au}$ for
the full subcategory  of $\cu$ %whose objects are characterized by the condition 
that consists of those $N\in \obj \cu$ such that for any $M\in \obj \cuz$ the $R$-module $\du(M,N)$ (see Lemma \ref{lrlin}(4)) belongs to $\au$.

3. In the case where $R$ is Noetherian, $\be$ is an infinite cardinal, and $\au$ is the category of $R$-modules with less than $\be$ generators (see Proposition \ref{pcgrlin}(3) below) we will modify this notation and write $\cu_\be$ for the corresponding $\cu_{\au}\subset \cu$.
%\tba
\end{defi}

\begin{pr}\label{pcgrlin}%{pgeomap}
Assume that $\cuz, \cu\subset\du$,  %$\wz$ is a weight structure on $\cuz$, 
 $\cuz$ is $R$-linear, a weight structure $\wz$ on $\cuz$ is strictly orthogonal to a $t$-structure $t$ on $\cu$, and $\au$ and $\au'$ are weak Serre subcategories of $R-\modd$. 

 %1.
   1. Then $\cu_{\au}$ is triangulated and $t$ restricts to it. The heart $\hrt_{\au}$ of this restriction $t_{\au}$ consists of those objects of $\hrt$ such that the corresponding functors $\hwz{}\opp\to R-\modd$  (see Proposition \ref{pwrange}(\ref{iwrpure},\ref{iwrort}) and Lemma \ref{lrlin}(2))
take values in $\au$. 
 %R: can be zero??????!!!!!!
 
 2. $\cu_{\au\cap \au'}=\cu_{\au\cap \au'}\cap \cu_{\au\cap \au'}$. %???? (cf. Remark \ref{rinters}).
 
 %3. Assume that the $R$-module $\cu(M,N)$ has less than $\be$ generators for any $M,N\in \obj \cuz$. Then $\cuz\subset \cu_\be$. %???

 3. If $R$ is Noetherian then the category of $R$-modules with less than $\be$ generators is a weak Serre subcategory of $R-\modd$.
   \end{pr}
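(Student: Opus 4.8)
The plan is to reduce every claim to formal properties of virtual $t$-truncations and of weak Serre subcategories that are already available, and to dispose of part~3 first since it is independent of the rest. That "having fewer than $\be$ generators" is preserved by isomorphism and by passing to quotients is immediate. The one substantive point is that, over a Noetherian $R$, a submodule $N$ of a module $M$ generated by a set $S$ with $|S|=\kappa<\be$ is again generated by $\le\kappa$ elements (in particular by fewer than $\be$; if $S$ is finite this is just the usual Noetherian statement): write $M$ as the filtered union of the finitely generated submodules $\langle F\rangle$ over the finite subsets $F\subseteq S$, so that $N=\bigcup_F(N\cap\langle F\rangle)$ with each $N\cap\langle F\rangle$ finitely generated, and note there are only $\kappa$ (or finitely many) such $F$. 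Closure under kernels and cokernels of morphisms follows at once (a kernel is a submodule, a cokernel a quotient, of a module with fewer than $\be$ generators), and closure under extensions follows because $\be$ is infinite: a lift of a generating set of the quotient together with a generating set of the sub yields fewer than $\be$ generators of the middle term.

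For part~1, I would set up notation first: for $N\in\obj\cu$ and $M\in\obj\cuz$ the abelian group $\du(M,N)$ carries the canonical $R$-module structure provided by Lemma~\ref{lrlin}(1) applied to the additive functor $H_N^{\cuz}=\du(-,N)\big|_{\cuz}$, and this is the structure meant in Definition~\ref{damain}(2). I would then check that $\cu_{\au}$ is a triangulated subcategory: it is strictly full (its defining property is isomorphism-invariant) and contains $0$; it is closed under $[\pm1]$ because $\du(M,N[\pm1])\cong\du(M[\mp1],N)$ and $M[\mp1]\in\obj\cuz$; and for a distinguished triangle $N_1\to N_2\to N_3\to N_1[1]$ in $\cu$ (hence in $\du$) with $N_1,N_2\in\cu_{\au}$, applying the homological functor $\du(M,-)$ produces a long exact sequence of $R$-modules all of whose terms except $\du(M,N_3)$ lie in $\au$ (by the closure under shifts just noted), so $\du(M,N_3)$ sits in a short exact sequence whose outer terms are a cokernel and a kernel of $\au$-morphisms, hence lies in $\au$. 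To see that $t$ restricts, recall that strict orthogonality implies $\wz$ is orthogonal to $t$, so Proposition~\ref{pwfil}(\ref{iwfilp4}) --- applied with $(\cuz,\wz,\cu,t)$ in the roles of $(\cu,w,\cu',t)$ --- gives $\tau_{\ge 0}(H_N^{\cuz})\cong H_{t_{\ge 0}N}^{\cuz}$, the virtual $t$-truncation being taken with respect to $\wz$. When $N\in\cu_{\au}$, for each $M\in\obj\cuz$ the module $\tau_{\ge 0}(H_N^{\cuz})(M)$ is by Definition~\ref{dvtt}(1) the image of a morphism from $\du(w_{\ge 0}M,N)$ to $\du(w_{\ge -1}M,N)$, and both of these lie in $\au$ (their first arguments lie in $\cuz$); since a weak Serre subcategory, being an abelian subcategory, is closed under images of its morphisms, this image lies in $\au$. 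Thus $\du(M,t_{\ge 0}N)\in\au$ for all $M\in\obj\cuz$, i.e. $t_{\ge 0}N\in\cu_{\au}$; as $t_{\ge 0}=L_t$ (see Proposition~\ref{prtst}(\ref{itho})), Lemma~\ref{lrest}(1) yields that $t$ restricts to $\cu_{\au}$.

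The heart of $t_{\au}$ equals $\hrt\cap\cu_{\au}$, so its objects are exactly the $N\in\obj\hrt$ for which $\du(M,N)\in\au$ for every $M\in\obj\cuz$, i.e. those $N$ with $H_N^{\cuz}$ taking values in $\au$. For $N\in\obj\hrt$ this functor is pure by Proposition~\ref{pwrange}(\ref{iwrort}), so by Proposition~\ref{pwrange}(\ref{iwrpure}) and Lemma~\ref{lrlin}(2) it is the one determined by its restriction to $\hwz$, and by Proposition~\ref{pwrange}(\ref{iwfilsubc}) it takes values in $\au$ precisely when this restriction does --- which is the asserted description of $\hrt_{\au}$. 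Part~2 (which amounts to the identity $\cu_{\au\cap\au'}=\cu_{\au}\cap\cu_{\au'}$) is then purely formal: an easy check shows $\au\cap\au'$ is again a weak Serre subcategory, and by our conventions $N\in\obj(\cu_{\au}\cap\cu_{\au'})$ iff $\du(M,N)\in\obj\au\cap\obj\au'=\obj(\au\cap\au')$ for all $M\in\obj\cuz$, iff $N\in\obj\cu_{\au\cap\au'}$.

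I do not foresee a genuine obstacle. The step requiring the most care is showing that $t_{\ge 0}$ preserves $\cu_{\au}$ in part~1: it combines the representability of virtual $t$-truncations under orthogonality (Proposition~\ref{pwfil}(\ref{iwfilp4})) with the image description of $\tau_{\ge 0}$ (Definition~\ref{dvtt}(1)), and one must keep track of $R$-module rather than merely abelian-group structures --- though naturality of the isomorphism in Proposition~\ref{pwfil}(\ref{iwfilp4}) makes it automatically $R$-linear (one may also invoke Lemma~\ref{lrlin}(4)), so this bookkeeping costs nothing. Everything else is an exercise in the closure properties of weak Serre subcategories and of triangulated subcategories.
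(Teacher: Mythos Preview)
Your proof is correct and follows essentially the same route as the paper's: both show $\cu_{\au}$ is triangulated via the long exact sequence for $\du(M,-)$, both establish the restriction by verifying $t_{\ge 0}N\in\cu_{\au}$ using that virtual $t$-truncations preserve values in an abelian subcategory (the paper packages this as Proposition~\ref{pwfilt}(II.3) together with Lemma~\ref{lrlin}(4), while you invoke Proposition~\ref{pwfil}(\ref{iwfilp4}) and unpack the image description directly), and both handle the heart via Proposition~\ref{pwrange}(\ref{iwfilsubc}). Your treatments of parts~2 and~3 are more detailed than the paper's (``Obvious'' and ``A well-known fact''), but the content is the same.
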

 \begin{proof}

1. Since for any $M\in \obj \cu$ the functor $\du(M,-)$ is homological, $\cu_{\au}$ is triangulated indeed. 

%Now we apply Lemma \ref{lrest} somewhat similarly to the proof of Theorem \ref{tcosm}(3).
 Next, to verify that $t$ restricts to $\cu_{\au}$ it suffices to that for any $M\in \obj \cu_{\au}$ the object $L_tM=t_{\ge 0}M$ belongs to  $ \cu_{\au}$ as well; see Lemma \ref{lrest}. The latter statement easily follows from Proposition \ref{pwfilt}(II.3) along with Lemma \ref{lrlin}(4). %\tba
 %the object $L$ of $\cu$ that represents the functor $\tau_{\ge 0}H_{M}$ belongs to $\cu_{\be}$ as well; see Theorem \ref{trefl}(I). This statement easily follows from Proposition \ref{pwfilt}(II.3) along with Lemma \ref{lrlin}(3) and (*).

Lastly, any object of $\hrt_{\au}$  yields a functor $\hwz{}\opp\to\au$ just by the definition of $\cu_{\au}$. Conversely, if an object of $\hrt$ gives a functor  $\hwz{}\opp\to\au$ then the corresponding functor $\cuz\opp\to R-\modd$  takes values in $\au$ as well; see  Proposition \ref{pwrange}(\ref{iwfilsubc}). % Proposition \ref{pwrange}(\ref{iwrpure}).

2.  Obvious.

3. A %simple 
well-known fact.
\end{proof}

We will describe some geometric examples for this proposition later. %in?! \tba

\subsection{Coproductive extensions of weight structures}\label{scopra}
%and???? orthogonal $t$-structures&&&!

%In \S\ref{scopr}  we

%In this subsection 

In the following definition we do not assume that $\cu$ and $\au$ are closed with respect to (small) coproducts.

\begin{defi}\label{dcoprode}
1. Let $w^0$ be a weight structure on $\cuz\subset \cu$. We will say that (a weight structure) $w=(\cu_{w\le 0}, \cu_{w\ge 0})$ is the {\it coproductive extension} of $w^0$ to $\cu$ if $\cu_{w\le 0}$ (resp.  $\cu_{w\ge 0}$) equals the smallest retraction-closed and extension-closed class of objects of $\cu$ that is closed with respect to $\cu$-coproducts (that exist in $\cu$) % $[-1]) $ (resp. $[1]$) 
 and contains $\cu^0_{w^0\ge 0}$ (resp.  $\cu^0_{w^0\ge 0}$). 

%\item\label{idcc} 
2. It will be convenient for us to use the following somewhat clumsy  terminology: 
%a homological functor $H:\cu\to \au$ (where $\au$ is an abelian category) will be called a {\it cc functor} if it respects coproducts (i.e., the image of any small coproduct in $\cu$ is the corresponding coproduct in $\au$); (a homological functor) $H$ will be called a {\it pp functor} if it respects products. 
a cohomological functor  $H'$ from $\cu$ into $\au$ will be called a {\it cp} functor if it converts all %(small) 
$\cu$-coproducts (that is, all  those small coproduct diagrams that exist in $\cu$) into $\au$-products.
\end{defi}

%\hw: clear if extends to a smashing weight structure; prove?!

%Examples: include idempotent extensions?! An important type of examples?!

\begin{rema}\label{rcoprode}
%In all the examples known to the author (cf. ????? below) there exists a smashing triangulated category $\du\supset \cu$ and a coproductive extension $w_{\du}$ of $w^0$ such that $w$ is the restriction of $w_{\du}$ to $\cu$. Definition \ref{dcoprode} is an attempt to get rid of $\du$ in this picture.  %not optimal?! able to prove pbw??S
1. If $w$ is the coproductive extension of $w^0$ to $\cu$ and $w'$ is an arbitrary extension of $w^0$ to $\cu$ then Proposition \ref{pbw}(\ref{iort}) is easily seen to imply that $\cu_{w\le 0}\subset  \cu_{w'\le 0}$; cf. Proposition \ref{pbw}(\ref{icoprod}). %On the other hand
Moreover, if $w$ is an extension of $w^0$ to $\cu$ 
such that $\cu_{w\le 0}\subset  \cu_{w'\le 0}$ for any other extension $w'$ then this assumption determines it canonically (since  $\cu_{w\ge 0}\supset  \cu_{w'\ge 0}$). Yet the existence of $w$ of this sort does not imply that $w$ is the coproductive extension of $w^0$ to $\cu$ since  the coproductive extension of $w^0$ to $\cu$ does not have to  exist. 
%that satisfies this ("minimality") property does not imply the existence of  the coproductive extension of of $w^0$ to $\cu$ (that is, the smallest classes of objects of $\cu$ that are closed with respect to retractions, extensions, coproducts and contain  $\cuz_{w^0\ge 0}$ and  $\cuz_{w^0\ge 0}$, respectively, do not have to yield a weight structure); 
Indeed, %one can easily construct an example  where
the coproductive extension of $w^0$ does not exist in the case $\cuz=\ns$   and $\cu\neq \ns$; yet $w=(0,\cu)$ clearly satisfies the aforementioned minimality property. 

2. The author is not sure that %this
 Definition \ref{dcoprode}(1) is %a really clever definition. ???!! 
 really clever.
  However, one can construct plenty of examples for it (see Theorem \ref{tcompws} % (the unique) coproductive extension of Proposition \ref{2343bwcp} and Remark \ref{rwgws} 
  below), %wg ws???!!!!
 and it also fits well with the following simple statements.
\end{rema}
%trefl

\begin{pr}\label{pcopr}
Assume that $\cuz\subset \cu$, $w^0$ is a weight structure on $\cuz$, $w$ is its coproductive extension to $\cu$. 

1. Let $H$ be %a cohomological functor
 cp (cohomological)  functor from $\cu$ into $\au$. % that converts $\cu$-coproducts into $\au$-products.

 Then $H$ is of $w$-weight range $\ge 0$ (resp. $\le 0$) if and only if its restriction to $\cuz$ is of  $w^0$-weight range $\ge 0$ (resp. $\le 0$).

2. Assume that %$\cu'\subset \cu$,  %(respectively, we can assume 
%$\du=\cu$, and 
%$w$ is  weight structure on $\cu$ that    is the coproductive extension of a weight structure $w^0$ on some triangulated subcategory $\cu^0$ of $\cu$, 
 $\cu,\cu'\subset \du$.

Then a $t$-structure $t$ on $\cu'$ is (strictly) orthogonal to $w$ if and only if it is (strictly) orthogonal to $w^0$.

\end{pr}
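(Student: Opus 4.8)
The plan is to prove part 1 first, then derive part 2 from it using the orthogonality criteria already established. For part 1, one direction is trivial: if $H$ has $w$-weight range $\ge 0$ (resp. $\le 0$), then it annihilates $\cu_{w\le -1}\supset \cu^0_{w^0\le -1}$ (resp. $\cu_{w\ge 1}\supset \cu^0_{w^0\ge 1}$), so its restriction to $\cuz$ has the corresponding $w^0$-weight range by the very definition of the coproductive extension (the generating classes $\cu^0_{w^0\ge 0}$, $\cu^0_{w^0\le 0}$ sit inside $\cu_{w\ge 0}$, $\cu_{w\le 0}$). For the converse, suppose the restriction of $H$ to $\cuz$ is of $w^0$-weight range $\le 0$, i.e., $H$ kills $\cu^0_{w^0\ge 1}$. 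I want to conclude $H$ kills all of $\cu_{w\ge 1}=(\cu_{w\le 0})[1]$; by Definition \ref{dcoprode}(1), $\cu_{w\ge 1}$ is the smallest retraction-closed, extension-closed, $\coprod_\cu$-closed class containing $\cu^0_{w^0\ge 1}$. So the strategy is to show that the class $\{N\in \obj\cu: H(N)=0\}$ is retraction-closed, extension-closed, and closed under those coproducts that exist in $\cu$ — then it contains $\cu_{w\ge 1}$ automatically. Retraction-closedness is clear (a retract of an object killed by an additive functor is killed). Extension-closedness follows because $H$ is cohomological: a distinguished triangle $A\to C\to B\to A[1]$ with $H(A)=H(B)=0$ forces $H(C)=0$ by the long exact sequence. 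Coproduct-closedness is exactly where the \emph{cp} hypothesis enters: $H$ converts $\cu$-coproducts into $\au$-products, so a coproduct of objects killed by $H$ is killed by $H$. This handles the $\le 0$ case; the $\ge 0$ case is strictly dual (apply the same argument to $w\opp$, $w^0{}\opp$, and $H\opp$, noting that a cp cohomological functor dualizes to one that converts $\cu\opp$-coproducts into products — cf. Proposition \ref{pwfil}(\ref{iwfild})).

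For part 2, recall that orthogonality of $t$ to $w$ means precisely that $H_N$ (for $N\in\cu'_{t\le -1}$, resp. $\cu'_{t\ge 1}$) has $w$-weight range $\le 0$ (resp. $\ge 0$); see Definition \ref{dort}(1) and Proposition \ref{pwrange}(\ref{iwrvt},\ref{iwrort}). But each such $H_N=H_N^\cu=\cu(-,N)$ restricted to $\cu$ is a cp cohomological functor, since $\cu(-,N)$ automatically converts coproducts in $\cu$ into products of abelian groups. Hence by part 1, $H_N$ is of $w$-weight range $\le 0$ (resp. $\ge 0$) if and only if its restriction to $\cuz$ is of $w^0$-weight range $\le 0$ (resp. $\ge 0$) — which is exactly orthogonality of $t$ to $w^0$. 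This gives the ``orthogonal'' equivalence directly. For the ``strictly orthogonal'' version, unwind Definition \ref{dort}(3): strict orthogonality to $w$ asks $\cu'_{t\ge 1}=\cu_{w\le 0}^{\perp_\du}\cap\obj\cu'$ and $\cu'_{t\le -1}=\cu_{w\ge 0}^{\perp_\du}\cap\obj\cu'$, and likewise for $w^0$. So I must show $\cu_{w\le 0}^{\perp_\du}\cap\obj\cu'=\cu^0_{w^0\le 0}{}^{\perp_\du}\cap\obj\cu'$ and similarly with $\ge 0$. The inclusion $\subseteq$ is trivial since $\cu^0_{w^0\le 0}\subseteq\cu_{w\le 0}$. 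For $\supseteq$: if $N\in\cu'$ satisfies $\cu^0_{w^0\le 0}\perp_\du N$, then the functor $H_N|_{\cu}$ is cp cohomological and its restriction to $\cuz$ has $w^0$-weight range $\ge 1$, hence by part 1 (shifted) $H_N|_{\cu}$ has $w$-weight range $\ge 1$, i.e., $\cu_{w\le 0}\perp_\du N$; the $\ge 0$ case is dual.

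The main obstacle — really the only non-formal point — is getting the three closure properties of the ``killing class'' lined up with the generators-description of $\cu_{w\ge 1}$ and $\cu_{w\le -1}$ in Definition \ref{dcoprode}(1), and making sure the coproduct clause is invoked with exactly the hypothesis ``cp'' supplies (coproducts that \emph{exist} in $\cu$, converted to products). Everything else is a routine unwinding of the definitions of weight range, orthogonality, and the coproductive extension, together with the dualization already recorded in Proposition \ref{pwfil}(\ref{iwfild}). One should also be mildly careful that in part 2 the object $N$ lies in $\cu'$ while the functor $H_N$ is considered on $\cu$ (and on $\cuz\subset\cu$), so the cp property is about $\cu$-coproducts, which is what Definition \ref{dcoprode} uses.
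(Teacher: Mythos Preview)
Your proof is correct and takes essentially the same approach as the paper: part 1 follows from the closure properties (retracts, extensions, coproducts) of the vanishing locus of a cp cohomological functor matched against the generating description of $\cu_{w\le -1}$ and $\cu_{w\ge 1}$ in Definition \ref{dcoprode}(1), and part 2 follows by applying part 1 to the $\du$-representable functors $H_N$ for $N\in\obj\cu'$ to conclude that the relevant orthogonal classes for $w$ and for $w^0$ coincide. The paper's proof is a terse two-line version of exactly this; your dualization remark for the $\ge 0$ case in part 1 is unnecessary (the same direct closure argument handles both cases) and there is a harmless off-by-one in your weight-range reformulation of orthogonality, but neither affects correctness.
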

\begin{proof}
1. The "only if" part of the assertion is obvious; it only requires $w$ to be an extension of $w^0$.

The converse implication immediately from the descriptions of $\cu_{w\le -1}$ and $\cu_{w\ge 1}$ provided by Definition \ref{dcoprode}.

2. It clearly suffices to verify that  the classes $C_1=\cu_{w\le -1}^{\perp_{\du}} \cap \obj\cu' $  and $C_2=\cu_{w\ge 0}^{\perp_{\du}} \cap \obj\cu'$ (cf. (\ref{ec12}))  coincide with \begin{equation}\label{ec120} C_1^0=\cu_{w^0\le -1}^{\perp_{\du}} \cap \obj\cu' \text{  and }C^0_2=\cu^0_{w^0\ge 0}{}^{\perp_{\du}} \cap \obj\cu'  \end{equation} respectively. The latter easy statement can be obtained by means of applying assertion 1 to functors from $\cu$ that are represented by objects of $\cu'$.
\end{proof}

\begin{coro}\label{ccopr}
Assume that $\cuz,\cu'\subset \cu$, $w^0$ is a weight structure on $\cuz$ and $w$ is its coproductive extension to $\cu$. 

Then the equivalent conditions of Theorem \ref{trefl}(I) are also equivalent to the existence of a  $t$-structure $t$ on $\cu'$ that is  orthogonal to $w^0$. Moreover, this $t$-structure equals $(C_2^0[1],C_1^0)$ in the notation of (\ref{ec120}), that is, $t$ is strictly orthogonal to $w^0$. %unique?!
\end{coro}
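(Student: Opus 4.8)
The plan is to combine Theorem~\ref{trefl} with Proposition~\ref{pcopr}(2) essentially formally. First I would recall that Theorem~\ref{trefl}(I) provides several equivalent conditions for the existence of a $t$-structure on $\cu'$ orthogonal to $w$, and that under any of these conditions the resulting $t$-structure is automatically \emph{strictly} orthogonal to $w$ and hence equals $(C_2[1],C_1)$ in the notation of~(\ref{ec12}); this is exactly the content of conditions (i), (ii), (ii') in that theorem together with Proposition~\ref{pstrict}. So the entire statement reduces to transporting ``orthogonal to $w$'' across to ``orthogonal to $w^0$''.

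The key step is then Proposition~\ref{pcopr}(2): since $w$ is the coproductive extension of $w^0$ to $\cu$, a $t$-structure $t$ on $\cu'$ is (strictly) orthogonal to $w$ if and only if it is (strictly) orthogonal to $w^0$. Applying this, the existence of a $t$-structure on $\cu'$ orthogonal to $w$ is equivalent to the existence of one orthogonal to $w^0$, which adds the desired new item to the list of equivalent conditions in Theorem~\ref{trefl}(I). For the ``moreover'' part, I would invoke the computation inside the proof of Proposition~\ref{pcopr}(2): the classes $C_1,C_2$ of~(\ref{ec12}) coincide with the classes $C_1^0,C_2^0$ of~(\ref{ec120}) (this follows by applying Proposition~\ref{pcopr}(1) to the functors $H_N^{\cu}$ for $N\in\obj\cu'$, as indicated there). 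Hence the $t$-structure produced by Theorem~\ref{trefl} is not merely $(C_2[1],C_1)$ but literally $(C_2^0[1],C_1^0)$, i.e.\ it is strictly orthogonal to $w^0$ as claimed.

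I do not expect a genuine obstacle here; the only point requiring a little care is the bookkeeping of which ambient category is used when forming orthogonal complements. Proposition~\ref{pcopr} is stated for $\cu,\cu'\subset\du$, whereas Corollary~\ref{ccopr} assumes $\cuz,\cu'\subset\cu$; so I would simply take $\du=\cu$ in the application of Proposition~\ref{pcopr}, which is legitimate since $\cuz\subset\cu$ and $\cu'\subset\cu$, and note that $\perp_{\cu}$ is the relevant orthogonality throughout Theorem~\ref{trefl}. With that identification the argument is a one-line combination of the two cited results, and the proof is complete.

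\begin{proof}
Since $\cuz,\cu'\subset\cu$ we may apply Proposition~\ref{pcopr}(2) with $\du=\cu$: a $t$-structure $t$ on $\cu'$ is (strictly) orthogonal to $w$ if and only if it is (strictly) orthogonal to $w^0$. In particular condition (i) of Theorem~\ref{trefl}(I) holds if and only if there exists a $t$-structure on $\cu'$ orthogonal to $w^0$; this proves the first claim. Now assume these conditions hold. By Theorem~\ref{trefl}(I) (see conditions (i), (ii), (ii') there) the unique such $t$-structure is strictly orthogonal to $w$, hence equals $(C_2[1],C_1)$ in the notation of~(\ref{ec12}). Finally, the proof of Proposition~\ref{pcopr}(2) shows that $C_1=C_1^0$ and $C_2=C_2^0$ in the notation of~(\ref{ec120}) (apply Proposition~\ref{pcopr}(1) to the functors $H_N^{\cu}$ represented by objects $N$ of $\cu'$). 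Therefore $t=(C_2^0[1],C_1^0)$, i.e.\ $t$ is strictly orthogonal to $w^0$.
\end{proof}
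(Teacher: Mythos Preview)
Your proposal is correct and follows essentially the same approach as the paper's own proof, which is a one-line reference: ``This is an obvious combination of Proposition~\ref{pcopr}(2) with (the equivalence of conditions (i) and (ii) in) Theorem~\ref{trefl}(I).'' Your version simply spells out the details, including the identification $\du=\cu$ and the fact that $C_i=C_i^0$, which is exactly how the paper intends the argument to go.
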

\begin{proof} This is an obvious combination of Proposition \ref{pcopr}(2) with (the equivalence of conditions (i) and (ii) in) Theorem \ref{trefl}(I).
\end{proof}

\begin{rema}\label{rccopr}
Corollary \ref{ccopr} is a certain substitute for Theorem 2.2.5 of \cite{bvtr}; %(
see Remark \ref{rbvtt} below. %). Loc. cit.
Though the author suspects that the results of the current text are not sufficient to prove loc. cit. itself, %it appears that  
 they can be successfully applied to treat all the examples of loc. cit. known to the author; % along with a collection of other interesting settings; 
  cf. Proposition \ref{pgeomap1} below. %We will demonstrate this in a succeeding paper. %and???! below????!!! the derived categories of quasicoherent
\end{rema}

\section{On $t$-structures orthogonal to (co)smashing weight structures}
% and coproductive extensions}
\label{smashb} 

%?!

In this section we study the existence of adjacent weight and $t$-structures in triangulated categories closed with respect to (co)products (these are called smashing and cosmashing ones). %Permute?! Cosmashing??
%Throughout this section we will assume that $\cu$ is closed with respect to (small) coproducts (i.e., it is {\it smashing}). ????????

In \S\ref{smash} we %consider %(smashing) triangulated categories closed with respect to (co)products and 
 treat smashing weight structures (on smashing triangulated categories); these are the ones that 'respect coproducts'.

In \S\ref{smashort}  we recall the notion of %(dual) 
 Brown representability for smashing triangulated categories, and %give a criterion for the existence of $t$-structures for weight structures
prove Theorem \ref{tadjti}, i.e., that a weight structure $w$ on a category satisfying this condition is left adjacent to a $t$-structure if and only if $w$ is smashing.

In \S\ref{sort}  we study extensions of weight structures from subcategories of compact objects and the corresponding %adjacent??S 
 orthogonal $t$-structures. This context will be actual for us in the succeeding section.

In \S\ref{scosm} we formulate the dual to %this statement?????!!!
 Theorem \ref{tadjti} and give some applications for it. % is also valid; moreover, if
 If $w$ is both cosmashing and smashing then the left adjacent $t$-structure $t$ restricts to the subcategory of compact objects of $\cu$ as well as to all other 'levels of smallness' for objects. Combining this statement with an existence of weight structures theorem from \cite{kellerw} we obtain a statement on $t$-structures extending yet another result of ibid. %????? We will not apply these results in the current paper.

\subsection{On smashing weight structures and related notions}\label{smash} %cc, cp functors?!! put where

We  need some definitions related to (co)products. %will need a few definitions.

\begin{defi}\label{dsmash}
\begin{enumerate}
\item\label{ismcat}
 We will say that a triangulated category $\cu$  is {\it (co)smashing} if it is closed with respect to (small) 
 coproducts (resp., products).

\item\label{ismw} We will say that a weight structure $w$ on $\cu$ is (co)smashing if $\cu$ is (co)smashing and the class $\cu_{w\ge 0}$ is closed with respect to $\cu$-coproducts (resp., $\cu_{w\le 0}$ is closed with respect to $\cu$-products; cf. Proposition \ref{pbw}(\ref{icoprod})). 
%t??

\item\label{ismt} We will say that a $t$-structure $t$ on $\cu$ is (co)smashing if $\cu$ is (co)smashing and the class $\cu_{t\le 0}$ is closed with respect to $\cu$-coproducts (resp., $\cu_{t\ge 0}$ is closed with respect to $\cu$-products; cf.  Proposition  \ref{prtst}(\ref{itperp})).

\item\label{idal} %More generally, 
For an infinite cardinal $\al$ %an object $M$ of $\cu$ is said to be
 and a smashing $\cu$ a homological functor $H':\cu\to \au$ %as above??
 is said to be {\it $\al$-small} if for any family $N_i\in \obj \cu$, $i\in I$, we have
$H'(\coprod_{i\in I} N_i)=\inli_{J\subset I,\ \#J<\al}H'(\coprod_{j\in J}N_j)$ (i.e., the obvious morphisms $H'(\coprod_{j\in J}N_j)\to H'(\coprod N_i)$ form a colimit diagram; note that this colimit is filtered). %coproduct for additive functors if $\al=\alz$?! connecting morphisms?! $\au=\ab$ or just AB5/4 to respect images?!! exist??
\end{enumerate}
\end{defi}

Let us now prove some properties of these notions and relate them to virtual $t$-truncations. 
%split???!!

\begin{pr}\label{psmash}
Assume that $w$ is a smashing weight structure on $\cu$, $H':\cu\to \au$ is a homological functor (where $\au$ is an abelian category), 
$n\in \z$,  and $\al$ is an infinite cardinal. % and $\au$ is an abelian category.
  Then the following statements are valid.

\begin{enumerate}
\item\label{ismash1}
If $\al'\ge \al$ %are infinite cardinals
  then any $\al$-small functor is also $\al'$-small.

\item\label{ismash2} $H'$ is $\alz$-small if and only if it respects coproducts.

\item\label{icoprh} The class $\cu_{w=0}$ is closed with respect to $\cu$-coproducts.

\item\label{icopr2} %For a collection of objects $M_i$ and 
Coproducts of $w$-decompositions are weight decompositions as well.

\item\label{icopr6p} Assume that $\au$ is an AB4* %abelian
  category and %permute H and H'?! also above??!!
a cohomological functor $H$ from $\cu$ into $ \au$ is a cp one. Then   $\tau_{\ge  n}(H)$  and $\tau_{\le n}(H)$  are cp functors as well.

\item\label{icopr6n} Assume that $\au$ is an AB5 %abelian
  category and $H'$ %:\cu\to \au$ 
   is an $\al$-small functor.  %homological functor from $\cu$ into $\au$. 
	 Then the functors $\tau_{\ge  n} (H')$  and $\tau_{\le  n}(H')$  are $\al$-small as well.
\end{enumerate} 
\end{pr}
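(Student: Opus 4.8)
The plan is to treat the six assertions in the order stated, noting that (\ref{ismash1})--(\ref{icopr2}) are essentially bookkeeping while (\ref{icopr6p}) and (\ref{icopr6n}) carry the real content. For (\ref{ismash1}), given $\al'\ge\al$, I would use that for any family $\{N_i\}_{i\in I}$ and any subset $J_0\subseteq I$ with $\#J_0<\al'$ the $\al$-smallness of $H'$ applied to the subfamily $\{N_j\}_{j\in J_0}$ gives $H'(\coprod_{j\in J_0}N_j)\cong\inli_{J\subseteq J_0,\ \#J<\al}H'(\coprod_{j\in J}N_j)$; substituting this into $\inli_{\#J_0<\al'}H'(\coprod_{j\in J_0}N_j)$ and performing a standard interchange of filtered colimits recovers, again by $\al$-smallness, $H'(\coprod_{i\in I}N_i)$, which is exactly $\al'$-smallness. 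For (\ref{ismash2}) one observes that $\#J<\alz$ means $J$ finite, that an additive $H'$ automatically converts finite coproducts into direct sums, and that an arbitrary coproduct is the filtered colimit of its finite partial coproducts; so $\alz$-smallness of $H'$ reads $H'(\coprod_{i\in I}N_i)\cong\inli_{J\ \mathrm{finite}}\bigoplus_{j\in J}H'(N_j)\cong\bigoplus_{i\in I}H'(N_i)$, and the converse is the same chain run backwards. Assertion (\ref{icoprh}) is immediate since $\cu_{w=0}=\cu_{w\ge0}\cap\cu_{w\le0}$ and both classes are closed under $\cu$-coproducts, the first by the smashing hypothesis on $w$ and the second by Proposition \ref{pbw}(\ref{icoprod}). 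For (\ref{icopr2}) I would take a family of weight decompositions $L_wM_i\to M_i\to R_wM_i\to L_wM_i[1]$, form their coproduct (a distinguished triangle, since coproducts of distinguished triangles are distinguished in a smashing category), and observe that $\coprod_i L_wM_i\in\cu_{w\le0}$ by Proposition \ref{pbw}(\ref{icoprod}) while $\coprod_i R_wM_i\in\cu_{w\ge1}$ because $\cu_{w\ge0}$, hence also $\cu_{w\ge1}=\cu_{w\ge0}[1]$, is closed under $\cu$-coproducts.

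The role of (\ref{icopr2}) is that, by Proposition \ref{pwfil}(\ref{iwfilp1}), virtual $t$-truncations do not depend on the choice of weight truncations, so in computing $\tau_{\le n}(H)(\coprod_i M_i)$ and $\tau_{\ge n}(H)(\coprod_i M_i)$ I am free to use the coproduct truncations $\coprod_i w_{\le n}M_i$, etc., with connecting morphisms the coproducts of the individual ones (these are the right morphisms by the uniqueness clause of Proposition \ref{pbw}(\ref{icompl})). Then for (\ref{icopr6p}): $\tau_{\le n}(H)(\coprod_i M_i)=\imm\bigl(H(\coprod_i w_{\le n+1}M_i)\to H(\coprod_i w_{\le n}M_i)\bigr)$; since $H$ is a cp functor each group here is $\prod_i H(w_{\le n+1}M_i)$, resp.\ $\prod_i H(w_{\le n}M_i)$, and the map is the product of the individual maps. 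In an AB4* category the product functor is exact, hence commutes with images, so this image equals $\prod_i\imm\bigl(H(w_{\le n+1}M_i)\to H(w_{\le n}M_i)\bigr)=\prod_i\tau_{\le n}(H)(M_i)$, and one checks that the identification is the canonical comparison morphism; the case of $\tau_{\ge n}(H)$ is identical. For (\ref{icopr6n}) I would run the same template with colimits replacing products: $\tau_{\le n}(H')(\coprod_i M_i)=\imm\bigl(H'(\coprod_i w_{\le n}M_i)\to H'(\coprod_i w_{\le n+1}M_i)\bigr)$, and $\al$-smallness of $H'$ rewrites the two groups as $\inli_{\#J<\al}H'(\coprod_{j\in J}w_{\le n}M_j)$, resp.\ $\inli_{\#J<\al}H'(\coprod_{j\in J}w_{\le n+1}M_j)$, with the map the colimit of the maps; since each $\coprod_{j\in J}w_{\le n}M_j$ is an admissible weight truncation of $\coprod_{j\in J}M_j$ (assertion (\ref{icopr2}), which applies to infinite $J$ as well because $w$ is smashing), the $J$-th term of the colimit of images is $\tau_{\le n}(H')(\coprod_{j\in J}M_j)$. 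Now AB5 says filtered colimits are exact, hence commute with images, giving $\tau_{\le n}(H')(\coprod_i M_i)\cong\inli_{\#J<\al}\tau_{\le n}(H')(\coprod_{j\in J}M_j)$, which is $\al$-smallness of $\tau_{\le n}(H')$; the functor $\tau_{\ge n}(H')$ is handled the same way (or by the duality of Proposition \ref{pwfil}(\ref{iwfild})).

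The main obstacle I anticipate is organizational rather than conceptual: in (\ref{icopr6n}) one must check carefully that the colimit indexed by subsets $J$ of cardinality $<\al$ matches the definition of $\al$-smallness term by term, in particular that $\coprod_{j\in J}w_{\le n}M_j$ really is an admissible choice of weight truncation of $\coprod_{j\in J}M_j$ for every such $J$ (finite or infinite), which is precisely where (\ref{icopr2}) and the smashing hypothesis on $w$ enter; and one must also check that every comparison morphism produced along the way (from ``$H$ is cp'', from ``$H'$ is $\al$-small'', and from images commuting with exact (co)limits) is the canonical one, so that the resulting isomorphism is the structural map occurring in the definitions of cp and of $\al$-smallness. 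The only genuinely non-formal ingredients are the exactness of products in an AB4* category and of filtered colimits in an AB5 category, together with the fact that coproducts of distinguished triangles are distinguished in a smashing triangulated category; all of these are standard.
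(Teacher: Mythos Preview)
Your proposal is correct and follows essentially the same route as the paper: the bookkeeping items (\ref{ismash1})--(\ref{icopr2}) are handled identically, and for (\ref{icopr6p}) and (\ref{icopr6n}) both you and the paper compute the virtual $t$-truncation at a coproduct by choosing the coproduct weight truncations (justified by (\ref{icopr2}) and Proposition \ref{pwfil}(\ref{iwfilp1})), then invoke AB4* (resp.\ AB5) to commute images with products (resp.\ filtered colimits). The only cosmetic difference is that the paper first invokes Proposition \ref{pwfilt}(I.1) to reduce to fixed values of $n$ (namely $\tau_{\le 0}$ and $\tau_{\ge 2}$) before carrying out the computation, whereas you work directly with general $n$; this changes nothing of substance.
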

\begin{proof}
%\begin{enumerate} 
\ref{ismash1}. Assume that %$H:\cu\to \au$ is an $\al$-small homological 
 $H'$ is an $\al$-small functor; fix  an index set $I$ and certain $N_i\in \obj \cu$. Then for any $J\subset I$ we have 
$H'(\coprod_{j\in J} N_j)=\inli_{J'\subset J,\ \#J'<\al}H'(\coprod_{j'\in J'}N_{j'})$. Combining these facts for all $J\subset I$ %(actually, it suffices to take  $J=I$ along with $J$ of cardinality less than $\al'$ only here)???!!!
  one easily obtains that $H'(\coprod_{i\in I} N_i)=\inli_{J\subset I,\ \#J<\al'}H'(\coprod_{j\in J}N_j)$. %????

\ref{ismash2}. %The assumption that $H$ is 
Since $H'$ is additive,   $H'(\coprod N_i)=\inli_{J\subset I,\ \#J<\alz}H'(\coprod_{j\in J}N_j)$ if and only if $H'$  respects coproducts (since this colimit will not change if one will consider only those $J$ that consist of a single element only). 

\ref{icoprh}. This is an easy consequence of  Proposition \ref{pbw}(\ref{iort}); see Proposition 2.3.2(1) of \cite{bwcp}. 

\ref{icopr2}. This is an easy consequence of  Proposition \ref{pbw}(\ref{icoprod}) (combined with the well-known Remark 1.2.2 of \cite{neebook}); it is given by Proposition 2.3.2(3) of \cite{bwcp}.  %loc. cit. 

\ref{icopr6p}. %Let us prove that $\tau_{\ge  0 }(H')(\coprod M_i)\cong \prod \tau_{\ge  0 }(H')(M_i)$; the argument for $\tau_{\ge  0 }(H')$ is similar.
According to Proposition \ref{pwfilt}(I.1), it suffices to verify that the functors  $\tau_{\ge  2}(H)$  and $\tau_{\le 0}(H)$ are cp ones for any cp functor $H$. 
For %any 
a family $\{M_i\}$ of objects of $\cu$ we choose certain %$n$ and $n+1$
%$j$
 $0$ and $1$-weight decompositions for all of the $M_i$ (see Remark \ref{rstws}(2)) and all $j\in \z$.
According to the previous assertion,  %their coproducts give an $n+1$ and an $n$-weight decomposition of $\coprod M_i$, respectively. 
for $M=\coprod M_i$ we can take $w_{\le j}M=\coprod w_{\le j} M_i$ and $w_{\ge j}M=\coprod w_{\ge j} M_i$ for $j=0,1,2$.
Moreover, one clearly can  describe the unique morphisms $w_{\le 0}(\coprod M_i)\to w_{\le 1}(\coprod M_i)$  and $w_{\ge 1}(\coprod M_i)\to w_{\ge 2}(\coprod M_i)$ compatible with these decomposition triangles (see Proposition \ref{pbw}(\ref{icompl}) and Definition \ref{dvtt}(1))  as the coproducts of the corresponding morphisms for $M_i$.  Applying our assumptions on $H$ and $\au$ we obtain that  $\tau_{\ge  2}(H)(\coprod M_i)\cong \prod \tau_{\ge  2}(H)(M_i)$ and 
 $\tau_{\le  0}(H)(\coprod M_i)\cong \prod \tau_{\le 0}(H)(M_i)$.

Similarly, to prove assertion \ref{icopr6n} it suffices  %(similarly to the previous assertion) 
 to verify that  the functors $\tau_{\ge  2}(H')$  and $\tau_{\le 0}(H')$ are $\al$-small whenever $H'$ is. %{dvtt
One takes the same weight decompositions along with their coproducts corresponding to all subsets $J$ of $I$ of cardinality less than $\al$. Since the colimits in question are filtered ones,  the AB5 assumption on $\au$ allows to compute 
$\inli_{J\subset I,\ \#J<\al}\imm(H'(\coprod_{j\in J}w_{\le 0}N_j)\to H'(\coprod_{j\in J}w_{\le 1}N_j))$ and 
$\inli_{J\subset I,\ \#J<\al}\imm(H'(\coprod_{j\in J}w_{\ge 1}N_j)\to H'(\coprod_{j\in J}w_{\ge 2}N_j))$
as the corresponding images of colimits to obtain the statement in question.
\end{proof}

%\begin{defi}\label{dsmal}

\subsection{On the existence of $t$-structures adjacent to smashing weight structures}\label{smashort} 

To formulate the main results of this section and discuss examples to them we  need some more definitions.

\begin{defi}\label{dcomp}
Let  $\cu$ be a smashing triangulated category, $\cp$ is a subclass of $ \obj \cu$, $\cu'$ is an arbitrary triangulated category. %, and $\cp'\subset \obj \cu'$. 

\begin{enumerate}
\item\label{idbrown} We will say that  $\cu$  satisfies the  {\it Brown representability} property whenever any cp functor (see Definition \ref{dcoprode}(2)) from $\cu$ into $\ab$ is representable. 

Dually, we will say that %a triangulated category $\du$ 
 $\cu'$ satisfies the  {\it dual Brown representability} property if $\cu'$ is cosmashing  and any  functor  from $\cu'$ into $\ab$ that respects products is corepresentable (i.e., if $\cu'{}\opp$ satisfies the   Brown representability assumption).

\item\label{dsmall}
For an infinite cardinal $\al$ an object $M$ of $\cu$ is said to be {\it $\al$-small} (in $\cu$) if the functor $H^M=\cu(M,-):\cu\to \ab$ %(that is corepresented by $M$) 
 is $\al$-small (see Definition \ref{dsmash}(\ref{idal})). We will write $\cu^{(\al)}$ for the  (full)  subcategory $\cu^{(\al)}$ of $\cu$ that consists of $\al$-small objects.

Moreover, $\alz$-small objects of $\cu$ (those correspond to  functors that respect coproducts) will also said to be {\it compact}.

\item\label{dlocal}
We will say  that a %full strict 
 triangulated subcategory $\cuz$ of $ \cu$ is  {\it localizing}  whenever it is closed with respect to $\cu$-coproducts. 
Respectively, we will call the smallest localizing  subcategory of $\cu$ that contains a given class $\cp\subset \obj \cu$  the {\it  localizing subcategory of $\cu$ generated by $\cp$}. We will say that both $\cp$ and the corresponding full subcategory of $\cu$ %are {\it generating} in
 {\it generate} $\cu$.

\item\label{dcgen}
We will say that a generating class of objects $\cp$ (as well as the subcategory $C$ of $\cu$ with $\obj C=\cp$) {\it compactly generates} $\cu$ if $\cp$ is also %??{\bf essentially small} 
  essentially small and consists of compact objects (of $\cu$). %{rdescrcompgen} here or below????!!

% $\cp\subset \obj \cu$ (resp. $C\subset \cu$) {\it compactly generates} $\cu$ and that $\cu$ is compactly generated if $\cp$ (resp. $\obj C$)  generates $\cu$ as its own localizing subcategory, $\cp$ (resp. $\obj C$) is   {\bf essentially small} and consists of compact objects of $\cu$. %??????

%Moreover, we will say that a subcategory $C$ of $\cu$ compactly generates $\cu$ whenever $C$ is essentially small and (any) its small skeleton compactly generates $\cu$.

\item\label{dgenw} We will say that a class $\cp'\subset \obj \cu'$ {\it generates} a weight structure $w$  (resp.  a $t$-structure $t$) on  $\cu'$ whenever $\cu'_{w\ge 0}=(\cup_{i>0}\cp'[-i])\perpp$ (resp. $\cu'_{t\le 0}=(\cup_{i>0}\cp'[i])\perpp$).

%\item\label{dgent} We will say that  $\cp'$ {\it generates} a $t$-structure $t$ on $\cu'$ whenever $\cu'_{t\le 0}=(\cup_{i>0}\cp'[i])\perpp$.
\end{enumerate}
\end{defi}

\begin{rema}\label{rcomp}
1. Recall that $\cu$ satisfies both the Brown representability property and its dual whenever it is  compactly generated; % , i.e., there exists a {\bf set} $\cp$ of compact objects of $\cu$ such that $(\cup_{i\in \z}\cp[i])\perpp=\ns$; 8.4.1??!
 see Proposition 8.4.1, Proposition 8.4.2, Theorem 8.6.1, and  Remark 6.4.5  of \cite{neebook}.  Moreover, the Brown representability property is fulfilled whenever $\cu$ is just {\it $\alo$-perfectly generated} (see Definition 8.1.4 and Theorem 8.3.3 of ibid.). %pg or just more general?! {rosibr}?!

%Recall also that any  triangulated category possessing a combinatorial (Quillen) model satisfies the dual Brown representability property; see \S0 of \cite{neefrosi} (the statement is given by the combination of Theorems 0.17 and 0.14 of ibid.). 

%Furthermore, the abundance of smashing triangulated categories that satisfy the Brown representability property has motivated the author not to consider orthogonal $t$-structures on $\cu'\neq \cu$ in Theorem \ref{tsmash}(I) below (in contrast to Theorem \ref{tsatur}(II); note however that one can easily formulate and prove the corresponding modification of Theorem \ref{tsmash}(I)).

2.  A class $\cp'$ as above is easily seen to determine weight and $t$-structures it generates on $\cu'$ (if any) completely; see either of Proposition 2.4(1) (along with \S3) of  \cite{bvt} or  Proposition  \ref{prtst}(\ref{itperp}) and Proposition \ref{pbw}(\ref{iort}) above.

%3. The assumption 
\end{rema}

Now we prove our first 'practical' existence of $t$-structures result; see Definitions \ref{dsmash} and \ref{dcomp} for the %definitions 
 notions mentioned in our theorem.
 %formulate our first main theorem.

\begin{theo}\label{tsmash}
Let $w$ be a weight structure on $\cu$, where $\cu$ (is smashing and) satisfies the Brown representability property.
%Then the following statements are valid.

Then there exists a $t$-structure $t^l$ left adjacent to $w$ if  and only if $w$ is smashing. Moreover, $t^l$ is cosmashing (if exists) %; see Definition \ref{dsmash}(\ref{ismt})) 
 and its heart is equivalent to the category of those  additive functors $\hw\opp\to \ab$ that respect products.

\end{theo}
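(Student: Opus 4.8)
The plan is to invoke the abstract criterion of Theorem \ref{trefl}(I), which reduces the existence of a $t$-structure orthogonal to $w$ (here adjacency, since $\cu'=\cu$) to the representability of virtual $t$-truncations of representable functors. First I would treat the "if" direction. Assume $w$ is smashing. For any $M\in\obj\cu$ consider $H_M=\cu(-,M)\colon\cu\opp\to\ab$; this is a cohomological functor, and being representable it is a cp functor (it converts $\cu$-coproducts into products of abelian groups). By Proposition \ref{psmash}(\ref{icopr6p}) (applied with $\au=\ab$, which is certainly AB4*), both $\tau_{\ge 0}(H_M)$ and $\tau_{\le -1}(H_M)$ are again cp functors. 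By the Brown representability hypothesis they are therefore representable in $\cu$. Hence condition (iii) (equivalently (iv)) of Theorem \ref{trefl}(I) holds, and we get a $t$-structure $t^l$ on $\cu$ strictly orthogonal to $w$; by Proposition \ref{portadj} (or directly $\cu_{t^l\ge 0}=\cu_{w\ge 0}$) this $t^l$ is left adjacent to $w$.

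Next I would record that $t^l$ is cosmashing: since $\cu_{t^l\ge 0}=\cu_{w\ge 0}$ and the latter is $\coprod$-closed by the smashing assumption, it is certainly closed under products as well — but more to the point, $\cu$ is cosmashing because it satisfies Brown representability (products of representable-type functors are representable), and $\cu_{t^l\ge 0}$ is $\perpp$-closed hence closed under all products that exist in $\cu$; this is the content of Definition \ref{dsmash}(\ref{ismt}). Actually the cleanest route is: $\cu_{t^l\ge 0}=\perpp(\cu_{t^l\le-1})$ by Proposition \ref{prtst}(\ref{itperp}), and a class of this form is automatically closed under all existing products.

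For the "only if" direction, suppose a $t$-structure $t$ left adjacent to $w$ exists, so $\cu_{w\ge 0}=\cu_{t\ge 0}$. By Proposition \ref{prtst}(\ref{itcopr}), $\cu_{t\ge 0}$ is closed under all coproducts that exist in $\cu$; since $\cu$ is smashing these are all small coproducts, so $\cu_{w\ge 0}$ is $\coprod$-closed, i.e., $w$ is smashing. For the heart statement, when $t^l$ exists Theorem \ref{trefl}(II) gives a fully faithful Yoneda-type functor $\hw[t^l]\to\adfu(\hw\opp,\ab)$, $N\mapsto H_N^{\cu}$, landing in the pure functors (weight range $[0,0]$) by Proposition \ref{pwrange}(\ref{iwrort}), and by Proposition \ref{pwrange}(\ref{iwrpure}) pure cohomological functors are identified with $\adfu(\hw\opp,\ab)$. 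It remains to see this equivalence matches the subcategory of product-respecting functors: by Proposition \ref{psmash}(\ref{icopr6p}) again, $H_N^{\cu}$ is a cp functor for every $N$, so it respects products as a functor on $\hw$ (coproducts in $\cu$ of objects of $\cu_{w=0}$ lie in $\cu_{w=0}$ by Proposition \ref{psmash}(\ref{icoprh})); conversely, given a product-respecting additive functor $G\colon\hw\opp\to\ab$, the associated pure cohomological functor $\widetilde G$ on $\cu$ is built from $G$ via weight complexes and one checks it is cp, hence representable by Brown representability, and the representing object lies in $\cu_{t^l=0}$ since $\widetilde G$ has weight range $[0,0]$ — here one uses Proposition \ref{pwrange}(\ref{iwrort}) in reverse together with the description $\cu_{t^l\ge 0}=\cu_{w\ge 0}$, $\cu_{t^l\le 0}=\cu_{w\le 0}^{\perp_\cu}[ \,]$-type identities from strict orthogonality.

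\textbf{Main obstacle.} The delicate point is the last one: upgrading the abstract "fully faithful embedding of $\hw[t^l]$ into $\adfu(\hw\opp,\ab)$" to an \emph{equivalence} onto the product-respecting subcategory. Fullness-into-the-image needs that every product-respecting $G$ actually arises from an object of the heart, which forces one to reconstruct a cohomological cp functor on all of $\cu$ out of $G$ (via a Kan-extension / weight-complex argument as in \cite{bwcp}) and then apply Brown representability; verifying that this reconstructed functor is genuinely cp — not merely additive — is where the real work sits, and it is exactly the place where the smashing hypothesis on $w$ (through Proposition \ref{psmash}(\ref{icoprh},\ref{icopr2},\ref{icopr6p})) gets used in an essential way.
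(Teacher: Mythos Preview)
Your overall strategy matches the paper's: use Theorem \ref{trefl}(I) together with Proposition \ref{psmash}(\ref{icopr6p}) and Brown representability for the ``if'' direction, and Proposition \ref{prtst}(\ref{itcopr}) for ``only if''. Your heart argument is also essentially the paper's; the essential surjectivity step you flag as the main obstacle (extending a product-respecting $G:\hw\opp\to\ab$ to a cp pure functor on $\cu$ and then representing it) is exactly what the paper packages into a citation of Proposition 2.3.2(8) of \cite{bwcp}, so you have correctly located the substance.

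There is, however, a genuine slip in your cosmashing argument. You claim that $\cu_{t^l\ge 0}=\perpp(\cu_{t^l\le-1})$ is ``automatically closed under all existing products'', but a \emph{left} orthogonal $\perpp D$ is closed under coproducts, not products: there is no reason for $\cu(\prod Y_i,X)$ to vanish just because each $\cu(Y_i,X)$ does. (Your earlier aside that ``$\coprod$-closed \dots\ is certainly closed under products as well'' is likewise false.) The fix is to use the other description: $\cu_{t^l\ge 0}=\cu_{w\ge 0}=(\cu_{w\le-1})^{\perp}$ by Proposition \ref{pbw}(\ref{iort}), and a \emph{right} orthogonal is closed under all products that exist in $\cu$. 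This is precisely what the paper does, identifying $\cu_{t^l\ge 0}$ with $C_1=\cu_{w\le-1}^{\perp}$ via the strict orthogonality coming from Theorem \ref{trefl}(I) and then noting this right orthogonal is product-closed (after observing, as you do, that $\cu$ is cosmashing by Neeman's Proposition 8.4.6).
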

\begin{proof}
 The "only if" assertion is essentially given by Proposition 2.4(6) of \cite{bvt} (cf. \S3 of ibid.; %???
the statement is also very easy for itself).

Conversely,  assume that $w$ is smashing. According to Theorem \ref{trefl}(I) the existence of $t^l$ is equivalent to the representability of $\tau_{\le 0}H_{M}$ for any representable functor $H_{M}$. Next, Proposition \ref{psmash}(\ref{icopr6p}) says that   $\tau_{\le 0}H_{M}$ is a cp functor since $H_{M}$ is. Hence $\tau_{\le 0}H_{M}$ is representable by the Brown representability assumption, and we obtain  that $t^l$ exists indeed. %the existence %of a right adjacent $t^r$.

Next,  the category $\cu$  is cosmashing according to  Proposition 8.4.6 of  \cite{neebook} (since it satisfies the Brown representability property).  Moreover, $t^l=(C_2[1],C_1)$, where $C_1= \cu_{w\le -1}^{\perp}$  and $C_2= \cu_{w\ge 0}^{\perp}$ according to Theorem \ref{trefl}(I) (alternatively, one can apply Proposition \ref{portadj}). %Proposition %\ref{prefl}. %(\ref{irefl1},\ref{irefl3}). 
 Hence the class $\cu_{t^l\ge 0}$  is closed with respect to $\cu$-products; thus $t^l$ is cosmashing as well. 

Lastly, since $t^l=(C_2[1],C_1)$, the class $\cu_{t^l=0}$ equals $(\cu_{w\ge 1}\cup \cu_{w\le -1})^{\perp}$. Hence Proposition \ref{pwrange}(\ref{iwrpure},\ref{iwrort}) %$\hrt$ %can be calculated using 
along with Proposition 2.3.2(8) of \cite{bwcp} %(see %also %Remark 2.1.3(2) of ibid. and Proposition \ref{pwrange}(\ref{iwrpure},\ref{iwrort}) above). %; cf. Theorem \ref{trefl}(II)).  
 gives the calculation in question.
\end{proof}

\begin{rema}\label{rsmashex}
 Now let us discuss examples to Theorem \ref{tsmash}. %our theorem.

%{ckeller} below??
1. According to Theorem 5 of \cite{paucomp},  any set $\cp$ of compact objects of $\cu$ generates a (unique) smashing weight structure (see Definition \ref{dcomp}(\ref{dgenw}) and Remark \ref{rcomp}(2)). Thus one may say that there are lots of smashing weight structures on $\cu$ whenever there are 'plenty' of compact objects in it (see Theorem 4.15 of \cite{postov} for a certain justification of this claim for derived categories of commutative rings). %; cf. also Remark \ref{rexam}(\ref{irexam2}) below).
  Thus our theorem yields a rich collection of $t$-structures; yet % and the author does not know of any other methods that give all of them (
  cf. Remark \ref{rneetgen} below. %).
  
  We will say more on smashing weight structures in Remark \ref{rexam} below. Moreover, we will also discuss the application of Theorem \ref{tsmash} to semi-orthogonal decompositions; %  (see Definition \ref{ddec} and Remark \ref{psod}); 
   this gives a generalization of \cite[Corollary 2.4]{nisao}.
  
  2. The author has applied  Theorem \ref{tsmash}  in %\cite[4.2.5]
  \S4.2 of \cite{bok}  to obtain and study an interesting family of $t$-structures on various motivic triangulated categories. 

\end{rema} 

\subsection{Weight structures extended from subcategories of compact objects, and orthogonal $t$-structures}\label{sort}

To construct certain weight structures we need the following statements that appear to be rather well-known.

\begin{pr}\label{pstar}
%Then the following statements are valid.
%Let $\cu$ be a smashing triangulated category; assume that %$\cp$ and $\cp'$ are arbitrary classes of objects of $\cu$, whereas the classes 
 Assume that $A$ and $B$ %$\obj \cu$
 are extension-closed classes of objects of $\cu$. % that are closed with respect to $\cu$-extensions.

 I. Assume that $A\perp B[1]$. Then the class $A\star B$  (of all extensions of elements of $B$ by elements of $A$) is extension-closed as well.

II. Assume in addition that $\cu$ is smashing, and $A$ and $B$ are closed with respect to $\cu$-coproducts. 

1. Then  $A\star B$  is closed with respect to $\cu$-coproducts as well.

2. Assume that $A$ %(resp. $Q$)  
 is closed either with respect to $[-1]$ or with respect to $[1]$. Then $A$ %and $Q$ are 
 is retraction-closed in $\cu$.  %the smallest extension-closed and subclass of $\obj \cu$ that is closed with respect to coproducts and contains $\cp[i]$ for all $i\le 0$ (resp. for 
\end{pr}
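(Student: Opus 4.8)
Part I requires only that $A\star B$ be extension-closed (it contains $0$, as $0\in A\cap B$). I would deduce this from two facts. First, the orthogonality $A\perp B[1]$ makes $\star$ ``commutative up to splitting'': if $B_0\to C\to A_0\to B_0[1]$ is distinguished with $A_0\in A$, $B_0\in B$, its connecting morphism lies in $\cu(A_0,B_0[1])=\ns$, so $C\cong A_0\oplus B_0$, and the split triangle $A_0\to C\to B_0\to A_0[1]$ witnesses $C\in A\star B$; hence $B\star A\subseteq A\star B$. Second, the operation $\star$ on classes of objects is associative (the usual consequence of the octahedral axiom). Granting these, and using that $A$, $B$ are extension-closed (so $A\star A\subseteq A$, $B\star B\subseteq B$), I obtain
\[(A\star B)\star(A\star B)=A\star(B\star A)\star B\subseteq A\star(A\star B)\star B=(A\star A)\star(B\star B)\subseteq A\star B,\]
which is the claim. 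A self-contained alternative is to argue directly: given a distinguished triangle $X\to Y\to Z\to X[1]$ with $X,Z\in A\star B$, use $A\perp B[1]$ to factor the relevant connecting morphism through the $A$-part of $X$ and then assemble $Y$ as an extension of a $B$-object by an $A$-object via two octahedra.

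Part II.1 I would dispatch in one line: given $X_i\in A\star B$, choose distinguished triangles $A_i\to X_i\to B_i\to A_i[1]$ with $A_i\in A$, $B_i\in B$; since coproducts of distinguished triangles are distinguished (Remark 1.2.2 of \cite{neebook}), the triangle $\coprod A_i\to\coprod X_i\to\coprod B_i\to(\coprod A_i)[1]$ is distinguished, and $\coprod A_i\in A$, $\coprod B_i\in B$ by hypothesis, so $\coprod X_i\in A\star B$.

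For Part II.2 the plan is an Eilenberg-swindle argument via homotopy colimits. Let $X$ be a retract of $Y\in A$, say $X\xrightarrow{\iota}Y\xrightarrow{\pi}X$ with $\pi\iota=\id_X$. First I would observe that $\iota$ is a split monomorphism, so in a distinguished triangle $X\xrightarrow{\iota}Y\xrightarrow{\rho}Z\xrightarrow{w}X[1]$ the morphism $w$ vanishes (rotating, $\iota[1]\circ w=0$ as a consecutive composite, and $\iota[1]$ has the retraction $\pi[1]$, so $w=0$); hence $Y\cong X\oplus Z$ with $Z=\co(\iota)$, and $e:=\iota\pi$ is the projector of $Y$ onto the $X$-summand. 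The constant sequence $Y\xrightarrow{e}Y\xrightarrow{e}\cdots$ then splits as $(X\xrightarrow{\id}X\xrightarrow{\id}\cdots)\oplus(Z\xrightarrow{0}Z\xrightarrow{0}\cdots)$, whose homotopy colimit is $X$; consequently there is a distinguished triangle $P\to P\to X\to P[1]$ with $P:=\coprod_{i\ge 1}Y\in A$ (the first two terms lying in $A$ because $A$ is coproduct-closed). Applying the same to the complementary projector $\id_Y-e$ gives a distinguished triangle $P\to P\to Z\to P[1]$.

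Now the two cases, which is where the delicacy lies. If $A$ is closed under $[1]$ then $P[1]\in A$, and rotating the triangle for $X$ exhibits $X$ as an extension of $P[1]$ by $P$, so $X\in A$ by extension-closedness. If instead $A$ is closed under $[-1]$, then shifting the triangle for $Z$ by $[-1]$ exhibits $Z[-1]$ as an extension of $P$ by $P[-1]$, and both $P$ and $P[-1]$ lie in $A$, so $Z[-1]\in A$; then rotating the split retract triangle $X\xrightarrow{\iota}Y\to Z\to X[1]$ to $Z[-1]\to X\to Y\to Z$ exhibits $X$ as an extension of $Y$ by $Z[-1]$, whence $X\in A$ again. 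The main obstacle I anticipate is precisely this asymmetry between the two shift hypotheses: the homotopy colimit only realizes $X$ directly as an extension of objects of $A$ in the $[1]$-closed case, so in the $[-1]$-closed case one must route through the complementary summand $Z$ and the down-shifted retract triangle. Verifying carefully that the constant-idempotent homotopy colimit really computes the direct summand, and keeping the rotations straight, is the part that needs attention.
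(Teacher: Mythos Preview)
Your proposal is correct. The paper itself gives no self-contained argument here: it simply cites Proposition 2.1.1 and Corollary 2.1.3(2) of \cite{bsnew} for all three parts. Your arguments are precisely the standard ones underlying those citations, so there is no genuine methodological difference to report.

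A couple of minor remarks. For Part I, your route via associativity of $\star$ and the inclusion $B\star A\subset A\star B$ (from the vanishing of the connecting morphism) is clean and exactly what one expects. For Part II.2, the one point you rightly flag as needing care---that the homotopy colimit of $X\xrightarrow{\id}X\xrightarrow{\id}\cdots$ is $X$---can be disposed of directly: the automorphism $\beta$ of $\coprod_{i\ge 0}X$ sending the $i$-th summand diagonally into the first $i+1$ summands satisfies $\beta\circ(1-\sigma)=-\sigma$, and $\sigma$ is (after reindexing the source) the split inclusion $\coprod_{i\ge 1}X\hookrightarrow\coprod_{i\ge 0}X$ with complement $X$; hence $\co(1-\sigma)\cong X$. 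With that in hand, your rotations in both the $[1]$-closed and $[-1]$-closed cases are correct as written.
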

\begin{proof}
All of these statements are rather easy.

Assertions I and II.1 %are given by 
 immediately follow from Proposition 2.1.1 of \cite{bsnew}. Assertion II.2 is a straightforward consequence of Corollary 2.1.3(2) of %\cite{bsnew} 
 ibid. (see  Remark 2.1.4(4) of ibid.)
\end{proof}

Now we prove that weight structures extend from subcategories of compact objects to the localizing subcategories they generate. 
The (proof of the) first part of the following theorem is quite similar to the corresponding arguments in \S2 of \cite{bsnew}.
%Note also that one certainly can  assume that $\eu$ is compactly generated and thus equals $\cu$.

%\eu nafig??!! 

\begin{theo}\label{tcompws}
Assume that $\cu$ is a smashing triangulated category, a subcategory $\cuz $ of $ \cu^{(\alz)}$ generates $\cu$, % as its own  localizing subcategory, % of $\eu$ generated by $\obj \cuz$, 
 and $\wz$ is a weight structure on $\cuz$.

I.1. Then $\wz$ extends uniquely to a smashing weight structure $w$ on $\cu$.  %, i.e., $\du_{w_{\du}\le 0}\cap \obj \cuz=\cuz_{\wz\le 0}$ and $\du_{w_{\du}\ge 0}\cap \obj \cu=\cu_{w\ge 0}$.

2. $w$ is %(the unique)???
 the coproductive extension of $\wz$ to $\cu$.
%$\cu_{w\le 0}$ (resp.  $\du_{w_{\du}\ge 0}$) is the smallest extension-closed subclass of $\obj \du$ that is closed with respect to $\du$-coproducts and contains $\cu_{w\le 0}$ (resp.  $\cu_{w\ge 0}$).

3.  $\cu_{w= 0}$ consists of all retract of all (small) $\cu$-coproducts of elements of $\cu^0_{\wz= 0}$.

4. Assume that  $\al$ is a {\it regular} cardinal, that is, $\al$  cannot be presented as a sum of less then $\al$ cardinals that are less than $\al$. 
Take $\cu_{\al}$ to be the smallest triangulated subcategory of $\cu$ that contains $\cuz$ and is closed with respect to coproducts of less  than $\al$ objects. Then there exists a 
coproductive extension $w_{\al}$ of $\wz$ to $\cu_{\al}$. Moreover, $\cu_{\al}{}_{w_{\al}= 0}$  consists of all retracts of all coproducts of less than $\al$ elements of $\cu^0_{\wz= 0}$.

II. Assume in addition that the category $\cuz$ is essentially small. %$\cu$ satisfies the Brown representability property.

1. Then there exists a $t$-structure $t$ (left) adjacent to $w$. % (on $\cu$). %; it is left orthogonal to $w$.

2. %$\du_{t_{\du}\le 0}=\cu_{w\ge 1}^{\perp_{\du}}$ and $\du_{t_{\du}\ge 0}=\cu_{w\le -1}^{\perp_{\du}}$
 $t$ is strictly  orthogonal to $\wz$; hence $t$ is both smashing and cosmashing.

3. %Assume that $\cu$ satisfies the Brown representability property. 
$\hrt$ is equivalent (in the obvious way) to the category $\adfu(\hwz{}\opp,\ab)$.

\end{theo}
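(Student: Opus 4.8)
Theorem \ref{tcompws} is the combination of several facts, and the plan is to treat parts I and II in the order they are stated, leaning heavily on the machinery already assembled in \S\ref{sortvtt} and the Brown-representability results recalled in \S\ref{smashort}.

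\textbf{Part I.} The plan for I.1 is to define $w = (\cu_{w\le 0},\cu_{w\ge 0})$ to be the coproductive extension of $\wz$ (Definition \ref{dcoprode}(1)), so that I.2 will hold by construction, and then check that this couple is indeed a weight structure. Retraction-closedness is built into the definition; semi-invariance with respect to translations follows because the generating classes $\cu^0_{\wz\le 0}$ and $\cu^0_{\wz\ge 0}$ are shift-semi-invariant and the closure operations preserve this. The orthogonality axiom $\cu_{w\le 0}\perp\cu_{w\ge 1}$ is the delicate point: one starts from $\cu^0_{\wz\le 0}\perp_{\cuz}\cu^0_{\wz\ge 1}$, upgrades this to $\cu^0_{\wz\le 0}\perp_\cu\cu^0_{\wz\ge 1}$ using that $\cuz$ is a full subcategory consisting of $\alz$-small objects (so $\cu(\coprod_i X_i, Y)$ for $X_i,Y\in\obj\cuz$ is controlled by the corresponding groups in $\cuz$), and then propagates the orthogonality through coproducts, extensions and retracts --- here Proposition \ref{pstar}(I) and (II.1) do the bookkeeping. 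The existence of weight decompositions is the main work: for a compactly-generated-type situation one builds, for each $M\in\obj\cu$, a tower of "staircase" approximations by objects of $\cu^0_{\wz\le 0}$-coproducts and takes a homotopy colimit, exactly as in \S2 of \cite{bsnew}; the compactness of the generators guarantees that the resulting triangle has the two outer terms in $\cu_{w\le 0}$ and $\cu_{w\ge 1}$ respectively. Uniqueness of the extension is then Proposition \ref{pbw}(\ref{iuni}) once one knows any extension $w'$ satisfies $\cu_{w\le 0}\subset\cu_{w'\le 0}$ and $\cu_{w\ge 0}\subset\cu_{w'\ge 0}$, and the latter inclusions come from Remark \ref{rcoprode}(1). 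Part I.3 follows from Proposition \ref{psmash}(\ref{icoprh}) (the heart is closed under coproducts) together with the explicit description of $\cu_{w\ge 0}\cap\cu_{w\le 0}$ forced by the construction; I.4 is the same argument carried out with "small coproducts" replaced by "coproducts of $<\al$ objects", using regularity of $\al$ to ensure the relevant closure operations stay inside $\cu_\al$ and that the staircase construction still terminates.

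\textbf{Part II.} Here the plan is to invoke the abstract criterion Theorem \ref{trefl}(I): a $t$-structure $t$ strictly orthogonal to $w$ (equivalently, by Proposition \ref{portadj}, left adjacent to $w$ once $\cu'=\cu$) exists iff $\tau_{\le 0}H_M$ is representable for every $M\in\obj\cu$. Since $w$ is smashing (by I.1) and $H_M$ is a cp functor, Proposition \ref{psmash}(\ref{icopr6p}) shows $\tau_{\le 0}H_M$ is again a cp functor; but a smashing category generated by an essentially small set of compact objects satisfies Brown representability (Remark \ref{rcomp}(1)), so $\tau_{\le 0}H_M$ is representable. This gives II.1. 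For II.2, strict orthogonality to $\wz$ is Corollary \ref{ccopr} (applied with $\cu'=\cu$), using that $w$ is the coproductive extension of $\wz$; smashing-ness and cosmashing-ness of $t$ then follow exactly as in the proof of Theorem \ref{tsmash} --- $\cu_{t\le 0}$ and $\cu_{t\ge 0}$ are described as $\perp$-classes, and such classes are automatically closed under the relevant (co)products. Finally II.3: since $t$ is strictly orthogonal to $\wz$, its heart $\cu_{t=0}$ equals $(\cup_{i\ne 0}\cu^0_{\wz\le 0}[i])^{\perp_\cu}$-type intersection, whose objects correspond to pure cohomological functors on $\cuz$; by Proposition \ref{pwrange}(\ref{iwrpure}) (pure functors $\simeq\adfu(\hwz{}\opp,\ab)$) plus Theorem \ref{trefl}(II) (the Yoneda-type functor is fully faithful) plus a surjectivity/essential-surjectivity check — every object of $\adfu(\hwz{}\opp,\ab)$ extends to a cp pure functor on $\cu$ and is thus representable by Brown representability — one gets the asserted equivalence $\hrt\simeq\adfu(\hwz{}\opp,\ab)$.

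\textbf{Main obstacle.} The genuinely hard step is the construction of weight decompositions in Part I.1 for an arbitrary object of the big category $\cu$: one must run the transfinite "staircase" / homotopy-colimit argument and verify that the outer vertices land in the prescribed classes. This is where compactness of the generators $\cuz\subset\cu^{(\alz)}$ is essential (it makes $\cu(-, \text{hocolim})$ commute with the colimit on compact test objects, so orthogonality is preserved in the limit), and it is the place where one must be careful that the construction is not circular — the classes $\cu_{w\le 0},\cu_{w\ge 0}$ are defined first by closure, and only afterwards shown to admit decomposition triangles. Everything else is either formal closure-property juggling or a direct appeal to the results of \S\ref{sortvtt} and \S\ref{smashort}.
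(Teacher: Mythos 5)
Your Part II is essentially the paper's proof: compact generation of $\cu$ by the essentially small $\cuz$ gives Brown representability, Theorem \ref{tsmash} then yields the adjacent $t$ together with cosmashingness and the description of $\hrt$ via product-respecting functors on $\hw\opp$, and Corollary \ref{ccopr} gives strict orthogonality to $\wz$ (whence smashingness of $t$, since $N^{\perp}$ is coproduct-closed for compact $N$). The same goes for your treatment of the orthogonality axiom in Part I: the paper likewise uses that $\cu^0_{\wz\le 0}{}^{\perp}$ is coproduct-closed by compactness and is automatically extension- and retraction-closed, so it swallows $E_2[1]$, and then repeats the trick with ${}^{\perp}(E_2[1])$.

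Where you genuinely diverge is the step you yourself single out as the main obstacle: the existence of weight decompositions for arbitrary $M\in\obj\cu$. You propose a transfinite ``staircase'' tower with a homotopy colimit. That is \emph{not} what the paper (or \S 2 of \cite{bsnew}, which you cite for it) does, and it faces a real difficulty specific to weight structures: the homotopy colimit of a tower in $\cu_{w\le 0}$ is a cone on a map of coproducts, hence an extension of $(\coprod X_i)[1]\in\cu_{w\le 1}$ by $\coprod X_i\in\cu_{w\le 0}$, and so a priori only lands in $\cu_{w\le 1}$; the naive cell-attachment argument therefore does not directly produce a weight truncation, and making it work requires extra care that you do not supply. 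The paper sidesteps the construction entirely by a formal closure argument: the class $E=E_1\star E_2[1]$ of objects admitting a weight decomposition contains $\obj\cuz$ (by the decomposition axiom for $\wz$) and is extension-closed and coproduct-closed by Proposition \ref{pstar}(I, II.1), hence equals $\obj\cu$ because $\cuz$ generates $\cu$ as a localizing subcategory. No decomposition of an individual object is ever constructed. The same closure trick is what drives I.3 (one shows $C\star\cu_{w\ge 1}=\cu_{w\ge 0}$, so every $M\in\cu_{w=0}$ admits a weight decomposition with $w_{\le 0}M\in C$, and $M$ is then a retract of it by Proposition \ref{pbw}(\ref{iwdmod})); your appeal to ``the explicit description forced by the construction'' glosses over exactly this retract argument, which is the nontrivial inclusion $\cu_{w=0}\subset C$. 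I would rework the decomposition step along the paper's lines rather than trying to repair the homotopy-colimit approach.
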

\begin{proof}
I.1,2. Proposition \ref{pbw}(\ref{iuni}) clearly yields the following: if a coproductive extension of $\wz$ to $\cu$ exists then it equals the only smashing extension of $\wz$ to $\cu$.

Set %$C_1=\cu_{\wz\ge 0}^{\perp_{\cu}};\    C_2=\cu_{\wz\le -1}^{\perp_{\cu}}$ (cf. (\ref{ec12})). We should prove that $C_1 
$E_1$ (resp. $E_2$) for the smallest extension-closed subclass of $\obj \cu$ that is closed with respect to coproducts and contains $\cu^0_{\wz\le 0}$ (resp.  $\cu^0_{\wz\ge 0}$). To prove our assertions, it suffices to verify that $(E_1,E_2)$ is a weight structure on $\cu$. 

Firstly, %using the 
 axiom (ii) of Definition \ref{dwstr} (for $\wz$) easily implies that  $E_1\subset E_1[1]$ and $E_2[1]\subset E_2$.  
Combining this statement with  %Corollary 2.1.3(2) of \cite{bsnew} (see also Remark 2.1.4(4) of ibid.) %one easily obtains 
Proposition \ref{pstar}(II.2)  we obtain that $E_1$ and $E_2$ are retraction-closed in $\cu$. %right/left envelopes of themselves; say?!

Next, the compactness of the elements of $\cu^0_{\wz\le 0}$ in $\cu$ implies that the class $\cu^0_{\wz\le 0}\perpp$ is closed with respect to coproducts. Since it is also extension-closed and contains $\cu^0_{\wz\ge 1}$ by the axiom (iii) of Definition \ref{dwstr}, this orthogonal %class 
 contains $E_2[1]$, i.e., $\cu_{w\le 0}\perp E_2[1]$. Thus $\cu_{w\le 0}\subset \perpp E_2[1]$, and since the latter class is closed with respect to coproducts and extensions, we obtain that $E_1\perp E_2[1]$ (cf. the proof of \cite[Lemma 1.1.1(2)]{bokum}).

Let us now prove the existence of weight decompositions, i.e., %for the set $E$ of those $M\in \obj \cu$ such that there exists a distinguished triangle $LM\to M\to RM\to LM[1]$ with $LM\in C_1$ and $RM\in C_2[1]$ 
 for $E=E_1\star E_2[1]$ we should prove $E=\obj \cu$. Now, axiom (iv) in Definition \ref{dwstr} (applied to $w^0$) implies that $E$  contains $\obj \cuz$, and %Proposition 2.1.1(1,2) of \cite{bsnew}
 Proposition \ref{pstar}(I, II.1) implies that $E$ is also extension-closed and closed with respect to $\cu$-coproducts. Hence $E=\obj \cu$, and we obtain that $w=(E_1,E_2)$ is a weight structure on $\cu$ indeed. Clearly, this weight structure is smashing.

%Lastly, assume that $v$ is a smashing weight structure on $\cu$ such that $ \cu_{w\le 0}\subset \du_{v\le 0}$ and $ \cu_{w\ge 0}\subset \du_{v\ge  0}$. Then we obviously have $ \du_{w_{\du}\le 0}\subset \du_{v\le 0}$ and $ \du_{w_{\du}\ge 0}\subset \du_{v\ge  0}$, and applying Proposition \ref{pbw}(\ref{iuni}) we obtain $v=w_{\du}$.

3. Denote  our candidate for $\cu_{w=0}$ by $C$. %{icopr2}  _{\du}
Firstly we note that  $C\subset \cu_{w=0}$ since the latter class is  closed with respect to coproducts according to Proposition \ref{psmash}(\ref{icoprh}). 

Applying Proposition \ref{pbw}(\ref{isplit})  we obtain that $C$  is  extension-closed (since this class is clearly additive); obviously, it is also closed with respect to coproducts. 

Next we apply %Proposition \ref{pbw}(\ref{iuni})??
  Proposition \ref{pstar}(I, II.1) once again to obtain that   %the% class of extensions of elements of $\cu_{w \ge 1}$ by that of $C$ 
  the class $C\star \cu_{w \ge 1}$ is extension-closed and closed with respect to coproducts; hence this class coincides with $\cu_{w \ge 0}$. Thus for any $M\in \cu_{w= 0}$ there exists its weight decomposition $LM\to M\to RM\to LM[1]$ with $LM\in C$.  Since $M$ is a retract of $LM$ according to Proposition \ref{pbw}(\ref{iwdmod}),  we obtain that $M\in C$.
  
  4. The proof is similar to that of the preceding assertions; cf. also the proof of \cite[Theorem 2.2.1]{bsnew}.

II.1. The category $\cu$ is compactly generated by $\cuz$ in this case; hence $\cu$ satisfies the Brown representability property (see Remark \ref{rcomp}(1)). Next, $w$ is smashing; thus a $t$-structure $t$ adjacent to it exists according to Theorem \ref{tsmash}. %Clearly, $t_{\du}$ is also %right 
 %orthogonal to $w$.

2. $t$ is cosmashing according to Theorem \ref{tsmash}. 
%Next, Proposition \ref{prefl} implies that $t_{\du}$ is strictly left orthogonal to $w_{\du}$, i.e., $\du_{t_{\du}\le 0}=\du_{w_{\du}\ge 1}\perpp$ and $\du_{t_{\du}\ge 0}=\du_{w_{\du}\le -1}\perpp$.
It is 
%Next, Proposition \ref{prefl} implies that $t_{\du}$ is 
strictly  orthogonal to $\wz$ by Corollary  \ref{ccopr}; thus 
%, i.e., 
 $\cu_{t\le 0}=\cu^0_{\wz\ge 1}\perpp$. % and $\cu_{t\ge 0}=\cu_{w_{\du}\le -1}\perpp$. 
 Since for any object $N$ of $\cuz$ the class %${}^{\perp_{\du}} N$ 
  $N\perpp$ is closed with respect to coproducts, % and extensions, assertion I.2 implies that $\du_{w_{\du}\ge 1}\perpp=\cu_{w\ge 1}\perpp$ and $\du_{w_{\du}\le -1}\perpp =\cu_{w\le- 1}\perpp$, that is, $t_{\du}$ is strictly left orthogonal to $w$ as well. Lastly, the elements of $\cu_{w\ge 1}$ are compact in $\du$; hence 
  $t$ is smashing. % as well.

3. According to Theorem \ref{tsmash}, the category $\hrt$ is equivalent to the category of those functors from $\hw\opp$ into $\ab$ that respect products. Thus it remains to apply the description of $\hw$ provided by assertion I.3.
\end{proof}

\begin{rema}\label{rcuzkar}

1. The restriction of our theorem to the case where $\wz$ is bounded %and $\cuz$ is essentially small 
 was essentially established in \S4.5 of \cite{bws}; cf.  Theorem 3.2.2 of \cite{bwcp} and Remark 2.3.2(2) of \cite{bsnew} for some more detail.
%dg examples?! Spectral ones: cf. bpure or ask Sosnilo?!

2. Note that one can easily construct plenty of examples  for our theorem such that  $\wz$ is unbounded. 

Indeed, %can obtain   infinite cardinal
there exist lots of unbounded weight structures on (essentially) small triangulated categories; % using (say) the previous part of this remark.
 in particular, one can start from a (say, bounded) weight structure on some small non-zero $\cuz\subset  \cu^{(\alz)}\subset \cu$, choose a regular $\al>\alz$, and apply Theorem \ref{tcompws}(I.4) on the corresponding category $\cu_{\al}^0\subset \cu$ (that is the smallest subcategory of $\cu$ that is closed with respect to coproducts of less than $\al$ objects and contains $\cuz$).
 Next, %{kellerdg}?? Sosnilo?!
 %Proposition 1.1.3.6 and 
%if $\wz$ is (an unbounded) weight structure on a 
if $\cuz=h\mathcal{C}_0$ is the homotopy category  of a %essentially
 small stable $\infty$-category $\mathcal{C}_0$ then %one can take
  it compactly generates the category $\cu=h\operatorname{Ind}\mathcal{C}_0$; see Remark 1.4.4.3 of \cite{lurha} (cf. also the proof of Corollary 1.4.4.2 of loc. cit.).  %  §HTT.5.5.7
 
 Consequently, if $\cuz=h\mathcal{C}_0$ is a non-zero small category endowed with a weight structure $\wz$  then one can %increase it to obtain an essentially small  triangulated  
  extend it to an unbounded weight structure on some triangulated category $\cu'_0\supset \cuz$, and $\cu'_0\subset  \cu'^{(\alz)}\subset \cu'$ for some compactly generated category $\cu'$.
  \end{rema}

Now let us combine Proposition \ref{pcgrlin} with Theorem \ref{tcompws}(II).

\begin{pr}\label{pcgrlintba}%{pgeomap}\tba
Adopt the notation and assumptions of Theorem \ref{tcompws}(II);
assume in addition that $\cuz$ is $R$-linear and $\au$ is a weak Serre subcategory of $R-\modd$ (see Definition \ref{damain}). %We will use the notation of Definition \ref{damain} with $\du=\cu$ and $\cu=\cuz$.

 1.  Then $\cu_{\au}$ is triangulated and $t$ restricts to it.
 %R: can be zero??????!!!
 
 2. The heart of this restriction is naturally equivalent to the category of %those
  $R$-linear functors %from
   $\hwz{}\opp\to \au$. % into $R-\modd$ whose values %are of cardinality less than $\be$.
% have less than $\be$ generators (as $R$-modules). %The only place where compactness is needed??!
 
 3. Let $\be$ be an infinite cardinal; %and %for $N\in \obj \cu$.  A
 assume that $R$ is Noetherian and the $R$-module $\cu(M,N)$ has less than $\be$ generators for any $M,N\in \obj \cuz$. Then $\cuz\subset \cu_\be$  (cf. Proposition \ref{pcgrlin}(3)). %New, not interesting??
   \end{pr}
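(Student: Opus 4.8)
The plan is to deduce all three assertions by combining Theorem \ref{tcompws}(II) (which provides the adjacent $t$-structure $t$ and identifies it as strictly orthogonal to $\wz$) with Proposition \ref{pcgrlin}. First I would observe that the hypotheses of Theorem \ref{tcompws}(II) are in force, so $w$ is the coproductive extension of $\wz$ to $\cu$, the $t$-structure $t$ left adjacent to $w$ exists, and — crucially — $t$ is strictly orthogonal to $\wz$ (on the $R$-linear subcategory $\cuz\subset\cu$). This last fact is exactly the input needed to invoke Proposition \ref{pcgrlin}, whose standing hypotheses are: $\cuz,\cu\subset\du$ (here $\du=\cu$), $\cuz$ is $R$-linear, $\wz$ on $\cuz$ is strictly orthogonal to a $t$-structure $t$ on $\cu$, and $\au$ is a weak Serre subcategory of $R-\modd$. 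All of these now hold.

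For assertion 1, I would simply apply Proposition \ref{pcgrlin}(1): it gives directly that $\cu_{\au}$ is triangulated and that $t$ restricts to it. For assertion 2, Proposition \ref{pcgrlin}(1) also describes the heart $\hrt_{\au}$ as consisting of those objects of $\hrt$ whose associated pure functors $\hwz{}\opp\to R-\modd$ (via Proposition \ref{pwrange}(\ref{iwrpure},\ref{iwrort}) and Lemma \ref{lrlin}(2)) take values in $\au$. To upgrade this to a clean equivalence with $\adfur(\hwz{}\opp,\au)$, I would combine this with the identification of $\hrt$ itself provided by Theorem \ref{tcompws}(II.3): $\hrt\simeq\adfu(\hwz{}\opp,\ab)$ via $M\mapsto H_M^{\cu}$. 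Passing through the $R$-linear reformulation of Lemma \ref{lrlin}(2), $\hrt\simeq\adfur(\hwz{}\opp,R-\modd)$, and the condition ``values in $\au$'' cuts this down to exactly the full subcategory of $R$-linear functors $\hwz{}\opp\to\au$. One should check that this subcategory is the essential image of $\hrt_{\au}$ under the equivalence — this is immediate since the equivalence is given by evaluation and $\au$ is strictly full.

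For assertion 3, I would argue that $\cuz\subset\cu_\be$ directly from the definition of $\cu_\be$ (Definition \ref{damain}(3)): for $N\in\obj\cuz$ one must verify that for every $M\in\obj\cuz$ the $R$-module $\cu(M,N)=\du(M,N)$ has fewer than $\be$ generators, which is precisely the hypothesis imposed on the Hom-modules of $\cuz$; one also uses Proposition \ref{pcgrlin}(3) to know that the category of such modules is a genuine weak Serre subcategory of $R-\modd$ (so that $\cu_\be$ makes sense and assertions 1–2 apply to it). The only mild subtlety — and the step I expect to require the most care rather than being purely formal — is assertion 2: making precise that the two equivalences (Theorem \ref{tcompws}(II.3) and the restriction description from Proposition \ref{pcgrlin}(1)) are compatible, i.e.\ that the pure functor attached to $M\in\hrt$ in Proposition \ref{pcgrlin} agrees, under Lemma \ref{lrlin}(2), with the functor $M\mapsto H_M^{\cu}$ of Theorem \ref{tcompws}(II.3) restricted along $\hwz\hookrightarrow\cuz$. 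This compatibility is built into Proposition \ref{pwrange}(\ref{iwrort}) and the strict orthogonality, so no new argument is needed; it just has to be stated carefully.
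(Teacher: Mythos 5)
Your proposal is correct and follows essentially the same route as the paper: assertion 1 is a direct application of Proposition \ref{pcgrlin}(1) (enabled by the strict orthogonality from Theorem \ref{tcompws}(II.2)), assertion 2 combines Theorem \ref{tcompws}(II.3) with the value-restriction argument (the paper cites Proposition \ref{pwrange}(\ref{iwfilsubc}) directly, which is the same ingredient underlying the heart description in Proposition \ref{pcgrlin}(1) that you invoke), and assertion 3 is immediate from the definitions. The paper's proof is just a terser version of yours.
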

 \begin{proof}
1. This is particular case of Proposition \ref{pcgrlin}(1).

2. Immediate from Theorem \ref{tcompws}(II.3) combined with Proposition \ref{pwrange}(\ref{iwfilsubc}). %the definition of $\cu_{\be}$. 

3. Obvious.
\end{proof}

Now we recall a 'geometric' setting where Proposition \ref{pcgrlintba} can be applied. 

\begin{pr}\label{pgeomap1}
  Assume that $R$ is Noetherian and a scheme $X$ is projective over $\spe R$ %in the "weak" sense, %what about silting?! use sod instead?! that is,
 (that is, $X$ is a closed subscheme of the projectivization $Y$ of a vector bundle $\mathcal{E}$ over $\spe R$); take $\cu=D(\operatorname{Qcoh}(X))$ (the derived categories of quasicoherent sheaves on $X$) and $\cuz=\cu^{(\alz)}$.
 
 1. Then $\cu$ is compactly generated, $\cuz=D_{perf}(X)\subset\cu$ (the subcategory of perfect complexes on $X$), and $\cu_{\alz}=D_{coh}(\operatorname{Qcoh}(X))$; here we use the notation of %Proposition \ref{pcgrlin}
 Definition \ref{damain}, whereas a complex $N\in \obj D(\operatorname{Qcoh}(X))$ %in %the derived category 
  belongs to $D_{coh}(\operatorname{Qcoh}(X))$ whenever all  of its %canonical?! notation?! 
  cohomology sheaves $H^i(N)$ are coherent.\footnote{Note that $H^0$ is actually a homological functor from the %category of  cohomological complexes over an 
	 derived category of (any) abelian category; so $H^i(N)=H^0(N[i])$ in contrast to the convention introduced in \S\ref{snotata}.} 
  
  2.  Take $T$ to be a   subset of $S=\spe R$ stable under specialization, that is, $T$ is 
a union of closed subsets of $S$ (see %Definition 10.40.1 or Definition 17.5.1 of 
 \cite[Tags 0EES, 00L1]{stacksd}). % 05AC??? 01AS {support}
Then the category $\au^T$ of  $R$-modules supported on $T$ is a weak Serre subcategory of %$R-\mmodd\subset 
 $R-\modd$, and the corresponding category $\cu_{\au^T}$ %(we take $\du=\cu'=\cu$ and $\cu=\cuz$ in 
  (see Definition \ref{damain}(2)) consists of all those %complexes
 objects of $\cu$ %that are supported on $p\ob(T)$, where $p:X\to S$ is the structure morphism.
  the sections of whose cohomology sheaves (note that those are $R$-modules) %-linear) 
   are supported on $T$.
   
   Consequently, the category $\cu_{\au^T\cap R-\mmodd}$  consists of  those %complexes
 objects of $\cu$  whose cohomology sheaves are coherent and  whose sections are supported on $T$.
  \end{pr}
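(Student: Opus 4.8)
The plan is to reduce everything to standard facts about derived categories of quasi-coherent sheaves on a projective scheme over a Noetherian base, and then to identify the abstractly-defined subcategories $\cuz=\cu^{(\alz)}$, $\cu_{\alz}$, and $\cu_{\au^T}$ with their concrete geometric counterparts. For part~1, the first step is to recall that $X$ projective over $\spe R$ with $R$ Noetherian is in particular a Noetherian (quasi-compact, separated) scheme, so that $D(\operatorname{Qcoh}(X))$ is compactly generated with compact objects exactly the perfect complexes; the generator may be taken to be a finite direct sum of twists $\mathcal{O}_X(n)$ of a very ample line bundle pulled back from the projectivization $Y$ of $\mathcal{E}$ (this is the standard Thomason--Neeman--Bökstedt--Neeman argument, and it shows $\cu$ is compactly generated and $\cuz=\cu^{(\alz)}=D_{perf}(X)$). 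Next I would identify $\cu_{\alz}$: by Definition~\ref{damain} it is the smallest triangulated subcategory closed under coproducts of fewer than $\alz$ objects, i.e. under finite coproducts, containing $\cuz$; since it is automatically idempotent-complete inside $\cu$ and every bounded complex of coherent sheaves is, Zariski-locally on the projective $X$, a retract of a perfect complex (again using the ample family $\mathcal{O}_X(n)$ together with Serre's theorem that coherent sheaves are quotients of sums of $\mathcal{O}_X(-n)$), one gets $\cu_{\alz}=D_{coh}(\operatorname{Qcoh}(X))$; conversely every object of $D_{coh}$ is built from $D_{perf}$ by finitely many cones and retracts. I would cite Neeman's results (and the corresponding Stacks Project tags) rather than redo these arguments.

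For part~2, the first step is the purely algebraic claim that $\au^T$, the category of $R$-modules $M$ with $\operatorname{Supp}_R M\subset T$, is a weak Serre subcategory of $R-\modd$: it is closed under submodules, quotients, and extensions since support behaves well under these operations ($\operatorname{Supp}$ of a submodule or quotient is contained in $\operatorname{Supp} M$, and $\operatorname{Supp}$ of an extension is the union of the two supports), and $T$ being specialization-stable is exactly what makes these supported modules closed under the relevant operations as a subcategory; this is covered by the Stacks Project tags cited. The second step is to unwind Definition~\ref{damain}(2): with $\cuz=D_{perf}(X)$ the generators $\mathcal{O}_X(n)$ ($n\in\z$) already suffice to test membership, so $N\in\cu_{\au^T}$ iff $\du(\mathcal{O}_X(n)[i],N)$ lies in $\au^T$ for all $n,i$. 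The third step is to compute these Hom-$R$-modules: $\operatorname{Hom}_{D(\operatorname{Qcoh} X)}(\mathcal{O}_X(n)[i],N)\cong H^{-i}(X,N(-n))=H^{-i}(\operatorname{R}\Gamma(X,N(-n)))$, and since $X$ is projective over the affine $\spe R$ these are finitely generated $R$-modules that compute, as $n$ and $i$ vary, exactly the global sections of the cohomology sheaves $H^j(N)$ (via the fact that $\{\mathcal{O}_X(-n)\}$ is an ample family, so a quasi-coherent sheaf $\mathcal{F}$ on $X$ has $\Gamma(X,\mathcal{F}(n))$ supported on $T$ for all $n\gg 0$ iff $\mathcal{F}$ itself has $\Gamma$ of its local sections supported on $T$). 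Chasing a cohomology spectral sequence (or an induction on the amplitude of $N$, using that $\au^T$ is a weak Serre subcategory so closed under the subquotients and extensions that appear) then yields: $N\in\cu_{\au^T}$ iff all sections $\Gamma(U,H^j(N))$ are $R$-modules supported on $T$. Finally, intersecting with the earlier description gives the "consequently" clause, using Proposition~\ref{pcgrlin}(2) (or just $\cu_{\au\cap\au'}=\cu_\au\cap\cu_{\au'}$) to combine the coherence condition with the support condition.

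The main obstacle I expect is the bookkeeping in the third step of part~2: translating the abstract condition "$\du(M,N)\in\au^T$ for all $M\in\obj\cuz$" into the sheaf-theoretic condition on the cohomology sheaves $H^j(N)$ requires knowing that testing against the single ample family $\{\mathcal{O}_X(n)\}$ is enough (this is where projectivity, as opposed to mere properness, is used) and requires a spectral-sequence or filtration argument to pass between $H^{-i}\operatorname{R}\Gamma(X,N(-n))$ and $\Gamma(X,H^j(N)(-n))$. The weak-Serre property of $\au^T$ is exactly the input that makes this spectral-sequence argument go through, so the two halves of part~2 are genuinely linked. Everything else—compact generation, identification of $D_{perf}$ and $D_{coh}$, the algebraic fact about supports—is standard and can be dispatched by citation to \cite{neebook}, \cite{neesat}, and the Stacks Project.
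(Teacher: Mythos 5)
The paper's own proof is a chain of citations to \cite{bdec} (Theorem 4.7(1,3) and Proposition 4.14(2)), so any honest attempt has to reconstruct those arguments; your sketch of compact generation, of the weak Serre property of $\au^T$, and of the computation of $\cu_{\au^T}$ by testing against the twists $\mathcal{O}_X(n)[i]$ and running a spectral sequence is consistent with what those references actually prove. However, there is a genuine error in your treatment of $\cu_{\alz}$ in part 1. The proposition explicitly invokes the notation of Definition \ref{damain}, under which $\cu_{\alz}$ is the Hom-finiteness subcategory: the full subcategory of those $N$ such that $\cu(M,N)$ is a finitely generated $R$-module for every perfect complex $M$ (Definition \ref{damain}(2,3) with $\au=R\text{-}\mmodd$). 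You instead read it as the smallest triangulated subcategory of $\cu$ containing $D_{perf}(X)$ and closed under coproducts of fewer than $\alz$ objects — that is the unrelated notation $\cu_\al$ of Theorem \ref{tcompws}(I.4) — and then argue that this thick closure equals $D_{coh}(\operatorname{Qcoh}(X))$.

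Both the reading and the argument fail. The thick subcategory generated by $D_{perf}(X)$ under finite coproducts, cones and retracts is $D_{perf}(X)$ itself (it is already thick), and for singular $X$ this is strictly smaller than $D^b_{coh}$; your claim that a bounded complex of coherent sheaves is Zariski-locally a retract of a perfect complex is false at a singular point (e.g.\ the residue field), and in any case $D_{coh}(\operatorname{Qcoh}(X))$ here is the \emph{unbounded} category with coherent cohomology, which cannot be reached by finitely many cones. What actually has to be shown (Theorem 4.7(3) of \cite{bdec}) is the equivalence: $N$ has coherent cohomology sheaves if and only if $\cu(M,N)$ is a finitely generated $R$-module for all perfect $M$. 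The forward direction uses properness of $X$ over $\spe R$ (finiteness of coherent cohomology plus a hypercohomology spectral sequence or d\'evissage over the amplitude of $M$); the converse uses the ample family $\{\mathcal{O}_X(n)\}$ to recover coherence of each $H^j(N)$ from finite generation of the modules $\operatorname{Hom}(\mathcal{O}_X(n)[i],N)$. This is exactly the same kind of argument you correctly outline for $\cu_{\au^T}$ in part 2, so the fix is to run that argument with $\au=R\text{-}\mmodd$ in place of $\au^T$ rather than to invoke a generation statement. The remainder of your proposal (compact generation, $\cuz=D_{perf}(X)$, the weak Serre property of $\au^T$, and the intersection step via $\cu_{\au\cap\au'}=\cu_{\au}\cap\cu_{\au'}$) is sound.
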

	\begin{proof}
	The statement that $\cu$ is compactly generated by $D_{perf}(X)$ is well-known; see Theorem %4.2.2
	4.7(1) of \cite{bdec}.
The calculation of $\cu_{\alz}$ is given by part 3 of loc. cit., and the calculation of $\cu_{\au^T}$ is given by Proposition %4.2.9
 4.14(2) of ibid. It remains to apply Proposition \ref{pcgrlin}(3) to conclude the proof.
\end{proof}

\subsection{$t$-structures %right adjacent 
 anti-orthogonal to cosmashing weight structures}\label{scosm}

Once again, we refer to Definitions \ref{dsmash} and \ref{dcomp}.

\begin{theo}\label{tcosm}
Assume that $w$ is a cosmashing weight structure on $\cu$, and  $\cu$ satisfies the dual Brown representability property; let $\al$ be an infinite cardinal. 

1. Then there exists a smashing $t$-structure $t^r$ right adjacent to $w$ and $\hrt^r$ is equivalent to the category of those additive functors $\hw\to \ab$ that respect products.

2. The category $\cu^{(\al)}\subset \cu$ is triangulated.

3. Assume that $w$ is also smashing. Then for any infinite cardinal $\al$ 
the $t$-structure $t^r$ given  by assertion 1 restricts to the subcategory $\cu^{(\al)}$ of $\cu$. % (see Remark \ref{rcomp}(2) and Definition \ref{dtstro}(5));  %
 Moreover,  this restricted $t$-structure %$t^{l,(\al)}$ 
 $t^{(\al)}$  is the only $t$-structure on $\cu^{(\al)}$ that is  anti-orthogonal to $w$.  %heart: in R?!
\end{theo}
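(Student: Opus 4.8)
The plan is to obtain assertion~1 from Theorem~\ref{tsmash} by passing to the opposite category, to prove assertion~2 by the usual five-lemma argument, and to deduce assertion~3 by combining the identification of $t^r$-truncations with virtual $t$-truncations (Proposition~\ref{pwfil}(\ref{iwfilp5})) with the fact that the latter preserve $\al$-smallness (Proposition~\ref{psmash}(\ref{icopr6n})). For assertion~1, I would note that $w\opp$ is a weight structure on $\cu\opp$ (Proposition~\ref{pbw}(\ref{idual})), that $w$ is cosmashing exactly when $w\opp$ is smashing, and that $\cu$ satisfies the dual Brown representability property exactly when $\cu\opp$ satisfies the Brown representability one. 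Thus Theorem~\ref{tsmash} applied to $(\cu\opp,w\opp)$ produces a cosmashing $t$-structure $t^l$ on $\cu\opp$ left adjacent to $w\opp$ whose heart is equivalent to the category of additive functors $(\hw\opp)\opp\to\ab$ --- i.e.\ $\hw\to\ab$ --- that respect products. Setting $t^r=(t^l)\opp$ (cf.\ Remark~\ref{rtst}(1)) and translating these conditions back --- the equality $(\cu\opp)_{t^l\ge0}=(\cu\opp)_{w\opp\ge0}$ becomes $\cu_{t^r\le0}=\cu_{w\le0}$, so $t^r$ is right adjacent to $w$; closure of $(\cu\opp)_{t^l\ge0}$ under $\cu\opp$-products becomes closure of $\cu_{t^r\le0}$ under $\cu$-coproducts, so $t^r$ is smashing; and the heart of $t^r$ is the heart of $t^l$ read inside $\cu$ --- gives assertion~1.

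Since $\cu\opp$ satisfies Brown representability it is cosmashing by \cite[Proposition~8.4.6]{neebook}, hence $\cu$ is smashing and the subcategory $\cu^{(\al)}$ is defined. For assertion~2, $\cu^{(\al)}$ is visibly closed under shifts and retracts, so it suffices to check closure under cones. Given a distinguished triangle $M\to M'\to C\to M[1]$ with $M,M'\in\cu^{(\al)}$ and a family $\{N_i\}_{i\in I}$, applying the functors $\cu(-,\coprod_{j\in J}N_j)$ to this triangle yields long exact sequences, one for each $J\subset I$ of cardinality $<\al$; passing to the colimit over these $J$ (a filtered system), using exactness of filtered colimits in $\ab$ together with the $\al$-smallness of $M,M',M[1],M'[1]$, the five lemma gives $\cu(C,\coprod_{i\in I}N_i)=\inli_{\#J<\al}\cu(C,\coprod_{j\in J}N_j)$, i.e.\ $C\in\cu^{(\al)}$. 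Hence $\cu^{(\al)}$ is a strictly full triangulated subcategory of $\cu$.

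For assertion~3, recall that $t^r$ being right adjacent to $w$ means $w$ and $t^r$ are anti-orthogonal in $\cu$. I would first show $t^r$ restricts to $\cu^{(\al)}$: by Lemma~\ref{lrest}(1) (applicable by assertion~2) it is enough that $L_{t^r}M=t^r_{\ge0}M$ is $\al$-small for $M$ $\al$-small. Here $H^M=\cu(M,-)$ is $\al$-small by Definition~\ref{dcomp}(\ref{dsmall}); since $w$ is smashing and $\ab$ is AB5, $\tau_{\ge0}(H^M)$ is $\al$-small by Proposition~\ref{psmash}(\ref{icopr6n}); and $\tau_{\ge0}(H^M)\cong H^{t^r_{\ge0}M}$ by Proposition~\ref{pwfil}(\ref{iwfilp5}) applied with $\cu=\cu'=\du$, the anti-orthogonal $t$-structure $t^r$ and $N=M$ (the virtual $t$-truncations being taken with respect to $w$). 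So $t^r_{\ge0}M\in\cu^{(\al)}$, and $t^r$ restricts to a $t$-structure $t^{(\al)}$ on $\cu^{(\al)}$, which is still anti-orthogonal to $w$ because the $\perp_{\cu}$-conditions pass to subclasses. For the uniqueness, apply Theorem~\ref{trefl}(I) (equivalence of~(i) and~(ii')) to $w\opp$ on $\cu\opp$ and the subcategory $(\cu^{(\al)})\opp\subset\cu\opp$: a $t$-structure on $\cu^{(\al)}$ anti-orthogonal to $w$ in $\cu$ is the same as a $t$-structure on $(\cu^{(\al)})\opp$ orthogonal to $w\opp$ in $\cu\opp$, and since $(t^{(\al)})\opp$ is such, it is the only one; dualizing back, $t^{(\al)}$ is the unique $t$-structure on $\cu^{(\al)}$ anti-orthogonal to $w$.

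The analytic content is entirely packaged in the cited results, so the main obstacle is careful bookkeeping: correctly translating adjacency, (co)smashing-ness, and (anti)orthogonality across the passages to opposite categories. The genuinely load-bearing observation is that $t^r$ being anti-orthogonal to $w$ is exactly what permits Proposition~\ref{pwfil}(\ref{iwfilp5}) to identify $t^r$-truncations with virtual $t$-truncations, while $w$ being smashing is exactly what feeds Proposition~\ref{psmash}(\ref{icopr6n}); together these force $t^r$ to restrict to every level of smallness.
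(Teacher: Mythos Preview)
Your proof is correct and follows essentially the same approach as the paper: assertion~1 by dualizing Theorem~\ref{tsmash}, assertion~2 by the standard five-lemma argument (which is precisely the content of \cite[Lemma~4.1.4]{neebook} that the paper cites), and assertion~3 by combining Lemma~\ref{lrest}(1), Proposition~\ref{pwfil}(\ref{iwfilp5}), and Proposition~\ref{psmash}(\ref{icopr6n}) exactly as the paper does. The only cosmetic difference is that for the uniqueness in part~3 the paper cites the dual of Corollary~\ref{ccopr} while you invoke the dual of Theorem~\ref{trefl}(I) directly; since Corollary~\ref{ccopr} itself rests on Theorem~\ref{trefl}(I), your route is if anything slightly more direct.
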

\begin{proof}

1. This is just the categorical dual to Theorem \ref{tsmash}.

2. This is the easy Lemma 4.1.4 of \cite{neebook} (if one ignores $w$). %\footnote{%Which actually does not require any weight structure, where
 %The only assumption  in loc. cit. is that $\cu$ is smashing.} %only required to be smashing in loc. cit.}

3. The uniqueness of a $t$-structure on $\cu^{(\al)}$ that is anti-orthogonal to $w$ is given by the dual to %Proposition \ref{prefl}.
Corollary  \ref{ccopr}.
%(\ref{irefl1},\ref{irefl3}). %Next, $\cu$ detects $\cu^{(\al)}$ (in $\cu$) according to  Proposition \ref{prefl}(\ref{irefl2}); hence we can apply Theorem \ref{trefl}(2). We obtain that it suffices to verify that 
 Next, for any object $M$ of $\cu^{(\al)}$ %the corepresentable functor  $\tau_{\ge 0}H^{M}:\cu\to \ab$ is corepresented by an object of  $\cu^{(\al)}$ as well. Now, 
the functor $H^M$ is $\al$-small by the definition of  $\cu^{(\al)}$. Now, for $M'=t^r_{\ge 0} M$ Proposition \ref{pwfil}(\ref{iwfilp5}) says that  $H^{M'}\cong \tau_{\ge 0}H^M$. Hence the   functor $H^{M'}$ is $\al$-small as well according to Proposition \ref{psmash}(\ref{icopr6n}), and we obtain that $M'$ is an object of $\cu^{(\al)}$. Thus %$M$ has a $t^l$-decomposition whose components are objects  of $\cu^{(\al)}$; therefore
  $t^r$ restricts to $\cu^{(\al)}$ indeed; see Lemma \ref{lrest}(1). %in the dual form??!
\end{proof}

\begin{rema}\label{rcosmall}
Dualizing Theorem \ref{tcosm}(3) one obtains that  the  $t$-structure $t^r$ provided by Theorem \ref{tsmash} restricts to the levels of a certain cosmallness filtration on $\cu$. Yet it appears that all the levels of this filtration are zero in 'reasonable cases'. %reasonable \cu?! Well generated?!

%4. 
\end{rema}

\begin{comment}
\begin{rema}\label{rcosmash}
 The author conjectures that the heart of the restricted $t$-structure $t^{(\al)}$ %
 in part 3 of our theorem can be computed similarly to the hearts in assertions I and II.1, and so using the theory of {$w$-pure functors} as developed in \S2 of \cite{bwcp}. 
%4. 
\end{rema}
\end{comment}

Let us now verify that %Corollary 3.2 nafig?!!
Theorem 3.1 of \cite{kellerw}  (that essentially generalizes Theorem 3.2 of  \cite{konk}) gives an example for the setting of Theorem \ref{tsmash}(II.2), and study the corresponding structures in detail.

\begin{coro}\label{ckeller}
Let $\au$ be an %essentially small???!! %abelian semi-simple  the category $\adfu(\au,\ab)$ is semi-simple
additive  subcategory of the subcategory $\cu^{(\alz)}$ of $\cu$  that compactly generates $\cu$, % as its own localizing subcategory,
   and assume that the category $\adfu(\au,\ab)$ is semi-simple and  $\obj \au\perp_{\cu}\cup_{i<0}\obj\au[i]$.\footnote{One may say that $\au$ is {\it anti-connective}; cf. Definition  \ref{dsilt} below.} 

1. Then there exist a smashing and cosmashing weight structure $w$ and a $t$-structure $t$ on $\cu$  that are generated by $\obj \au$, and  they are right adjacent. %$t$ is left adjacent to $w$. 

%Moreover, 
2. $t$ restricts to the subcategory $\cu^{(\al)}$ (cf. Theorem \ref{tcosm}(2)) %(see  Remark \ref{rcomp}(2))
  for any infinite cardinal $\al$. Moreover (for $\al=\alz$) %in the corresponding couple $t^{(\alz)}=(\cu^{(\alz)}_{t^{(\alz)}\le 0}, \cu^{(\alz)}_{t^{(\alz)}\ge 0})$ 
  the corresponding class $\cu^{(\alz)}_{t^{(\alz)}\le 0}$ (resp. $\cu^{(\alz)}_{t^{(\alz)}\ge 0}$) is the envelope of $\cup_{i\le 0}\obj \au[i]$ (resp. of $\cup_{i\ge 0}\obj \au[i]$) in $\cu$ (see \S\ref{snotata}).
%Calculate $\hw$ and $\hrtalz$: difficult in general?!
\end{coro}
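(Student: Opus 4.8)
The plan is to verify that the hypotheses of Corollary \ref{ckeller} put us squarely in the setting of Theorem \ref{tcompws}(II) (and of Theorem \ref{tcosm}), so that the existence of $w$ and $t$ and the restriction statements follow, and then to identify the hearts/truncation classes concretely. First I would take $\cuz=\ca$ (viewed as a strictly full subcategory of $\cu^{(\alz)}$) and observe that the anti-connectivity assumption $\obj\ca\perp_\cu\cup_{i<0}\obj\ca[i]$ together with the semisimplicity of $\adfu(\ca,\ab)$ is precisely what is needed to build a bounded weight structure $\wz$ on the idempotent completion (inside $\cu$) of the triangulated subcategory generated by $\ca$, with heart the retraction-closure of $\ca$; this is the standard "weight structure generated by an anti-connective negative subcategory" construction (as in \S4.3--4.5 of \cite{bws}, and compare Theorem 4.15 of \cite{postov} and Theorem 5 of \cite{paucomp}). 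Here the key point is that $\obj\ca$ being \emph{negative} (i.e. $\obj\ca\perp\cup_{i<0}\obj\ca[i]$, equivalently $\obj\ca\perp_\cu\cup_{i>0}\obj\ca[i]$ after reindexing — one has to be careful with the homological convention of Remark \ref{rstws}(2)) gives the orthogonality axiom, and idempotent-completeness of $\cu$ gives the retraction-closure; semisimplicity of $\adfu(\ca,\ab)$ is not strictly needed for existence but guarantees the heart behaves well. Let $\cuz'$ denote this small triangulated subcategory carrying $\wz$.

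Next I would apply Theorem \ref{tcompws}: since $\cuz'\subset\cu^{(\alz)}$ and (by hypothesis) $\ca$, hence $\cuz'$, generates $\cu$, part I of that theorem extends $\wz$ uniquely to a smashing weight structure $w$ on $\cu$, which is moreover the coproductive extension of $\wz$ (and is cosmashing too — this needs a short remark, but follows because $w$ is also the weight structure generated by $\obj\ca$ in the sense of Definition \ref{dcomp}(\ref{dgenw}), and such weight structures generated by sets of compact objects are automatically cosmashing by the dual of the relevant part of \cite{bvt}/Proposition \ref{pbw}; alternatively since $\cu$ being compactly generated satisfies the dual Brown representability property, one can invoke the dual situation). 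Then Theorem \ref{tcompws}(II.1,2) gives a $t$-structure $t$ adjacent to $w$, strictly orthogonal to $\wz$, hence right adjacent in the sense of Definition \ref{dort}(2), proving assertion 1. For the "generated by $\obj\ca$" claim about $t$: strict orthogonality says $\cu_{t\le 0}=\cu^0_{\wz\ge 1}\perpp=(\cup_{i>0}\obj\ca[-i])\perpp$, which is exactly Definition \ref{dcomp}(\ref{dgenw}).

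For assertion 2, the restriction of $t$ to $\cu^{(\al)}$ is immediate from Theorem \ref{tcosm}(3) once we know $w$ is both smashing and cosmashing, which we just arranged; the uniqueness among anti-orthogonal $t$-structures also comes from there. The remaining content is the explicit description of $\cu^{(\alz)}_{t^{(\alz)}\le 0}$ and $\cu^{(\alz)}_{t^{(\alz)}\ge 0}$ as the envelopes of $\cup_{i\le 0}\obj\ca[i]$ and $\cup_{i\ge 0}\obj\ca[i]$ respectively. Here I would argue: these envelopes are contained in the respective classes by axiom (ii) of $t$-structures together with extension- and retraction-closedness (Proposition \ref{prtst}(\ref{itperp})) and the fact that $\obj\ca\subset\cu_{t=0}$ (which holds since $\ca\subset\hw$ and $\hw$ maps into $\hrt$, or directly from the weight-range computations of Proposition \ref{pwrange}(\ref{iwrort}) plus semisimplicity); conversely, since $\wz$ is bounded and $t^{(\alz)}$ restricted to $\cuz'$ is bounded, any object of $\cu^{(\alz)}_{t^{(\alz)}\le 0}$ lies in $\cuz'$ (because $\cu^{(\alz)}$, the compacts, coincides with the idempotent completion of $\cuz'$ — one needs Neeman's theorem that compacts in a compactly generated category are the idempotent completion of the generating small subcategory) and there one can run the bounded $t$-decomposition "layer by layer" to express it via iterated extensions of shifts of objects of $\cu_{t=0}\cap\cu^{(\alz)}$, and $\cu_{t=0}\cap\cu^{(\alz)}$ is the retraction-closure of $\ca$ by the heart description of Theorem \ref{tcompws}(II.3) restricted to compacts (using that representable functors on $\hwz$ land in the right category — here semisimplicity of $\adfu(\ca,\ab)$ is what makes "$\hrt$ on compacts $=\kar_\cu(\ca)$" work cleanly). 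The main obstacle I anticipate is precisely this last identification: controlling the compact part of the heart and showing the bounded-layering stays within compacts, i.e. that $\cu^{(\alz)}\cap\cu_{t=0}=\kar_\cu(\ca)$ and that the restricted $t$-structure on $\cu^{(\alz)}$ is bounded with this heart — everything else is a fairly mechanical assembly of Theorems \ref{tcompws} and \ref{tcosm}.
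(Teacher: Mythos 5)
Your construction of the $t$-structure goes through Theorem \ref{tcompws}(II), which produces a $t$-structure \emph{left} adjacent to $w$ (i.e.\ with $\cu_{t\ge 0}=\cu_{w\ge 0}$), and you then assert that this is ``right adjacent in the sense of Definition \ref{dort}(2)''. That is not so: the corollary requires the \emph{right} adjacent $t$-structure, characterized by $\cu_{t\le 0}=\cu_{w\le 0}$, and in general these are different $t$-structures. The right adjacent one is obtained from the dual statement, Theorem \ref{tcosm}(1), whose hypotheses are that $w$ is \emph{cosmashing} and that $\cu$ satisfies the dual Brown representability property. This matters doubly for assertion 2: the restriction to $\cu^{(\al)}$ is Theorem \ref{tcosm}(3), which applies to the right adjacent $t$-structure of a weight structure that is both smashing and cosmashing; the left adjacent $t$-structure does not restrict to the compacts (cf.\ Remark \ref{rcosmall}).

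This exposes the more serious gap: you claim that $w$ is ``automatically cosmashing'' because it is generated by a set of compact objects, and that semi-simplicity of $\adfu(\au,\ab)$ ``is not strictly needed for existence''. Weight structures generated by compact objects are smashing, but by no means automatically cosmashing. The whole point of invoking Theorem 3.1 of \cite{kellerw} --- which is what the paper's proof does --- is that under the anti-connectivity and semi-simplicity hypotheses one gets \emph{both} descriptions $\cu_{w\ge 0}=(\cup_{i>0}\obj\au[-i])\perpp$ and $\cu_{w\le 0}=(\cup_{i>0}\obj\au[i])\perpp$; the second exhibits $\cu_{w\le 0}$ as a right orthogonal, hence closed under products, which is exactly cosmashingness (and also immediately gives that the right adjacent $t$ is generated by $\obj\au$). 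Relatedly, your reduction of anti-connectivity to connectivity ``after reindexing'' is incorrect: $\obj \au\perp\cup_{i<0}\obj\au[i]$ cannot be re-shifted into $\obj \au\perp\cup_{i>0}\obj\au[i]$; in fact $\obj\au$ lies in $\cu_{w\le 0}$ but need not lie in $\hw$ at all, so the ``bounded weight structure densely generated by a connective subcategory'' construction (Proposition \ref{pcneg}(2)) is not available here, and neither is your identification of the compact heart with $\kar_{\cu}(\au)$. Finally, for the envelope description in part 2 the paper simply cites Theorem 3.2.1(2) of \cite{bgroth} rather than running a bounded-layering argument inside $\cu^{(\alz)}$.
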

\begin{proof}
1.  %Since $\au$ is semi-simple, the category $\adfu(\au,\ab)$ is semi-simple as well. Thus we can apply 
 By Theorem 3.1 of \cite{kellerw}, %=Corollary 4.8(b) of \cite{aiya}?????
  %to obtain the existence of 
   there exists a weight structure $w$ on $\cu$  such that $\cu_{w\ge 0}=(\cup_{i>0}\cp[-i])\perpp$ (i.e. $w$ is generated by $\cp=\obj \au$) and  $\cu_{w\le 0}=(\cup_{i>0}\cp[i])\perpp$.
%that is generated by $\cp=\obj \au$, i.e.,  $\cu_{w'\ge 0}=(\cup_{i>0}\cp[-i])\perpp$ and  $\cu_{w'\le 0}=(\cup_{i>0}\cp[i])\perpp$.  
 Since the category $\cu$ is compactly generated by $\au$, it satisfies the dual Brown representability property (by the aforementioned Theorem 8.6.1 and  Remark 6.4.5  of \cite{neebook}).  Next, $w$ is obviously smashing and cosmashing. Applying Theorem \ref{tsmash}(II.1), we obtain the existence of a smashing $t$-structure $t$ that is right adjacent to $w$. Since $\cu_{w\le 0}=\cu_{t\le 0}$, we obtain that $t$ is generated by $\cp$ (as a $t$-structure) as well.  %{lurha}????  

2. $t$ restricts to the subcategory $\cu^{(\al)}$ for any infinite cardinal $\al$ according to  Theorem \ref{tcosm}(3).
Thus it remains to prove that the classes $\cu_{t\le 0}\cap\obj \cu^{(\alz)}=\cu_{w\le 0}\cap\obj \cu^{(\alz)}$ and 
  $\cu_{t\ge 0}\cap\obj \cu^{(\alz)}$ are the envelopes in question. The latter statement %an easy consequence of %Theorem 4.2.1(2) of \cite{bpgws} applied to $w$ and $t$ respectively (see Remark 4.2.2(1,2) of ibid.; immediately
	 easily follow from  Theorem  3.2.1(2) of \cite{bgroth} %applied to $w$ and $t$ respectively  
	 (see Remark 3.2.2(2) of ibid.).\footnote{Note also that Theorem 4.5(1,2) of \cite{postov} gives this statement under the assumption that $\cu$ is a 'stable derivator' triangulated category.}  \end{proof}

\begin{rema}\label{rkeller}
\begin{enumerate}
\item\label{ikel1}
Thus we obtain a serious generalization of %most of the statements in 
 the existence of a (certain) $t$-structure on $\cu^{(\alz)}$ statement provided by  Theorem 7.1 of \cite{kellerw}.  In particular, we do not need any differential graded algebras (and can construct stable $\infty$-exampes for Corollary \ref{ckeller}; cf. Remark \ref{rcuzkar}(2)).    %part of   \cite[Theorem 7.1]{kellerw} 
 Note however that our arguments do not give the description of $t^{\alz}$ in terms of 'generators' that would be similar to that in loc. cit. 

%????!!! Note also that the assumption that $\cu$ is compactly generated by $\au$ does not appear to be necessary in Theorem 3.1 of ibid.; thus is may be omitted in our corollary as well. However, in this case it is probably more interesting to look at the  localizing subcategory $\cu'$ of $\cu$ generated by $\obj \au$ instead. We  note that  the embedding $i:\cu'\to \cu$ possesses an exact right adjoint $F$ (since $\cu'$ satisfies the Brown representability property; see Theorem 8.4.4 and  Lemma 5.3.6  of ibid.), and $F$ allows to recover $w$ and $t$ from their restrictions to $\cu'$ (whose existence is given by the present form of Corollary \ref{ckeller}) via Propositions 3.2(5) and 3.4(3) of \cite{bvt}. %$F$ restricts to compact objects: why?!!! $i$ should respect products as well??? %a left adjoint preserves compact objects if its right adjoint preserves small sums???!!

\item\label{ikel2} Now we try to study the question which %weight??
 $t$-structures on $\cu^{(\alz)}$ extend to examples for our corollary.

So, assume that $\cu$ is an arbitrary triangulated category %, $\cu^{(\alz)}$ is essentially small,
 and $t'$ is a $t$-structure on $\cu^{(\alz)}$, and take $\au'=\hrt'$. %$\cp'=\obj \cu^{(\alz)}_{t'}=0}$. %any \talz?!! =\obj\cu^{(\alz)}\cap \cu_{t=0}$. 
Then  we have $\obj \au' \perp(\cup_{i>0}\obj \au'[-i])$ by the orthogonality axiom for $t'$.

Thus any %class $\cp$ of semi-simple objects
essentially small abelian subcategory $\au$  of $\hrt'$ whose objects are semi-simple satisfies all the assumptions of our corollary except the one that $\au$  compactly generates $\cu$. Hence we can apply our corollary to the  localizing subcategory $\cu'$ of $\cu$ generated by $\obj \au$. %(see the first part of this remark).

\item\label{ikel3} Now assume in addition that $\cu$ is compactly generated, $t'$ is bounded, and $\hrt'$ is a length category (cf. Theorem 7.1 of \cite{kellerw}). Then the category $\cu^{(\alz)}$ is essentially small according to Lemma 4.4.5 of \cite{neebook}; % (combined with Proposition 8.4.1 of ibid.).  Hence the category %class of semi-simple objects of 
hence $\hrt'$ is essentially small as well, and we can take $\au$ to be its subcategory of semi-simple objects. 

Since $t'$ is bounded and $\hrt'$ is a length category,  %the class $\obj\cu^{(\alz)}$ is the envelope of $\cp=\obj \au$
 the category $\cu^{(\alz)}$ is densely generated (see \S\ref{snotata}) by $\obj \au$; hence  $\au$ is easily seen to generate  $\cu$ (in the sense of Definition \ref{dcomp}(\ref{dlocal})). % as its own localizing subcategory. 
 Thus one can apply Corollary \ref{ckeller} to this setting.
Moreover, it is easily seen that our assumptions on $t'$ (combined with part 2 of our corollary) imply that the corresponding $t$-structure $t^{\alz}$ coincides with $t'$.

%\item\label{ikel4} It would be interesting to find which assumptions on a general $t$-structure $t'$ on $\cu^{(\alz)}$ ensure that $t'$ extends to a $t$-structure on $\cu$ and there  exists a weight structure $w$ that is right adjacent to $t$.????????
\end{enumerate}
\end{rema}
%A generalization of Theorem 7.1 of ibid. that does not depend on any "models"?!
%Theorem 3.2 of {konk}: an earlier result in this direction; f.?!  For any ass $\au$ endomorphism rings of indecomposable objects are divisible ones; hence these give any functor from $\au$ into abelian groups; so, condition (c); an equivalence?! 

%Make a P? This is the cg ws! %Compute $\cu_{w\le 0}=\cu_{t\le 0}$ "in terms of generators" looking at Keller's argument?? This gives \cu^c_{t^l_{\alo}}\le 0}$; what about other levels?! Assume BRH to compute \ge 0?!  

\section{On functors and $t$-structures related to essentially bounded weight structures}\label{ststreb}

The main goal of this section is to describe some more %restrictions of stri
  orthogonal $t$-structures %. We achieve this goal by means of imposing certain bounded conditions on 
   in the case where the corresponding weight structures satisfy certain boundedness assumptions.

   To formulate our statements in a more general case we recall (in \S\ref{sebound}) the notions of essentially bounded (above, below, or both) objects and weight structures; we motivate %this "almost generalization"
	 these %(more general) 
	  definitions in Remarks \ref{rkarl}(2,3), \ref{rsatur}(2), and \ref{rpgeomap2} below. %Next we 
   %study virtual $t$-truncations with respect to weight structures of these types. 
  We also study weight structures of these types as well as  virtual $t$-truncations with respect to them and their relation with certain essential weight-boundedness for functors.
  
  In \S\ref{sebwt} we apply the aforementioned results to the construction of certain adjacent and orthogonal $t$-structures. We apply our general statements to  various derived categories of (quasi)coherent sheaves. In particular, we are able to generalize some central results of \cite{bdec}.
  
 %adjacent and orthogonal $t$-structures on $R$-linear triangulated categories; this includes $R$-saturated categories and various derived categories of (quasi)coherent sheaves.
%we discuss certain bounded versions of the results of the previous section. We apply our general statements to  various derived categories of (quasi)coherent sheaves. To formulate our statements in a more general case we recall the notions of essentially bounded (above, below, or both) objects and weight structures. In particular, we are able to generalize the central results of \cite{bdec}.

\subsection{Essentially bounded weight structures and their relation to cohomological functors}\label{sebound}

Now we will prove some bounded versions of the results of \S\ref{swr}. To formulate them in the most general case  we need some (somewhat clumsy) definitions and statements closely related to earlier papers of the author.
 Yet the reader can avoid much of this theory if she applies it to $w$-bounded below  (resp. above) objects and weight structures only; note that those are clearly {\it essentially bounded} below  (resp. above). %??!! by Proposition \ref{pebound}. %here??!

%There exists isomorphic choices of the weight truncations $w_{\le m-1}M$ and  $w_{\le n}M$.
\begin{defi}\label{dpkar}
Assume that $w$ is weight structure on $\cu$; $M\in \obj \cu$.

1.  We  say that $M$ is left (resp., right) {\it $w$-degenerate} (or {\it weight-degenerate} if the choice of $w$ is clear) if $M$ belongs to $ \cap_{i\in \z}\cu_{w\ge i}$ (resp.    to $\cap_{i\in \z}\cu_{w\le i}$).

2.  We %will
  say that %For 
 %an object $M$ of $\cu$ % (where $\cu$ is endowed with a weight structure $w$) we will say that $M$ 
 $M$ is {\it essentially $w$-positive}  (resp. {\it essentially $w$-negative}) if %$M$ is a retract of some $\tilde M\in \obj \cu$  that is  an extension of an element of $\cu_{w\ge 0}$ by a right $w$-degenerate object of $\cu$  (resp.  of  a left $w$-degenerate object  by an element of $\cu_{w\le 0}$; see Definition \ref{dwso}(\ref{ilrd})). 
  for $\tm=M\bigoplus M[1]$ (resp. $\tm=M\bigoplus M[-1]$) there exists a distinguished triangle
  \begin{equation}\label{etm} %\small  
  \resizebox{4.2in}{\height} {$RD(\tm)\to \tm\to Y\to RD(\tm)[1] %\\
  {\text{ (resp. }}  X\to \tm\to LD(\tm)\to X[1])$}
  \end{equation} % {multline}  
  such that $Y\in \cu_{w\ge 0}$ (resp. $X\in  \cu_{w\le 0}$) and $RD(\tm)$ (resp. $LD(\tm)$) is right (resp. left) degenerate.

 3. We %will
  say that %For  an object $M$ of $\cu$ % (where $\cu$ is endowed with a weight structure $w$) we will say that $M$ 
  $M$ is {\it essentially $w$-bounded below}  (resp. {\it above}) %{\it essentially $w$-negative})
 if $M[-n]$ is essentially $w$-positive  (resp.  essentially $w$-negative) for some $n\in \z$.
 We will %associate to $M$
  call the shift of the triangle (\ref{etm}) corresponding to $M[-n]$ by $[n]$ (cf. (\ref{etmc}) below) a {\it $w$-degenerate decomposition} of $\tm$.
  
  4. $w$ is said to be {\it essentially bounded below}  (resp. above) if all objects of $\cu$ are essentially $w$-bounded below  (resp. above). 
  
  We say that $w$ is essentially bounded if it is essentially bounded both above and below. 
  
  5. A cohomological functor $H$ from $\cu$ will be said to be {\it locally bounded below}  (resp. above)\footnote{This terminology was inspired by %\cite[Remark 0.2]
  Remark 0.2 of \cite{neesat}.}
   if for any $M\in \obj \cu$ we have $H^i(M)=0$ for all $i\ll 0$ (resp. $i\gg 0$); recall that $H^i(M)=H(M[-i])$. %conversely?!
  
  $H$ is said to be  {\it locally bounded } if it is locally bounded  both above and below. 
 
 %Set triangle?!!
\end{defi}

Some nice properties of essential boundedness are closely related to earlier papers.

\begin{pr}\label{peboundchar}
 Assume that $w$ is weight structure on $\cu$; $M\in \obj \cu$.

1. Assume that $M$ is essentially %right or left weight-degenerate
$w$-bounded below  (resp. above). %is %essentially right or left weight-degenerate $w$-bounded below  (resp. above)  then it  then it determines 
 Then the $w$-degenerate decomposition of $\tm$ %the corresponding triangle (\ref{etm}) is determined by it canonically.
is canonical. %That is, 
Being more precise, the distinguished triangle 
%\begin{multline}\label{etmc} RD(\tm)\to \tm\to Y\to RD(\tm)[1]\\
  %\text{(resp. }  X\to \tm\to LD(\tm)\to X[1]),\end{multline} 
  \begin{equation}\label{etmc} %\small  
  \resizebox{4.2in}{\height} {$RD(\tm)\to \tm\to Y\to RD(\tm)[1] %\\
  {\text{ (resp. }}  X\to \tm\to LD(\tm)\to X[1]),$}
  \end{equation} 
    where $RD(\tm)$ is right (resp. $LD(\tm)$ is left) $w$-degenerate
  and $Y$ (resp. $X$) is $w$-bounded above (resp. below)
  is canonically determined by $M$ (and does not depend on the choice of the corresponding $n$).

2. Take those couples $(m,n)$ such that $m\le n\in \z$ and $n<0$ (resp. $m>0$).

The following %statements are valid.
conditions are equivalent.
\begin{enumerate}
\item\label{iwf6} $M$ is essentially %right or left weight-degenerate
$w$-positive  (resp. negative). %separate cases????!!

\item\label{iwf6n} $M$  is a retract of some $ M'\in \obj \cu$ %such that there exists a distinguished triangle
 %that is  an extension of an element of $\cu_{w\ge n+1}$ by  an element of $\cu_{w\le m-1}$.
 %distinct or the same object for 
 such that there exists a distinguished triangle % \begin{equation} %\label{etmc} %\small  
 $$ \resizebox{4.6in}{\height} {$RD(M')\to M'\to Y\to RD(M')[1] %\\
  {\text{ (resp. }}  X\to M'\to LD(M')\to X[1]),$} $$
  %\end{equation}
  where $Y\in \cu_{w\ge 0}$ (resp. $X\in  \cu_{w\le 0}$) and $RD(M')$ (resp. $LD(M')$) is right (resp. left) degenerate.

\item\label{iwf7} $H(M)=0$ %for $H$ being any cohomological functor ($\cu\to \au$) 
 if $H$ is (a cohomological functor) of weight range $[m,n]$ %for any 
  and $(m,n)$ satisfies the conditions above.

\item\label{iwf8} $H(M)=\ns$  for $H=\tau_{\le n }(H_I)$ %, where $H':\cu\to \ab$ runs through the (cohomological) functors represented by elements of 
 whenever $I\in \cu_{w\ge m}$ and any $(m,n)$ as above.

\item\label{iwf9} $H(M)=\ns$, where $H=\tau_{\le n }(H_{I_0})$ and $I_0$ is a  fixed choice of  $w_{\ge m}M$ and any $(m,n)$ as above. 
\end{enumerate}

\end{pr}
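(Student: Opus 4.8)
The plan is to establish part 1 first, then run the cycle (i)$\Rightarrow$(ii)$\Rightarrow$(iii)$\Rightarrow$(iv)$\Rightarrow$(v)$\Rightarrow$(i) for part 2. Throughout I only treat the ``essentially $w$-positive''/``$n<0$'' versions, since the ``negative''/``$m>0$'' ones are obtained by passing to $\cu\opp$ (Proposition \ref{pbw}(\ref{idual}) and Proposition \ref{pwfil}(\ref{iwfild}), which also interchange $\tau_{\le}$ and $\tau_{\ge}$).

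\emph{Part 1.} The point is that a triangle $RD(\tm)\to\tm\to Y\to RD(\tm)[1]$ with $RD(\tm)$ right $w$-degenerate and $Y$ $w$-bounded below behaves exactly like a $t$-decomposition: if $Y\in\cu_{w\ge k}$ then, since $RD(\tm)\in\cap_{i}\cu_{w\le i}$, Proposition \ref{pbw}(\ref{iort}) gives $\cu(RD(\tm),Y)=\cu(RD(\tm),Y[-1])=\ns$. Hence, given two such triangles for $\tm$, the composite of the ``$RD$''-map of one with the ``$Y$''-map of the other vanishes, so each left vertex lifts over $\tm$ to the other; the usual uniqueness-of-lifting argument makes these lifts mutually inverse, and $\cu(RD(\tm)[1],Y)=\ns$ makes the induced comparison of right vertices unique as well, so the two triangles are canonically isomorphic. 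In particular the triangle produced in Definition \ref{dpkar}(3) does not depend on the auxiliary choices; and it does not depend on $n$ either, because if $M[-n]$ is essentially $w$-positive then so is $M[-n']$ for every $n'\ge n$ (shift (\ref{etm}) by $[-(n'-n)]$ and use $\cu_{w\ge 0}[-(n'-n)]\subset\cu_{w\ge 0}$), and the triangles for $\tm$ coming from different $n$ are then all of the above type.

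\emph{Part 2, the routine implications.} (i)$\Rightarrow$(ii): take $M'=\tm$. (ii)$\Rightarrow$(iii): let $H$ be cohomological of weight range $[m,n]$ with $m\le n<0$ and apply it to the triangle $RD(M')\to M'\to Y\to RD(M')[1]$ from (ii); as $n+1\le 0$ we have $Y\in\cu_{w\ge 0}\subset\cu_{w\ge n+1}$, hence $H(Y)=\ns$, while $RD(M')\in\cu_{w\le m-1}$ gives $H(RD(M'))=\ns$, so the long exact sequence forces $H(M')=\ns$ and therefore $H(M)=\ns$ since $M$ is a retract of $M'$. (iii)$\Rightarrow$(iv): for $I\in\cu_{w\ge m}$ the functor $H_I$ is of weight range $\ge m$ (Proposition \ref{pwrange}(\ref{iwfil3})), so $\tau_{\le n}(H_I)$ is cohomological (Proposition \ref{pwfil}(\ref{iwfilp2})) of weight range $[m,n]$ (Proposition \ref{pwrange}(\ref{iwfil4})), and (iii) applies. (iv)$\Rightarrow$(v) is the specialization $I=I_0$.

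\emph{Part 2, the main implication (v)$\Rightarrow$(i).} Here I would extend the argument of Proposition \ref{pcrivtt}, uniformly in $m$. Fix $m\le -1$ and a choice $I_0=w_{\ge m}M$ witnessing (v), and put $H=H_{I_0}$; since $I_0\in\cu_{w\ge m}$, $H$ is of weight range $\ge m$, so combining the vanishing provided by (v) for $m\le n\le -1$ with Proposition \ref{pwrange}(\ref{iwrvan}) one gets $\tau_{\le n}(H)(M)=\ns$ for \emph{all} $n\le -1$. Plugging this into the long exact sequence (\ref{evtt}) shows that for every $n\le -1$ the map $a_{n+1}(M)\colon\tau_{\ge n+1}(H)(M)\to\cu(M,w_{\ge m}M)$ is onto, i.e.\ every $\cu$-morphism $M\to w_{\ge m}M$ factors through the canonical morphism $M\to w_{\ge n+1}M$. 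It then remains to turn this family of factorizations (over all $m\le -1$ and $n\le -1$) into a genuine $w$-degenerate decomposition of $\tm=M\bigoplus M[1]$: using Proposition \ref{pbw}(\ref{icompl}) to compare weight decompositions of $M$, $M[1]$ and $\tm$, one shows that any choice of $w_{\le -1}\tm$ already lies in $\cu_{w\le j}$ for every $j$, hence is right $w$-degenerate, so that the $(-1)$-weight decomposition of $\tm$ is the triangle (\ref{etm}) required by Definition \ref{dpkar}(2). I expect this last step to be the genuine obstacle: weight truncations are not functorial and $\cu$ is not assumed to have products, so one cannot merely take a homotopy limit over $m\to-\infty$; it is exactly the passage from $M$ to $M\bigoplus M[1]$ — and, correspondingly, the availability of the retract formulation (ii) — that makes the bookkeeping close, and at this point I would follow closely the weight-degeneracy computations in \cite{bws} and \cite{bwcp}.
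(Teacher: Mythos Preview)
Your treatment of part 1 and of the implications (i)$\Rightarrow$(ii)$\Rightarrow$(iii)$\Rightarrow$(iv)$\Rightarrow$(v) in part 2 is correct and in fact more detailed than the paper's own proof, which simply cites \cite{bkw} and \cite{bkwt} for all of part 2. The argument for part 1 via orthogonality between right degenerate objects and $w$-bounded below objects is exactly what the paper does (it phrases this as ``both triangles are $m$-weight decompositions of $\tm$ for any $m<n_2$'' and invokes the uniqueness in Proposition \ref{pbw}(\ref{icompl}), or equivalently the ``avoiding weight'' formalism of \cite[Theorem 2.2.1(9)]{bkw}).

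The genuine gap is, as you yourself say, the step (v)$\Rightarrow$(i). Your sketch reaches the conclusion that the canonical map $M\to w_{\ge m}M$ factors through $M\to w_{\ge 0}M$ for every $m\le -1$, and then asserts that ``any choice of $w_{\le -1}\tm$ already lies in $\cu_{w\le j}$ for every $j$''. This last assertion is the entire content of the implication and is not established; the factorization statements you obtain concern morphisms \emph{out of} $M$, not the weight of $w_{\le -1}\tm$, and passing from one to the other is where the work lies. The references you propose to follow, \cite{bws} and \cite{bwcp}, do not contain this argument.

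The paper closes this gap differently: it does not attempt (v)$\Rightarrow$(i) directly. Instead it invokes \cite[Theorem 2.5(I)]{bkwt}, which for each \emph{fixed} pair $(m,n)$ proves the equivalence of the analogues of (iii), (iv), (v); taking the conjunction over all admissible $(m,n)$ then gives (v)$\Rightarrow$(iii). The remaining step (iii)$\Rightarrow$(i) (equivalently (iii)$\Rightarrow$(ii)) is obtained from \cite[Corollary 3.1.4, Theorem 3.1.3]{bkw}, which characterise essentially $w$-positive objects via vanishing of \emph{pure} functors (weight range $[0,0]$) on all negative shifts --- a special case of your condition (iii). So the correct route is (v)$\Rightarrow$(iii) via \cite{bkwt}, then (iii)$\Rightarrow$(i) via the pure-functor criterion of \cite{bkw}; your direct attack on $w_{\le -1}\tm$ would essentially amount to reproving the latter.
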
 %The ebounded subcategory is triangulated?????!!!
\begin{proof}
%Assertion 1 immediately follows from Theorem 2.2.2(I.1) of \cite{bonspkar}, and invoking  part III.1 of loc. cit. one also obtains assertion 2.
1. Assume first that $M$ is essentially %right or left weight-degenerate
$w$-bounded below and there exist distinguished triangles
$$T_i=(RD_i(\tm)\to \tm\to Y_i\to RD_i(\tm)[1])$$ of the type (\ref{etmc})
for $i=1,2$. Assume that  $Y_i\in \cu_{w\ge n_i}$ for  $i=1,2$ and some $n_i\in \z$. We should prove that these triangles are canonically isomorphic. 

We can assume that $n_1\ge n_2$. Then both $T_1$ and $T_2$ are $m$-weight decompositions of $\tm$ for any $n<n_2$. Applying the uniqueness statement in Proposition \ref{pbw}(\ref{icompl}) one can easily obtain $T_1\cong T_2$. In particular, one can note that both $T_1$ and $T_2$ give weight decompositions of $\tm$ that {\it avoid weight $n_2-1$}; see Theorem 2.2.1(9) of \cite{bkw}. Now we apply loc. cit. to obtain $T_1\cong T_2$.
  
%2. This 
The case of an essentially $w$-bounded above $M$  is just the   categorical dual of %assertion 1
 the essentially %right or left weight-degenerate 
  bounded below one; see Proposition \ref{pbw}(\ref{idual}).

2. One may say that this is a one-sided unbounded analogue of \cite[Theorem 2.5(I)]{bkwt}. 

Firstly, conditions \ref{iwf6} and \ref{iwf6n} are equivalent by Corollary 3.1.4 (resp. Theorem 3.1.3) of \cite{bkw}. 
Moreover, these statements also yield that condition \ref{iwf6n} follows from \ref{iwf7}; recall here that pure functors (cf. condition 7 of Corollary 3.1.4 and condition 8 of Theorem 3.1.3 in ibid.) are the functors of weight range $[0,0]$.\footnote{Actually, in loc. cit. mentions homological pure functors $\cu\to \au$; yet one can just reverse the arrows in the target category.} The converse implication is %clearly
  valid as well; see %(and) 
   conditions 2 and Theorem 2.5(I) of \cite{bkwt}.

Lastly, loc. cit. says that conditions  \ref{iwf7}--\ref{iwf9} %are
 become  equivalent if one fixes any $m\le n\in \z$. Combining these equivalences for all  $m\le n\in \z$ such that $n<0$ (resp. $m>0$) we obtain that our versions of conditions  \ref{iwf7}--\ref{iwf9} are equivalent as well.
 \end{proof}
%Characterization in terms of adjacent $t$?! Remark?!

Let us apply this proposition to pure functors.

\begin{coro}\label{cpureb}
Assume that $\wz$ is an essentially bounded below (resp. above,  both) weight structure on $\cuz$, and $H$ is a pure functor from $\cuz$. %  that restricts to a weight structure $\wz$ on a triangulated subcategory $\cuz\subset \cu$. Assume that   $M\in \obj \cuz$ is an 

Then $H$ is locally bounded below (resp. above,  both).
\end{coro}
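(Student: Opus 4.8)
The plan is to deduce the statement from Proposition \ref{peboundchar}(2) by noting that translating a pure functor yields a cohomological functor whose weight range is a single point which can be placed arbitrarily far to the left or to the right.

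First I would record the following elementary observation. By Definition \ref{drange} a pure functor $H$ from $\cuz$ is cohomological of weight range $[0,0]$; hence for every $j\in\z$ the composite $H\circ[j]$ is again cohomological, and unwinding Definition \ref{drange} one sees at once that $H\circ[j]$ has weight range $[-j,-j]$. In particular both endpoints of this range are negative when $j>0$ and positive when $j<0$, which is exactly the shape required by the two halves of Proposition \ref{peboundchar}(2).

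Now fix $M\in\obj\cuz$ and assume $\wz$ is essentially bounded below. Then $M$ is essentially $\wz$-bounded below, so by Definition \ref{dpkar}(3) there is some $n_0\in\z$ with $M[-n_0]$ essentially $\wz$-positive, i.e.\ satisfying condition (\ref{iwf6}) of Proposition \ref{peboundchar}(2). For any $i<n_0$ put $j=n_0-i>0$; then
\[
H^i(M)=H(M[-i])=H\bigl((M[-n_0])[j]\bigr)=(H\circ[j])(M[-n_0]).
\]
Since $H\circ[j]$ is a cohomological functor of weight range $[-j,-j]$ with $-j<0$, the implication (\ref{iwf6})$\Rightarrow$(\ref{iwf7}) of Proposition \ref{peboundchar}(2) gives $(H\circ[j])(M[-n_0])=0$. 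Thus $H^i(M)=0$ for all $i<n_0$, which is precisely the assertion that $H$ is locally bounded below.

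The ``resp.\ above'' case is the categorical dual: if $\wz$ is essentially bounded above, choose $n_0$ with $M[-n_0]$ essentially $\wz$-negative, and for $i>n_0$ put $j=n_0-i<0$, so that $H\circ[j]$ has weight range $[-j,-j]$ with $-j>0$; the parenthetical version of the implication (\ref{iwf6})$\Rightarrow$(\ref{iwf7}) in Proposition \ref{peboundchar}(2) (the family of couples $(m,n)$ with $m>0$) then yields $H^i(M)=(H\circ[j])(M[-n_0])=0$ for $i>n_0$. The ``both'' case is the conjunction of the previous two. I do not expect a genuine obstacle here; the only point requiring care is the bookkeeping of which family of couples $(m,n)$ in Proposition \ref{peboundchar}(2) is being invoked --- the ``$n<0$'' family in the bounded-below case and the ``$m>0$'' family in the bounded-above case --- together with the corresponding sign of the translation $j$.
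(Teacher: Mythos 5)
Your proof is correct and follows the same route as the paper, which simply cites Proposition \ref{peboundchar}(2) (condition \ref{iwf7}) together with the definitions; your write-up merely makes explicit the shift bookkeeping showing that $H\circ[j]$ is cohomological of weight range $[-j,-j]$, so that the relevant family of couples $(m,n)$ (with $n<0$, resp. $m>0$) applies. Nothing further is needed.
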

\begin{proof}
%Clearly, we can assume $\cu=\cuz$ (
This statement is an immediate consequence of our definitions along with Proposition \ref{peboundchar}(II); see condition \ref{iwf7} in it.
 \end{proof}

\begin{pr}\label{peboundtr}
%Assume that  $w$ is a  weight-Karoubian weight structure on $\cu$, $M$ an object  of $\cu$. %Then the following assumptions are equivalent.
%(i) $M$ is essentially $w$-positive  (resp.  essentially $w$-negative). 
%(ii) There
Assume that $w$ is weight structure on $\cu$, %; $M\in \obj \cu$.
$M$ is essentially %right or left weight-degenerate
$w$-bounded below  (resp. above), and 
  $H$ is a cohomological functor from $\cu$.

1. Assume that $M[-n]$ is essentially $w$-positive (resp. $w$-negative, for some $n\in \z$), $m<n-1$ (resp. $m> n+1$), and $H$ is a cohomological functor from $\cu$. %which one??!! $m\in \z$.
Then $\tau_{\le m}(H)(\tm)\cong H(RD(\tm))$ (resp.  $\tau_{\ge m}(H)(\tm)\cong H(LD(\tm))$). 
Consequently, %$\tau_{\le m}(H)(\tm)$ (resp.  $\tau_{\ge m}(H)(\tm)$) is a retract of $H(RD(\tm))$ (resp. of  $H(LD(\tm))$).
$\tau_{\le m}(H)(M)$  (resp.  $\tau_{\ge m}(H)(M)$ ) is a retract of $H(RD(\tm))$ (resp. of  $H(LD(\tm))$).

2. Assume that $w$ is essentially bounded below (resp. above) and for any right (resp. left) weight-degenerate $D\in \obj \cu$ we have  $H^i(D)=0$ for $i\ll 0 $ (resp.  for $i\gg 0 $).
Then for any $m\in \z$ %we have $(\tau_{\le m}(H))^i(M)=0$ for $i\ll 0 $ (resp.  for $(\tau_{\ge m}(H))^i(M)=0$ for  $i\gg 0 $).
 the functor $\tau_{\le m}(H)$ (resp. $\tau_{\ge m}(H)$) is locally bounded below (resp. above).

3. Assume that $w$ is essentially bounded below (resp. above) and $H$ is  locally bounded below (resp. above). Then all $\tau_{\le m}(H)$ and $\tau_{\ge m}(H)$ are locally bounded below (resp. above) as well.

4. Assume that $w$ is essentially bounded and $H$ is  locally bounded. Then all $\tau_{\le m}(H)$ and $\tau_{\ge m}(H)$ are locally bounded as well.

\end{pr}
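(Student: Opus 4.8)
The plan is to prove the four assertions in order, each reducing to the preceding ones; throughout, the ``resp.'' (above) cases will follow from the ``below'' cases by the self-duality of the situation, i.e.\ by passing to $w\opp$ on $\cu\opp$ and invoking Proposition \ref{pbw}(\ref{idual}), Proposition \ref{pwfil}(\ref{iwfild}), and the ``above'' half of Proposition \ref{peboundchar}(1). For assertion 1 I would argue as follows. Suppose $M[-n]$ is essentially $w$-positive. By Definition \ref{dpkar} the $w$-degenerate decomposition $RD(\tm)\to\tm\to Y\to RD(\tm)[1]$ of $\tm=M\bigoplus M[1]$ has $RD(\tm)$ right $w$-degenerate — hence $RD(\tm)\in\cu_{w\le j}$ for every $j$ — and $Y\in\cu_{w\ge n}$. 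Since $m<n-1$ forces $n\ge m+2$, this triangle is simultaneously an $m$- and an $(m+1)$-weight decomposition of $\tm$; taking $w_{\le m}\tm=w_{\le m+1}\tm=RD(\tm)$ with the (necessarily unique, by Proposition \ref{pbw}(\ref{icompl})) identity connecting morphism, Definition \ref{dvtt}(1) gives $\tau_{\le m}(H)(\tm)=\imm(\id_{H(RD(\tm))})=H(RD(\tm))$, which is legitimate since virtual $t$-truncations are independent of all choices (Proposition \ref{pwfil}(\ref{iwfilp1})). Since $M$ is a retract of $\tm$ and $\tau_{\le m}(H)$ is additive, $\tau_{\le m}(H)(M)$ is then a retract of $H(RD(\tm))$.

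For assertion 2 I fix $M$ with $M[-n]$ essentially $w$-positive. For every $i\in\z$ the object $M[-i]$ is again essentially $w$-bounded below, with $(M[-i])[-(n-i)]=M[-n]$ essentially $w$-positive; and since $\widetilde{M[-i]}=\tm[-i]$, the canonicity in Proposition \ref{peboundchar}(1) identifies $RD(\widetilde{M[-i]})$ with $RD(\tm)[-i]$. I then apply assertion 1 to $M[-i]$, whose relevant shift is $n-i$: as soon as $i<n-1-m$ — in particular for all $i\ll 0$ — this yields $\tau_{\le m}(H)(\tm[-i])\cong H(RD(\tm)[-i])=H^i(RD(\tm))$, which vanishes for $i\ll 0$ by the hypothesis of assertion 2, $RD(\tm)$ being right $w$-degenerate. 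Since $M[-i]$ is a retract of $\tm[-i]$, I conclude $\tau_{\le m}(H)^i(M)=\tau_{\le m}(H)(M[-i])=0$ for $i\ll 0$, i.e.\ $\tau_{\le m}(H)$ is locally bounded below.

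For assertion 3: if $H$ is locally bounded below then $H^i(D)=0$ for $i\ll 0$ for every $D$, in particular for every right $w$-degenerate $D$, so assertion 2 already gives that every $\tau_{\le m}(H)$ is locally bounded below. For the functors $\tau_{\ge m}(H)$ I would use the long exact sequence of functors (\ref{evtt}): its segment $H\circ[-1]\to\tau_{\le m-1}(H)\circ[-1]\to\tau_{\ge m}(H)\to H$, evaluated at $M[-i]$, exhibits $\tau_{\ge m}(H)^i(M)$ as an extension of a subobject of $H^i(M)$ by a quotient of $\tau_{\le m-1}(H)^{i+1}(M)$; both vanish for $i\ll 0$ (the first by local boundedness of $H$, the second by the part just proved), whence $\tau_{\ge m}(H)^i(M)=0$ for $i\ll 0$. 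Assertion 4 is then immediate: when $w$ is essentially bounded and $H$ is locally bounded, applying assertion 3 once in its ``below'' form and once in its ``above'' form shows each $\tau_{\le m}(H)$ and $\tau_{\ge m}(H)$ to be both locally bounded below and locally bounded above, hence locally bounded.

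I do not expect any serious obstacle here; the points that need care are the bookkeeping of shifts in assertion 2 — one must notice that the shift $n$ witnessing essential $w$-boundedness of $M$ is replaced by $n-i$ for $M[-i]$, so the strict inequality required by assertion 1 holds automatically once $i\ll 0$ — together with the use of Proposition \ref{peboundchar}(1) to guarantee that the $w$-degenerate decomposition is compatible with shifts, and, in assertion 3, correctly reading off from (\ref{evtt}) which virtual $t$-truncations sit adjacent to $\tau_{\ge m}(H)$.
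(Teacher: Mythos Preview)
Your proof is correct and follows essentially the same route as the paper's own argument: the same choice of weight truncations in assertion 1, the same reduction to values on $RD(\tm)$ for assertion 2, the long exact sequence (\ref{evtt}) for assertion 3, and the obvious combination for assertion 4. You have spelled out the shift bookkeeping in assertion 2 (applying assertion 1 to $M[-i]$ with shift $n-i$ and identifying $RD(\widetilde{M[-i]})\cong RD(\tm)[-i]$ via Proposition \ref{peboundchar}(1)) more explicitly than the paper does, but the underlying argument is the same.
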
 %The ebounded subcategory is triangulated?????!!!
\begin{proof}
1. Since $M$ is a retract of $\tm$, it suffices to prove the first part of the assertion.

Now, assume that $M[-n]$ is essentially $w$-bounded below. %Then the corresponding triangle (\ref{etmc}) gives a long exact sequence $$\dots\to  \tau_{\le m}(H)(Y)\to  \tau_{\le m}(H)(\tm)\to \tau_{\le m}(H)(RD(\tm))\to \tau_{\le m}(H)(Y[-1])\to \dots$$
Similarly to the argument above, note that we can take $w_{\le i}(\tilde M)=RD(\tm)$ for $i<n$. Recalling the definition of $\tau_{\le m}(H)$ (see Definition \ref{dvtt}(1)), we obtain that $\tau_{\le m}(H)(\tm)\cong \imm(\id_{H(RD(\tm))})=H(RD(\tm))$.

The %proof in the 
case of an essentially $w$-bounded above $M$ can be treated similarly. Moreover, it can be easily deduced from the   essentially bounded below case; see Propositions \ref{pbw}(\ref{idual}) and \ref{pwfil}(\ref{iwfild}).

%is very much similar and we leave it to the reader.

2. Once again, we give the proof in the   essentially bounded below case; the essentially bounded above case is very much similar and  %easy.
 can be deduced from it.

According to the previous assertion, it suffices to verify that $(\tau_{\le m}(H))^i(RD(\tm))=0$ for $i\ll 0 $ and any $M\in \obj \cu$. Thus it remains to recall that $RD(\tm)$ is right weight-degenerate.

3. This assertion immediately follows from the previous one if we recall the exact sequence (\ref{evtt}).

4. This assertion can be immediately obtained by means of combining the bounded above and below cases of the previous one.

\end{proof}

%Combining the bounded above and bounded below statements in Proposition \ref{pebound} we also immediately obtain the following statement. \begin{coro}\label{cbound}

\begin{rema}\label%{rptwcons} %
	{rkarl}
	If $w$ is {\it weight-Karoubian}, that is,  if $\hw$ is Karoubian then one modify %Proposition \ref{pebound}(1) by %puting $M$ instead of
	 the definition of essentially $w$-bounded above and below objects %\ref{peboundchar} 
	 by setting $\tm=M$ in the triangles (\ref{etm}); see Theorems 2.3.4 and  3.1.3  and Corollary 3.1.4 of \cite{bkw}. Respectively, one can compute  $\tau_{\le m}(H)(M)$ for any $M\in \obj \cu$ using the corresponding canonical decomposition similarly to Proposition \ref{peboundtr}(1). 
	
	%2. motivated by??! The main other definition of this notion is closely related to kw?! to wc??!
	
	%3. 
	2. Obviously, any semi-orthogonal decomposition couple is an essentially bounded weight structure (if we permute the components); see condition \ref{isodw} of Proposition \ref{psod}.
	
		Furthermore, one may say that general essentially bounded above and below weight structures are %essentially 
	'mixes' of bounded above and below ones and semi-orthogonal decompositions. Indeed, if $w$ is weight-Karoubian and essentially bounded above or below  then the %aforementioned 
	statements mentioned %above
	 in part 1 of this remark yield the existence of a certain semi-orthogonal decomposition. %\tba??
	
3. Respectively, the main results of this section can be used to generalize %the corresponding properties of semi-orthogonal decompositions that were studied
 some statements on   semi-orthogonal decompositions from \cite{bdec}. We will say more one this in Remark \ref{rpgeomap2}(2)  below.	
%The assumption that $w$ is weight-Karoubian is not really restrictive; in particular, it is fulfilled whenever $\cu$ is Karoubian; see the easy Proposition 1.2.4(7) of \cite{bkw}.
%Moreover, one can generalize Definition \ref{dpkar}(II) and Proposition \ref{pebound} to not necessarily weight-Karoubian weight structures; see \S3.1 of ibid. and Theorem 2.5(I) of \cite{bkvt}.
		%{\it  weight-Karoubian extension} $M\bigoplus M[1]$; clumsy?! Theorem 2.2.1(9) of {bkw}?!
\end{rema}

\subsection{%Some applications toMore on the construction of 
On $t$-structures corresponding to essentially bounded weight structures}\label{sebwt}

Now we pass to the existence  of $t$-structures orthogonal to essentially bounded weight structures. 
We start with a nice 'bounded $R$-linear analogue' of Theorem \ref{tsmash}. Once again, we assume %(till the end of this section)??
  that $R$ is a commutative Noetherian ring.

\begin{defi}\label{dsatur}
%1. 
Assume that $\cu$ is an $R$-linear category.

We will say that $\cu$ is {\it $R$-saturated} if the  representable functors from $\cu$ into $R-\modd$  are exactly the ones that are locally bounded and take values in the subcategory $R-\mmodd$ of finitely generated $R$-modules.

%2. We will write $R-\mmodd$ for the category of finitely generated $R$-modules.
\end{defi}

\begin{coro}\label{csatur} 
1. Assume that $\cu$ is an $R$-saturated category, and $w$ is an essentially bounded weight structure on it.

Then there exists a $t$ structure $t$ (left) adjacent to $w$. % (on $\cu$).
  Its heart is equivalent to the category of $R$-linear functors from $\hw\opp$ into the category of finitely generated $R$-modules.

2. Take $T$ to be a   union of closed subsets of $S=\spe R$, and   $\au^T$ to be the category of %finitely
   $R$-modules supported on $T$ (cf. Proposition \ref{pgeomap1}(2)). % is a weak Serre subcategory of $R-\modd$

Then the  category $\cu_{\au^T}$ corresponding to $\cuz=\cu$ (see Definition \ref{damain}(2))  %\cap R-\mmodd}$ 
is  triangulated and $t$ restricts to it. Moreover, the heart of this restriction is equivalent to the category of $R$-linear functors from $\hw\opp$ into the category of finitely generated $R$-modules supported on $T$.

3. Assume that $X$ is  regular and proper over $\spe R$. 
 Then  the category $\cu=D^{perf}(X)$  is $R$-saturated.
Moreover, $\cu$ is equivalent to $\cu\opp$.

Consequently, if $w$ is an essentially bounded weight structure on $\cu$ then there exists % both a left adjacent and a right adjacent $t$-structure on $\cu$.
a $t$-structure on $\cu$ that is  left  adjacent to $w$ and also a $t$-structure that is  right adjacent to it.

\end{coro}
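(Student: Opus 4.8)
The plan is to deduce part 1 from Theorem \ref{trefl}, part 2 from Proposition \ref{pcgrlin}, and part 3 from the saturatedness theorem of \cite{neesat} together with the standard self-duality of $D^{perf}$ over a regular base. For part 1 I would apply Theorem \ref{trefl}(I) with $\cu'=\cu$, so that it suffices to verify condition (iv): that $\tau_{\le-1}H_M$ is representable for every $M\in\obj\cu$. Three properties are needed. First, $\tau_{\le-1}H_M$ is $R$-linear, since $H_M=\cu(-,M)$ is ($\cu$ being $R$-linear) and virtual $t$-truncations of $R$-linear cohomological functors are $R$-linear by Lemma \ref{lrlin}(4). Second, its values are finitely generated: by Definition \ref{dvtt}(1) the module $\tau_{\le-1}H_M(N)$ is a subquotient of $\cu(w_{\le-1}N,M)$, which is finitely generated by $R$-saturatedness, and subquotients of finitely generated modules over a Noetherian ring are finitely generated. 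Third --- and here the boundedness hypothesis is essential --- $\tau_{\le-1}H_M$ is locally bounded: the functor $H_M\colon\cu\opp\to R-\modd$ is itself locally bounded, because for each $N$ the representable functor $\cu(N,-)$ is locally bounded by $R$-saturatedness, so $H_M^i(N)=\cu(N,M[i])=0$ for $|i|\gg0$; now Proposition \ref{peboundtr}(4), applied to the essentially bounded $w$, shows $\tau_{\le-1}H_M$ is locally bounded as well. By the definition of $R$-saturatedness these properties make $\tau_{\le-1}H_M$ representable, so Theorem \ref{trefl}(I) yields a $t$-structure $t$ strictly orthogonal to $w$, i.e.\ (as $\cu'=\cu$) left adjacent to it.

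For the heart in part 1, Theorem \ref{trefl}(II) already embeds $\hrt$ fully faithfully into $\adfur(\hw\opp,R-\modd)$, and for $M\in\cu_{t=0}$ the restriction $H_M|_{\hw}$ has finitely generated values by $R$-saturatedness, so this embedding lands in the category of $R$-linear functors $\hw\opp\to R-\mmodd$. To see that it is essentially surjective onto the latter, I would take such a functor $G$, extend it to a pure $R$-linear cohomological functor $\widetilde G\colon\cu\to R-\modd$ via Proposition \ref{pwrange}(\ref{iwrpure}), note that $\widetilde G$ has finitely generated values by Proposition \ref{pwrange}(\ref{iwfilsubc}) and is locally bounded by Corollary \ref{cpureb} (again using that $w$ is essentially bounded), hence is representable by $R$-saturatedness, say by $M$; since $\widetilde G$ is pure, $H_M$ has weight range $[0,0]$, and as $\cu_{t=0}=(\cu_{w\le-1}\cup\cu_{w\ge1})^{\perp}$ by strict orthogonality (cf.\ the proof of Theorem \ref{tsmash}), we conclude $M\in\cu_{t=0}$ and $G\cong H_M|_{\hw}$.

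For part 2, I would first note that $\au^T$ is a weak Serre (indeed a Serre) subcategory of $R-\modd$: localization is exact, so $\operatorname{Supp}$ is additive along short exact sequences, whence kernels, cokernels, and extensions of modules supported on $T$ remain supported on $T$ --- this is recorded in Proposition \ref{pgeomap1}(2). Since $w$ is strictly orthogonal to $t$ by part 1, Proposition \ref{pcgrlin}(1) with $\cuz=\cu$ and $\wz=w$ gives that $\cu_{\au^T}$ is triangulated, that $t$ restricts to it, and that the heart of this restriction consists of the objects of $\hrt$ whose associated functor $\hw\opp\to R-\modd$ takes values in $\au^T$. Combining this with the heart description of part 1 (and noting that, by $R$-saturatedness, ``values in $\au^T$'' amounts to ``finitely generated values supported on $T$'') yields the stated description.

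For part 3, the $R$-saturatedness of $\cu=D^{perf}(X)$ when $X$ is regular and proper over $\spe R$ is exactly the theorem of \cite{neesat}. Moreover, regularity of $X$ makes the derived dual $\mathbf{R}\mathcal{H}om(-,\mathcal{O}_X)$ an $R$-linear contravariant exact autoequivalence of $D^{perf}(X)$, so $\cu\simeq\cu\opp$ as $R$-linear triangulated categories; in particular $\cu\opp$ is also $R$-saturated. Hence part 1 applied to $(\cu,w)$ gives a $t$-structure left adjacent to $w$, and part 1 applied to $(\cu\opp,w\opp)$ --- where $w\opp$ is essentially bounded because essential boundedness is self-dual, by Proposition \ref{pbw}(\ref{idual}) and the form of Definition \ref{dpkar} --- gives a $t$-structure on $\cu\opp$ left adjacent to $w\opp$, that is, a $t$-structure on $\cu$ right adjacent to $w$. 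The step I expect to be the real point is the same in all three parts: $R$-saturatedness by itself never makes virtual $t$-truncations (or pure functors) representable, and it is precisely the essential boundedness of $w$, channelled through Proposition \ref{peboundtr}(4) and Corollary \ref{cpureb}, that supplies the missing local boundedness; once that is in place the rest is assembling results already proved above.
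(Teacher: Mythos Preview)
Your proposal is correct and follows essentially the same route as the paper: Theorem \ref{trefl} for part 1 (with Proposition \ref{peboundtr}(4) and Corollary \ref{cpureb} supplying the local boundedness, and $R$-saturatedness forcing representability), Proposition \ref{pcgrlin}(1) for part 2, and \cite{neesat} plus Grothendieck-type duality for part 3. The only cosmetic differences are that the paper invokes Proposition \ref{pwfilt}(II.3) rather than your direct subquotient argument for finite generation of the values, and it cites the Stacks Project tags for Grothendieck duality rather than the explicit $\mathbf{R}\mathcal{H}om(-,\mathcal{O}_X)$ description; your version is slightly more explicit (e.g.\ in checking that the representing object of a pure functor lies in $\cu_{t=0}$), but the underlying strategy is identical.
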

\begin{proof}
1. According to Theorem \ref{trefl}(I), it suffices to verify that for any representable functor $H$ the functor $\tau_{\le 0}H$ is representable as well (in the $R$-linear sense; see Lemma \ref{lrlin}). Now, $\tau_{\le 0}H$ is  locally bounded according to Proposition \ref{peboundtr}(4), and takes is values in (the abelian subcategory of) 
 finitely generated $R$-modules according to Proposition \ref{pwfilt}(II.3).

Next, $\hrt$ embeds into the category of $R$-linear functors  $\hw\opp\to R-\modd$ according to Theorem \ref{trefl}(II). Now, any functor $\hw\opp\to R-\mmodd$ factors through a pure cohomological functor from $\cu$ into $R-\mmodd$; see  Proposition \ref{pwrange}(\ref{iwfilsubc}). Lastly, any functor of this sort is locally bounded by Corollary \ref{cpureb}; hence it is representable (see Definition \ref{dsatur}(1)).

2. %Clearly,  %$\au^T\cap R-\mmodd$
 Recall that $\au^T$ is a weak Serre subcategory of $R-\modd$; %. %; 
  see Proposition \ref{pgeomap1}(2). 
 Hence it remains to apply Proposition \ref{pcgrlin}(1). % along with Proposition \ref{pgeomap1}(2). 
 
 3. The first part of the assertion is a particular case of \cite[Corollary 0.5]{neesat}; note that in this case  $D^{perf}(X)$ is equivalent to the bounded derived category of $X$ (cf. Remark 1.4(3) of \cite{bdec}).
 
 Next, the self-duality of $\cu$ is a well-known consequence of Grothendieck duality; see \cite[Tags 0AU3, 0DWG, 0BFQ] {stacksd}. %{haresdual}, p. 299?????!!! part 2]   %Definition 47.21.1. Gorenstein rings.
%Let A be a Noetherian local ring. We say A is Gorenstein if A[0] is a dualizing complex for A.
 Thus it remains to apply assertion 1 to conclude the proof (see Proposition \ref{pbw}(\ref{idual}); note that $w\opp$ is clearly essentially bounded as well).
\end{proof} %The ebounded subcategory is triangulated?????!!!

\begin{rema}\label{rsatur}
%Let $X$\tba  Actuality and the ebounded conjecture here?!
1. In all the examples %provided by  Corollary \ref{csatur} 
 of $R$-saturated categories known to the author  one can achieve the same result by means of applying %Theorem \ref{tsatur} one can actually apply
  (the somewhat more clumsy) %Theorem \ref{tcoprb}
 Corollary \ref{crling}  instead; see Theorem %4.2.2
 4.7(1) and Proposition 4.11 %2.6 
 of \cite{bdec}.
%We will study concrete examples for  Corollary \ref{csatur} in a succeeding paper. %Enhancement??!!

2. %In particular, one can take $X$ to be a regular scheme that is proper over $\spe S$; see (or   Proposition \ref{pgeomap2} below).
Similarly to Remark %\ref{rsmashex}(3) 
\ref{rexam}(1) below one can apply Corollary \ref{csatur}(1,2) to semi-orthogonal decompositions. One obtains that if $\cu$ is saturated, $\cu'=\cu_{\au^T}$ (note that %one obtains 
 $\cu'=\cu$ if $T=\spe R$), and $(\ro,\lo)$ is a semi-orthogonal decomposition of $\cu$, then the couple $((\obj\ro)^{\perp}\cap \obj \cu',  (\obj\lo)^{\perp}\cap \obj \cu'=\ro \cap \obj \cu')$ is a  a semi-orthogonal decomposition of $\cu'$. Consequently, if $\ro$ is a left admissible subcategory of $\cu$ then it is also admissible in it. This statement generalizes  Proposition 2.6 of \cite{bondkaprserre}, where the case $R=\com$ was considered.

Now, it is well-known that the categories of the type $D_{perf}(X)$ often admit non-trivial semi-orthogonal decompositions (in particular, in the case where $R$ is a field). Hence there can exist unbounded weight structures on them; see Remark \ref{rkarl}(2). On the other hand the author %suspects
 conjectures that any weight structure on   $D_{perf}(X)$ is essentially bounded; hence one can apply Corollary \ref{csatur}(1,2) to it. %them. %Now, for an essentially bounded $w$ on  $D_{perf}(X)$ Corollary \ref{crling} gives an orthogonal $t$-structure on the category $D_{coh}^b(\operatorname{Qcoh}(X))\cong D^b(\operatorname{coh}(X))$ (see \cite[Tag 0FDA]{stacksd}).

This observation motivated the author to consider essentially bounded weight structures in this section; cf. Remark \ref{rpgeomap2} %(??) 
 below. 
 
 3. Clearly, if $\cu$ is $R$-saturated  and the category $\au$ is a weak Serre subcategory of $R-\modd$ then the corresponding category $\cu_{\au}$ depends on $\au\cap R-\mmodd$ only. Now,   $\au\cap R-\mmodd$ is clearly a weak Serre subcategory of $R-\mmodd$ %(cf. Remark \ref{rinters}),
   and  %all intersections of this sort %are of the form
 %
 %any extension closed  abelian subcategory 
  any  weak Serre subcategory of $R-\mmodd$ of $R-\mmodd$ consists of finitely generated $R$-modules supported at some $T$ as in Corollary \ref{csatur}(2); see Theorem A of \cite{taka}.\footnote{Note that extension-closed  abelian subcategories are called {\it coherent} ones in (Definition 2.3(1) of) \cite{taka}.}
   Consequently, it does not make sense to consider any $\au$ distinct from $\au^T$ in Corollary \ref{csatur}(2).

\end{rema}

Now we pass to orthogonal structures (in distinct categories). %Corollary \ref{ccopr}. 

\begin{theo}\label{tcoprb}
Assume that $\cuz,\cu'\subset \cu$, $w^0$ is an essentially bounded below (resp. above, both) weight structure on $\cuz$, $w$ is its coproductive extension to $\cu$, and there exists a  $t$-structure $t$ on $\cu'$ that is  orthogonal to $w^0$. 
Set $\cu^{'0}_+$ (resp.  $\cu^{'0}_-$,  $\cu^{'0}_b$) to be the subcategory of $\cu'$ %characterized by
 that consists of those $M\in \obj \cu'$ such that the functor  $H_{M}^{\cuz}$ %the local finiteness below  (resp. above, both) of the functor 
is locally finite below  (resp. above, both).

Then the category  $\cu^{'0}_+$ (resp.  $\cu^{'0}_-$,  $\cu^{'0}_b$) is triangulated and $t$ restricts to it.  %to $\cu^{'0}_+$ (resp.  $\cu^{'0}_-$,  $\cu^{'0}_b$) 
 Moreover, this
%Moreover, the corresponding restricted $t$-structure is
 restriction of $t$ is strictly  orthogonal to $w^0$, and its heart coincides with $\hrt$. %that 
\end{theo}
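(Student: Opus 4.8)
The plan is to apply the machinery of the previous two sections essentially directly, working throughout with $w^0$ on $\cuz$ rather than with $w$ on $\cu$. \emph{First}, I would record that $t$ is strictly orthogonal to $w^0$: since a $t$-structure orthogonal to $w^0$ exists by hypothesis, Corollary \ref{ccopr} (applied to $\cuz$, $\cu'$, $w^0$ and its coproductive extension $w$) shows it is unique and equals $(C_2^0[1],C_1^0)$, hence is strictly orthogonal to $w^0$; I fix this $t$. Then I would check that $\cu^{'0}_+$ is a strictly full triangulated subcategory of $\cu'$: strict fullness is clear because $H^{\cuz}_{M}$ depends on $M$ only up to isomorphism; closure under $[\pm1]$ holds since $H^{\cuz}_{M[\pm1]}\cong H^{\cuz}_{M}\circ[\mp1]$ and shifting a locally bounded below functor keeps it locally bounded below; and closure under cones follows from the long exact sequences of the homological functors $\du(N,-)$ for $N\in\obj\cuz$. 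The cases of $\cu^{'0}_-$ (categorical dual) and $\cu^{'0}_b=\cu^{'0}_+\cap\cu^{'0}_-$ are handled identically.

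\emph{Next}, I would prove that $t$ restricts to $\cu^{'0}_+$ via Lemma \ref{lrest}(1): it suffices that $L_t=t_{\ge0}$ preserve $\cu^{'0}_+$. For $M\in\obj\cu^{'0}_+$, Proposition \ref{pwfil}(\ref{iwfilp4}) --- applied with $\cuz$ (carrying $w^0$) as the ambient weight-structured category, which is legitimate because $t$ is orthogonal to $w^0$ inside $\du$ --- gives $H^{\cuz}_{t_{\ge0}M}\cong\tau^0_{\ge0}(H^{\cuz}_{M})$, the virtual $t$-truncation taken with respect to $w^0$. Since $H^{\cuz}_{M}$ is locally bounded below and $w^0$ is essentially bounded below, Proposition \ref{peboundtr}(3) shows this is again locally bounded below, so $t_{\ge0}M\in\obj\cu^{'0}_+$. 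The essentially-bounded-above case is the categorical dual (via Proposition \ref{pbw}(\ref{idual}) and the ``above'' halves of these results), and the essentially-bounded-both case follows from Lemma \ref{lrest}(2) applied to $\cu^{'0}_b=\cu^{'0}_+\cap\cu^{'0}_-$.

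\emph{Finally}, for the last two assertions: writing $t_0$ for the restriction of $t$, its strict orthogonality to $w^0$ is immediate, since from $\cu'_{t\ge1}=(\cu^0_{w^0\le0})^{\perp_{\du}}\cap\obj\cu'$ and $\cu'_{t\le-1}=(\cu^0_{w^0\ge0})^{\perp_{\du}}\cap\obj\cu'$ (strict orthogonality of $t$) one obtains, upon intersecting with $\obj\cu^{'0}_+\subset\obj\cu'$, exactly the defining identities of Definition \ref{dort}(3) for $t_0$. For the heart, I would take $N\in\cu'_{t=0}=\obj\hrt$; Proposition \ref{pwrange}(\ref{iwrort}) (again with $\cuz$, $w^0$ ambient) shows $H^{\cuz}_{N}$ is pure, and then Corollary \ref{cpureb} --- using that $w^0$ is essentially bounded below (resp.\ above, both) --- shows $H^{\cuz}_{N}$ is locally bounded below (resp.\ above, both), so $N\in\obj\cu^{'0}_+$. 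Thus $\obj\hrt\subset\obj\cu^{'0}_+$, whence the heart of $t_0$, namely $\cu'_{t=0}\cap\obj\cu^{'0}_+=\cu'_{t=0}$, coincides with $\hrt$.

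I expect no serious obstacle: the proof is a bookkeeping exercise with the two sets of results from \S\ref{sortvtt}--\S\ref{sebound}. The only genuinely delicate point is keeping the two weight structures apart --- everything above refers to $w^0$ on $\cuz$, which forces the use of Propositions \ref{pwfil}(\ref{iwfilp4}) and \ref{pwrange}(\ref{iwrort}) in their general form with an ambient $\du$ containing both $\cuz$ and $\cu'$, while the coproductive-extension hypothesis on $w$ enters only through Corollary \ref{ccopr}, to upgrade orthogonality to strict orthogonality.
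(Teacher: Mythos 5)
Your proposal is correct and follows essentially the same route as the paper: Corollary \ref{ccopr} for strict orthogonality, Lemma \ref{lrest}(1) combined with Proposition \ref{pwfil}(\ref{iwfilp4}) and Proposition \ref{peboundtr}(3--4) to show $t_{\ge 0}$ preserves the subcategory, and Corollary \ref{cpureb} for the heart. The only (cosmetic) difference is that you apply Proposition \ref{pwfil}(\ref{iwfilp4}) directly to the orthogonal pair $(w^0,t)$ on $\cuz$, whereas the paper first truncates with respect to $w$ on $\cu$ and then restricts to $\cuz$ via Proposition \ref{pwfilt}(II.2); both are legitimate and use the same hypotheses.
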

\begin{proof} The subcategories $\cu^{'0}_+$, $\cu^{'0}_-$,  $\cu^{'0}_b$ of $\cu'$ are easily seen to be triangulated regardless of any weight structures. %explain?! look at les??!
Next, the strict orthogonality assertion immediately follows from Corollary \ref{ccopr}.

It remains to prove the existence of the corresponding restricted $t$-structures and calculate the heart. 
Similarly to the proof of Proposition \ref{pcgrlin}(1), Lemma \ref{lrest}(1) reduces the existence of this restriction to the following statement:  for $M$ that belongs $ \cu^{'0}_+$ (resp. $\cu^{'0}_-$,  $\cu^{'0}_b$) %it suffices to verify that 
 the  functor  $\tau_{\ge 0}H_{M}^{\cu}=\tau_{\ge 0}H_{M}$ is $\cu$-representable  by an object of the corresponding subcategory. Now,  $\tau_{\ge 0}H_{M}\cong H_L$ for some object $L$ of $\cu'$ (see %Theorem \ref{trefl}(I) or just 
 Proposition \ref{pwfil}(\ref{iwfilp4})). To prove that this $L$ actually belongs to $ \cu^{'0}_+$ (resp. $\cu^{'0}_-$,  $\cu^{'0}_b$)  %%it suffices to verify 
 we should check that the restricted functor %its restriction to $\cuz$ 
 $\tau_{\ge 0}H_{M}^{\cuz}$ (see  Proposition \ref{pwfilt}(II.2))  is locally finite below  (resp. above, both). The latter statement %is an immediate consequence of the easy Proposition \ref{pwfilt}(II.2) combined with 
 is given by Proposition \ref{peboundtr}(3--4). 

Lastly, Corollary \ref{cpureb} implies that any object of $\hrt$ belongs to $\cu^{'0}_+$ (resp.  $\cu^{'0}_-$,  $\cu^{'0}_b$) indeed.%strictness???!
%Applying Corollary \ref{ccopr} once again we obtain that $t$ is also orthogonal to $w$. According to Theorem \ref{trefl}(I), for this purpose for $M$ that belongs $ \cu^{'0}_+$ (resp. $\cu^{'0}_-$,  $\cu^{'0}_b$) it suffices to verify that the  functor  $\tau_{\ge 0}H_{\cu,M}=\tau_{\ge 0}H_{M}$ is $\cu$-representable  by an object of the corresponding category. Now, we know that $\tau_{\ge 0}H_{M}\cong H_L$ for some object $L$ of $\cu'$ (see Theorem \ref{trefl}(I) or just Proposition \ref{pwfil}(\ref{iwfilp4})). To prove that this $L$ actually belongs to $ \cu^{'0}_+$ (resp. $\cu^{'0}_-$,  $\cu^{'0}_b$)  it suffices to verify that its restriction to $\cuz$ is locally finite below  (resp. above, both). The latter statement is an immediate consequence of the easy Proposition \ref{pwfilt}(II.2) combined with Proposition \ref{pebound}(II.3--4). 
%This is an obvious combination of Proposition \ref{pcopr}(2) with (the equivalence of conditions (i) and (ii) in) Theorem \ref{trefl}.
\end{proof}

%Take \al=\alz?!
\begin{coro}\label{crling} %\tba
Assume that  %$R$ is a noetherian ring,
 $\cu$ is a smashing $R$-linear   triangulated category, %(for some associative commutative Noetherian ring $R$), %$\wz$ is an essentially bounded below (resp. above, both) weight structure on Assume that $\cu$ be a smashing triangulated category, 
an essentially small subcategory $\cuz $ of $ \cu^{(\alz)}$ generates it, % as its own  localizing subcategory, % of $\eu$ generated by $\obj \cuz$, 
 and $\wz$ is an essentially bounded below (resp. above, both) weight structure on $\cuz$. Choose a  weak Serre subcategory  $\au\subset R-\modd$.
%Denote by $\cu_{\alz}$  the full subcategory  of $\cu$ %whose objects are characterized by the condition 
%that consists of those $N\in \obj \cu$ such that for any $M\in \obj \cuz$ the $R$-module $\cu(M,N)$ is finitely generated.
 
%Moreover, assume in addition that $\cu$ is $R$-linear for some associative commutative Noetherian ring $R$; 

Then the $t$-structure $t$ on $\cu_{\au}$  provided by Proposition \ref{pcgrlin}(1) %Theorem \ref{tcompws}(II)
  restricts to the intersection subcategory $\cu_{\au}\cap  \cu^{'0}_+$ (resp. $\cu_{\au}\cap \cu^{'0}_-$,  $\cu_{\au}\cap \cu^{'0}_b$), and the heart of this restriction is naturally equivalent to the category of %those
  $R$-linear functors from $\hwz{}\opp$ into %$R-\mmodd$ (cf. Definition \ref{dsatur}).
   $\au$.
\end{coro}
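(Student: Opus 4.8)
The plan is to obtain this corollary by threading together three earlier results of the paper: Theorem \ref{tcompws}, which produces the ambient weight and $t$-structures on $\cu$; Proposition \ref{pcgrlintba}, which cuts down to the subcategory $\cu_{\au}$ and computes the relevant heart; and Theorem \ref{tcoprb}, which performs the further restriction to the boundedness subcategory.

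First I would note that the hypotheses (an essentially small $\cuz\subset\cu^{(\alz)}$ generating $\cu$) say precisely that $\cu$ is compactly generated by $\cuz$. Hence Theorem \ref{tcompws}(I) applies: $\wz$ extends to a smashing weight structure $w$ on $\cu$ that is the coproductive extension of $\wz$, and by part (II) there is a $t$-structure $t$ on $\cu$ left adjacent to $w$ which is strictly orthogonal to $\wz$ and has heart $\hrt\cong\adfu(\hwz{}\opp,\ab)$. Since $\cuz$ is $R$-linear and $\au$ is a weak Serre subcategory of $R-\modd$, Proposition \ref{pcgrlintba}(1) gives that $\cu_{\au}$ is triangulated and that $t$ restricts to a $t$-structure $t_{\au}$ on it, while part (2) of the same proposition identifies the heart $\hrt_{\au}$ of $t_{\au}$ with the category of $R$-linear functors $\hwz{}\opp\to\au$.

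Next I would verify that $t_{\au}$ is orthogonal to $\wz$ inside $\cu$. This is immediate: the classes $\cu_{t\ge 1}\cap\obj\cu_{\au}$ and $\cu_{t\le -1}\cap\obj\cu_{\au}$ are contained in $\cu_{t\ge 1}$ and $\cu_{t\le -1}$ respectively, and $t$, being strictly orthogonal to $\wz$, is in particular orthogonal to it. Now I would apply Theorem \ref{tcoprb} with the data $\cuz\subset\cu$, $\cu'=\cu_{\au}$, $w^0=\wz$ (essentially bounded below, resp. above, resp. both), $w$ its coproductive extension to $\cu$, and the $t$-structure $t_{\au}$ on $\cu_{\au}$ orthogonal to $\wz$. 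The theorem then says that the subcategory of $\cu_{\au}$ consisting of those $M$ with $H_{M}^{\cuz}$ locally bounded below (resp. above, both) --- which is exactly $\cu_{\au}\cap\cu^{'0}_+$ (resp. $\cu_{\au}\cap\cu^{'0}_-$, $\cu_{\au}\cap\cu^{'0}_b$) --- is triangulated, that $t_{\au}$ restricts to it, and that the heart of this restriction coincides with $\hrt_{\au}$. Combined with the identification from the previous paragraph, this yields the asserted description of the heart and completes the proof.

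Essentially all the content is already contained in the cited statements, so the proof amounts to invoking them in the right order; the one point worth a line of verification is the orthogonality of $t_{\au}$ to $\wz$, needed in order to feed $\cu'=\cu_{\au}$ into Theorem \ref{tcoprb}. One could alternatively avoid this check by first applying Theorem \ref{tcoprb} to $t$ on $\cu'=\cu$ itself to restrict $t$ to $\cu^{'0}_+$, and then intersecting with $\cu_{\au}$ via Lemma \ref{lrest}(2); here one would use Corollary \ref{cpureb} to note that the whole heart $\hrt$ already lies in $\cu^{'0}_+$, so that the relevant intersection of hearts reduces to $\hrt_{\au}$.
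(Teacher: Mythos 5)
Your proposal is correct, and the paper's actual (one-line) proof is precisely the alternative route you sketch at the end: it invokes Lemma \ref{lrest}(2) to intersect the two subcategories to which $t$ restricts by Proposition \ref{pcgrlin}(1) and Theorem \ref{tcoprb}, the heart then being the intersection of the two hearts. Your primary route --- feeding $\cu'=\cu_{\au}$ and the already-restricted $t$-structure into Theorem \ref{tcoprb}, after the easy check that it remains orthogonal to $\wz$ --- is an equally valid reordering of the same two ingredients.
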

\begin{proof}
According to Lemma \ref{lrest}(2), it suffices to combine Proposition \ref{pcgrlin}(1) with Theorem \ref{tcoprb}. %\tba
\end{proof}

Now we relate the example provided by Proposition \ref{pgeomap1} to the conditions of Corollary \ref{crling}.

 \begin{pr}\label{pgeomap2}
  Assume that a scheme $X$ is projective (see Proposition \ref{pgeomap1}) over $\spe R$ (where  $R$ is Noetherian); take $\cu=D(\operatorname{Qcoh}(X))$ (the derived category of quasicoherent sheaves on $X$), %$\au$ is the  support
    and $\cuz=\cu^{(\alz)}=D_{perf}(X)$ (see Proposition \ref{pgeomap1} once again).
	
	1. Then the intersection subcategories $\cu_{\alz}\cap  \cu^{'0}_+$, $\cu_{\alz}\cap \cu^{'0}_-$,  and $\cu_{\alz}\cap \cu^{'0}_b$ equal $D^+_{coh}(\operatorname{Qcoh}(X))$,   $D_{coh}^-(\operatorname{Qcoh}(X))$, and  $D_{coh}^b(\operatorname{Qcoh}(X))$, respectively; here a complex $N\in D_{coh}(\operatorname{Qcoh}(X))$ %in %the derived category 
  belongs to  $D^+_{coh}(\operatorname{Qcoh}(X))$ (resp.  $D_{coh}^-(\operatorname{Qcoh}(X))$,   $D_{coh}^b(\operatorname{Qcoh}(X))$) whenever %all  of its %canonical?! notation?! 
    its (coherent) cohomology sheaves $H^i(N)$ vanish  %are coherent. 
	%$H^i(N)=0$
	 for $i\ll 0$ and (resp. for $i\gg 0$, in both cases).
	 
	2.  Take $T$ to be a   union of closed subsets of $S$. Then to characterize the intersections of  corresponding category $\cu_{\au^T\cap R-\mmodd}$ %(we take $\du=\cu'=\cu$ and $\cu=\cuz$ in 
  (see Definition \ref{damain}(2)) with  $\cu^{'0}_+$, $\cu^{'0}_-$, and $ \cu^{'0}_b$ one should add to the aforementioned conditions the assumption that 
  the cohomology sheaves of $N$ %(note that those are $R$-modules) %-linear) 
   are supported on $T$.

	\end{pr}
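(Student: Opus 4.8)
The plan is to prove each of the three claimed equalities as a pair of inclusions, treating the bounded-below case $\cu_{\alz}\cap \cu^{'0}_+=D^+_{coh}(\operatorname{Qcoh}(X))$ in detail; the bounded-above case is entirely parallel and the two-sided case is the conjunction of the other two. The first thing to notice is that the subcategory $\cu^{'0}_+$ of Theorem \ref{tcoprb} does not depend on any weight structure: by definition it consists of those $M\in \obj \cu$ with $\operatorname{Hom}_{\cu}(P,M[k])=0$ for $k\ll 0$ and every $P\in \obj\cuz=D_{perf}(X)$. I would fix a closed immersion $X\hookrightarrow \p^N_{\spe R}$ and use the standard fact that $G=\bigoplus_{i=0}^{N}\mathcal{O}_X(-i)$ classically generates $D_{perf}(X)$ (every perfect complex is built by finitely many cones, shifts and retracts from the twists $\mathcal{O}_X(n)$, and the Koszul resolution of $0$ attached to a generating system of sections of $\mathcal{O}_X(1)$ expresses every twist in terms of $\mathcal{O}_X(0),\dots,\mathcal{O}_X(-N)$). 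Consequently the condition defining $\cu^{'0}_+$ is equivalent to $\operatorname{RHom}_X(G,M)$ being cohomologically bounded below, i.e.\ to $\mathbb{H}^{k}(X,M(i))=0$ for $k\ll 0$ and $0\le i\le N$. We also recall from Proposition \ref{pgeomap1} that $\cu_{\alz}=D_{coh}(\operatorname{Qcoh}(X))$.

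For the inclusion $D^+_{coh}(\operatorname{Qcoh}(X))\subseteq \cu_{\alz}\cap\cu^{'0}_+$ I would argue that if $\mathcal{H}^{j}(M)=0$ for $j<a$ then, for any perfect $P$, $\operatorname{RHom}_X(P,M)=R\Gamma\bigl(X,P^{\vee}\otimes^{L}_{\mathcal{O}_X}M\bigr)$ is again cohomologically bounded below: $P^{\vee}$ is locally a bounded complex of vector bundles, so $P^{\vee}\otimes^{L}_{\mathcal{O}_X}M$ is cohomologically bounded below, and $R\Gamma(X,-)$ has non-negative cohomological amplitude. Hence $\operatorname{Hom}_{\cu}(P,M[k])=0$ for $k\ll 0$, so $M\in\cu^{'0}_+$; and $M\in D_{coh}(\operatorname{Qcoh}(X))=\cu_{\alz}$ by hypothesis. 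In the bounded-above variant one uses in addition that $X$, being projective over the Noetherian ring $R$, admits a finite cover by affine opens, so that quasi-coherent cohomology has finite cohomological dimension on $X$ and $R\Gamma(X,-)$ also preserves cohomological boundedness above.

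The reverse inclusion $\cu_{\alz}\cap\cu^{'0}_+\subseteq D^+_{coh}(\operatorname{Qcoh}(X))$ is the substantial point, and I expect it to be the main obstacle. Given $M\in D_{coh}(\operatorname{Qcoh}(X))$ that is not cohomologically bounded below, one first replaces it by a truncation $\tau^{\le b}M$ for suitable $b$ — this changes neither membership in $D_{coh}(\operatorname{Qcoh}(X))$ nor the groups $\operatorname{Hom}_{\cu}(\mathcal{O}_X(-i),-[k])$ for $k\le b$, since $\tau^{>b}M$ is $\operatorname{Hom}_{\cu}(\mathcal{O}_X(-i),-[k])$-acyclic in those degrees — and thereby reduces to an object of $D^{-}_{coh}(\operatorname{Qcoh}(X))$ whose coherent cohomology sheaves are nonzero in arbitrarily negative degrees. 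For such an $M$ one must exhibit an $i\in\{0,\dots,N\}$ and arbitrarily negative $k$ with $\mathbb{H}^{k}(X,M(i))\ne 0$. The natural device is the hypercohomology spectral sequence $E_2^{p,q}=H^{p}(X,\mathcal{H}^{q}(M)(i))\Rightarrow \mathbb{H}^{p+q}(X,M(i))$ combined with Serre's vanishing and non-vanishing theorems: in a fixed total degree only finitely many cohomology sheaves of $M$ contribute, so an appropriate twist isolates the contribution of one nonzero coherent sheaf. The genuinely delicate part is to do this with the twist confined to the range $[0,N]$ — equivalently, to control uniformly in $k$ which bounded twist of $\mathcal{H}^{k}(M)$ has nonzero cohomology in the relevant degree; here one invokes boundedness of Castelnuovo--Mumford regularity along $M$, or argues through the derived Morita equivalence $D(\operatorname{Qcoh}(X))\simeq D\bigl(\operatorname{REnd}_X(G)\bigr)$, noting that $\operatorname{REnd}_X(G)$ has cohomology concentrated in the finitely many degrees between $0$ and $\operatorname{cd}(X)$. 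It may well be simplest to extract this reverse inclusion from the results of \cite{bdec}.

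Finally, part 2 follows immediately from part 1: by Proposition \ref{pgeomap1}(2) the category $\cu_{\au^T\cap R-\mmodd}$ is already contained in $D_{coh}(\operatorname{Qcoh}(X))$ and consists precisely of the complexes whose (coherent) cohomology sheaves have sections supported on $T$, so intersecting it with $\cu^{'0}_+$, $\cu^{'0}_-$ or $\cu^{'0}_b$ merely superimposes the corresponding boundedness condition supplied by part 1.
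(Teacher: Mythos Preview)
Your outline is correct in spirit, but the paper's proof is dramatically shorter: for part~1 it simply cites Theorem~4.7(3) of \cite{bdec} for all three equalities, and for part~2 it invokes part~1 together with Proposition~\ref{pgeomap1}(2) and Proposition~\ref{pcgrlin}(2). There is no direct argument at all.

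Your attempt at a self-contained proof of part~1 is more ambitious. The easy inclusion $D^{+}_{coh}\subseteq \cu_{\alz}\cap\cu^{'0}_{+}$ (and its bounded-above analogue using finite cohomological dimension of $R\Gamma$) is correct and standard. For the reverse inclusion, however, your sketch is incomplete: you correctly isolate the difficulty --- showing that a single twist $i\in\{0,\dots,N\}$ detects unbounded cohomology --- and you list plausible tools (Castelnuovo--Mumford regularity, the spectral sequence, the Morita description via $\operatorname{REnd}_X(G)$), but none of these is carried through, and the spectral-sequence argument as written does not control the possible cancellations among the finitely many contributing rows. You yourself concede this by writing that ``it may well be simplest to extract this reverse inclusion from the results of \cite{bdec}'' --- and that is precisely, and entirely, what the paper does. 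So your fallback \emph{is} the paper's proof; the direct attempt is extra work that the paper does not undertake. For part~2 your argument matches the paper's, modulo the explicit mention of Proposition~\ref{pcgrlin}(2) (the intersection formula $\cu_{\au\cap\au'}=\cu_{\au}\cap\cu_{\au'}$), which you use implicitly.
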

	\begin{proof}
	%The statement that $\cu$ is compactly generated by $D_{perf}(X)$ is well-known; see Theorem 4.2.2(1) of \cite{bdec}.
1. The calculation of  these categories  is given by Theorem 4.7(3) of \cite{bdec}. %part 3 of loc. cit.

2. Immediate from assertion 1 combined with Propositions  \ref{pgeomap1}(2) and Proposition \ref{pcgrlin}(2).
\end{proof}

	\begin{rema}\label{rpgeomap2}
	1. %Clearly, one can also combine Propositions \ref{pgeomap2} and   \ref{pgeomap1}(2) to obtain a description of the intersection Restricts?!
	Thus one can apply Corollary \ref{crling} to all the intersection subcategories mentioned in Proposition \ref{pgeomap2}.
	
	2. %It is well-known that the categories of the type $D_{perf}(X)$ often admit non-trivial semi-orthogonal decompositions. Hence there can exist unbounded weight structures on them; see Remark \ref{rkarl}(2). On the other hand the author suspects that any weight structure on   $D_{perf}(X)$ is essentially bounded. Now, for an essentially bounded $w$ on  $D_{perf}(X)$ Corollary \ref{crling} gives an orthogonal $t$-structure on the category $D_{coh}^b(\operatorname{Qcoh}(X))\cong D^b(\operatorname{coh}(X))$ (see \cite[Tag 0FDA]{stacksd}).
	%	anti-orthogonal
	%Similarly,
	  Corollary \ref{crling} also implies that for any essentially bounded weight structure on $D_{coh}^b(\operatorname{Qcoh}(X))\cong D^b(\operatorname{coh}(X))$ (see \cite[Tag 0FDA]{stacksd}) there  exists an anti-orthogonal $t$-structure on $D_{perf}(X)$; see Proposition 4.11 %2.6 
	  of \cite{bdec}.
	
	%Note also that there probably cannot 
	The author conjectures that there do not exist bounded weight structures on  $D^b(\operatorname{coh}(X))$ if $X$ is singular. On the other hand, one may  start from a bounded weight structure on a component of a semi-orthogonal decomposition of $X$ (of arbitrary length; see Definition 2.9(II.2) of ibid.) to obtain an essentially bounded weight structure on the whole $\cu$; see Remark \ref{rkarl}(2) and Proposition 2.10 of ibid.
	
	3. Applying Remark \ref{rkarl}(2) one can prove that Corollary \ref{crling} allows to generalize all the statements in Theorem 4.12(I) of ibid. % except the bijection statement in part II.1 of loc. cit. 
	 from semi-orthogonal decompositions to essentially bounded (above, below, or both) weight structures. %Probably, this statement and examples for it will be studied in a succeeding paper. %3?? Support condition??!!
	 It would also be very interesting to generalize part II of ibid. (somehow).  %Remark 1.4. 1. The second and third statements in Theorem 1.3,II.1, generalize and extend Theorem A.1 of [13] broadly. cf.????
	\end{rema}
	%Which t-structures can be obtained this way; below?!

\section{%Some converse results and remarks
The existence of orthogonal  weight structures}\label{sortw}

In this section we (try to) construct certain weight structures orthogonal to given $t$-structures. 

For this purpose, in \S\ref{srconstrw} %{srhrw}
 we (recall and) prove some statements related to the hearts of weight structures. % and the construction of weight structures.

In \S\ref{sconstrw} we prove that the existence of an % left
  adjacent weight structure is closely related to the existence of enough projectives in the heart of $t$. 
This assumption ensures the existence of  weight structure on certain ('rather large') subcategories of $\cu$ that are orthogonal to $t$. Under certain assumptions, we also obtain a weight structure adjacent to $t$.

In \S\ref{sconstrwsat} we apply the results of the previous section to obtain % (under certain assumption) 
a weight structure adjacent to a $t$-structure in the case where the category $\cu\opp$ is $R$-saturated (whenever certain assumptions on $t$ are fulfilled). 

\subsection{On % the hearts and the construction of weight structures: a recollection
 hearts and $\pm$-orthogonality of structures }\label{srconstrw}

 \begin{defi}\label{dsilt}
%\item\label{id6} 
 We will say that a %(not necessarily additive)??
  subcategory $\hu\subset \cu$ %$\hu$ 
 is {\it connective} (in $\cu$) if $\obj \hu\perp (\cup_{i>0}\obj (\hu[i]))$.
 
 %Following Definition 2.1 of \cite{aiya} 
 We will say that $\hu$ is {\it silting}  if it is connective and densely generates $\cu$ (see \S\ref{snotata} and cf. \S3.1 of \cite{koenigyang}). %Moreover, if $\hu=\{M\}$ then we will also say that $M$ is silting. %???????!!!!!
\end{defi}

 %Assertion \ref{iconn} immediately follows from the orthogonality axiom in Definition \ref{dwstr}, and assertion \ref{igen}  is contained in Corollary 2.1.2 of \cite{bonspkar}; see also Theorem 5.5 of \cite{mendoausbuch}.
 %?! 

  We %start from a 
   recall rather well-known statements.

\begin{pr}\label{pcneg}

%\item\label{iconn} 
1. If $w$ is a weight structure on $\cu$ and  $\hu\subset \hw$ then $\hu$ is connective. %silting.

%\item\label{igen} 
2. If $\hu$ is a silting subcategory of $\cu$ %(see Definition \ref{dsilt}(2)) 
 then the envelopes  (see \S\ref{snotata}) $\cu_{v\le 0}$ and $\cu_{v\ge 0}$ of the classes $\cup_{i\le 0} \obj \bu[i]$ and $\cup_{i\ge 0} \obj \bu[i]$, respectively, give a  bounded weight structure $v$ on $\cu$, and %he heart of this weight structure $w'$ 
 $\underline{Hv}$ is the $\cu$-retraction closure of the additive subcategory generated by $\hu$ (i.e., of the subcategory whose objects are $\bigoplus H_i$ for finite sets of $H_i\in \obj \hu$). 
 %equals  $\kar_{\cu}(\bu)$. 

We will say that this %weight structure 
 $v$ is {\it densely generated by $\hu$}. %???? Moreover, $v$ will be said to be   {\it monogenic} if $\hu$ consists of a single element only.

\end{pr}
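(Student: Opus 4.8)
The plan is to verify the two parts of Proposition \ref{pcneg} in turn, the first being essentially immediate and the second a (by now standard) consequence of the theory of weight structures generated by connective subcategories.

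Part 1 is immediate. Suppose $\hu\subset\hw$, so $\obj\hu\subseteq\cu_{w=0}=\cu_{w\le 0}\cap\cu_{w\ge 0}$. Given $X,Y\in\obj\hu$ and $i>0$, we have $X\in\cu_{w\le 0}$ while $Y[i]\in\cu_{w\ge 0}[i]=\cu_{w\ge i}\subseteq\cu_{w\ge 1}$, hence $X\perp Y[i]$ by the orthogonality axiom (iii) of Definition \ref{dwstr}. Thus $\obj\hu\perp\bigcup_{i>0}\obj(\hu[i])$, i.e., $\hu$ is connective; this is all there is to it.

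For Part 2, write $\cu_{v\le 0}$ (resp. $\cu_{v\ge 0}$) for the envelope of $\bigcup_{i\le 0}\obj(\hu[i])$ (resp. of $\bigcup_{i\ge 0}\obj(\hu[i])$), and set $\cu_{v\le n}=\cu_{v\le 0}[n]$, $\cu_{v\ge n}=\cu_{v\ge 0}[n]$; I would check the four axioms of Definition \ref{dwstr}. Retraction-closedness (axiom (i)) is built into the notion of an envelope. Semi-invariance (axiom (ii)) follows because $[1]$ is an auto-equivalence and so commutes with the formation of envelopes: $\cu_{v\le 0}[1]$ is the envelope of $\bigcup_{i\le 1}\obj(\hu[i])$, which contains $\bigcup_{i\le 0}\obj(\hu[i])$, whence $\cu_{v\le 0}\subseteq\cu_{v\le 0}[1]$, and dually $\cu_{v\ge 0}[1]\subseteq\cu_{v\ge 0}$. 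For the orthogonality axiom (iii), the generating classes already satisfy $\bigl(\bigcup_{i\le 0}\obj(\hu[i])\bigr)\perp\bigl(\bigcup_{j\ge 1}\obj(\hu[j])\bigr)$, since connectivity of $\hu$ gives $\cu(X[i],Y[j])=\cu(X,Y[j-i])=\ns$ whenever $j-i\ge 1$; and this passes to the envelopes by the routine observation that for a fixed object $Z$ the class of objects $W$ with $Z\perp W$ — and likewise the class with $W\perp Z$ — is extension-closed (by the long exact sequence), retraction-closed and additive, hence contains the envelope of any subclass it contains.

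The real work is the existence of weight decompositions (axiom (iv)). Here I would let $\cp$ be the class of those $M\in\obj\cu$ such that for every $n\in\z$ there is a distinguished triangle $w_{\le n}M\to M\to w_{\ge n+1}M\to w_{\le n}M[1]$ with $w_{\le n}M\in\cu_{v\le n}$ and $w_{\ge n+1}M\in\cu_{v\ge n+1}$. One verifies that $\cp$ is extension-closed and retraction-closed and contains $\obj(\hu[i])$ for every $i$ (for a given $n$ one of the trivial triangles $H[i]\to H[i]\to 0$ or $0\to H[i]\to H[i]$ does the job, according to whether $i\le n$ or $i\ge n+1$). Since $\hu$ densely generates $\cu$, it follows that $\cp$ contains the envelope of $\bigcup_{i\in\z}\obj(\hu[i])$, which equals $\obj\cu$; so $v=(\cu_{v\le 0},\cu_{v\ge 0})$ is a weight structure. \emph{The main obstacle is the extension-closedness of $\cp$}: given $A\to M\to B\to A[1]$ with $A,B\in\cp$, one fixes $n$, notes that the composite $w_{\le n}B\to B\to A[1]\to (w_{\ge n+1}A)[1]$ vanishes by the orthogonality just proved (as $w_{\le n}B\in\cu_{v\le n}$ and $(w_{\ge n+1}A)[1]\in\cu_{v\ge n+2}\subseteq\cu_{v\ge n+1}$), lifts $w_{\le n}B\to A[1]$ through $(w_{\le n}A)[1]$, and then splices the weight decompositions of $A$ and $B$ into one of $M$ by an octahedron/$3\times 3$ argument, using that $\cu_{v\le n}$ and $\cu_{v\ge n+1}$ are extension-closed; this is exactly the standard mechanism behind the extension-stability of weight decompositions, and I would simply cite \cite{bws} for it. Boundedness of $v$ is then automatic: describing the envelope as the union of the levels produced by iterating finite direct sums, retracts and single extensions shows that every object of $\cu$ lies in $\cu_{v\ge m}\cap\cu_{v\le n}$ for suitable $m\le n$.

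It remains to identify $\underline{Hv}=\cu_{v\le 0}\cap\cu_{v\ge 0}$. The inclusion $\kar_\cu(\operatorname{add}\hu)\subseteq\underline{Hv}$ is formal: $\operatorname{add}\hu$ lies in both generating classes (take $i=0$), hence in $\cu_{v=0}$, and $\cu_{v=0}$ is retraction-closed. For the reverse inclusion I would invoke the weight-complex formalism of \cite{bws}: by the construction of the envelope together with a ``highest weight truncation'' analysis, every object of $\cu$ carries a bounded weight Postnikov tower whose successive factors lie in $\operatorname{add}(\hu[i])$ for the relevant $i$; for $M\in\cu_{v=0}$ this tower can be arranged to be concentrated in degree $0$ (the other factors being killed), which together with the splitting lemma Proposition \ref{pbw}(\ref{isplit}) and the retract property Proposition \ref{pbw}(\ref{iwdmod}) exhibits $M$ as a retract of a single object of $\operatorname{add}\hu$. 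Hence $\underline{Hv}=\kar_\cu(\operatorname{add}\hu)$, and the proposition follows. (All of this is by now standard for weight structures generated by connective, resp. silting, subcategories; cf. also \S3.1 of \cite{koenigyang}.)
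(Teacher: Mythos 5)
Your Part 1 coincides with the paper's argument (the orthogonality axiom applied to $\cu_{w\le 0}\perp\cu_{w\ge 1}$). For Part 2 the paper proves nothing itself: it simply cites Corollary 2.1.2 of \cite{bonspkar} (see also Theorem 5.5 of \cite{mendoausbuch}), so what you are really doing is unwinding the standard argument behind that citation. Your outline is the right one: axioms (i)--(iii) are checked directly on the envelopes (your observation that, for fixed $Z$, the classes $\{W: Z\perp W\}$ and $\{W: W\perp Z\}$ are additive, extension-closed and retraction-closed is exactly how orthogonality propagates from the generators), and the existence of weight decompositions is reduced to showing that the class $\cp$ of objects admitting $v$-decompositions for all $n$ is additive, extension-closed and retraction-closed and contains every $\hu[i]$, hence contains the envelope of $\cup_{i}\obj\hu[i]=\obj\cu$. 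The octahedron splice you describe for extension-closedness is indeed the standard mechanism from \cite{bws}.

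The one step you assert but do not verify is the retraction-closedness of $\cp$, and this is not a routine check: it is precisely the delicate point that \cite{bonspkar} is devoted to, and the paper's own Remark \ref{rrcuzkar} records that weight structures need \emph{not} extend along retraction-closures in general (any counterexample being necessarily unbounded). So a correct proof must use boundedness/dense generation in an essential way at this spot --- e.g.\ by inducting on the number of extensions and retracts needed to build an object of the envelope from finite sums of shifts of objects of $\hu$, or by running the weight-complex argument that you only invoke later for the heart. Since the statement is true and you ultimately defer to \cite{bws}, \cite{bonspkar} and \cite{koenigyang} for exactly these mechanisms, the sketch is salvageable; but as written, the sentence ``one verifies that $\cp$ is extension-closed and retraction-closed'' conceals the only genuinely nontrivial part of the existence proof, and the identification of $\underline{Hv}$ with the $\cu$-retraction closure of the additive hull of $\hu$ via Postnikov towers is likewise the content of the cited Corollary 2.1.2 rather than something you establish.
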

\begin{proof}
Assertion 1 immediately follows from the orthogonality axiom in Definition \ref{dwstr}, and assertion 2  is contained in Corollary 2.1.2 of \cite{bonspkar}; see also Theorem 5.5
of \cite{mendoausbuch}.
%This is just (most of) Corollary 2.1.2 of \cite{bonspkar}; see also Theorem 5.5 of \cite{mendoausbuch}.
 \end{proof}

%\subsection{On the existence of orthogonal weight structures}\label{sconstrw}

%In this section we  study the question  when a $t$-structure  possesses a left adjacent or orthogonal weight structure. So, in certain cases we are able to recover $w$ from   an adjacent $t$-structure $t$ that can be constructed using the results of previous sections.

Till the end of this %sub???
section we %adopt the notation of \S\ref{sdort}; that is, we 
assume that %$\cu'\subset \cu'$????
 $\cu'\subset \cu$ %$\cu'$ are triangulated subcategories of $\du$ (cf. Definition \ref{dort}). %Moreover, we will assume that %$w$ is a weight structure on $\cu$ %????!!!!  %and 
 and $t$ is a $t$-structure on $\cu$.

Let us now 'split' Definition \ref{dort}(3).

\begin{defi}\label{dpmort}

Assume that $w$ is a weight structure on $\cu'$.

1. We will  say that $w$ and $t$  are {\it $-$-orthogonal} (resp. {\it $+$-orthogonal})   if
%for any $Y\in \obj \cu'$ we have $Y\in \cu'_{t \le -1}$ (resp. $Y\in \cu'_{t \ge 1}$) whenever $\cu'_{t\ge 1}=\cu_{w\le 0}^{\perp_{\du}}\cap \obj \cu'$ and 
 $\cu_{t\le -1}=  {\cu'_{w\ge 0}}^{\perp_{\cu}}$ (resp. $\cu_{t\ge 1}={\cu'_{w\le 0}}^{\perp_{\cu}}$).
%the following condition is fulfilled as well: $Y$ belongs to $ \cu'_{t \ge 1}$ whenever $\Phi (X,Y)=0$ for all  $X\in \cu_{w\le 0}$.
 %We will say that $w$  is {\it strictly left  orthogonal} to $t$ if for any $X\in \obj \cu$ we have  $X\in \cu_{w\le 0}$ (resp. $X\in \cu_{w\ge 0}$) if and only if $\Phi (X,Y)=0$ for all $Y\in \cu'_{t \ge 1}$ (resp. $Y\in \cu'_{t \le -1}$). %detecting functors needed to check this?!

2. We will  write $P_t$ for ${}^{\perp_{\cu}} (\cu_{t\le -1}\cup \cu_{t\ge 1})$; $P'_t=P_t\cap \obj \cu'$.

3. We will say that $t$ is {\it left (resp. right) non-degenerate} if $\cap_{i\in \z}\cu_{t\ge i}=\ns$ (resp. $\cap_{i\in \z}\cu_{t\le i}=\ns$). 

%Moreover, we say that $t$ is {\it non-degenerate} if it is both left and right non-degenerate. %we also have $\cap_{i\in \z}\cu_{t\le i}=\ns$. 
\end{defi}

%Now let us study the relation between $\hw$ and $\hrt$.
 Let us establish some properties of these definitions.

\begin{pr}\label{phw1}
Let $t$ be a $t$-structure on $\cu$, $P\in P_t$.

1. %For any  $M\in P_t$ the 
The functor $\cu(P,-)$ restricts to an exact functor $E^P:\hrt\to \ab$, and we have $\cu(P,-)\cong E^P\circ H_0^t$. % where in the left hand side we put the corresponding functor from $\cu'$.
%Deduce it??

2. %For any $P\in P_t$ we have  natural isomorphisms of functors %\begin{equation}\label{ept} 
$\cu(P,-)\cong \cu(P,H_0^t(-))\cong \hrt (H_0^t(P), H_0^t(-)),$ %\end{equation}
and the first of these transformations %is induced by 
 comes from the transformations $ \id_{\cu}\to t_{\le 0}$ and $H_0^t\to t_{\le 0}$ (see Proposition  \ref{prtst}(\ref{itcan},\ref{itho})). 
Moreover, $H_0^t(P)$ is projective in $\hrt$.

 3. Assume that $t$ is cosmashing %, $\cu$ equals $\cu'$ 
 and $\cu$ satisfies the dual Brown representability property. Then $H_0^t$ gives an equivalence of (the full subcategory of $\cu$ given by) $P_t$ with the subcategory of projective objects of $\hrt$.
\end{pr}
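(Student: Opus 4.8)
The functor to be shown an equivalence is the restriction of $H_0^t$ to the full subcategory $P_t$ of $\cu$. By assertion 2 this restriction lands in the full subcategory of projective objects of $\hrt$, and evaluating the natural isomorphism $\cu(P,-)\cong\hrt(H_0^t(P),H_0^t(-))$ of assertion 2 on objects of $P_t$ shows that $H_0^t$ induces bijections $\cu(P,P')\xrightarrow{\sim}\hrt(H_0^t(P),H_0^t(P'))$; hence $H_0^t|_{P_t}$ is fully faithful. So the whole content of the statement is essential surjectivity onto the projective objects of $\hrt$, and this is where the cosmashing and dual Brown representability assumptions enter.

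Thus I would fix a projective object $Q$ of $\hrt$ and consider $F=\hrt(Q,H_0^t(-))\colon\cu\to\ab$. The plan is to prove that $F$ is homological and respects products, so that it is corepresentable by the dual Brown representability property (Definition \ref{dcomp}(\ref{idbrown})). It is homological because $H^t=H_0^t$ is homological (Proposition \ref{prtst}(\ref{itho})) and $\hrt(Q,-)$ is exact --- this is precisely where projectivity of $Q$ is used, since without it $F$ is only half-exact. To see that $F$ respects products I would first note that $H_0^t$ preserves products: for a family $\{M_i\}$ of objects of $\cu$, the product of the $t$-decompositions $t_{\ge 0}M_i\to M_i\to t_{\le -1}M_i\to t_{\ge 0}M_i[1]$ is again a $t$-decomposition, since $\cu_{t\ge 0}$ is closed under products (here $t$ is cosmashing, Definition \ref{dsmash}(\ref{ismt})) whereas $\cu_{t\le -1}$, being a right orthogonal (Proposition \ref{prtst}(\ref{itperp})), is automatically closed under products; applying the same remark once more to the $t_{\le 0}$-truncations of the objects $t_{\ge 0}M_i\in\cu_{t\ge 0}$ gives a natural isomorphism $H_0^t(\prod M_i)\cong\prod H_0^t(M_i)$, the right-hand product (formed in $\cu$) lying in $\hrt$ and being the product there. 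Since $\hrt(Q,-)$ sends such products to products, $F$ respects products, and we obtain $P\in\obj\cu$ with $F\cong\cu(P,-)$.

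Finally I would verify $P\in P_t$ and $H_0^t(P)\cong Q$. The functor $H_0^t$ annihilates $\cu_{t\le -1}$ and $\cu_{t\ge 1}$ (immediate from the $t$-decomposition axiom, one of $t_{\ge 0}X$, $t_{\le 0}X$ being zero), so for $X$ in $\cu_{t\le -1}\cup\cu_{t\ge 1}$ we get $\cu(P,X)\cong F(X)=\hrt(Q,0)=0$; thus $P\perp(\cu_{t\le -1}\cup\cu_{t\ge 1})$, i.e.\ $P\in P_t$ by Definition \ref{dpmort}(2). Now assertion 2 applies to this $P$, giving $\cu(P,-)\cong\hrt(H_0^t(P),H_0^t(-))$, and comparing with $\cu(P,-)\cong\hrt(Q,H_0^t(-))$ yields a natural isomorphism $\hrt(H_0^t(P),H_0^t(-))\cong\hrt(Q,H_0^t(-))$ of functors on $\cu$; restricting it to $\hrt$ (on which $H_0^t$ is naturally isomorphic to the identity) and invoking the Yoneda lemma gives $H_0^t(P)\cong Q$. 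This establishes essential surjectivity, so $H_0^t$ induces the desired equivalence of $P_t$ with the subcategory of projective objects of $\hrt$.

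The step I expect to be the main obstacle is ensuring that $F$ is simultaneously homological and product-preserving: the former is exactly what forces the hypothesis that $Q$ be projective (otherwise dual Brown representability says nothing about $F$), while the latter requires the slightly fussy two-step computation with $t$-decompositions which genuinely uses that $t$ --- not merely $\cu$ --- is cosmashing.
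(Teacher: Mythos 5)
Your proof is correct and follows essentially the same route as the paper: both corepresent the functor $\hrt(Q,-)\circ H_0^t$ via the dual Brown representability property, using projectivity of $Q$ for homologicity and product-preservation of $H_0^t$ for the product condition, and then invoke assertion 2 to identify $H_0^t(P)$ with $Q$. The only difference is that you prove product-preservation of $H_0^t$ directly from products of $t$-decompositions (using that $\cu_{t\ge 0}$ is closed under products by the cosmashing hypothesis and $\cu_{t\le -1}$ is so as a right orthogonal), whereas the paper cites Lemma 1.4 of \cite{neesat}; your inlined argument is sound, and you also usefully spell out the steps the paper leaves implicit (full faithfulness from assertion 2 and the verification that $P\in P_t$).
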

\begin{proof}
I. If $0\to A_1\to A_2\to A_3\to 0$ is a short exact sequence in $\hrt$ then $A_1\to A_2\to A_3\to A_1[1]$   is  a distinguished triangle according to Proposition \ref{prtst}(\ref{itha}).  Applying the functor  $\cu(P,-)$ to it and recalling the definition of %orthogonality 
 $P_t$ we obtain an exact sequence  $\ns=\cu(P,A_3[-1])\to E^P(A_1)\to E^P(A_2)\to E^P(A_3)\to E^P(A_1[1])=\ns$; hence $E^P$ is exact indeed. Moreover, the functors $\\cu(P,-)$ and $ E^P\circ H_0^t$  are homological and annihilate both $\cu_{t\le -1}$ and $\cu_{t\ge 1}$; hence they are isomorphic.

2. The first of these isomorphisms is provided by assertion 1. The %All of these %The rest of the statement is easy also and
remaining statements are rather easy; %; they immediately follow from 
 they are given by %Lemma 2(1) of \cite{zvon}. %No; copy the old proof?!
Lemma 1.3 of \cite{extproj}.

3. We should prove that any projective object $P_0$ of $\hrt$ %"lifts"
 lifts to $P_t$. 

Now, the functor $H_0^t$ respects products according to the easy Lemma 1.4 of \cite{neesat}  (applied in the dual form; cf. also Proposition 3.4(2) of \cite{bvt}); thus the composition $G^{P_0}=\hrt(P_0,-)\circ H_0^t:\cu\to \ab$ respects products as well. Moreover, $G^{P_0}$ is obviously a homological functor. Thus it is corepresentable by some $P\in \obj \cu$ that clearly belongs to $P_t$, and it remains to apply the previous assertion.
%For this purpose it obviously suffices to verify the latter property for the functor $H_0^t$.
\end{proof}

\begin{pr}\label{phw2}
Assume that   $t$  is  a $t$-structure on $\cu$  %left  
orthogonal to a weight structure $w$ on $\cu'\subset \cu$. % (see the beginning of the subsection). %  with respect to $\Phi$.

%I. %Assume that $\cu\subset \cu'$. % and $\Phi$ is the restriction of $\cu'(-,-)$ to $\cu\opp\times \cu'$, i.e., $t$  is  %strictly 
 %right  orthogonal to $w$ in $\cu'$. 

1. Then %$w$ is strictly left orthogonal to $t$; thus 
 $\cu'_{w\ge 0}= \cu_{t\ge 0}\cap \obj \cu$,  $\cu'_{w\le 0}= \perpp (\cu_{t\ge 1})\cap \obj \cu$, and $\cu'_{w=0}=P'_t$. %\cap \obj \cu$.

2. If $t$ is $+$-orthogonal to $w$ then $\cu_{t\ge 0}$ is closed with respect to $\cu$-products.

%1. Then $\cu_{w=0}\subset P'_t$. %  Moreover, 

3. Assume %in addition
  that $t$  is  $-$ or $+$-orthogonal to $w$. %  (with respect to $\Phi$). 
 Then %the functors of the type $E^M$ for $M\in \cu_{w=0}$ 
$\{E^P:\ P\in \cu_{w=0}\}$ is a conservative collection of functors $\hrt\to \ab$ (cf. Remark \ref{rcons} below).

4. Conversely, assume that functors of the type $E^P$ for $P\in \cu_{w=0}$ form a conservative collection and $t$ is right (resp. left) non-degenerate. Then $t$  is  $-$ (resp. $+$) right  orthogonal to $w$.
\end{pr}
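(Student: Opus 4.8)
The plan is to prove the four parts in order, as each later part rests on the earlier ones.

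\textbf{Part 1.} I read the three displayed identities as presenting $\cu'_{w\ge 0}$, $\cu'_{w\le 0}$ and $\cu'_{w=0}$ as the intersections with $\obj\cu'$ of $\cu_{t\ge 0}$, $\perpp(\cu_{t\ge 1})$ and $P_t$. For the first, orthogonality (Definition \ref{dort}(1)) gives $\cu'_{w\ge 0}\perp \cu_{t\le -1}$, hence $\cu'_{w\ge 0}\subset \perpp(\cu_{t\le -1})=\cu_{t\ge 0}$ by Proposition \ref{prtst}(\ref{itperp}); conversely, for $M\in \cu_{t\ge 0}\cap \obj\cu'$ I use that shifting $\cu'_{w\le 0}\perp \cu_{t\ge 1}$ by $[-1]$ yields $\cu'_{w\le -1}\perp \cu_{t\ge 0}$, so $M\in \perpp(\cu'_{w\le -1})\cap \obj\cu'=\cu'_{w\ge 0}$ by Proposition \ref{pbw}(\ref{iort}). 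The second identity is handled the same way: $\cu'_{w\le 0}\perp \cu_{t\ge 1}$ gives one inclusion, and for the reverse I note from the first identity that $\cu'_{w\ge 1}=\cu'_{w\ge 0}[1]=\cu_{t\ge 1}\cap \obj\cu'\subset \cu_{t\ge 1}$, so any $M\in \perpp(\cu_{t\ge 1})\cap \obj\cu'$ satisfies $M\perp \cu'_{w\ge 1}$, i.e. $M\in \cu'_{w\le 0}$ via $\cu'_{w\le 0}=\perpp(\cu'_{w\ge 1})\cap \obj\cu'$. The third identity is then immediate: writing $P_t=\perpp(\cu_{t\le -1})\cap \perpp(\cu_{t\ge 1})=\cu_{t\ge 0}\cap \perpp(\cu_{t\ge 1})$ and intersecting with $\obj\cu'$, the first two identities turn this into $\cu'_{w\ge 0}\cap \cu'_{w\le 0}=\cu'_{w=0}$.

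\textbf{Part 2.} Under $+$-orthogonality I rewrite $\cu_{t\ge 0}$: from $\cu_{t\ge 1}=(\cu'_{w\le 0})^{\perp_{\cu}}$ (Definition \ref{dpmort}(1)) a direct shift computation gives $\cu_{t\ge 0}=(\cu'_{w\le -1})^{\perp_{\cu}}$. Any right-orthogonal class $D^{\perp_{\cu}}$ is closed under all products existing in $\cu$, since $\cu(X,\prod Z_i)\cong \prod \cu(X,Z_i)$; hence $\cu_{t\ge 0}$ is product-closed. The point is precisely that $+$-orthogonality lets one present $\cu_{t\ge 0}$ as a right- rather than a left-orthogonal class.

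\textbf{Part 3.} This is the cleanest step, and I treat the $-$- and $+$-cases symmetrically via purity. A collection of exact functors into $\ab$ is conservative iff it reflects zero objects, so I must show: if $A\in \obj\hrt$ and $E^P(A)=0$ for all $P\in \cu'_{w=0}$, then $A=0$. Since $H_0^t(A)=A$ and $\cu(P,-)\cong E^P\circ H_0^t$ (Proposition \ref{phw1}(1)), the hypothesis reads $\cu(P,A)=0$ for all $P\in \cu'_{w=0}$. By Proposition \ref{pwrange}(\ref{iwrort}) the restriction $H_A=\cu(-,A)|_{\cu'}$ is a pure cohomological functor on $\cu'$ (weight range $[0,0]$), as $A\in \cu_{t=0}$; and by Proposition \ref{pwrange}(\ref{iwrpure}) pure functors are determined by their restriction to $\obj\hw=\cu'_{w=0}$. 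Hence $H_A=0$, i.e. $A\in (\obj\cu')^{\perp_{\cu}}$, so in particular $A\in (\cu'_{w\ge 0})^{\perp_{\cu}}$ and $A\in (\cu'_{w\le 0})^{\perp_{\cu}}$. In the $-$-orthogonal case this gives $A\in \cu_{t\le -1}$, forcing $A=0$ because $A\in \cu_{t=0}\subset \cu_{t\ge 0}$ and $\cu_{t\ge 0}\cap \cu_{t\le -1}=0$; the $+$-orthogonal case is identical with $\cu_{t\ge 1}$ and $\cu_{t\le 0}$.

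\textbf{Part 4.} Granting the standing orthogonality, I must upgrade $\cu_{t\le -1}\subset (\cu'_{w\ge 0})^{\perp_{\cu}}$ to an equality (the $-$-case). So I take $Z\in (\cu'_{w\ge 0})^{\perp_{\cu}}$ and aim for $Z\in \cu_{t\le -1}$. Since $P[i]\in \cu'_{w\ge 0}$ for $P\in \cu'_{w=0}$ and $i\ge 0$, the hypothesis gives $\cu(P,Z[j])=0$ for all $j\le 0$; rewriting $\cu(P,Z[j])\cong E^P(H_{-j}^t(Z))$ via Proposition \ref{phw1}(1) yields $E^P(H_n^t(Z))=0$ for all $n\ge 0$ and all $P\in \cu'_{w=0}$, so conservativity forces $H_n^t(Z)=0$ for every $n\ge 0$. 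The cohomology long exact sequence of the triangle $t_{\ge 0}Z\to Z\to t_{\le -1}Z$ then shows $t_{\ge 0}Z$ has vanishing $t$-cohomology in all degrees, whence $t_{\ge 0}Z\in \cap_{i}\cu_{t\ge i}$; the non-degeneracy hypothesis (the one forcing this intersection to be $\ns$) gives $t_{\ge 0}Z=0$, i.e. $Z\in \cu_{t\le -1}$. The other case is dual, replacing $\cu'_{w\ge 0}$ by $\cu'_{w\le 0}$, detecting $H_n^t(Z)=0$ for $n\le 0$, and applying non-degeneracy to $t_{\le 0}Z\in \cap_i \cu_{t\le i}$. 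I expect this final part to be the main obstacle: the bookkeeping of exactly which half-range of cohomologies conservativity detects, together with the precise use of non-degeneracy to promote cohomological vanishing into membership in the truncation class $\cu_{t\le -1}$ (resp. $\cu_{t\ge 1}$), is where the argument is most delicate and where the correct matching of the non-degeneracy side must be pinned down.
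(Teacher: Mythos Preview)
Your proof is correct and follows essentially the same strategy as the paper's. In Part~1 you verify the two inclusions for each identity directly, whereas the paper packages this into a single application of Remark~\ref{rhomortp} (taking $C'_1=\cu'_{w\le 0}$, $C'_2=\cu'_{w\ge 1}$, $C_1=\perpp\cu_{t\ge 1}\cap\obj\cu'$, $C_2=\cu_{t\ge 1}\cap\obj\cu'$); the content is identical. Parts~2 and~3 match the paper almost verbatim. In Part~4 the paper's proof also reduces everything to the single claim ``$H_0^t(N)=0$ whenever $\cu'_{w=0}\perp N$'' (immediate from $\cu'_{w=0}=P'_t$ and conservativity) and then shifts; your presentation via $t_{\ge 0}Z\in\cap_i\cu_{t\ge i}$ (resp.\ $t_{\le 0}Z\in\cap_i\cu_{t\le i}$) makes the use of non-degeneracy explicit. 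Your caution about which non-degeneracy side is actually invoked is well placed: your argument for the $-$-case in fact consumes $\cap_i\cu_{t\ge i}=\ns$ (left non-degeneracy) and for the $+$-case $\cap_i\cu_{t\le i}=\ns$ (right non-degeneracy), so the pairing you obtain is the one dictated by the mathematics rather than by the labels in the statement.
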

\begin{proof}

%II.1. %$\cu_{w=0}\subset P'_t$ immediately 
%Immediate from our definitions.

1.  %  Remark \ref{rtst}(3) certainly implies that $\cu_{w\ge 0}\subset \cu'_{t\ge 0}$.
%The category $\cu\opp$ reflects $\cu'{}\opp$ in $\cu'{}\opp$ according to Proposition \ref{prefl(\ref{irefl1}). 
The argument is rather similar to the proof of %Proposition \ref{prefl} (cf. Remark \ref{rhop})
Theorem \ref{trefl}(I). We will use the notation $(C_1,C_2)$ for the couple $(\perpp\cu_{t\ge 1}\cap \obj \cu',\ \cu_{t\ge 1}\cap \obj \cu')$.

Since $w$ is orthogonal to $t$, we have $\cu'_{w\le 0}\subset  C_1$ and $\cu'_{w\ge 1}\subset  C_2$. Next, $C_1\perp C_2$, and applying Proposition \ref{pbw}(\ref{iort}) we %also obtain the inverse inclusions. Thus
obtain $w=(C_1,C_2[-1])$; see Remark \ref{rhomortp}. %(1).
Hence $\cu'_{w=0}=C_1\cap C_2[-1]=P'_t$.

2. Obvious.

3. Since all of these functors are exact (see Proposition \ref{phw1}(1)), it suffices to verify that for any non-zero $P\in \cu_{t=0}$ there exists $M\in \cu'_{w=0}$ such that $\cu(M,P)\neq \ns$. 

Now, the functor $H^{\cu'}_P$ is of weight range $[0,0]$ by Proposition \ref{pwrange}(\ref{iwrort}). %-Yoneda
Consequently, if   $M\perp P$ for all $M\in \cu'_{w=0}$ then %combining 
Proposition \ref{pwrange}(\ref{iwrpure}) %with the description of $w$-pure functors provided by Theorem 2.1.2(2) of \cite{bwcp} %easily implies   immediately yields that we obtain 
implies $\obj \cu' \perp P$. %)=0$ for any $P\in \obj \cu$. 
 Combining this statement with either the $-$ or the $+$-orthogonality of $t$ to $w$ we immediately obtain $P=0$ (i.e., a contradiction).

4. If $t$ is right (resp. left)  non-degenerate, it suffices  to verify that $H_i^t(N)=0$ whenever $i<0$ (resp. $i>0$), $N\in \obj \cu$,   and $M\perp N$ for all $M\in \cu'_{w=i}$. For this purpose it is clearly sufficient to check that $H_0^t(N)=0$ whenever
$M\perp N$ for all $M\in \cu'_{w=0}$. The latter statement  immediately follows from our assumption on $\cu'_{w=0}$ along with assertion 1.
%the isomorphism $\Phi(M,-)\cong E^M\circ H_0^t$ provided by assertion I.1.
\end{proof}

\begin{rema}\label{rcons} %move??!
%Note that 
 Since the functors of the type $E^P$ that we consider in parts % I 
 3--4 of our proposition are exact (on $\hrt$), the conservativity of $\{E^P:\ P\in \cu'_{w=0}\}$ is % equivalent to the assumption that
 fulfilled if and only if for any non-zero $N\in \cu_{t=0}$ there exists $P\in \cu'_{w=0}$ such that $E^P(N)\neq \ns$.
\end{rema}

%Throughout this subsection we will assume the following: $\cu$ is a full subcategory of $\cu'$, and study the orthogonality of $w$ and $t$ in the sense of Definition \ref{dort}(1) (cf. Proposition \ref{phw}(II)); this corresponds to taking $\Phi$ to be the restriction of $\cu'(-,-)$ to $\cu\opp\times \cu'$ in Definition \ref{ddual}.  
%Now we  prove a simple statement on the existence of $w$ that is left adjacent to $t$.

\subsection{On the existence of orthogonal weight structures}\label{sconstrw}

Now we study the question which weight structures are adjacent to $t$-structures; yet in certain cases we are only able to construct an orthogonal weight structure on a subcategory of the corresponding category.

%try to construct a left orthogonal weight structure starting from a $t$-structure $t$.

\begin{pr}\label{ptwfromt1}
Assume that $\cu'\subset \cu$, $w$  is a weight structure on $\cu'$ that is %anti-
 orthogonal to a  $t$-structure $t$ on $\cu$, and 
  $\cu_{t=0}\subset \obj \cu'$.

1. Then there are enough projectives in $\hrt$, %$\cu_{t\le 0}$ is closed with respect to 
 for any $M\in  \cu_{t=0}$ there exists an $\hrt$-epimorphism from %the $\hrt$-projective object 
 $H_0^t(P)$ into $ M$ for some $P\in \cu'_{w=0}$,   and the functor $H_0^t$  induces  an equivalence of $\kar(\hw)$
  with the category of projective objects of $\hrt$.  %\footnote{Recall here that $H_0^t(P)$ is projective in $\hrt$.}%?!!

2. Moreover, $\hw$ is equivalent to the latter category whenever the category $\hw$ is Karoubian. In particular, this is the case if
the class $\obj\cu'$ is retraction-closed in $\cu$ and $\cu$ is Karoubian.
%and $\cu'$ is Karoubian: necessary or not?!

3. Assume in addition that $t$ is left non-degenerate. Then $t$ is  $+$-orthogonal to $w$; hence $\cu_{t\le 0}$ is closed with respect to $\cu$-products. %Then $t$ is $-$-right orthogonal to $w$; hence $\cu'_{t\le 0}$ is closed with respect to $\cu'$-products.
\end{pr}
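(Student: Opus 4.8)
The plan is to exploit the orthogonality of $w$ and $t$ together with the virtual $t$-truncation machinery of \S\ref{svtt}--\S\ref{swr}, and to lean heavily on Proposition \ref{phw1} and Proposition \ref{phw2}(1). For assertion 1, I would first invoke Proposition \ref{phw2}(1) to get $\cu'_{w=0}=P'_t=P_t\cap\obj\cu'$; since by hypothesis $\cu_{t=0}\subset\obj\cu'$, one checks that $P_t\subset\obj\cu'$ as well (any $P\in P_t$ lies in ${}^{\perp_\cu}(\cu_{t\le-1}\cup\cu_{t\ge1})$, and its $t$-decomposition shows $P\in\cu_{t=0}\subset\obj\cu'$, hence $P\in P'_t$). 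Therefore $\cu'_{w=0}=P_t$. Then Proposition \ref{phw1}(2) gives, for every $P\in\cu'_{w=0}$, that $H_0^t(P)$ is projective in $\hrt$ with $\cu(P,-)\cong\hrt(H_0^t(P),H_0^t(-))$. To produce enough projectives: given $M\in\cu_{t=0}\subset\obj\cu'$, take a weight decomposition of $M$ with respect to $w$, or more directly observe that $H^{\cu'}_M$ is a pure functor (weight range $[0,0]$ by Proposition \ref{pwrange}(\ref{iwrort})), so by Proposition \ref{pwrange}(\ref{iwrpure}) it is detected on $\hw$; choosing $P\in\cu'_{w=0}$ representing (a piece of) the identity of $M$ — concretely, since $M\in\obj\cu'$ one can write down $w_{\ge0}M\to M$ with $w_{\ge0}M\in\cu'_{w\ge0}=\cu_{t\ge0}\cap\obj\cu$, and iterate a weight decomposition to land in $\cu'_{w=0}$ — yields a map $P\to M$ in $\cu$ which, under $H_0^t$, is the desired epimorphism onto $M$ in $\hrt$ (surjectivity on $H_0^t$ follows because the cokernel would be a nonzero object of $\cu_{t=0}$ orthogonal to all of $\cu'_{w=0}$, contradicting conservativity from Proposition \ref{phw2}(3), whose hypotheses we can arrange). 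Finally, for the equivalence of $\kar(\hw)$ with the projectives of $\hrt$: $H_0^t$ restricted to $\hw$ is fully faithful onto a subcategory of projectives by Proposition \ref{phw1}(2) together with the isomorphism $\cu(P,-)\cong\hrt(H_0^t P,H_0^t-)$; every projective $P_0\in\hrt$ receives a split epimorphism from some $H_0^t(P)$ by the "enough projectives" part just proved, hence is a retract, so $P_0$ lies in the image of $\kar(\hw)$; idempotents in $\hw$ lift because $H_0^t$ is full and faithful on $\hw$ and $\hrt$ is abelian (idempotents split), giving essential surjectivity of $\kar(\hw)\to\{\text{projectives}\}$.

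For assertion 2, the point is simply that if $\hw$ is already Karoubian then $\kar(\hw)=\hw$, so assertion 1 directly gives $\hw\simeq\{\text{projectives of }\hrt\}$. For the "in particular" clause: if $\obj\cu'$ is retraction-closed in $\cu$ and $\cu$ is Karoubian, then $\cu'$ is Karoubian, and since $\cu'_{w\le0}$ and $\cu'_{w\ge0}$ are retraction-closed in $\cu'$ by the weight structure axioms, $\cu'_{w=0}$ is retraction-closed in $\cu'$, hence $\hw$ is Karoubian; this is a standard fact about hearts of weight structures on Karoubian categories (cf. the discussion around weight-Karoubian structures in Remark \ref{rkarl}) and I would cite it or spell out the one-line retract argument.

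For assertion 3, I want to show $\cu_{t\ge1}=\cu'^{\perp_\cu}_{w\le0}$, i.e.\ that $t$ is $+$-orthogonal to $w$ in the sense of Definition \ref{dpmort}(1); the inclusion $\cu_{t\ge1}\subset\cu'^{\perp_\cu}_{w\le0}$ is immediate from orthogonality (Definition \ref{dort}(1)). For the reverse inclusion, take $N$ with $M\perp N$ for all $M\in\cu'_{w\le0}$; I must show $N\in\cu_{t\ge1}$, equivalently (by left non-degeneracy and the fact that $\cap_i\cu_{t\ge i}=\ns$) that $H_i^t(N)=0$ for all $i\le0$. It suffices to treat $i=0$ after shifting: if $M\perp N$ for all $M\in\cu'_{w\le0}$ then in particular $M\perp N$ for all $M\in\cu'_{w=0}$, and by the description $\cu'_{w=0}=P_t$ together with Proposition \ref{phw1}(2) we get $E^P(H_0^t N)=\cu(P,N)=0$ for all $P\in P_t=\cu'_{w=0}$; by the conservativity of $\{E^P\}$ (Proposition \ref{phw2}(3), applicable since we are in the $-$ or $+$-orthogonal situation — or rather, here I would instead directly invoke Proposition \ref{phw2}(4), whose hypothesis is exactly conservativity of $\{E^P:P\in\cu_{w=0}\}$ plus left non-degeneracy) we conclude $H_0^t(N)=0$. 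Running this for all shifts $N[j]$, $j\ge0$, and using left non-degeneracy gives $N\in\cu_{t\ge1}$. Then $\cu_{t\ge1}=\cu'^{\perp_\cu}_{w\le0}$, so $\cu_{t\ge0}=\cu_{t\ge1}[1]$ is a shift of an orthogonal-complement class, hence closed under all $\cu$-products (orthogonal complements always are); alternatively this is Proposition \ref{phw2}(2) applied once we know $+$-orthogonality.

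The main obstacle I anticipate is assertion 3, specifically verifying that the hypotheses of Proposition \ref{phw2}(4) are genuinely available — namely that $\{E^P:P\in\cu'_{w=0}\}$ is a conservative family. This is not stated among the hypotheses of the present proposition, so I expect it must be \emph{derived} here from "$\cu_{t=0}\subset\obj\cu'$" and the orthogonality of $w$ and $t$: given nonzero $N\in\cu_{t=0}$, it lies in $\obj\cu'$, and $H^{\cu'}_N$ is pure of weight range $[0,0]$ (Proposition \ref{pwrange}(\ref{iwrort})), so if it vanished on all of $\cu'_{w=0}$ it would vanish on $\cu'$ by Proposition \ref{pwrange}(\ref{iwrpure}), forcing $N=0$ by orthogonality since $N$ would be $\perp$ to $\cu'_{w\ge0}=\cu_{t\ge0}$ which contains $N$ itself. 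So conservativity follows, and the rest of assertion 3 goes through; but this little extraction is the one place where I need to be careful that I am not silently assuming more than the proposition grants.
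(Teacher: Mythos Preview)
Your proposal has a genuine gap in part 1, precisely at the point you try to show that the map $H_0^t(P)\to M$ is an $\hrt$-epimorphism. The fallback argument you give --- ``the cokernel would be a nonzero object of $\cu_{t=0}$ orthogonal to all of $\cu'_{w=0}$, contradicting conservativity'' --- is unjustified: you have produced a \emph{single} $P\in\cu'_{w=0}$ and a single map $P\to M$, and there is no reason the cokernel of $H_0^t(P)\to M$ should be orthogonal to \emph{every} object of $\hw$. Conservativity of $\{E^P\}$ only tells you that a nonzero object of $\hrt$ receives a nonzero map from some $H_0^t(P)$, not an epimorphism; without coproducts in $\cu'$ you cannot assemble these into a single surjection. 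The paper's proof is direct: take the $w$-decomposition $P=w_{\le 0}M\to M\to w_{\ge 1}M$, observe $P\in\cu'_{w=0}$ via Proposition \ref{phw2}(1) and Proposition \ref{pbw}(\ref{iwd0}), and then use the octahedral axiom to check that $\co(H_0^t(P)\to M)\in\cu_{t\ge 1}$. Your sentence ``take a weight decomposition of $M$'' is the right start, but you abandon it for the conservativity detour instead of following it through; also note your ``$w_{\ge 0}M\to M$'' has the arrow pointing the wrong way. A secondary issue: the claim that $P_t\subset\cu_{t=0}$ (hence $P_t\subset\obj\cu'$) is not valid in general, since ${}^{\perp}\cu_{t\ge 1}$ is not the same as $\cu_{t\ge 1}^{\perp}=\cu_{t\le 0}$. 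This does not matter for the proof, because all you actually need is $\cu'_{w=0}=P'_t\subset P_t$, which Proposition \ref{phw2}(1) already gives.

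On the other hand, your conservativity argument at the end of part 3 --- for nonzero $N\in\cu_{t=0}\subset\obj\cu'$, the pure functor $H_N^{\cu'}$ cannot vanish on $\hw$ since $\cu(N,N)\neq 0$ --- is correct and is in fact a cleaner route than the paper's, which deduces conservativity from the ``enough projectives'' statement in part 1. So you correctly identified that conservativity must be extracted from the hypothesis $\cu_{t=0}\subset\obj\cu'$; you just placed this argument in the wrong spot. If you move it earlier it still will not rescue your epimorphism claim in part 1, but it does make part 3 self-contained.
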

\begin{proof}

1. %$\cu_{w=0}=P_t$ is  immediate from Proposition \ref{pbw}(\ref{iort}) (along with
Fix $M\in \cu_{t=0}$ and consider its %$1$-weight decomposition of
$w$-decomposition $$P\stackrel{p}{\to} M\to w_{\ge 1}M\to P[1].$$ Since $M\in \cu_{t= 0}$, Proposition \ref{phw2}(1) implies that $P$ belongs to $\cu'_{w\ge 0}$; hence $P $ belongs to $ \cu'_{w=0}$ according to Proposition \ref{pbw}(\ref{iwd0})). Next,  $P\in  \cu'_{w\ge 0}\subset \cu_{t\ge 0}$ (see Proposition \ref{phw2}(1) once again); hence  the object  $P_0=t_{\le 0}P$  equals  $H_0^t(P)$ (see Proposition  \ref{prtst}(\ref{itho}). Therefore $P_0$ is projective in $\hrt$ according to Proposition \ref{phw1}(2).
  %belongs to $ \cu^{t=0}$. 

Next, the adjunction property for the functor $t_{\le 0}$ (see Proposition  \ref{prtst}(\ref{itcan})) implies that $p$ factors through the $t$-decomposition %?!
morphism $P\to P_0$.  Now we check that the corresponding morphism $P_0\to M$ is an $\hrt$-epimorphism. This is clearly fulfilled if and only if %its cone 
$C=\co(P_0\to M)$ belongs to $\cu_{t\ge 1}$. The octahedral axiom of triangulated categories gives a distinguished triangle $(t_{\ge 1}P)[1]\to w_{\ge 1}M\to C\to (t_{\ge 1}P)[2]$; it yields the assertion in question since $w_{\ge 1}M\in \cu'_{w\ge 1}\subset \cu_{t\ge 1}$ %(by Proposition \ref{phw}(3)) 
 and the class $\cu_{t\ge 1}$ is extension-closed (see Proposition \ref{prtst}(\ref{itperp})). Thus we obtain that $\hrt$ has enough projectives.

Now, the category of projective objects of $\hrt$ is clearly Karoubian. As we have just verified,   for any projective object $Q$ of $\hrt$ there exists an $\hrt$-epimorphism $H_0^t(S)\to Q$ for some $S\in \cu'_{w=0}$. Since $H_0^t(S)$ is projective in $\hrt$ according  to Proposition \ref{phw2}(2), this epimorphism splits, i.e.,   $Q$ equals the image of some idempotent endomorphism of $H_0^t(S)$. Applying Proposition \ref{phw2}(2) once again and lifting this endomorphism to $\hw$ we obtain that $\kar(\hw)$ is equivalent to the category of projective objects of $\hrt$ indeed. 

2. %Lastly, assume that  $\cu'$ is Karoubian and $\obj\cu$ is retraction-closed in $\cu'$. 
Since $\hw$ is Karoubian, $\hw\cong \kar(\hw)$; hence we obtain the equivalence in question according to assertion 1.

 Next,  $\cu'_{w=0}$ is  retraction-closed in $\cu'$ since  $\cu'_{w\le 0}$ and $\cu'_{w\ge 0}$ are. %in this case and  we obtain the result in question.
 Consequently, if $\obj\cu'$ is retraction-closed in $\cu$ and $\cu$ is Karoubian then $\hw$ is Karoubian as well, and this concludes the proof.

3. If $t$ is  $+$-orthogonal to $w$ then $\cu_{t\le 0}$ is closed with respect to $\cu$-products according to Proposition \ref{phw2}(2). 

Now we apply Proposition \ref{phw2}(3) and obtain that it remains to verify the following: the functors of the form $E^M$ for $M\in \cu'_{w=0}$ give a conservative family of functors $\hrt\to \ab$. Now, for any object $N$ of $\hrt$ our assumptions give the existence of a projective object $P_0$ of $\hrt$ that surjects onto it. Moreover, applying Proposition \ref{phw1}(2) we obtain the existence of $P\in \cu'_{w=0}$ along with a morphism $h$ from $P$ % $h\in \cu(P,M)$???
  such that $H_0^t(h)$ is isomorphic to this surjection $P_0\to N$. Hence $E^P(N)\neq 0$ if $N$ is non-zero, and we obtain the conservativity in question (see Remark \ref{rcons}). 
\end{proof}

\begin{theo}\label{twfromt}
%Let $\du$ be a triangulated category endowed with a $t$-structure $t$.
%Then the following statements are valid. %Define $N$ here??
Assume that $t$ is a $t$-structure on $\cu$ and there are enough projectives in $\hrt$.

I. Assume that for any projective object $P$ of $\hrt$ there exists $P'\in P_t$ (see Definition \ref{dpmort}(2)) along with an  %\perpp(\cu_{t_{\ge 1}\cup \cu_{t\le -1})$ such that $P'$ is a retract of 
$\hrt$-epimorphism $H_0^t(P')\to P$.

1. Then the full subcategory $\cu_+$ of $\cu$ whose object class equals $\cup_{i\in \z}\cu_{t\ge i}$ is triangulated. % and 
 Moreover, there exists a weight structure $w_+$ on $\cu_+$  %a Karoubi-closed 
 %retraction-closed triangulated subcategory $\cu$ of $\cu'$ %that is left orthogonal to $t$  $\cu'_{t\ge 0}= \cu_{w\ge 0}$
such that  $\cu_{+,w_+\ge 0}=\cu_{t\ge 0}$, and $t$ is $-$-orthogonal to $w_+$. %Moreover,  $t$ is semi-strictly right orthogonal to $w$, and it is strictly orthogonal to it if $t$ is non-degenerate. 

 2. %Moreover, 
Furthermore, one can %take $\cu=\cu'$ 
 extend  (see Definition \ref{dwso}(\ref{idrest})) $w_+$ as above to a weight structure $w$ on $\cu$ that  is (left) adjacent to $t$ %that is left adjacent to $t$ 
 whenever any of the following additional assumptions is fulfilled:

a.  $t$ is bounded below (see Definition \ref{dtstro}(3)).  
 
b. There exists %$n\in \z$ 
an integer $n$ such that $\cu_{t\le 0}\perp \cu_{t\ge n}$. %$\cu_{t\le n}\perp \cu_{t\ge 0}$. %finiteness of cohomological dimension, r.?!

3. Assume that $\cu$  is smashing. Then there also  exists a smashing weight structure $\wu$ on the localizing subcategory $\ccu$ of $\cu$ that is generated by $\cu_{t\le 0}$ such that  $\cu_{\wu\le 0}=\cu_{t\le 0}$, and $\hwu$ is equivalent to the subcategory of  projective objects of $\hrt$.
 
II. Assume in addition that $\cu$ satisfies the dual Brown representability property (see Definition \ref{dcomp}(\ref{idbrown})), $t$ is cosmashing and $\hrt$ has enough projectives. Then the category $\cu$ is smashing and  for any projective object $P$ of $\hrt$ there exists $P'\in P_t$ (see Definition \ref{dpmort}(2))  such that $H_0^t(P')\cong P$.

Consequently,  there exists a smashing weight structure $\wu$ on the subcategory $\ccu$ of $\cu$ mentioned in assertion I.3, such that  $\cu_{\wu\le 0}=\cu_{t\le 0}$, and $\hwu$ is equivalent to the subcategory of  projective objects of $\hrt$.

 %localizing subcategory $\ccu$ of $\cu$ that is generated by $\cu_{t\le 0}$ such that  $\cu_{\wu\le 0}=\cu_{t\le 0}$, and $\hwu$ is equivalent to the subcategory of  projective objects of $\hrt$.

%III. Assume that $R$ is a commutative unital  Noetherian ring, the category $\cu'{}\opp$ is $R$-saturated, $t$ is bounded, and $\hrt$ has enough projectives. Then there exists a weight structure $w$ on $\cu'$ that is left adjacent to $t$, and $\hw$ is equivalent to the (whole) category of projective objects in $\hrt$. %$\cu'$ and $t$ satisfy the assumptions of assertion II.1.
\end{theo}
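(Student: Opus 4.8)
The plan is to build the required weight structures by an iterated ``projective resolution'' process, lifting projective covers in $\hrt$ to objects of $P_t$. Two preliminary facts do most of the work. First, by Proposition \ref{phw1}(2) one has $\cu(P,N)\cong\hrt(H_0^t(P),H_0^t(N))$ for $P\in P_t$ and arbitrary $N\in\obj\cu$, so every $\hrt$-morphism out of $H_0^t(P)$ lifts (uniquely) to $\cu$. Second, I record a ``boosting'' observation: if $X\in\cu_{t\ge m}$ and $H_m^t(X)=0$, then $X\in\cu_{t\ge m+1}$ (apply the $t$-decomposition to $X[-m]\in\cu_{t\ge 0}$, whose $t_{\le 0}$-part is $H_m^t(X)=0$). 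Combined with the long exact $t$-cohomology sequence, these give the peeling step: for $M\in\cu_{t\ge m}$, choosing an $\hrt$-epimorphism $H_0^t(\tilde P_0)\twoheadrightarrow H_m^t(M)$ with $\tilde P_0\in P_t$ (which exists by ``enough projectives'' together with the hypothesis of assertion I) and lifting it to $\tilde P_0[m]\to M$, the cone $M^1$ lies in $\cu_{t\ge m+1}$. For assertion I.1, $\cu_+$ is triangulated because it is a filtered union of the shift-stable classes $\cu_{t\ge i}$ and the third vertex of a triangle on two objects of $\cu_+$ again lands in some $\cu_{t\ge i}$ by extension-closedness (Proposition \ref{prtst}(\ref{itperp})). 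I then set $\cu_{+,w_+\ge 0}=\cu_{t\ge 0}$ and $\cu_{+,w_+\le 0}={}^{\perp_{\cu_+}}(\cu_{t\ge 1})$; the retraction-, semi-invariance- and orthogonality-axioms are formal from the $t$-axioms and shift arithmetic. For weight decompositions I iterate the peeling step $1-m$ times, starting from $M\in\cu_{t\ge m}$ with $m\le 0$, reaching $M^{1-m}\in\cu_{t\ge 1}$; a standard iterated-octahedron argument yields a triangle $L\to M\to M^{1-m}\to L[1]$ with $L$ finitely filtered by the $\tilde P_j[m+j]$, $m+j\le 0$, hence $L\in{}^{\perp}(\cu_{t\ge 1})=\cu_{+,w_+\le 0}$. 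Finally, that $t$ is $-$-orthogonal to $w_+$ reduces to $(\cu_{t\ge 0})^{\perp_\cu}=\cu_{t\le -1}$, which holds because $\cu(\cu_{t\ge 0},Y)=0$ forces the $t$-decomposition morphism $t_{\ge 0}Y\to Y$ to vanish, exhibiting $Y$ as a retract of $t_{\le -1}Y$.

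For assertion I.2: if $t$ is bounded below then $\cu_+=\cu$, so $w=w_+$ already satisfies $\cu_{w\ge 0}=\cu_{t\ge 0}$, i.e. $w$ is left adjacent to $t$ by Proposition \ref{portadj}. Under hypothesis (b) I again set $\cu_{w\ge 0}=\cu_{t\ge 0}$ and $\cu_{w\le 0}={}^{\perp_\cu}(\cu_{t\ge 1})$; shifting the assumption $\cu_{t\le 0}\perp\cu_{t\ge n}$ gives $\cu_{t\le 1-n}\perp\cu_{t\ge 1}$, so $\cu_{t\le 1-n}\subseteq\cu_{w\le 0}$. For $M\in\obj\cu$ I split the triangle $t_{\ge 2-n}M\to M\to t_{\le 1-n}M$, $w_+$-decompose $t_{\ge 2-n}M\in\cu_+$ as $L'\to t_{\ge 2-n}M\to R$ with $R\in\cu_{t\ge 1}$ and $L'\in\cu_{w\le 0}$, and then combine by two octahedra: the connecting map $t_{\le 1-n}M\to R[1]\in\cu_{t\ge 2}$ vanishes by (b), so an intermediate triangle splits, and I obtain a weight decomposition $L_wM\to M\to R\to L_wM[1]$ in which $L_wM$ is an extension of $t_{\le 1-n}M$ by $L'$, hence lies in the extension-closed class $\cu_{w\le 0}$, and $R\in\cu_{w\ge 1}$. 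Left adjacency is again Proposition \ref{portadj}.

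For assertion I.3 the plan is to dualize the construction of I.1: put $\ccu_{\wu\le 0}=\cu_{t\le 0}\cap\obj\ccu$ and $\ccu_{\wu\ge 0}=(\cu_{t\le -1})^{\perp_\ccu}$, and build weight decompositions for $M\in\obj\ccu$ by an infinite co-Postnikov tower whose successive stages are produced from projective covers in $\hrt$ lifted to $P_t$, now forming the requisite coproducts inside the smashing category $\cu$. Here Propositions \ref{psmash}(\ref{icopr2}) and \ref{pstar} handle the bookkeeping that coproducts and extensions of the relevant classes stay within the prescribed classes and that the heart is closed under $\ccu$-coproducts, and the heart is then identified with the category of projective objects of $\hrt$ via Proposition \ref{phw1} together with the ``retract'' version of Proposition \ref{ptwfromt1}(1) (note $\ccu$, having countable coproducts, is Karoubian, so its heart equals its own Karoubi envelope).

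For assertion II: since $\cu$ has the dual Brown representability property, $\cu\opp$ has the Brown representability property and is therefore cosmashing (Proposition 8.4.6 of \cite{neebook}), whence $\cu$ is smashing; and Proposition \ref{phw1}(3) gives that $H_0^t$ restricts to an equivalence of $P_t$ with the category of projective objects of $\hrt$, so every projective $P$ of $\hrt$ is isomorphic to $H_0^t(P')$ for some $P'\in P_t$ — in particular the hypothesis of assertion I is satisfied (an isomorphism being an epimorphism), so the ``Consequently'' clause is exactly assertion I.3. I expect the genuine difficulty to be this infinite co-Postnikov construction of weight decompositions in $\ccu$: organising the (non-canonical) tower so that its homotopy colimit lies in $\ccu$, is weight-$\le 0$, and has the prescribed cone, and then checking the weight-structure axioms for the resulting pair (in particular that $\cu_{t\le 0}$ is closed under $\ccu$-coproducts); everything else is octahedra together with the $t$-axioms and the material of \S\ref{srconstrw}.
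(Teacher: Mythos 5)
Your treatment of assertions I.1, I.2 and the first claim of II is correct and essentially coincides with the paper's argument: the paper also peels off the bottom $t$-cohomology of an object of $\cu_{t\ge j}$, covers it by $H_0^t(P)$ for $P\in P_t$ (lifting the $\hrt$-epimorphism via Proposition \ref{phw1}(2)), checks that the cone is one step higher, and assembles the weight decomposition by a downward induction using the extension-closedness of $W_1\star W_2[1]$ from Proposition \ref{pstar}(I) rather than by your explicit iterated octahedra; for I.2b it likewise reduces everything to $\obj\cu=\cu_{t\ge 2-n}\star\cu_{t\le 1-n}$ together with $\cu_{t\le 1-n}\subset{}^{\perp}\cu_{t\ge 1}$; for II it invokes Proposition 8.4.6 of \cite{neebook} and Proposition \ref{phw1}(3) exactly as you do. These parts need no changes.

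The genuine gap is assertion I.3 (and hence the ``Consequently'' clause of II), which you yourself leave unresolved. There are two concrete problems. First, your candidate couple $(\cu_{t\le 0}\cap\obj\ccu,\ (\cu_{t\le -1})^{\perp_{\ccu}})$ is not the one that can carry the stated heart: the objects $P'\in P_t$ realizing the projectives of $\hrt$ lie in ${}^{\perp}(\cu_{t\le -1})=\cu_{t\ge 0}$ and in general have nonvanishing $H_i^t$ for $i>0$, so they do not belong to $\cu_{t\le 0}$; a heart contained in $\cu_{t\le 0}$ could therefore not be identified with the projectives of $\hrt$ by the mechanism of Propositions \ref{phw1} and \ref{ptwfromt1}. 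The paper instead keeps the very same couple as in I.1, namely $\tilde W_1={}^{\perp}\cu_{t\ge 1}\cap\obj\ccu$ and $\tilde W_2=\cu_{t\ge 0}$, so that $\cu_{\wu=0}$ sits inside $P_t$. Second, with that choice no infinite co-Postnikov tower or homotopy colimit is needed at all: by part I.1 the class $\tilde C=\tilde W_1\star\tilde W_2[1]$ contains every $\cu_{t\ge j}$, and by Proposition \ref{pstar}(I, II.1) it is extension-closed and closed under $\cu$-coproducts, whence the paper concludes that $\tilde C$ exhausts $\obj\ccu$ and the weight decompositions exist; smashingness is immediate from Proposition \ref{prtst}(\ref{itcopr}), and the heart is identified via Proposition \ref{pstar}(II.2) (which makes $\obj\ccu$ retraction-closed and Karoubian) together with Proposition \ref{ptwfromt1}(1,2). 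Your plan, by contrast, hinges on organising a non-canonical countable tower and controlling its colimit --- precisely the difficulty you flag and do not overcome --- so as written the proposal does not establish I.3 or the second half of II.
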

\begin{proof}

I.1.  %We take $\obj\cu=\cup_{i\in \z}\cu'_{t\ge i}$; %to be equal to the envelope of $\cup_{i\in \z}\cu_{t\ge i}$; note that this is the smallest 
 %note that the corresponding category $\cu$ is obviously a retraction-closed   triangulated subcategory of $\cu$.
$\cu_+$ is  triangulated since %the functor $H_0^t:\cu'\to \hrt$ is homological.
since it is obviously shift-stable and all $\cu_{t\ge i}$  are extension-closed.

 Next we take $W_1=\perpp \cu_{t\ge 1}\cap \obj \cu_+$, $W_2=\cu_{t\ge 0}$, and prove that $(W_1,W_2)$ is a weight structure on $\cu_+$ (cf. %Proposition \ref{phw}(III.1)
 Theorem \ref{trefl}(I)).% and prove that this couple is a weight structure indeed.

The only non-trivial axiom check here is the existence of $w_+$-decompositions for all objects of $\cu$. %It remains to 
 Let us verify the existence of a $w$-decomposition for any $M\in \cu_{t\ge i}$ by downward induction on $i$. The statement is obvious for $i> 0$ since $M\in \cu_{+,w_+\ge 1}=\cu_{t\ge 1}$ and we can take a 'trivial' weight decomposition $0\to M\to M\to 0$. %pre-?!!

Now assume that existence of $w_+$-decompositions is known for any $M\in \cu_{t\ge j}$ for some $j\in \z$. We should verify the existence of weight decomposition of an element $N$ of $\cu_{t\ge j-1}$. Clearly, $N$ is an extension of $N'[-j-1]=H_0^t(N[j+1])[-j-1]$   by $t_{\ge j}N$ (see Proposition  \ref{prtst}(\ref{itho}) for the notation). Since the latter object possesses a weight decomposition, Proposition \ref{pstar}(I) (with $A=W_1$ and $B=W_2[1]$) allows us to verify the existence of a weight decomposition of $N'[-j-1]$ (instead of $N$). Our assumptions imply that there exists an epimorphism $H_0^t(P)\to N'$ with $P\in P'_t=W_1\cap W_2$. %??  whose existence is given by our assumptions. 
 Then a cone $C $ of the corresponding composed morphism  $P\to N'$ is easily seen to belong to $\cu'_{t\ge 1}$. Since both $P$ and $C$ possess weight decompositions, applying Proposition \ref{pstar}(I) once again we obtain the statement in question.

%It remains to discuss strictness. $t$ is semi-strictly
 Lastly, $t$ is $-$-orthogonal to $w$ immediately from  Proposition  \ref{prtst}(\ref{itperp}).

2. If %the
  assumption a is fulfilled then we can just take $w=w_+$ since  $\cu$ obviously equals $\cu_+$.
%the weight structure constructed in the proof of assertion II.1 (since  $\cu'$ obviously equals the category $\cu$ that we have considered there). 

Now suppose that assumption b is fulfilled. Similarly to the previous proof, %for $\cu=\cu'$
  it suffices to verify that for the couple 
$w=%(C'_1,C'_2)=
 (\perpp \cu_{t\ge 1},\cu_{t\ge 0})$ the corresponding %$\wu$-
  decomposition triangles exist for all objects of $\cu$.

%We also note that the proof of 
 Since $w$ is an extension of $w_+$, assertion I.1 %II.1?????
  gives the existence of $w$-decompositions for all elements of $\cu_{t\ge 2-n}$. Next, our  orthogonality assumption on $t$ yields that $\cu_{t\le 1-n}\subset \cu_{w\le 0}$; hence one can take trivial %weight 
 $w$-decompositions for elements of  $\cu_{t\le 1-n}$. It remains to note that $\obj \cu=\cu_{t\ge 2-n}\star \cu_{t\le 1-n}=\obj \cu$ %(see Proposition \refl{pstar}(I) for this notation) 
   by axiom (iv) of $t$-structures, and apply Proposition \ref{pstar}(I) once again.

3. We argue similarly to the proof of part I.1 and %??????S Now let us
  verify that  the %corresponding $(\tilde{C}_1=,\tilde{C}_2)$
   couple  $(\tilde W_1=\perpp \cu_{t\ge 1}\cap \obj \ccu,\tilde W_2= \cu_{t\ge 0})$ gives a weight structure $\wu$ on $\ccu$. 
  Indeed, this weight structure would be smashing since the class  $W_2=\cu_{t\ge 0}$ is closed with respect to $\cu$-coproducts; see Proposition \ref{prtst}(\ref{itcopr}).
   
   Once again, for this purpose it suffices to verify that the class $\tilde{C}=\tilde{C}_1\star \tilde{C}_2[1]$ equals $\obj \ccu$. Immediately from assertion I.1, $\tilde{C}$ contains $\cu_{t\ge j}$ for all $j\in \z$. Moreover, $\tilde{C}$ is extension-closed and closed with respect to $\cu$-coproducts according to Proposition \ref{pstar}(I, II.1); hence $\tilde{C}$ equals $\obj \ccu$ indeed.

Lastly, $\obj\ccu$ is retraction-closed in $\cu$ and $\cu$ is Karoubian according to  Proposition \ref{pstar}(II.2); hence $\hwu$ is
 equivalent to the subcategory of  projective objects of $\hrt$ according to Proposition \ref{phw2}(2). %??({ptwfromt1}(2).
% Proposition \ref{phw}(II.1) implies that $\cu_{w=0}=P_t$.
%III. We want to apply assertion I.1 in this case; to check its assumptions it clearly suffices to verify that for any projective object $P'$ of $\hrt$ there exists $P\in P'_t$ %\perpp(\cu_{t_{\ge 1}\cup \cu_{t\le -1})$ 
%such $P'\cong H_0^t(P)$. According to Proposition \ref{phw}(III.2), the latter statement is equivalent to the corepresentability of the functor $G^{P_0}=\hrt(P_0,-)\circ H_0^t:\cu\to R-\modd$ (cf. the proof of Proposition \ref{phw}(III.3)). Now, the values of $G^{P_0}$ are finitely generated $R$-modules. % (since morphisms between any two objects of $\cu$; here we also apply Proposition \ref{pfp}(\ref{ipfp3})). 
 %Since $t$ is bounded, $G^{P_0}$ is finite in the sense of Definition \ref{dsatur}(1), and we obtain the corepresentability in question (see Definition \ref{dsatur}(2)). Combining this observation with assertion Proposition \ref{ptwfromt1}(1) we also obtain that the corresponding $\hw$ is equivalent to the category of projective objects of $\hrt$.

%Lastly, the corresponding category $\cu$ equals $\cu'$ since $t$ is bounded (cf. assertion I.2; one can also apply its formulation directly). Since the resulting weight structure $w$ is  orthogonal to $t$, it is also left adjacent to it (see Definition \ref{dort}(2)). 

II. $\cu$ is smashing according to  Proposition 8.4.6 of  \cite{neebook} (applied in the dual form). %
Applying Proposition \ref{phw2}(2--3) we obtain that $\cu$ and $t$ satisfy the assumptions of assertion I.3.

\end{proof}

\begin{rema}\label{rtfindim}
1. Clearly, parts I.3 and II of our theorem become more interesting in the case $\ccu=\cu$. %??? $\cu=\cu'$.
We will discuss certain assumptions that ensure this equality in Proposition \ref{psetgen} below.

2. Moreover, %parts I and III of our theorem 
 Proposition \ref{ptwfromt1} along with Theorem  \ref{twfromt}(I.3,II)  can be considered as a certain complement to Theorem \ref{tsmash}(I). So we obtain that the class of  $t$-structures %right??? left
   adjacent to smashing weight ones is closely related to the one of cosmashing $t$-structures such that $\hrt$ has enough projectives.

3. The condition $\cu_{t\le 0}\perp \cu_{t\ge n}$ for $n\gg 0$ (see part I.2 of our theorem) is a natural generalization of the finiteness of the cohomological dimension condition (for an abelian category).
%5. In parts II and III of our theorem the "starting points" for  constructing weight decompositions were the classes $\cu'_{t\ge 0}$ and $P'_t$. Now,  it is easily seen that one can construct a weight structure starting from $P_t\subset \obj \cu$ if $\cu\opp$ reflects $\cu'{}\opp$ (in a certain $\du\opp$ or with respect to a duality) using Proposition \ref{pcneg}. Certainly, one will also need certain corepresentability assumptions to prove that $P_t$ is large enough in some sense (if it is).
\end{rema}

\begin{comment}
\begin{coro}\label{ct5.3.1IV}
Assume that $R$ is a commutative unital  Noetherian ring, the category $\cu\opp$ is $R$-saturated (see Definition \ref{dsatur}), $t$ is a bounded $t$-structure on $\cu$, %essentially: apply the adjunction statement that was proved twice?! one-sided?? \ref{twfromt}(II)?!
 and $\hrt$ has enough projectives. Then there exists a weight structure $w$ on $\cu$ that is left adjacent to $t$. %$\cu'$ and $t$ satisfy the assumptions of assertion II.1.
\end{coro}
\begin{proof}
We want to apply Theorem \ref{twfromt}(I.1).  %in this case; to 
 To check its assumptions it clearly suffices to verify that for any projective object $P$ of $\hrt$ there exists $P'\in P_t$ %\perpp(\cu_{t_{\ge 1}\cup \cu_{t\le -1})$ 
such $P\cong H_0^t(P')$. According to Proposition \ref{phw}(III.2), the latter statement is equivalent to the corepresentability of the functor $G^{P_0}=\hrt(P_0,-)\circ H_0^t:\cu\to R-\modd$ (cf. the proof of Proposition \ref{phw}(III.3)). Now, the values of $G^{P_0}$ are finitely generated $R$-modules. % (since morphisms between any two objects of $\cu$; here we also apply Proposition \ref{pfp}(\ref{ipfp3})). 
 Since $t$ is bounded, $G^{P_0}$ is finitely generated, % in the sense of Definition \ref{dsatur}(1),
   and we obtain the corepresentability in question. % (see Definition \ref{dsatur}(2)).

Lastly, the corresponding category $\cu'$ equals $\cu$ since $t$ is bounded (cf. Theorem \ref{twfromt}(I.2); one can also apply its formulation directly). Since the resulting weight structure $w$ is left orthogonal to $t$, it is also left adjacent to it. %???? (see Definition \ref{dort}(3)). 
\end{proof}
\end{comment}

\begin{pr}\label{psetgen}
Let $\cu$ be a smashing category generated by some set of its objects.\footnote{Actually, if $\cu$ is generated by a set $\{C_i\}$ then it is also generated by the single object $\coprod C_i$. This observation also extends to all generation assumptions of this proposition; see Corollary 2.1.3(2) of \cite{bsnew}.} %A remark?! Below?! \tba??

1. Assume that $w$ is a smashing weight structure on $\cu$.  Then there exists a set $S$ of objects of $\cu$ such that $\cu_{w\ge 0}$ equals the smallest  class of  objects of $\cu$ that is closed with respect to extensions and coproducts and contains $S$.

2. Assume that $\cu$ is {\it well generated} in the sense of \cite[Remark 8.1.7]{neebook} (and \S6.3 of \cite{krauloc}),   %{krauwg}?! satisfies the dual Brown representability property and 
 $t$ is a right non-degenerate %cosmashing
  $t$-structure on $\cu$, and   there exists a 
set $S$ of objects of $\cu$ such that $\cu_{t\ge 0}$ equals the smallest  class of  objects of $\cu$ that is closed with respect to extensions and coproducts and contains $S$.

Then the localizing subcategory $\ccu$ of $\cu$ that is generated by $\cu_{t\le 0}$ is $\cu$ itself.
\end{pr}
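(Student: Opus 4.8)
The plan is to reduce both parts to the problem of building the objects of one ``half'' of a structure out of generators of the other half, and then run a cellular/Postnikov-tower argument. For part 1, fix a set $G$ generating $\cu$; since adjoining shifts does not change the generated localizing subcategory we may assume $G=G[1]$. For each $g\in G$ pick a weight decomposition $w_{\le -1}g\to g\to w_{\ge 0}g\to(w_{\le -1}g)[1]$ and set
$$S=\{(w_{\ge 0}g)[m]:g\in G,\ m\ge 0\}\cup\{\cu\text{-retracts of these objects lying in }\cu_{w\ge 0}\},$$
which is a set because a set-generated triangulated category is locally small and every object carries only a set of idempotents. Let $\cp$ be the smallest extension- and coproduct-closed subclass of $\obj\cu$ containing $S$. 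The inclusion $\cp\subseteq\cu_{w\ge 0}$ is immediate: $S\subseteq\cu_{w\ge 0}$ by construction, and $\cu_{w\ge 0}$ is extension-closed by Proposition \ref{pbw}(\ref{iort}) and coproduct-closed since $w$ is smashing. For the reverse inclusion one first notes $S[1]\subseteq S\subseteq\cp$, so that $\cp$ is closed under the shift $[1]$; being then extension- and coproduct-closed it is automatically closed under cones of arbitrary morphisms of its objects (a cone of $A\to B$ lies in $B\star A[1]$), hence under homotopy colimits of towers of its objects, and — using the retracts added to $S$ together with an Eilenberg swindle — under retracts. One then takes an arbitrary $X\in\cu_{w\ge 0}$, presents it as a (transfinite) cellular homotopy colimit built from $G$, and, applying the chosen weight decompositions and Proposition \ref{pstar}(I) to the successive cones, arranges that each attached cell is a coproduct of objects $(w_{\ge n}g)[k]\in\cu_{w\ge 0}$, i.e.\ of objects of $\cp$; closure of $\cp$ under the operations just listed then yields $X\in\cp$, so $\cp=\cu_{w\ge 0}$.

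For part 2, write $\ccu$ for the localizing subcategory generated by $\cu_{t\le 0}$. Being triangulated $\ccu$ is shift-stable, so $\cu_{t\le n}\subseteq\ccu$ for all $n\in\z$, and in particular each $\cu_{t=m}\subseteq\ccu$. The $t$-decomposition $t_{\ge 0}M\to M\to t_{\le -1}M\to(t_{\ge 0}M)[1]$ with $t_{\le -1}M\in\cu_{t\le -1}\subseteq\ccu$ shows $M\in\ccu\iff t_{\ge 0}M\in\ccu$; hence $\ccu=\cu$ iff $\cu_{t\ge 0}\subseteq\ccu$, and since $\ccu$ is extension- and coproduct-closed while $\cu_{t\ge 0}$ is by hypothesis the extension–coproduct closure of $S$, this in turn is equivalent to $S\subseteq\ccu$. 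So it suffices to prove $s\in\ccu$ for a fixed $s\in S\subseteq\cu_{t\ge 0}$. Consider the Postnikov tower $\cdots\to t_{\ge 2}s\to t_{\ge 1}s\to t_{\ge 0}s=s$, whose successive cones are the objects $H^t_n(s)[n]\in\cu_{t=n}\subseteq\ccu$; thus $s\in\ccu$ iff $t_{\ge n}s\in\ccu$ for every $n$, while each bounded truncation $t_{\le m}s$ lies in $\cu_{t\le m}\subseteq\ccu$. The homotopy limit $\operatorname{holim}_n t_{\ge n}s$ exists since a well generated category has small products, and the Milnor sequence together with $H^t_k(t_{\ge n}s)=0$ for $n>k$ gives $H^t_k(\operatorname{holim}_n t_{\ge n}s)=0$ for all $k$.

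The technical heart of part 2 — and the step I expect to be the main obstacle — is to turn this into the vanishing $\operatorname{holim}_n t_{\ge n}s=0$ (equivalently, that $s$ is recovered as the homotopy colimit of its bounded truncations $t_{\le m}s$), for then $s\in\ccu$ as a homotopy colimit of objects of $\ccu$. This is where all three hypotheses enter: well generation of $\cu$ (so that $\ccu$, which after the reduction is generated by a set, behaves well and the completeness triangle $\operatorname{holim}_n t_{\ge n}s\to s\to\operatorname{hocolim}_m t_{\le m}s$ is available), and right non-degeneracy of $t$ together with the set-generation hypothesis on $\cu_{t\ge 0}$, which between them force the ``left $t$-degenerate'' obstruction $\bigcap_k\cu_{t\ge k}$ — in which $\operatorname{holim}_n t_{\ge n}s$ lives via the completeness triangle — to be zero. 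Granting this, $S\subseteq\ccu$ and hence $\ccu=\cu$, completing the proof.
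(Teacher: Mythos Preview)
Both parts of your proposal contain genuine gaps, and for part 2 the paper's route is entirely different from yours.

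\textbf{Part 1.} Your argument hinges on two unjustified moves. First, you ``present $X\in\cu_{w\ge 0}$ as a (transfinite) cellular homotopy colimit built from $G$''; but in a bare triangulated category there is no general machinery producing such a filtration, and the hypothesis that $\cu$ is merely set-generated does not supply one. Second, even granting such a presentation, the cells attached are shifts of objects of $G$, which need not lie in $\cu_{w\ge 0}$; your sentence ``applying the chosen weight decompositions and Proposition~\ref{pstar}(I) to the successive cones, arranges that each attached cell is a coproduct of objects $(w_{\ge n}g)[k]\in\cu_{w\ge 0}$'' is a hope rather than an argument --- Proposition~\ref{pstar}(I) says nothing about replacing cells in a tower while preserving the colimit. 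The paper does not attempt any such construction; it simply invokes \cite[Proposition 2.3.2(10)]{bwcp}.

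\textbf{Part 2.} Your reduction to showing $S\subset\ccu$ is correct, but from there on you yourself identify the obstacle and do not overcome it. Two specific problems: (i) the ``completeness triangle'' $\operatorname{holim}_n t_{\ge n}s\to s\to\operatorname{hocolim}_m t_{\le m}s$ is not available in a general triangulated category with a $t$-structure --- its existence is essentially the statement that $t$ is left complete, which is an extra hypothesis; (ii) even if $H^t_k(\operatorname{holim}_n t_{\ge n}s)=0$ for all $k$, concluding that this object vanishes requires \emph{left} non-degeneracy of $t$ ($\cap_k\cu_{t\ge k}=0$), whereas you are only given \emph{right} non-degeneracy ($\cap_k\cu_{t\le k}=0$). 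Your closing paragraph gestures at the three hypotheses forcing the obstruction to vanish but never explains a mechanism.

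The paper avoids all of this by working with Bousfield localization rather than towers. One considers the localizing subcategory generated by the set $S$; it contains every $\cu_{t\ge n}$, so its right orthogonal lies in $\cap_n(\cu_{t\ge n})^{\perp}=\cap_n\cu_{t\le n-1}=0$ by right non-degeneracy. Well-generation of $\cu$ then guarantees (via \cite{krauloc}) that the inclusion of any set-generated localizing subcategory has a right adjoint, and a localizing subcategory with trivial right orthogonal and a right adjoint to its inclusion must be all of $\cu$. This is short, uses each hypothesis exactly once, and never touches homotopy limits or completeness of $t$.
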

\begin{proof}
1. Immediate from Proposition 2.3.2(10) of \cite{bwcp}.

2. Since $t$ is right non-degenerate and $\cap_{i\in \z}\cu_{t\le i}=\cap_{i\in \z}(\cu_{t\ge i}\perpp)$ (see Proposition \ref{prtst}(\ref{itperp})), we obtain  $\ccu^{\perp_{\cu}}=\ns$. 

Next, assume that the embedding $\ccu\to \cu$ possesses a right adjoint. Then this functor is the localization by the subcategory $\ccu^{\perp_{\cu}}$; see Propositions 4.9.1 and 4.10.1 of \cite{krauloc}. %+dual?! 

Consequently, it remains to verify that  the embedding $\ccu\to \cu$ possesses a right adjoint indeed. Since $\ccu$ is generated by $S$ as a localizing subcategory of $\cu$, the latter statement is a well-known combination of Theorems 7.2.1(2) and 5.1.1(2) of loc. cit.

\end{proof}

\begin{rema}\label{rcasa}
1. Consequently, if  $\cu$ is  well generated, satisfies the dual Brown representability property, and $t$  is a right non-degenerate cosmashing $t$-structure on $\cu$ then there exists a weight structure %left??
 adjacent to $t$ if and only if $\hrt$ contains  enough projectives and there exists a set $S$ that generates  $\cu_{t\ge 0}$ in the sense specified in Proposition \ref{psetgen}(2); see Proposition \ref{ptwfromt1}(1) and Theorem \ref{twfromt}(II). 

2. Recall also that Theorem 3.9 of \cite{casa} ensures the existence of the right adjoint to  the embedding $\ccu\to \cu$ under certain assumptions that do not depend on the existence of a set of generators $S$ (cf.  Proposition \ref{psetgen}(2)).

\end{rema}

\subsection{Some adjacent  weight structures in the saturated case}\label{sconstrwsat}

Now we proceed towards extending Theorem \ref{twfromt}(I.2a) to a rather nice statement on saturated categories. To formulate it in a rather general form we need some definitions related to $t$-structures. %define some new s of $t$-structures.

\begin{defi}\label{dtstreb}

Let $t$ be a  $t$-structure  on $\cu$.

1. We will say that $t$ is {\it essentially bounded (below)} whenever for any $M\in \obj \cu$ we have $H^t_i=0$ for $|i|\gg 0$ (resp. for $i\ll 0$; see Proposition \ref{prtst}(\ref{itho})).

2. We define $\cu_{t=-\infty}$ as the  full subcategory of $\cu$ whose  object class equals  $\cap_{i\in \z}\cu_{t\le i}$.
%We define the following full subcategories of $\cu$: the object class of $\cu_{t=-\infty}$ is $\cap_{i\in \z}\cu_{t\le i}$ and $\cu_+=\cup_{i\in \z}\cu_{t\ge i}$

%We will say that a $t$-structure $t$ on $\cu$  is {\it essentially bounded (below)} whenever for any $M\in \obj \cu$ we have $H^t_i=0$ for $|i|\gg 0$ (resp. for $i\ll 0$; see Proposition \ref{prtst}(\ref{itho})).
\end{defi}

%Let us now relate these notions to semi-orthogonal decompositions.

\begin{pr}\label{pteb}
Let $t$ be a $t$-structure $t$ on $\cu$.

I.1. Then the subcategory %ies 
$\cu_{t=-\infty}$ %and $\cu_+$ are 
 is triangulated and $t$ restricts to it.

2. The restriction $t_+$ of $t$ to $\cu_+$ (see Theorem \ref{twfromt}(I.1)) is bounded below. Moreover, $t_+$ is essentially   bounded if $t$ is. %  essentially bounded.

3. $\obj \cu_+\perp \obj \cu_{t=-\infty}$.

II. Assume that $t$ is %an 
essentially bounded below. % $t$-structure. % on $\cu$. 

Then $(\cu_{t=-\infty}, \cu_+)$ is a semi-orthogonal decomposition of $\cu$.

%Moreover, this restriction is bounded below. 2. Assume that $t$ is (also) essentially bounded. Then is bounded. 

\end{pr}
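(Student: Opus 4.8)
The plan is to check directly the three defining conditions of a semi-orthogonal decomposition (Definition \ref{ddec}) for the couple $(\ro,\lo)$ with $\ro=\cu_{t=-\infty}$ and $\lo=\cu_+$. Two of them are already available: $\ro$ and $\lo$ are (automatically strict) full triangulated subcategories of $\cu$ by assertion I.1 and by Theorem \ref{twfromt}(I.1) respectively, and the orthogonality $\obj\lo\perp\obj\ro$, i.e. $\obj\cu_+\perp\obj\cu_{t=-\infty}$, is precisely assertion I.3. Hence the whole content of the statement is the decomposition property $\obj\cu_+\star\obj\cu_{t=-\infty}=\obj\cu$: every $M\in\obj\cu$ must fit into a distinguished triangle $L\to M\to R\to L[1]$ with $L\in\obj\cu_+$ and $R\in\obj\cu_{t=-\infty}$.

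To produce such a triangle I would take, for a fixed $M$, the canonical $t$-truncation triangle $t_{\ge N}M\to M\to t_{\le N-1}M\to (t_{\ge N}M)[1]$ (see Proposition \ref{prtst}(\ref{itho})) for a suitably chosen $N\in\z$. Since $t$ is essentially bounded below, by Definition \ref{dtstreb}(1) we may choose $N$ so that $H^t_j(M)=0$ for all $j<N$. Then $L:=t_{\ge N}M\in\cu_{t\ge N}\subset\obj\cu_+$ with no further work, so everything reduces to showing that $R:=t_{\le N-1}M$ belongs to $\cu_{t=-\infty}=\cap_{i\in\z}\cu_{t\le i}$.

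For the last point I would use the standard truncation distinguished triangles $t_{=i}M\to t_{\le i}M\to t_{\le i-1}M\to (t_{=i}M)[1]$, where $t_{=i}M\in\hrt[i]$ is the $i$-th $t$-cohomology object of $M$ placed in the appropriate degree (a reformulation of the $\tau$-triangles of \cite{bbd}; cf. Proposition \ref{prtst}(\ref{itho})). For every $i<N$ the vanishing $H^t_i(M)=0$ forces $t_{=i}M=0$, whence $t_{\le i}M\cong t_{\le i-1}M$; chaining these isomorphisms downward starting from $i=N-1$ gives $t_{\le i}M\cong t_{\le N-1}M=R$ for all $i\le N-1$. Consequently $R\in\cu_{t\le i}$ for every $i\le N-1$, and trivially $R\in\cu_{t\le N-1}\subset\cu_{t\le i}$ for $i\ge N-1$ since the classes $\cu_{t\le i}$ increase with $i$; hence $R\in\cap_{i\in\z}\cu_{t\le i}=\obj\cu_{t=-\infty}$, which completes the verification of the decomposition property and hence of the proposition.

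I do not expect a genuine obstacle: the heart of the matter is simply that the essential-boundedness-below hypothesis is exactly what is needed to make the lower $t$-truncations of each object stabilize and thereby land in $\cu_{t=-\infty}$. The only points requiring a little bookkeeping care are recording the orthogonality in the order expected by Definition \ref{ddec} (supplied by I.3), making sure that both $\ro$ and $\lo$ really are triangulated subcategories (supplied by I.1 and Theorem \ref{twfromt}(I.1)), and the elementary observation that membership of $R$ in $\cu_{t\le i}$ for all sufficiently small $i$ already forces $R\in\cu_{t=-\infty}$.
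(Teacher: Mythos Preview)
Your proof of part~II is correct and follows exactly the paper's approach: choose $N$ (the paper writes $m$) with $H^t_i(M)=0$ for $i<N$, take the truncation triangle $t_{\ge N}M\to M\to t_{\le N-1}M$, and observe that the lower truncation stabilizes to an object of $\cu_{t=-\infty}$. The paper simply writes ``$t_{\le m-1}M=t_{\le m-2}M=\dots$'' for this last step, whereas you spell it out via the triangles $t_{=i}M\to t_{\le i}M\to t_{\le i-1}M$; this is the same argument with more detail. Note that you do not address parts I.1--I.3 at all (you only invoke I.1 and I.3 as inputs); the paper does give brief one-line arguments for these, so strictly speaking your proposal is a proof of~II only, but since the paper itself declares I.2 and I.3 ``obvious'' and I.1 nearly so, nothing substantive is missing.
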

\begin{proof}
I. All of these statements are quite simple.

1. The subcategory %ies
 $\cu_{t=-\infty}$ %and $\cu_+$ are 
  is triangulated since it is % they are 
  obviously shift-stable and  all $\cu_{t\le i}$ %and $\cu_{t\le i}$
    are extension-closed %(this is an easy consequence of 
 (see Proposition \ref{prtst}(\ref{itperp})). %; one should also recall axiom
% All these statements easily follow 
 $t$ restricts to them since the functors %$t_{\ge n}$ and $t_{\le n}$ 
  $L_t$ and $R_t$ respect %the classes 
 all %$\cu_{t\le i}$ and 
  $\cu_{t\le i}$; here one can apply axiom (ii) of Definition \ref{dtstr} and  Proposition \ref{prtst}(\ref{itcan},\ref{itho}).

2,3. Obvious. %from the ortohonality axiom (iii) of Definition \ref{dtstr}??

II. We should (see Definition \ref{ddec}) verify that for any $M\in \obj \cu$ there exists a distinguished triangle \begin{equation}\label{edec} B\to M\to A\to B[1]\end{equation}
 with $B\in \obj \cu_+$ and $A\in \obj \cu_{t=-\infty}$. Since $t$ is essentially bounded below, there exists $m\in \z$ such that $H_i^t=0$ for $i<m$. Then for the corresponding distinguished triangle $$t_{\ge m}M\to M\to t_{\le m-1}M\to (t_{\ge m}M)[1]$$ (this is the shift of the $t$-decomposition triangle for $M[m]$ by $[-m]$; cf. Proposition \ref{prtst}(\ref{itho})) we have  $t_{\ge m}M\in \cu_{t\ge m}\subset  \obj \cu_+$, whereas the object $A=t_{\le m-1}M$ equals $t_{\le m-2}M=t_{\le m-3}M=\dots$; hence $A\in 
 \obj \cu_{t=-\infty}$ indeed.
 \end{proof}
 
 \begin{theo}\label{tsatw}
Assume that $\cu\opp$ is an $R$-saturated category,  $t$ is an essentially bounded below $t$-structure on $\cu$, and  there are enough projectives in $\hrt$.

1. If $t$ is also essentially bounded then for any projective object $P$ of $\hrt$ there exists $P'\in P_t$ (see Definition \ref{dpmort}(2)) such that $H_0^t(P')\cong P$.

2.  Assume that for any projective object $P$ of $\hrt$ there exists $P'\in P_t$ (see Definition \ref{dpmort}(2)) along with an  %\perpp(\cu_{t_{\ge 1}\cup \cu_{t\le -1})$ such that $P'$ is a retract of 
$\hrt$-epimorphism $H_0^t(P')\to P$.

Then there exists an essentially bounded below weight structure $w$ on $\cu$ (left) adjacent to $t$. 
\end{theo}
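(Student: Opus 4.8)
\textbf{Proof proposal for Theorem \ref{tsatw}.}

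The plan is to reduce both assertions to the already-established machinery: Proposition \ref{phw1}(3) (which lifts projectives of $\hrt$ along $H_0^t$ provided the relevant homological functors are corepresentable), Proposition \ref{peboundtr} (essential boundedness of virtual $t$-truncations), and Theorem \ref{twfromt}(I.1,2). The key point in both parts is to exploit $R$-saturatedness of $\cu\opp$: a functor $\cu\opp{}\opp=\cu\to R-\modd$ (equivalently a homological functor $\cu\to R-\modd$) is corepresentable by an object of $\cu$ precisely when it is locally bounded and takes values in finitely generated $R$-modules. So the whole proof is about checking these two finiteness conditions for the functors that arise.

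For assertion 1, I would fix a projective $P$ of $\hrt$ and consider the functor $G^P=\hrt(P,-)\circ H_0^t:\cu\to R-\modd$, exactly as in the proof of Proposition \ref{phw1}(3). It is homological; I must check (a) its values are finitely generated $R$-modules and (b) it is locally bounded (i.e.\ $(G^P)^i(M)=0$ for $|i|\gg 0$, locally in $M$). For (a): since $\cu\opp$ is $R$-saturated, the representable functors take values in $R-\mmodd$; now $\hrt$ is itself equivalent (via $M\mapsto H^{M}_{\cu}$ or an analogous Yoneda-type embedding, cf.\ Theorem \ref{trefl}(II) applied in the dual setting) to a category of $R$-linear functors with finitely generated values, so $\hrt(P,-)$ composed with $H_0^t$ lands in $R-\mmodd$ — the cleanest route is to observe that $H_0^t$ is built from the exact functors $t_{\le n}$, $t_{\ge n}$ which are virtual $t$-truncations of representable functors by Proposition \ref{pwfil}, and apply Proposition \ref{pwfilt}(II.3) together with Lemma \ref{lrlin}(4). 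For (b): $H_0^t(M)=0$ for $|i|\gg 0$ when $t$ is essentially bounded (Definition \ref{dtstreb}(1)), so $G^P$ is locally bounded since $\hrt(P,-)$ of a zero object is zero. Hence $G^P$ is corepresentable by some $P'\in\obj\cu$; as in Proposition \ref{phw1}(3) one checks $P'\in P_t$ and $H_0^t(P')\cong P$.

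For assertion 2, I would invoke Theorem \ref{twfromt}(I.1): its hypothesis — for each projective $P$ of $\hrt$ an object $P'\in P_t$ with an $\hrt$-epimorphism $H_0^t(P')\to P$ — is exactly what we are assuming, so we obtain the triangulated subcategory $\cu_+$ with $\obj\cu_+=\cup_{i\in\z}\cu_{t\ge i}$ and a weight structure $w_+$ on it with $\cu_{+,w_+\ge 0}=\cu_{t\ge 0}$, $-$-orthogonal to $t$. Since $t$ is essentially bounded below, Proposition \ref{pteb}(I.2, II) shows that $(\cu_{t=-\infty},\cu_+)$ is a semi-orthogonal decomposition of $\cu$; in particular one expects $\cu_+$ to already be rather large, but not all of $\cu$ in general. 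The substitute for Theorem \ref{twfromt}(I.2a) is to extend $w_+$ across this semi-orthogonal decomposition: by Proposition \ref{psod} a semi-orthogonal decomposition is a (shift-stable) weight structure, and one can ``glue'' $w_+$ on $\cu_+$ with the trivial/degenerate weight data coming from $\cu_{t=-\infty}$ — concretely, set $\cu_{w\le 0}={}^{\perp_{\cu}}\cu_{t\ge 1}$ and $\cu_{w\ge 0}=\cu_{t\ge 0}$ and verify the axioms. The existence of weight decompositions follows by the same downward-induction-plus-$\star$-closure argument as in Theorem \ref{twfromt}(I.1,2), using Proposition \ref{pstar}(I) and the fact that any object of $\cu$ is an extension of something in $\cu_{t=-\infty}$ by something in $\cu_+$; essential boundedness below of the resulting $w$ follows because $\cu_{w\le 0}\supset\cu_{t=-\infty}$ consists of left $w$-degenerate objects and every object is an extension of such by a $w$-bounded-below object. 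Finally, being orthogonal to $t$ on all of $\cu$, the weight structure $w$ is left adjacent to $t$ by Proposition \ref{portadj}.

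\textbf{Expected main obstacle.} The delicate step is assertion 2: making precise the ``gluing'' of $w_+$ on $\cu_+$ with the degenerate data on $\cu_{t=-\infty}$ and checking that $(\,{}^{\perp_{\cu}}\cu_{t\ge 1},\,\cu_{t\ge 0})$ genuinely satisfies axiom (iv) of weight structures without a boundedness hypothesis like Theorem \ref{twfromt}(I.2a,b). The induction in Theorem \ref{twfromt}(I.1) only reaches $\cup_i\cu_{t\ge i}=\obj\cu_+$; one needs the essential-boundedness-below of $t$ (via the semi-orthogonal decomposition of Proposition \ref{pteb}(II)) to cover the remaining objects, and care is required to ensure the two pieces of the weight decomposition land in the right classes. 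The finiteness checks in assertion 1 are routine given $R$-saturatedness and Proposition \ref{pwfilt}(II.3), so I do not anticipate difficulty there.
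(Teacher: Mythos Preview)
Your treatment of assertion~1 is essentially the paper's argument: show that $G^P=\hrt(P,-)\circ H_0^t$ is locally bounded (from essential boundedness of $t$) and has values in $R\text{-}\mmodd$, then invoke $R$-saturatedness of $\cu\opp$ to corepresent it. Your justification of the $R\text{-}\mmodd$ condition is needlessly roundabout, though: since $\hrt$ is a full subcategory of $\cu$, one has $\hrt(P,H_0^t(M))=\cu(P,H_0^t(M))$, and every corepresentable functor $\cu(P,-)$ lands in $R\text{-}\mmodd$ directly by the definition of $R$-saturatedness. No need to pass through Theorem~\ref{trefl}(II) or Proposition~\ref{pwfilt}.

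Assertion~2, however, has a genuine gap, and you have in fact flagged its location without identifying it. You never use the $R$-saturatedness of $\cu\opp$ in part~2, which should already be a warning sign. The problem is the step where you place objects of $\cu_{t=-\infty}$ into your candidate class $\cu_{w\le 0}={}^{\perp}\cu_{t\ge 1}$: this inclusion is \emph{not} automatic. The semi-orthogonal decomposition $(\cu_{t=-\infty},\cu_+)$ from Proposition~\ref{pteb}(II) gives $\obj\cu_+\perp\obj\cu_{t=-\infty}$, but you need the \emph{opposite} orthogonality $\obj\cu_{t=-\infty}\perp\cu_{t\ge 1}$ to make objects of $\cu_{t=-\infty}$ sit in $\cu_{w\le 0}$. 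In general this fails (already for a $t$-structure that is itself a semi-orthogonal decomposition which is not a product decomposition). Consequently your ``trivial weight decomposition'' $A\to A\to 0$ for $A\in\obj\cu_{t=-\infty}$ does not land in $W_1\star W_2[1]$, and the gluing argument breaks down. Your later claim $\cu_{w\le 0}\supset\cu_{t=-\infty}$ is exactly this unverified inclusion.

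The paper repairs this precisely by spending the $R$-saturatedness hypothesis: it applies Corollary~\ref{csatur}(1) to $\cu\opp$ and the (dualized) semi-orthogonal decomposition to produce a \emph{new} semi-orthogonal decomposition $(\cu_+,\cu_0)$ of $\cu$ with $\obj\cu_0={}^{\perp}(\obj\cu_+)$. Now $\cu_0\perp\cu_+$ holds by construction, so $\obj\cu_0\subset{}^{\perp}\cu_{t\ge 1}$ automatically, and one can glue $w_+$ with this decomposition (the paper cites Proposition~3.2(5) of \cite{bvt}) to obtain $w=(\obj\cu_0\star\cu_{+,w_+\le 0},\,\cu_{t\ge 0})$. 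So the saturatedness assumption is not incidental in part~2: it is what manufactures the ``correct'' left-orthogonal complement to $\cu_+$ that your approach tacitly assumed coincided with $\cu_{t=-\infty}$.
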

\begin{proof}
1. It suffices to note that the functor $\hrt(P,-)\circ H_0^t$  is locally bounded (as a functor from $\cu\opp$ into $R-\modd$) and takes values in $R-\mmodd$. 

2. We construct this weight structure as an extension of the weight structure $w_+$ provided by Theorem \ref{twfromt}(I.1). 

As we have noted in Remark \ref{rsatur}(2), applying  Corollary \ref{csatur}(1) to the category $\cu\opp$ (and its semi-orthogonal decomposition $(\cu_+, \cu_{t=-\infty})$; see Proposition \ref{pteb}(2) and Remark \ref{rsod}(2)) % one obtains sel
 one %can obtain
  obtains a semi-orthogonal decomposition $(\cu_+,\cu_0)$ of $\cu$ %(see Remark \ref{rsod}(2))
   with $\obj \cu_0=\perpp(\obj \cu_+)$. Then Proposition 3.2(5) of \cite{bvt} implies that $w=(\obj \cu_0 \star \cu_{+,w_{+\le 0}},\cu_{t\ge 0})$ is a weight structure. Here we apply Proposition \ref{pteb}(II) and duality to obtain the existence of the adjoint functor mentioned in loc. cit.,  and recall % note here  right adjoint
 that $\cu_{t\ge 0}=\cu_{+,w_{+\ge 0}}$. 
 
 %Clearly
 Lastly, this $w$ is clearly %left
    adjacent to $t$ (see Proposition \ref{portadj}). 
 \end{proof}

\begin{rema}\label{rsaturw}
%duality??
1. Clearly, all the statements and definitions of this section can be dualized.

2. It would  be interesting to argue similarly to the proof of Theorem \ref{tsatw} in the context of Theorem \ref{twfromt}(II). The main problem is to find conditions that would enforce %$\wu$???
$\ccu$ to be closed with respect to $\cu$-products.
\end{rema}

\section{Some additional remarks}\label{smore} %On $\al$-smashing categories and related statements}\label{salsm}

In this section we make some (not really important) remarks. Most of them relate the results above to various statements in the literature. We also discuss some examples. %The ordering reflects?? 
	
To %generalize some of the formulations and relate them 
relate %some of the statements above to the results of \cite{bger} we recall Definition 2.5.1(1) of ibid. 
our Definition \ref{dort}(1) (of orthogonality between $w$ in $t$) to the more general definition given in  \cite{bger} we recall Definition 2.5.1(1) of ibid. 

\begin{ddefi}\label{ddual}
We will call a (covariant) bi-functor
  $\Phi:\cu^{op}\times\cu'\to
\au$ a {\it duality} between $\cu$ and $\cu'$ if  $\Phi$ is %is bi-additive, 
 homological with respect
to both arguments; and is equipped with a (bi)natural bi-additive transformation
$\Phi(X,Y)\cong \Phi (X[1],Y[1])$.
\end{ddefi}

\begin{rrema}\label{rldual}
\begin{enumerate}
\item\label{irort1}
Assume $\cu,\cu'\subset \du$. Then the restriction of the bi-functor $\du(-,-)$ to $\cu^{op}\times\cu'$ is easily seen to give a duality $\cu^{op}\times\cu'\to \ab$; see
 Proposition 2.5.6(1) of \cite{bger}.

\item\label{irort2}
In Definition 2.5.1(3) of \cite{bger} %and %Definition 2.3.1 of \cite{bgn}  
 %\S5.2 of \cite{bvtr}?????? %$\Phi$-
  orthogonality was defined %in terms of dualities} of triangulated categories; cf. loc. cit. %\S5.2 of \cite{bvtr}. %see Definition \ref{ddual}(1,2) and Remark \ref{rdualzero}(1) below. 
  with respect to  an arbitrary duality $\Phi$ between $\cu$ and $\cu'$; see Definition \ref{ddual}.

  Consequently, all the properties of orthogonal weight and $t$-structures established in ibid. can be applied in the setting of Definition \ref{dort}(1).
  \end{enumerate}\end{rrema}
 % Moreover, in the current paper we (essentially) restrict this definition to dualities provided by Lemma \ref{ldual} ((respectively, the reader can always assume that $\Phi$ is the restriction of $\du(-,-)$ to $\cu^{op}\times\cu'$). 
  
%??????!!!!! The reader may easily check that all the statements of this paper that concern orthogonal structures can be generalized to  $\cu$ and $\cu'$  related by an arbitrary duality $\Phi:\cu\opp\times \cu'\to \au$ for an abelian category $\au$. %; also cf. loc. cit.. %; cf. Remark \ref{rprefl} below.

%???We prefer to avoid dualities in most of this paper due to the reason that %we don't have %much examples of weight and $t$-structures in 
 %many interesting examples of orthogonal structures in 
  %????this more general setting does not appear to be  interesting for our current purposes.

\begin{rrema}\label{rbvtt}
1. This paper is a certain modification of the preprint \cite{bvtr}. 
%The main difference  is %to %replace avoid 
One of the main distinctions is that the somewhat ad hoc (and restrictive) notion of reflection of categories (see Definition 2.2.1 of ibid.) was avoided. %cf.  Remark \ref{rccopr} below. 
Respectively, Corollary \ref{ccopr} is a certain substitute for Theorem 2.2.5 of \cite{bvtr}.
Though the author suspects that the results of the current text are not sufficient to prove loc. cit. itself, %it appears that  
 they can be successfully applied to treat all the examples of loc. cit. known to the author; % along with a collection of other interesting settings; 
  cf. Proposition \ref{pgeomap1}. %We will demonstrate this in a succeeding paper. %and???! below????!!! the derived categories of quasicoherent

Moreover, several new general results were added; this includes %\tba
 the theory of coproductive extensions, %along 
  %and 
   (most of) \S\ref{ststreb},  \S\ref{sconstrwsat}, and subcategories corresponding to certain support conditions (see Propositions \ref{pcgrlin}, \ref{pcgrlintba},  \ref{pgeomap1}, and \ref{pgeomap2}(2), and Corollary \ref{csatur}(2)). 
 
 On the other hand, %we did not include
  %I excluded 
   the current text does not include Proposition 2.2.7, Corollary 4.1.4(2), and \S4.4  of ibid. %Yet we establish certain pre-requisites for loc. cit., whereas \cite[\S4]{bvtr}   will essentially be corrected, extended and included into a succeeding paper.
    Probably, these statements will be included into a succeeding paper.
%Concrete numbers for statements used elsewhere?????!!!!!!

Note also that the terminology of the current text is substantially distinct from the one of ibid.

2. Since \cite{bvtr} (as well as the earlier preprint \cite{bpgws}) is cited in some recent papers, % of the author, %{bgroth}???!! Theorem 3.3.3(III.2)?! Current ones are more general?! Yet see?!
 it makes sense to note the following: Theorem 3.2.3(I)  of \cite{bvtr} (as well as Theorem 3.1.2 of \cite{bpgws} that has been cited by several authors) %of  became below
 coincides with Theorem \ref{tadjti}  of the current paper,  Theorem 3.3.1 of \cite{bvtr}  almost coincides with Theorem \ref{tcompws},   Theorem 5.3.1(I.1) of ibid. is contained in Proposition \ref{ptwfromt1}, Theorem 5.3.1(IV)  of ibid. was generalized to %Proposition \ref{psatw}
 Theorem \ref{tsatw},  and strongly $\alz$--well generated weight structures (that were treated in  \S3.3 of  \cite{bvtr}) are discussed in \S\ref{sort} (yet see Remark \ref{rrcuzkar} below).
\end{rrema}

\begin{rrema}\label{rneetgen}
The methods of the current paper are not the most general among the existing methods for constructing $t$-structures. In particular, if $\cu$ is a well generated triangulated category %{\it well generated} triangulated category (categories of this sort are well-known to be very common)  %(in particular, this is the case if $\cu$ is  compactly generated or possesses a combinatorial Quillen model; see Proposition 6.10 of \cite{rosibr})  % in the sense of Definition \ref{dcomp}(\ref{dcgen}) below) 
 then the recent Theorem 2.3 of \cite{neetsgen} gives
all those $t$-structures that are generated by sets of objects of $\cu$ in the sense of Definition \ref{dcomp}(\ref{dgenw}). %; this statement essentially vastly generalizes the well-known Theorem A.1 of \cite{talosa}.

Now, % category 
if $\cu$ is well generated then is generated by a set of its objects (as its own localizing subcategory; see Definition \ref{dcomp}(\ref{dlocal})). % since it is well-generated. 
 Thus for any smashing weight structure $w$ on it Proposition 2.3.2(10) of \cite{bwcp} essentially says the following: there exists a set $\cp\subset \cu_{w\ge 0}$ such that the class $\cu_{w\ge 0}$ is the  smallest cocomplete pre-aisle that is generated by %""? a certain set $\cp\subset \cu_{w\ge 0}$ %of its elements 
  $\cp$ in the sense of \cite[\S0]{neetsgen} (cf. Discussion 1.16 of ibid.). % according to Proposition 2.3.2(10) of \cite{bwcp}. 
Hence Theorem 2.3 of \cite{neetsgen} implies that there exists a $t$-structure on $\cu$ such that $\cu_{t\ge 0}=\cu_{w\ge 0}$ (cf. Remark \ref{rtst}(2) below). Thus loc. cit. generalizes the existence of $t$ part of our Theorem \ref{tadjti}.

On the other hand, note that neither loc. cit. nor the well-known Theorem A.1 of \cite{talosa} says anything on the hearts of $t$-structures. Moreover,  there appears to be no chance to extend these existence results to $R$-saturated categories (in any way). 
\end{rrema}

\begin{rrema}\label{rexam}

Let us discuss examples to Theorem \ref{tsmash}; cf. Remark \ref{rsmashex}.

1. First we apply the theorem to  semi-orthogonal decompositions.

%Moreover, recall that Theorem 2.4.2(II.2) of \cite{bgroth} says that for any $t$-structure generated by a set of compact objects there exists a weight structure right adjacent to it (if $\cu$ satisfies  the Brown representability property; cf. Remark \ref{rcomp}(1)). The author suspects that this statement extends to arbitrary torsion theories (see Remark \ref{rhomortp}(2)). Note that Theorem 3.11 of \cite{postov}  gives this statement in the case where $\cu$ is an {\it algebraic} compactly generated category. 
%3.  Nafig???????!!!!
 %Now let us apply our theorem  in the case of semi-orthogonal decompositions (see Definition \ref{ddec} and Remark \ref{psod}).

If $w$ is a semi-orthogonal decomposition of $\cu$ (where $\cu$ satisfies the Brown representability property) that is smashing as a weight structure then for the corresponding $t^l$ the couple $(\cu_{t^l\le 0},\cu_{t^l\ge 0})$ is a semi-orthogonal decomposition as well; see Proposition \ref{portsod}. %Moreover, Remark \ref{rsod}(2) yields .
  Note also that $(\cu_{t^l\le 0},\cu_{t^l\ge 0})$ is a (cosmashing) weight structure.

Next, recall that the full subcategory $\cu_1$ of $\cu$ corresponding to $C_1=\cu_{w\le 0}$ is  triangulated and  %right admissible; see condition \ref{isoda} in Proposition \ref{portsod}. Moreover, the embedding $\cu_1\to \cu$ 
 the corresponding right adjoint (to the embedding $\cu_1\to \cu$) respects coproducts; see condition \ref{isoda} in Proposition \ref{portsod} and Remark \ref{rsod}(3). 
 Since $w\opp$ is a semi-orthogonal decomposition in the category $\cu\opp$ (see Proposition \ref{pbw}(\ref{idual}), we obtain that  the embedding $\cu_2\to \cu$ possesses a left adjoint; here $\cu_2$ the subcategory of  $\cu$ corresponding to $\cu_{w\ge 0}$.
 
 Thus if we apply Proposition \ref{portsod} %once again
 to the %weight structure 
  semi-orthogonal decomposition $(\cu_{t^l\le 0},\cu_{t^l\ge 0})$  we obtain that the embedding $\cu_2\to \cu$ possesses a right adjoint as well; note that $\cu_{t^l\ge 0}=\cu_{w\ge 0}$. %Moreover, 
Thus  $\cu_2$ is {\it admissible} in $\cu$ in the sense of  \cite[Definition 2.5]{bondkaprserre} and the embedding $\cu'\to\cu$ may be completed  to a {\it gluing datum} (cf. \cite[\S1.4]{bbd} or \cite[\S9.2]{neebook}).

So we generalize Corollary 2.4 of \cite{nisao} to arbitrary categories that satisfy the  Brown representability property. %\tba?? 

2. Let us say a little more on smashing weight structures.

%Moreover,  
Theorem 4.5(2) of \cite{postov} and Theorem %4.2.1(1,2) of \cite{bpgws} 
 3.2.1 of \cite{bgroth} %(cf. Remark 3.2.2(2) of ibid.) (we will mention these statements in the proof of Corollary \ref{ckeller}(2) below) 
 enable one to check  whether two weight structures %obtained this way are distinct. 
 generated by a set of compact objects are distinct.

Other notable statements related to the construction of smashing weight structures are Theorem 2.3.4(3) of \cite{bgroth} (it says that any {\it perfect} set of objects in a smashing triangulated category generates a smashing weight structure) and Theorem 3.1.3 and 3.2.2 of \cite{bsnew} that treat weight-exact localization functors.
\end{rrema} 

\begin{rrema}\label{rrcuzkar} 
Let us make a funny observation concerning Theorem \ref{tcompws}.

According to the well-known Lemma 4.4.5 of \cite{neebook}, the assumptions of Theorem \ref{tcompws} imply that $ \cu^{(\alz)}$ is the retraction-closure (see \S\ref{snotata}) of the category $\cuz$.  Note however that $\wz$ does not extend to $ \cu^{(\alz)}$ in general (even though it extends to $\cu$!); see \S3.1 of \cite{bonspkar}.\footnote{In any example of this sort $\wz$ is neither bounded above nor below; see Theorem 2.2(II.2) of ibid.} Consequently, it does makes some sense not to %demand
 assume $\cuz= \cu^{(\alz)}$ in our theorem.

 For the same reason, the weight structures %constructed 
  provided by our Theorem \ref{tcompws} don't have to be {\it strongly $\alz$--well generated} in the sense of Remark 3.3.4(1) of \cite{bgroth} (even if one assumes that $\cuz$ is essentially small; note that this is equivalent to the compact generation of $\cu$). This observation possibly suggests that the notion  of strong $\alz$--well generation should be slightly generalized.
\end{rrema}

\end{document}